\newcommand{\ccc}{{\mathbf C}}
\newcommand{\nnn}{{\mathbf N}}
\newcommand{\rrr}{{\mathbf R}}
\newcommand{\zzz}{{\mathbf Z}}
\newtheorem{thm}{Theorem}[section]
\newtheorem{prop}{Proposition}[section]
\newtheorem{lemma}{Lemma}[section]
\newtheorem{rem}{Remark}[section]
\newtheorem{note}{Note}[section]
\numberwithin{equation}{section}
\begin{document}

\title{Mock theta functions and indefinite modular forms}

\author{\footnote{12-4 Karato-Rokkoudai, Kita-ku, Kobe 651-1334, 
Japan, \qquad
wakimoto.minoru.314@m.kyushu-u.ac.jp, \hspace{5mm}
wakimoto@r6.dion.ne.jp 
}{ Minoru Wakimoto}}

\date{\empty}

\maketitle

\begin{center}
Abstract
\end{center}
In the explicit formula for the signed mock theta functions 
$\Phi^{(-)[m,s]}$ obtained from the coroot lattice of $D(2,1;a)$, 
functions with indefinite quadratic forms naturally take place. 
We compute their modular transformation properties by
applying the Zwegers' modification theory of mock theta functions 
and show that the $\ccc$-linear span of these functions is 
$SL_2(\zzz)$-invariant.

\tableofcontents

\section{Introduction}

For $m \in \frac12 \nnn$ and $s \in \frac12 \zzz$, 
let $\Phi^{(\pm)[m,s]}_i$ $(i=1,2)$ and $\Phi^{(\pm)[m,s]}$ be 
the (signed) mock theta functions (cf. \cite{KW2016b}) defined by 
\begin{subequations}
\begin{eqnarray}
\Phi^{(\pm)[m,s]}_1(\tau, z_1, z_2,t) &:=& e^{-2\pi imt} 
\sum_{j \in \zzz} \, (\pm 1)^j \, 
\frac{e^{2\pi imj(z_1+z_2)+2\pi isz_1} q^{mj^2+sj}}{1-e^{2\pi iz_1}q^j}
\label{n3-2:eqn:2022-111a}
\\[1mm]
\Phi^{(\pm)[m,s]}_2(\tau, z_1, z_2,t) &:=& e^{-2\pi imt} 
\sum_{j \in \zzz} \, (\pm 1)^j \, 
\frac{e^{-2\pi imj(z_1+z_2)-2\pi isz_2} q^{mj^2+sj}}{1-e^{-2\pi iz_2}q^j}
\label{n3-2:eqn:2022-111b}
\\[1mm]
\Phi^{(\pm)[m,s]}(\tau, z_1, z_2,t) &:=& 
\Phi^{(\pm)[m,s]}_1(\tau, z_1, z_2,t) -\Phi^{(\pm)[m,s]}_2(\tau, z_1, z_2,t) 
\label{n3-2:eqn:2022-111c}
\end{eqnarray}
\end{subequations}
where $q:=e^{2\pi i\tau} \,\ (\tau \in \ccc_+)$ and $z_1, z_2, t \in \ccc$. 
We write simply $\Phi^{[m,s]}_i$ and $\Phi^{[m,s]}$ 
for $\Phi^{(+)[m,s]}_i$ and $\Phi^{(+)[m,s]}$ respectively. 
For $m \in \frac12 \nnn$ and $j \in \frac12 \zzz$, 
we define the function $R^{(\pm)}_{j,m}(\tau,z)$ by 
{\allowdisplaybreaks
\begin{eqnarray}
R^{(\pm)}_{j,m}(\tau,w) &:= & 
\sum_{\substack{n \in \frac12 \zzz \\[1mm]
n -j \in 2m\zzz}}
(\pm 1)^{\frac{n-j}{2m}}\bigg\{
{\rm sgn}\Big(n-\frac12-j+2m\Big)
\nonumber
\\[1mm]
& &
-E\bigg(\Big(n-2m\frac{{\rm Im}(w)}{{\rm Im}(\tau)}\Big)
\sqrt{\frac{{\rm Im}(\tau)}{m}}\bigg)\bigg\} \, 
e^{-\frac{\pi i\tau}{2m}n^2+2\pi inw}
\label{m1:eqn:2022-512a}
\end{eqnarray}}
where $(\tau,w) \in \ccc_+ \times \ccc$ and 
$E(z):= 2\int_0^z e^{-\pi t^2}dt$.
For $m \in \frac12 \nnn$ and $s \in \frac12 \zzz$, we put 
\begin{subequations}
{\allowdisplaybreaks
\begin{eqnarray}
& &\hspace{-15mm}
\Phi^{(\pm)[m,s]}_{1, {\rm add}}(\tau, z_1,z_2,t) 
\, := \, 
- \, \frac12 \, e^{-2\pi imt} \hspace{-2mm}
\sum_{\substack{\\ k \in s+\zzz \\[1mm] 
s \leq k < s+2m }} \hspace{-2mm}
R^{(\pm)}_{k,m}\Big(\tau, \frac{z_1-z_2}{2}\Big) 
\theta_{k,m}^{(\pm)}(\tau, \, z_1+z_2) 
\label{m1:eqn:2022-512b1a}
\\[1mm]
& &\hspace{-15mm}
\Phi^{(\pm)[m,s]}_{2, {\rm add}}(\tau, z_1,z_2,t) 
\, := \, 
- \, \frac12 \, e^{-2\pi imt} \hspace{-2mm}
\sum_{\substack{\\ k \in s+\zzz \\[1mm] 
s \leq k < s+2m }} \hspace{-2mm}
R^{(\pm)}_{k,m}\Big(\tau, \frac{z_1-z_2}{2}\Big) 
\theta_{-k,m}^{(\pm)}(\tau, \, z_1+z_2) 
\label{m1:eqn:2022-512b1b}
\\[1mm]
& &\hspace{-15mm}
\Phi^{(\pm)[m,s]}_{\rm add}(\tau, z_1,z_2,t) 
\,\ := \,\ 
\Phi^{(\pm)[m,s]}_{1, {\rm add}}(\tau, z_1,z_2,t) 
-
\Phi^{(\pm)[m,s]}_{2, {\rm add}}(\tau, z_1,z_2,t) 
\nonumber
\\[2.5mm]
&=&
- \, \frac12 \, e^{-2\pi imt} \hspace{-2mm}
\sum_{\substack{\\ k \in s+\zzz \\[1mm] 
s \leq k < s+2m }} \hspace{-2mm}
R^{(\pm)}_{k,m}\Big(\tau, \frac{z_1-z_2}{2}\Big) 
\big[\theta_{k,m}^{(\pm)}-\theta_{-k,m}^{(\pm)}\big]
(\tau, \, z_1+z_2) 
\label{m1:eqn:2022-512b1}
\end{eqnarray}}
\end{subequations}
and
\begin{subequations}
{\allowdisplaybreaks
\begin{eqnarray}
& & \hspace{-10mm}
\widetilde{\Phi}^{(\pm)[m,s]}_i(\tau, z_1,z_2,t) 
\, := \, 
\Phi^{(\pm)[m,s]}_i(\tau, z_1,z_2,t) 
+
\Phi^{(\pm)[m,s]}_{i,{\rm add}}(\tau, z_1,z_2,t) \hspace{5mm}
(i=1,2)
\label{m1:eqn:2022-512b2a}
\\[2mm]
& & \hspace{-10mm}
\widetilde{\Phi}^{(\pm)[m,s]}(\tau, z_1,z_2,t) 
\, := \, 
\Phi^{(\pm)[m,s]}(\tau, z_1,z_2,t) 
+
\Phi^{(\pm)[m,s]}_{\rm add}(\tau, z_1,z_2,t) 
\label{m1:eqn:2022-512b2}
\end{eqnarray}}
\end{subequations}
where $\theta_{k,m}^{(\pm)}$ is the Jacobi's theta function:
\begin{equation}
\theta_{k,m}^{(\pm)}(\tau,z) \, := \, 
\sum_{j \in \zzz} (\pm 1)^j e^{2\pi im(j+\frac{k}{2m})z} \, 
q^{m(j+\frac{k}{2m})^2}
\label{m1:eqn:2022-512c}
\end{equation}

\noindent
We call $\widetilde{\Phi}^{(\pm)[m,s]}_i$ (resp. $\Phi^{(\pm)[m,s]}_{i,{\rm add}}$)
the \lq \lq modification" (resp. \lq \lq correction term") of 
$\Phi^{(\pm)[m,s]}_i$.

\medskip

It is known that the Jacobi's theta functions satisfy the following properties:

\begin{lemma} 
\label{m1:lemma:2022-526a}
Let $m \in \frac12 \nnn$, $j \in \frac12 \zzz$ and $a \in \rrr$ 
such that $am \in \frac12 \zzz$. Then
\begin{enumerate}
\item[{\rm 1)}] \,\ $\theta_{j,m}^{(\pm)}(\tau, z+a\tau) 
\, = \, q^{-\frac14ma^2} e^{-\pi imaz}
\theta_{j+am,m}^{(\pm)}(\tau, z)$
\item[{\rm 2)}] \,\ $\theta_{j,m}^{(\pm)}(\tau, z+a) 
\,\ = \, \left\{
\begin{array}{lcl}
e^{\pi ija} \, \theta_{j,m}^{(\pm)}(\tau, z) \quad & \text{if} & 
am \, \in \, \zzz \\[2mm]
e^{\pi ija} \, \theta_{j,m}^{(\mp)}(\tau, z) & \text{if} & 
am \, \in \, \frac12 \zzz_{\rm odd}
\end{array} \right. $
\end{enumerate}
\end{lemma}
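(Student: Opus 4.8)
The plan is to verify both identities by direct computation from the defining series (\ref{m1:eqn:2022-512c}), reducing everything to the effect of the shift $z \mapsto z + a\tau$ (resp. $z \mapsto z+a$) on the exponent $2\pi i m(j+\tfrac{k}{2m})z + 2\pi i m \tau (j+\tfrac{k}{2m})^2$ and then reindexing the summation over $j \in \zzz$.

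For part 1), I would substitute $z + a\tau$ into (\ref{m1:eqn:2022-512c}) and complete the square in the exponent. Writing $\nu := j + \tfrac{j}{2m}$... more precisely $\nu := j + \tfrac{k}{2m}$ (using $j$ here for the index and $k$ for the characteristic to match the statement's $j$), the term becomes $(\pm1)^j e^{2\pi i m \nu z} e^{2\pi i m \nu a \tau} q^{m\nu^2}$. Collecting the $\tau$-dependent factors, $e^{2\pi i m \nu a \tau} q^{m\nu^2} = q^{m(\nu + \frac{a}{2})^2 - \frac{ma^2}{4}} = q^{-\frac{ma^2}{4}} q^{m(\nu+\frac a2)^2}$, and $\nu + \tfrac a2 = j + \tfrac{k + am}{2m}$. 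The factor $e^{2\pi i m\nu z}$ must be rewritten as $e^{2\pi i m(\nu + \frac a2)z} e^{-\pi i m a z}$. Pulling out $q^{-ma^2/4} e^{-\pi i m a z}$, what remains is exactly $\sum_{j}(\pm1)^j e^{2\pi i m(j + \frac{k+am}{2m})z} q^{m(j+\frac{k+am}{2m})^2} = \theta^{(\pm)}_{k+am,m}(\tau,z)$; since $am \in \tfrac12\zzz$ the shifted characteristic $k + am$ still lies in the allowed range and no reindexing of $j$ is needed. This gives the claimed formula with the roles of $j$ and $k$ renamed back.

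For part 2), substituting $z+a$ into (\ref{m1:eqn:2022-512c}) multiplies the $j$-th term by $e^{2\pi i m(j+\frac{k}{2m})a} = e^{2\pi i m j a} e^{\pi i k a}$. The factor $e^{\pi i k a}$ is independent of $j$ and pulls out as $e^{\pi i j a}$ in the notation of the statement (characteristic $= j$). The remaining factor $e^{2\pi i m j a}$ depends on $j$: if $am \in \zzz$ it equals $1$ for all $j \in \zzz$, leaving $\theta^{(\pm)}_{k,m}(\tau,z)$ untouched; if $am \in \tfrac12\zzz_{\mathrm{odd}}$, then $e^{2\pi i m j a} = e^{\pi i j \cdot(2ma)} = (-1)^j$ since $2ma$ is an odd integer, and $(\pm1)^j(-1)^j = (\mp1)^j$, which turns $\theta^{(\pm)}_{k,m}$ into $\theta^{(\mp)}_{k,m}$. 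Combining the two cases yields the stated formula.

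I do not anticipate a genuine obstacle here: both parts are elementary manipulations of absolutely convergent series (convergence being clear since $\tau \in \ccc_+$), and the only points requiring a little care are completing the square correctly in part 1) — in particular tracking the $e^{-\pi i m a z}$ prefactor that is split off from $e^{2\pi i m\nu z}$ — and, in part 2), correctly identifying the parity of $2ma$ in the half-integer case to get the sign flip $(\pm1)^j \mapsto (\mp1)^j$. The hypothesis $am \in \tfrac12\zzz$ is exactly what guarantees that these shifts land back among theta functions of the same type rather than producing a genuinely new object.
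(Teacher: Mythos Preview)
Your proof is correct. The paper itself states this lemma as a known property of Jacobi theta functions and gives no proof, so your direct computation from the defining series \eqref{m1:eqn:2022-512c} --- completing the square in part 1) and tracking the parity of $2am$ in part 2) --- is exactly the standard verification one would expect.
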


\begin{lemma} 
\label{m1:lemma:2022-526b}
$\theta_{j+2am,m}^{(\pm)}(\tau,z) = (\pm 1)^a \theta_{j,m}^{(\pm)}(\tau,z)$
\,\ if $m \in \frac12 \nnn$, $j \in \frac12 \zzz$ and $a \in \zzz$.
\end{lemma}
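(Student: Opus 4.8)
The plan is to prove the identity directly from the series definition \eqref{m1:eqn:2022-512c} by a single shift of the summation index; no auxiliary result is needed. Writing out the left-hand side (and renaming the summation variable in \eqref{m1:eqn:2022-512c} to $n$ so as not to clash with the parameter $j$ of the statement),
\[
\theta_{j+2am,m}^{(\pm)}(\tau,z) \;=\; \sum_{n \in \zzz} (\pm 1)^n \, e^{2\pi i m\left(n+\frac{j+2am}{2m}\right)z}\, q^{m\left(n+\frac{j+2am}{2m}\right)^2},
\]
the key observation is the arithmetic identity $n+\frac{j+2am}{2m} = (n+a) + \frac{j}{2m}$, valid since $a \in \zzz$. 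Hence the substitution $\ell := n+a$, which is a bijection of $\zzz$, carries the exponential factor $e^{2\pi i m(n+\frac{j+2am}{2m})z}$ and the power $q^{m(n+\frac{j+2am}{2m})^2}$ exactly onto the $\ell$-th summand of $\theta_{j,m}^{(\pm)}(\tau,z)$.

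The only point to track is the sign: under $\ell=n+a$ one has $(\pm 1)^n = (\pm 1)^{\ell-a} = (\pm 1)^a (\pm 1)^{\ell}$, where we used $(\pm 1)^{-a} = (\pm 1)^a$ because $(\pm 1)^2 = 1$. Pulling the constant $(\pm 1)^a$ out of the sum gives $(\pm 1)^a \theta_{j,m}^{(\pm)}(\tau,z)$, which is precisely the assertion. The rearrangement is harmless since for $\tau \in \ccc_+$ the theta series converges absolutely, and in any case we are merely relabelling a fixed index set.

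I expect no real obstacle here: the statement is just the quasi-periodicity of $\theta_{k,m}^{(\pm)}$ in the lower index $k$ modulo $2m\zzz$, up to the sign $(\pm 1)$, and it reduces to the reindexing above. If one prefers, the general case follows from the case $a=1$ by iteration, and the case $a=1$ is the trivial remark that incrementing $k$ by $2m$ has the same effect on \eqref{m1:eqn:2022-512c} as incrementing the summation variable by $1$.
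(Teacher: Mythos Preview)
Your proof is correct. The paper does not actually give a proof of this lemma: it is stated as one of several known properties of Jacobi's theta functions (introduced with ``It is known that\ldots'') and left without argument. Your direct reindexing $\ell = n+a$ in the defining series \eqref{m1:eqn:2022-512c}, together with $(\pm 1)^{-a}=(\pm 1)^a$, is the standard verification and is exactly what one would supply if asked to justify the statement.
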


\begin{lemma} 
\label{m1:lemma:2022-526c}
Let $m \in \frac12 \nnn$, $j \in \frac12 \zzz$. Then
\begin{enumerate}
\item[{\rm 1)}] \,\ $\theta_{j,m}^{(+)}
\Big(-\dfrac{1}{\tau}, \dfrac{z}{\tau}\Big)
\, = \,
\dfrac{(-i\tau)^{\frac12} }{\sqrt{2m}} \, 
e^{\frac{\pi imz^2}{2\tau}} \, 
\times \, \left\{
\begin{array}{ccl}
\sum\limits_{\substack{k \in \zzz \\[1mm] 0 \leq k <2m}}
e^{\frac{\pi ijk}{m}} \, \theta_{k,m}^{(+)}(\tau,z) \quad 
&\text{if}& j \, \in \, \zzz
\\[7.5mm]
\sum\limits_{\substack{k \in \zzz \\[1mm] 0 \leq k <2m}}
e^{\frac{\pi ijk}{m}} \, \theta_{k,m}^{(-)}(\tau,z) \quad 
&\text{if}& j \, \in \, \frac12 \zzz_{\rm odd}
\end{array}\right.  $
$$
= \, \dfrac{(-i\tau)^{\frac12}}{\sqrt{2m}} \, 
e^{\frac{\pi imz^2}{2\tau}} \, 
\times \left\{
\begin{array}{ccl}
\sum\limits_{k \in \zzz/2m\zzz}
e^{\frac{\pi ijk}{m}} \, \theta_{k,m}^{(+)}(\tau,z) \quad 
&\text{if}& j \, \in \, \zzz
\\[4mm]
\sum\limits_{k \in \zzz/2m\zzz}
e^{\frac{\pi ijk}{m}} \, \theta_{k,m}^{(-)}(\tau,z) \quad 
&\text{if}& j \, \in \, \frac12 \zzz_{\rm odd}
\end{array}\right. 
$$
\item[{\rm 2)}]  $\theta_{j,m}^{(-)}
\Big(-\dfrac{1}{\tau}, \dfrac{z}{\tau}\Big)
\,\ = \,\ \dfrac{1}{\sqrt{2m}} \, (-i\tau)^{\frac12} \, 
e^{\frac{\pi imz^2}{2\tau}}
\times \left\{
\begin{array}{ccl}
\sum\limits_{\substack{k \in \frac12 \zzz_{\rm odd} \\[1mm] 0 < k <2m}}
e^{\frac{\pi ijk}{m}} \, \theta_{k,m}^{(+)}(\tau,z) \,\ 
&\text{if}& j \, \in \, \zzz
\\[7.5mm]
\sum\limits_{\substack{k \in \frac12 \zzz_{\rm odd} \\[1mm] 0 < k <2m}}
e^{\frac{\pi ijk}{m}} \, \theta_{k,m}^{(-)}(\tau,z) \,\ 
&\text{if}& j \, \in \, \frac12 \zzz_{\rm odd}
\end{array}\right. $
$$
= \dfrac{(-i\tau)^{\frac12}}{\sqrt{2m}} \, 
e^{\frac{\pi imz^2}{2\tau}} \times
\left\{
\begin{array}{ccl}
\sum\limits_{k \in \zzz/2m\zzz}
e^{\frac{\pi ij}{m}(k+\frac12)} \, \theta_{k+\frac12,m}^{(+)}(\tau,z) \,\ 
&\text{if}& j \in \zzz
\\[4mm]
\sum\limits_{k \in \zzz/2m\zzz}
e^{\frac{\pi ij}{m}(k+\frac12)} \, \theta_{k+\frac12,m}^{(-)}(\tau,z) \,\ 
&\text{if}& j \in \frac12 \zzz_{\rm odd}
\end{array}\right. 
$$
\end{enumerate}
\end{lemma}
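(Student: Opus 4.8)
The plan is to obtain both formulas from the Poisson summation formula applied to the defining series \eqref{m1:eqn:2022-512c} (evaluated at $-1/\tau$ and $z/\tau$), followed by a regrouping of the resulting dual sum modulo $2m$. The two parts run in parallel and differ only by a single sign factor, so I will carry out part~1) in detail and then indicate the change needed for part~2). (One could instead quote the classical transformation law for theta functions with characteristics, but the direct Poisson argument is self-contained and makes the role of the superscript $(\pm)$ transparent.)

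For part~1), fix $j$ and write $\theta^{(+)}_{j,m}\!\big({-}\tfrac1\tau,\tfrac z\tau\big)=\sum_{\ell\in\zzz}f(\ell)$, where
\[
f(x)\ :=\ \exp\!\Big(2\pi i m\big(x+\tfrac{j}{2m}\big)\tfrac{z}{\tau}\,-\,\tfrac{2\pi i m}{\tau}\big(x+\tfrac{j}{2m}\big)^{2}\Big).
\]
Since ${\rm Re}\big(\tfrac{2im}{\tau}\big)=\tfrac{2m\,{\rm Im}(\tau)}{|\tau|^{2}}>0$ for $\tau\in\ccc_{+}$, the function $f$ is a Schwartz function, so Poisson summation gives $\theta^{(+)}_{j,m}\!\big({-}\tfrac1\tau,\tfrac z\tau\big)=\sum_{k\in\zzz}\widehat f(k)$ with $\widehat f(k)=\int_{\rrr}f(x)e^{-2\pi i kx}\,dx$. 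Substituting $y=x+\tfrac{j}{2m}$ turns $\widehat f(k)$ into a shifted Gaussian integral, and the standard formula $\int_{\rrr}e^{-\pi a y^{2}+2\pi i by}\,dy=a^{-1/2}e^{-\pi b^{2}/a}$ (valid for ${\rm Re}\,a>0$, principal branch) applied with $a=\tfrac{2im}{\tau}$ — so that $a^{-1/2}=\tfrac{(-i\tau)^{1/2}}{\sqrt{2m}}$ — and $b=\tfrac{mz}{\tau}-k$ gives, upon expanding $-\pi b^{2}/a$,
\[
\widehat f(k)\ =\ \frac{(-i\tau)^{1/2}}{\sqrt{2m}}\;e^{\frac{\pi i m z^{2}}{2\tau}}\;e^{\frac{\pi i jk}{m}}\;q^{\frac{k^{2}}{4m}}\,e^{-\pi i kz}.
\]
The two $k$-free factors here are exactly the asserted automorphy factor $\tfrac{(-i\tau)^{1/2}}{\sqrt{2m}}\,e^{\pi i m z^{2}/2\tau}$, so what remains is to rewrite $\sum_{k\in\zzz}e^{\pi i jk/m}\,q^{k^{2}/4m}\,e^{-\pi i kz}$ as a finite $\ccc$-linear combination of the $\theta^{(\pm)}_{k,m}(\tau,z)$.

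For that I would split $k=r+2mp$ by the residue $r$ of $k$ modulo $2m$ ($p\in\zzz$). Using $q^{(r+2mp)^{2}/4m}=q^{m(p+r/2m)^{2}}$ and $e^{-\pi i(r+2mp)z}=e^{-2\pi i m(p+r/2m)z}$, comparison with \eqref{m1:eqn:2022-512c} shows that, for fixed $r$, the partial sum over $p$ equals $e^{\pi i jr/m}\,\theta^{(\varepsilon)}_{r,m}(\tau,-z)=e^{\pi i jr/m}\,\theta^{(\varepsilon)}_{-r,m}(\tau,z)$, the sign $\varepsilon$ being read off from the residual factor $e^{\pi i j(r+2mp)/m}=e^{\pi i jr/m}\cdot e^{2\pi i pj}$: one has $e^{2\pi i pj}=1$ and $\varepsilon=+$ when $j\in\zzz$, and $e^{2\pi i pj}=(-1)^{p}$ and $\varepsilon=-$ when $j\in\tfrac12\zzz_{\rm odd}$, the factor $(-1)^{p}$ being exactly what, by \eqref{m1:eqn:2022-512c}, converts $\theta^{(+)}_{r,m}$ into $\theta^{(-)}_{r,m}$. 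Summing over $r$, reindexing, and using Lemma~\ref{m1:lemma:2022-526b} to pass from $0\le r<2m$ to $r\in\zzz/2m\zzz$ (which also shows that the summand $e^{\pi i jr/m}\theta^{(\varepsilon)}_{r,m}(\tau,z)$ is $2m$-periodic in $r$: when $j\in\tfrac12\zzz_{\rm odd}$ the sign of $e^{\pi i jr/m}$ and that of $\theta^{(-)}_{r,m}$ change together) would then deliver both displayed forms of 1).

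For part~2) the only change is that the summand of $\theta^{(-)}_{j,m}$ carries the extra factor $(-1)^{\ell}=e^{\pi i\ell}$; absorbing it into the exponential of $f$ replaces $e^{-2\pi i kx}$ by $e^{-2\pi i(k-\frac12)x}$ in the Poisson sum, so the dual index is shifted by $\tfrac12$ and runs over $\zzz+\tfrac12=\tfrac12\zzz_{\rm odd}$ in place of $\zzz$. Re-running the same regrouping — now with half-integral residues $r$, $0<r<2m$ — and the identical parity analysis of $e^{2\pi i pj}$ would give the two cases of 2), with $\theta^{(+)}$ resp.\ $\theta^{(-)}$ occurring exactly when $j\in\zzz$ resp.\ $j\in\tfrac12\zzz_{\rm odd}$; the reformulation with $r=k+\tfrac12$ over $k\in\zzz/2m\zzz$ is then immediate. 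I expect the main obstacle to be bookkeeping rather than anything substantive — keeping the principal branch of $(-i\tau)^{1/2}$ consistent (harmless, since ${\rm Re}(2im/\tau)>0$) and carefully tracking the periodicity-shift factor $e^{2\pi i pj}$, which is precisely what produces the $(+)\leftrightarrow(-)$ interchange recorded in the statement.
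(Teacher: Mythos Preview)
The paper does not give a proof of this lemma; it is stated among Lemmas~\ref{m1:lemma:2022-526a}--\ref{m1:lemma:2022-526d} as a known property of Jacobi theta functions, without argument. Your Poisson-summation derivation is the standard proof of such transformation laws and is carried out correctly.

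One bookkeeping point deserves tightening. After the regrouping modulo $2m$ you correctly obtain
\[
\sum_{r}e^{\pi i jr/m}\,\theta^{(\varepsilon)}_{-r,m}(\tau,z),
\]
and you then invoke a ``reindexing'' to reach the displayed form. But reindexing $r\mapsto -r$ turns this into $\sum_{k}e^{-\pi i jk/m}\,\theta^{(\varepsilon)}_{k,m}(\tau,z)$, with a \emph{minus} in the exponent rather than the plus printed in the lemma. A direct check (e.g.\ $m=2$, $j=1$) confirms the minus sign, and so do the paper's own applications of the lemma: both Note~\ref{m1:note:2022-528a} and the computations in the proof of Lemma~\ref{m1:lemma:2022-522d} use phases of the form $e^{-\pi i jk/m}$. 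So the discrepancy is a typo in the displayed statement, not a flaw in your method; your calculation yields the correct transformation law.
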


\begin{lemma} 
\label{m1:lemma:2022-526d}
Let $m \in \frac12 \nnn$, $j \in \frac12 \zzz$. Then
$$
\theta_{j,m}^{(\pm)}(\tau+1, z) \,\ = \,\ \left\{
\begin{array}{ccl}
e^{\frac{\pi ij^2}{2m}} \, \theta_{j,m}^{(\pm)}(\tau,z) \quad 
&\text{if}& m+j \, \in \, \zzz \\[2mm]
e^{\frac{\pi ij^2}{2m}} \, \theta_{j,m}^{(\mp)}(\tau,z) \quad 
&\text{if}& m+j \, \in \, \frac12 \zzz_{\rm odd}
\end{array} \right.
$$
\end{lemma}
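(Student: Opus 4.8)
The plan is to prove the $T$-transformation formula by a direct substitution into the defining series \eqref{m1:eqn:2022-512c}, since — unlike the $S$-transformation of Lemma~\ref{m1:lemma:2022-526c} — it requires no Poisson summation. Writing $q=e^{2\pi i\tau}$, replacing $\tau$ by $\tau+1$ multiplies the summand indexed by $n\in\zzz$ by the extra factor $e^{2\pi i m(n+\frac{j}{2m})^2}$, so the whole computation reduces to understanding this factor.

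First I would expand the exponent,
\[
m\Big(n+\frac{j}{2m}\Big)^2 \;=\; mn^2 \;+\; nj \;+\; \frac{j^2}{4m},
\]
so that
\[
e^{2\pi i m(n+\frac{j}{2m})^2} \;=\; e^{\frac{\pi i j^2}{2m}}\cdot e^{2\pi i(mn^2+nj)} .
\]
The constant factor $e^{\pi i j^2/2m}$ is independent of $n$ and pulls out of the sum, producing exactly the scalar appearing in the statement. It remains to evaluate $e^{2\pi i(mn^2+nj)}=(e^{2\pi im})^{n^2}(e^{2\pi ij})^{n}$ for each $n\in\zzz$.

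Here I would use the congruence $n^2\equiv n\pmod 2$: thus $e^{2\pi i mn^2}=1$ when $m\in\zzz$ and $e^{2\pi i mn^2}=(-1)^n$ when $2m$ is odd, i.e. $m\in\frac12\zzz_{\rm odd}$; likewise $e^{2\pi ijn}=1$ for $j\in\zzz$ and $(-1)^n$ for $j\in\frac12\zzz_{\rm odd}$. Multiplying, in all four sub-cases $e^{2\pi i(mn^2+nj)}$ equals $1$ precisely when $m+j\in\zzz$ and equals $(-1)^n$ precisely when $m+j\in\frac12\zzz_{\rm odd}$, these two cases being exhaustive since $m+j\in\frac12\zzz$. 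In the first case the series is unchanged term-by-term, giving $e^{\pi i j^2/2m}\,\theta_{j,m}^{(\pm)}(\tau,z)$; in the second the extra $(-1)^n$ merges with the sign $(\pm1)^n$ in \eqref{m1:eqn:2022-512c} to yield $(\mp1)^n$, hence $e^{\pi i j^2/2m}\,\theta_{j,m}^{(\mp)}(\tau,z)$.

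There is no genuine obstacle: the argument is an elementary rearrangement of an absolutely convergent series. The only point needing a little care is the parity bookkeeping in the last step — the congruence $n^2\equiv n\pmod 2$ and the resulting identification of the dichotomy $m+j\in\zzz$ versus $m+j\in\frac12\zzz_{\rm odd}$ with the sign change $\theta^{(\pm)}\mapsto\theta^{(\mp)}$.
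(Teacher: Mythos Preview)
Your proof is correct. The paper does not actually supply a proof of this lemma: it is listed among the properties that ``are known'' for Jacobi's theta functions (the block introducing Lemmas~\ref{m1:lemma:2022-526a}--\ref{m1:lemma:2022-526d}), so there is nothing to compare against. Your direct substitution into the defining series~\eqref{m1:eqn:2022-512c}, together with the parity reduction $n^2\equiv n\pmod 2$, is exactly the standard argument and is complete as written.
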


\medskip

We note that the Mumford's theta functions $\vartheta_{ab}(\tau,z)$ 
(cf. \cite{Mum}) are related to the Jacobi's theta functions 
by the following formulas:

\begin{note} \quad 
\label{m1:note:2022-526a}
$\left\{
\begin{array}{lcr}
\vartheta_{00}(\tau,z) &=& \theta_{0, \frac12}^{(+)}(\tau, 2z) \\[3mm]
\vartheta_{01}(\tau,z) &=& \theta_{0, \frac12}^{(-)}(\tau, 2z)
\end{array}\right. \hspace{10mm} \left\{
\begin{array}{lcr}
\vartheta_{10}(\tau,z) &=& \theta_{\frac12, \frac12}^{(+)}(\tau, 2z) \\[3mm]
\vartheta_{11}(\tau,z) &=& i \, \theta_{\frac12, \frac12}^{(-)}(\tau, 2z)
\end{array}\right. $
\end{note}

\noindent
In this paper, we write simply $\theta_{j,m}$ for \, $\theta_{j,m}^{(+)}$.

\medskip

This paper is organized as follows. 
In section 2, we recall and prepare some of basic properties of 
mock theta functions $\Phi^{(\pm)[m,s]}_i$ and $\Phi^{(\pm)[m,s]}$ and 
$\widetilde{\Phi}^{(\pm)[m,s]}$, which are used in this paper.

In section 3, we deduce explicit formulas for $\Phi^{(-)[m,\frac12]}$ 
by using similar method as in \cite{W2022b} and, in sections 4 and 5,
we compute Zwegers' modification for these functions.

In sections 6 and 7, we study the functions $\Xi^{(i)[m,p]}(\tau,z)$ 
and $\Upsilon^{(i)[m,p]}(\tau,z)$ which have good modular properties.
In section 8, we compute the modular transformation properties 
of indefinite modular forms $g^{(i)[m,p]}_j(\tau)$ and show that 
their $\ccc$-linear span is $SL_2(\zzz)$-invariant.

\section{Basic properties of $\Phi^{(\pm)[m,s]}$}
\label{m1:sec:basic-properties}

Some of basic properties of $\Phi^{[m,s]}_i$ and $\Phi^{[m,s]}$
are shown in section 2 of \cite{W2022a} and \cite{W2022b}.
In this section, we note that similar formulas hold 
for signed mock theta functions $\Phi^{(\pm)[m,s]}_i$ and 
$\Phi^{(\pm)[m,s]}$ and $\widetilde{\Phi}^{(\pm)[m,s]}_i$
and $\widetilde{\Phi}^{(\pm)[m,s]}$.

\begin{note}
\label{m1:note:2022-830a}
Let $m \in \frac12 \nnn$ and $s \in \frac12 \zzz$. Then
\begin{enumerate}
\item[{\rm 1)}] \,\ $\Phi^{(\pm)[m,s]}_i(\tau, -z_2, -z_1,t)
\, = \, \Phi^{(\pm)[m,s]}_j(\tau, z_1, z_2,t)$ \,\ for \,\ 
$i,j \in \{1,2\}$ \,\ such that \,\ $i \ne j$.
\item[{\rm 2)}] \,\ $\Phi^{(\pm)[m,s]}(\tau, -z_2, -z_1,t)
\, = \, - \, \Phi^{(\pm)[m,s]}(\tau, z_1, z_2,t)$
\item[{\rm 3)}] \,\ $\Phi^{(\pm)[m,s]}(\tau, z, -z,t) \, = \, 0$
\end{enumerate}
\end{note}

\vspace{0mm}

\begin{lemma} 
\label{m1:lemma:2022-512a}
Let $m \, \in \frac12 \nnn$ and $s, \, s' \in \frac12 \zzz$ 
and $j \in \{1,2\}$. Then
\begin{enumerate}
\item[{\rm 1)}] $\widetilde{\Phi}^{(\pm)[m,s]}_j(\tau, z_1,z_2,t) 
\, = \, 
\widetilde{\Phi}^{(\pm)[m,s']}_j(\tau, z_1,z_2,t)$ \quad if 
$s-s' \in \zzz$, 
\item[{\rm 2)}] $\widetilde{\Phi}^{(-)[m,\frac12]}_j\Big(
-\dfrac{1}{\tau}, \dfrac{z_1}{\tau},\dfrac{z_2}{\tau},t\Big) 
\, = \, \tau \, 
e^{\frac{2\pi im}{\tau}z_1z_2} \, 
\widetilde{\Phi}^{(-)[m,\frac12]}_j(\tau, z_1,z_2,t)$
\item[{\rm 3)}] $\widetilde{\Phi}^{(\pm)[m,s]}_j(\tau+1, z_1,z_2,t)
\, = \, 
\widetilde{\Phi}^{(\pm)[m,s]}_j(\tau, z_1,z_2,t)$ \quad if \,\ 
$m+s \in \zzz$.
\end{enumerate}
\end{lemma}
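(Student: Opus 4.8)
The plan is to verify the three assertions separately, each by reducing to the corresponding property of the ordinary theta functions stated in Lemmas~\ref{m1:lemma:2022-526a}--\ref{m1:lemma:2022-526d} together with the behavior of the correction terms $\Phi^{(\pm)[m,s]}_{j,{\rm add}}$. For part 1), I would note that $\Phi^{(\pm)[m,s]}_j$ itself depends on $s$ only through the residue $s \bmod \zzz$: in the series \eqref{n3-2:eqn:2022-111a}--\eqref{n3-2:eqn:2022-111b} a shift $s \mapsto s+1$ can be absorbed by reindexing $j \mapsto j-1$, using $(\pm 1)^{j}$ versus $(\pm1)^{j-1}$ only up to an overall sign that cancels against the $q^{mj^2+sj}$ factor — this is exactly the kind of elementary shift computation carried out in \cite{W2022a,W2022b}. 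For the correction term, since $R^{(\pm)}_{k,m}$ and $\theta^{(\pm)}_{\pm k,m}$ depend on $k$ only mod $2m$ by Lemma~\ref{m1:lemma:2022-526b} (and $R^{(\pm)}_{k,m}$ transforms compatibly under $k \mapsto k+2m$), the summation range $s \le k < s+2m$ over $k \in s+\zzz$ is unchanged in content when $s$ is replaced by $s' \in s+\zzz$. Combining the two gives 1).

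For part 3), the $T$-transformation, I would apply Lemma~\ref{m1:lemma:2022-526d} to each $\theta^{(\pm)}_{\pm k,m}(\tau,z_1+z_2)$ appearing in the correction term, and the known (or directly computed) elliptic-series identity giving the $\tau \mapsto \tau+1$ behavior of $R^{(\pm)}_{k,m}$ and of the raw series $\Phi^{(\pm)[m,s]}_j$. The hypothesis $m+s \in \zzz$ is precisely what is needed so that, for every $k \in s+\zzz$ in the range, $m+k \in \zzz$ as well, hence Lemma~\ref{m1:lemma:2022-526d} yields no sign flip $(\pm)\to(\mp)$ and the exponential prefactor $e^{\pi i k^2/2m}$ matches the phase coming from $\Phi^{(\pm)[m,s]}_j(\tau+1,\dots)$; the $e^{-2\pi imt}$ factor is inert. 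Assembling these, the modification $\widetilde\Phi^{(\pm)[m,s]}_j$ is $T$-invariant.

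Part 2), the $S$-transformation at $m \in \frac12\nnn$, $s=\frac12$ with the signed choice $(-)$, is the substantive one and I expect it to be the main obstacle. The strategy is: $\widetilde\Phi^{(-)[m,1/2]}_j$ is (by design of Zwegers' modification) a genuine Jacobi form, so it suffices to compute the $S$-transformation of the completed object, and here the correction term does the work of making the naive (anomalous) transformation of $\Phi^{(-)[m,1/2]}_j$ exact. Concretely I would substitute $(\tau,z_1,z_2)\mapsto(-1/\tau, z_1/\tau, z_2/\tau)$ into \eqref{m1:eqn:2022-512b1a}, apply Lemma~\ref{m1:lemma:2022-526c}(2) (the $\theta^{(-)}$ case, since $k\in\frac12+\zzz$ forces half-integral weight labels) to $\theta^{(-)}_{\pm k,m}(-1/\tau, (z_1+z_2)/\tau)$, apply the corresponding Zwegers-type $S$-rule for $R^{(-)}_{k,m}(-1/\tau, (z_1-z_2)/(2\tau))$ — which produces a $\theta\times R$ "period integral" cross term plus the expected $(-i\tau)^{1/2}e^{\pi i m (z_1-z_2)^2/2\tau}$ factor — and then reorganize the double sum over the two mod-$2m$ indices. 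The metaplectic factors $(-i\tau)^{1/2}$ from the $z_1+z_2$ variable and from the $(z_1-z_2)/2$ variable multiply to $-i\tau$, and the Gaussian exponents combine via $\frac{m}{2}\big(\frac{z_1-z_2}{2}\cdot 2\big)^2\!\big/(2\cdot\text{stuff}) + \frac{m(z_1+z_2)^2}{2\tau}$-type bookkeeping to the claimed $e^{2\pi i m z_1 z_2/\tau}$; the anomalous non-holomorphic pieces from $\Phi$ and from the $E$-function part of $R$ must cancel, which is guaranteed because $\widetilde\Phi$ is the Zwegers completion. The delicate points are (i) tracking the sign factor $(-1)^{(n-k)/(2m)}$ through the lattice reindexing so that it matches $(-1)^j$ in the transformed series, and (ii) getting the finite Weyl-type sum over $k$ to collapse correctly after the two applications of Lemma~\ref{m1:lemma:2022-526c}; I would handle (ii) by the standard orthogonality $\sum_{k \bmod 2m} e^{\pi i k(\ell-\ell')/m} = 2m\,\delta_{\ell\equiv\ell'}$. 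Alternatively — and this may be cleaner — one can avoid recomputing from scratch by invoking the $S$-transformation of $\Phi^{(-)[m,1/2]}_j$ already recorded in section~2 of \cite{W2022a} (up to the explicit correction term) and simply check that the section's correction term transforms so as to upgrade the anomaly to an exact Jacobi transformation, citing the general principle that the Zwegers modification is modular. I would present the argument in that reduced form, deferring the bulk of the theta bookkeeping to the cited companion papers.
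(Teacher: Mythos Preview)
The paper does not actually supply a proof of this lemma: it is listed among the ``basic properties'' in Section~\ref{m1:sec:basic-properties} with the blanket remark that these follow by the same arguments as in \cite{W2022a,W2022b} and Zwegers' theory. So there is no proof to compare against, and your final suggestion --- to defer to those references for the analytic content --- is exactly what the paper does.

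That said, your sketch for part 1) contains a real mistake that would derail a self-contained write-up. You assert that the raw series $\Phi^{(\pm)[m,s]}_j$ is itself periodic in $s$ modulo $\zzz$, by a reindexing $j\mapsto j-1$. It is not: the substitution you describe does not close up (the denominator $1-e^{2\pi i z_1}q^{j}$ shifts, and the exponents do not recombine). In fact the paper uses precisely the \emph{failure} of this periodicity: the formula quoted from \cite[Lemma~2.1]{W2022b} in the proof of Lemma~\ref{m1:lemma:2022-524b} reads
\[
\Phi^{(\pm)[m,s-a]} - \Phi^{(\pm)[m,s]}
\;=\;
\sum_{-a\le k\le -1}
e^{\pi i(s+k)(z_1-z_2)}\,q^{-\frac{(s+k)^2}{4m}}
\big[\theta^{(\pm)}_{s+k,m}-\theta^{(\pm)}_{-(s+k),m}\big](\tau,z_1+z_2),
\]
which for $a=1$ is a single nonzero theta term. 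Likewise your claim that the summation range $s\le k<s+2m$ in $\Phi^{(\pm)[m,s]}_{j,{\rm add}}$ is ``unchanged in content'' under $s\mapsto s+1$ is too quick: Lemma~\ref{m1:lemma:2022-526b} gives the $2m$-periodicity of $\theta^{(\pm)}_{k,m}$, but $R^{(\pm)}_{k,m}$ is \emph{not} $2m$-periodic --- equation \eqref{m1:eqn:2022-523e2} shows $Q^{(\pm)}_{j+2m,m}=\pm Q^{(\pm)}_{j,m}\mp 2\,e^{-\frac{\pi i j^2}{2m}\tau+2\pi ijw}$, so $R^{(\pm)}_{j+2m,m}$ picks up exactly an extra $\mp 2\,e^{-\frac{\pi i j^2}{2m}\tau+2\pi ijw}$. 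The correct mechanism for 1) is that the shift in the raw $\Phi$ and the shift in the correction term $\Phi_{\rm add}$ are both nonzero and cancel against each other in the modified sum $\widetilde\Phi=\Phi+\Phi_{\rm add}$. Your outlines for parts 2) and 3) are fine in spirit and match the standard Zwegers argument that the paper is implicitly invoking.
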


\vspace{0mm}

\begin{lemma} 
\label{m1:lemma:2022-512b}
Let $m \in \frac12 \nnn$, $s \in \frac12 \zzz$ and $p \in \zzz$. Then 
\begin{enumerate}
\item[{\rm 1)}]
\begin{enumerate}
\item[{\rm (i)}] $\Phi_1^{(\pm)[m, s]}(\tau, z_1, z_2+p\tau, t) 
= e^{-2\pi impz_1}  
\Phi_1^{(\pm)[m, s+mp]}(\tau, z_1, z_2, t)$
\item[{\rm (ii)}] $\Phi_2^{(\pm)[m, s]}(\tau, z_1, z_2+p\tau, t) 
= (\pm 1)^p e^{-2\pi impz_1} 
\Phi_2^{(\pm)[m, s+mp]}(\tau, z_1, z_2, t)$
\end{enumerate}
\vspace{0mm}
\item[{\rm 2)}] $\Phi^{(\pm)[m, s]}(\tau, z_1, z_2+p\tau, t) 
= e^{-2\pi impz_1} 
\Phi^{(\pm)[m, s+mp]}(\tau, \, z_1, \, z_2, \, t)$ \quad 
if \,\ $(\pm 1)^p =1$.
\end{enumerate}
\end{lemma}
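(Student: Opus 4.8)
The plan is to prove both parts of Lemma~\ref{m1:lemma:2022-512b} by direct computation from the series definitions \eqref{n3-2:eqn:2022-111a}--\eqref{n3-2:eqn:2022-111c}, tracking how the substitution $z_2 \mapsto z_2 + p\tau$ interacts with the summation index $j$. The key observation is that shifting $z_2$ by $p\tau$ multiplies the numerator $q$-exponent by a factor $q^{mjp}$ (for $\Phi_1$) or $q^{-mjp}$ (for $\Phi_2$), and together with the existing $q^{mj^2 + sj}$ these combine into $q^{m(j \pm \tfrac{p}{2})^2 + \cdots}$-type terms, so that completing the square turns the shift in $z_2$ into a shift in $s$ by $mp$ plus an overall prefactor. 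The $\tau$- and $t$-dependence through $e^{-2\pi imt}$ is inert, so the whole argument happens inside the $j$-sum.

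Concretely, for part~1)(i): starting from $\Phi_1^{(\pm)[m,s]}(\tau, z_1, z_2 + p\tau, t)$, substitute into \eqref{n3-2:eqn:2022-111a}. The only place $z_2$ appears is in $e^{2\pi i m j(z_1+z_2)}$, which picks up the factor $e^{2\pi i m j p\tau} = q^{mjp}$. Then $q^{mj^2 + sj} \cdot q^{mjp} = q^{mj^2 + (s+mp)j}$, which is exactly the $q$-exponent appearing in $\Phi_1^{(\pm)[m, s+mp]}$ with the same index $j$; the denominator $1 - e^{2\pi i z_1} q^j$ is untouched, the sign $(\pm1)^j$ is untouched, and the phase $e^{2\pi i s z_1}$ becomes $e^{2\pi i (s+mp) z_1} \cdot e^{-2\pi i mp z_1}$, producing the claimed overall factor $e^{-2\pi i m p z_1}$. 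For part~1)(ii), the computation is the same except that in \eqref{n3-2:eqn:2022-111b} the relevant exponential is $e^{-2\pi i m j(z_1+z_2)}$, which picks up $q^{-mjp}$; to match the $q$-exponent of $\Phi_2^{(\pm)[m, s+mp]}$ one must reindex $j \mapsto j + p$ (or equivalently complete the square), and this reindexing is where the extra $(\pm 1)^p$ comes from --- shifting $j$ by $p$ in the factor $(\pm1)^j$ costs $(\pm1)^p$, and one checks that the denominator $1 - e^{-2\pi i z_2}q^{j+p}$ together with the shifted numerator reassembles correctly into the $s+mp$ series. Part~2) is then immediate: subtract (ii) from (i), note that when $(\pm1)^p = 1$ the discrepancy factor $(\pm1)^p$ disappears, so both pieces acquire the common prefactor $e^{-2\pi i m p z_1}$ and the difference is $e^{-2\pi i m p z_1}\,\Phi^{(\pm)[m, s+mp]}(\tau, z_1, z_2, t)$ by \eqref{n3-2:eqn:2022-111c}.

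The only delicate point is the bookkeeping of the reindexing in part~1)(ii) --- making sure that the shift $j \mapsto j+p$ is applied consistently to the sign $(\pm1)^j$, the numerator $q$-exponent $mj^2 + sj$, the exponential prefactors in $z_1, z_2$, and the denominator, and that after the shift everything lines up term-by-term with the defining series for $\Phi_2^{(\pm)[m,s+mp]}$. Since $p \in \zzz$ the shift is a genuine bijection of the summation set $\zzz$, so no convergence or rearrangement subtlety arises; it is purely a matter of careful algebra. I expect no real obstacle here beyond organizing the exponents cleanly; it may be worth recording the intermediate identity for $\Phi_2$ in the form "$q^{mj^2+sj} e^{-2\pi i m j p \tau} = q^{m(j-p)^2 + (s + mp)(j-p)} \cdot (\text{prefactor})$" to make the square-completion transparent before performing the index shift.
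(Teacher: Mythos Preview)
Your proposal is correct and is precisely the direct computation that the paper defers to (the paper's proof consists of the single sentence ``The proof is similar with that of Lemma 2.4 in \cite{W2022b}''). One tiny bookkeeping slip: after substituting $z_2 \mapsto z_2 + p\tau$ in \eqref{n3-2:eqn:2022-111b}, the denominator becomes $1 - e^{-2\pi i z_2} q^{\,j-p}$ (not $q^{\,j+p}$), so the correct reindexing is $j \mapsto j+p$ in the sense of setting $j = k+p$, after which the denominator reads $1 - e^{-2\pi i z_2} q^{k}$; with this adjustment all the exponents line up exactly as you describe.
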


\begin{proof} The proof is similar with that of Lemma 2.4 in \cite{W2022b}.
\end{proof}

\vspace{0mm}

\begin{rem} 
\label{m1:rem:2022-512a}
Lemmas 2.2 and 2.5 in \cite{W2022a} and Lemma 2.1 and formula 
(2.1) in \cite{W2022b} hold as well for $\Phi^{(\pm)[m,s]}$
if we replace $\Phi^{[m,s]}$ and $\theta_{j,m}$
with $\Phi^{(\pm)[m,s]}$ and $\theta_{j,m}^{(\pm)}$ respectively.
\end{rem}

\medskip

We note also that the following formulas hold.

\begin{lemma} 
\label{m1:lemma:2022-524a}
Let $m \in \frac12 \nnn$, $s \in \frac12 \zzz$ and $a \in \zzz$. Then
\begin{enumerate}
\item[{\rm 1)}]
\begin{enumerate}
\item[{\rm (i)}] $\Phi^{(\pm)[m,s]}_1
(\tau, \, z_1+a\tau, \, z_2-a\tau, \, t)
= \, (\pm 1)^a e^{2\pi ima(z_1-z_2)} q^{ma^2} 
\Phi^{(\pm)[m,s-2am]}_1(\tau, z_1,z_2,t) $
\item[{\rm (ii)}] $\Phi^{(\pm)[m,s]}_2
(\tau, \, z_1+a\tau, \, z_2-a\tau, \, t)
= \, 
(\pm 1)^a e^{2\pi ima(z_1-z_2)} q^{ma^2} 
\Phi^{(\pm)[m,s-2am]}_2(\tau, z_1,z_2,t) $
\end{enumerate}
\item[{\rm 2)}] \,\ $\Phi^{(\pm)[m,s]}
(\tau, \, z_1+a\tau, \, z_2-a\tau, \, t)
= \,\ (\pm 1)^a \, e^{2\pi ima(z_1-z_2)} \, q^{ma^2} \, 
\Phi^{(\pm)[m,s-2am]}(\tau, z_1,z_2,t) $

\vspace{-5mm}

\begin{equation} 
\label{m1:eqn:2022-524b}
\end{equation}
\end{enumerate}
\end{lemma}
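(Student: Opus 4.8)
The plan is to prove Lemma~\ref{m1:lemma:2022-524a} by direct substitution into the defining series \eqref{n3-2:eqn:2022-111a}--\eqref{n3-2:eqn:2022-111c}, exactly as in the analogous unsigned statements quoted from \cite{W2022a,W2022b}. For part 1)(i), I would start from
$$
\Phi^{(\pm)[m,s]}_1(\tau, z_1+a\tau, z_2-a\tau, t)
= e^{-2\pi imt}\sum_{j\in\zzz}(\pm1)^j\,
\frac{e^{2\pi imj(z_1+z_2)+2\pi isz_1}\,e^{2\pi isa\tau}\,q^{mj^2+sj}}{1-e^{2\pi iz_1}q^{j+a}},
$$
where the factor $e^{2\pi imj(z_1+z_2)}$ is unchanged because the $a\tau$ and $-a\tau$ in the two slots cancel, while $e^{2\pi isz_1}$ acquires the extra factor $q^{sa}=e^{2\pi isa\tau}$. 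The key step is the reindexing $j\mapsto j-a$, which turns the denominator into $1-e^{2\pi iz_1}q^{j}$ and produces a shift $s\mapsto s-2am$ in the $q$-exponent together with the prefactors $(\pm1)^a$, $e^{2\pi ima(z_1-z_2)}$ and $q^{ma^2}$. Concretely, writing $m(j-a)^2+s(j-a) = mj^2 + (s-2am)j + (ma^2 - sa)$ and $2\pi im(j-a)(z_1+z_2) = 2\pi imj(z_1+z_2) - 2\pi ima(z_1+z_2)$, one collects the $j$-independent pieces; combining $-2\pi ima(z_1+z_2)$ with $+2\pi isz_1$ and the residual $2\pi isz_1$ inside the shifted sum, and using $2\pi isa\tau = 2\pi isa\tau$ to cancel the $-sa$ in the exponent, leaves precisely $(\pm1)^a e^{2\pi ima(z_1-z_2)}q^{ma^2}$ times $\Phi^{(\pm)[m,s-2am]}_1(\tau,z_1,z_2,t)$. (Here one uses $a\in\zzz$ so that $s-2am\in\frac12\zzz$ and the reindexing is legitimate, and $(\pm1)^{j}=(\pm1)^{j-a}(\pm1)^a$.)

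For part 1)(ii) the computation is formally identical: in \eqref{n3-2:eqn:2022-111b} the roles of $z_1$ and $z_2$ are swapped with opposite signs, so the substitution $z_1\mapsto z_1+a\tau$, $z_2\mapsto z_2-a\tau$ again leaves $e^{-2\pi imj(z_1+z_2)}$ invariant, multiplies $e^{-2\pi isz_2}$ by $e^{2\pi isa\tau}=q^{sa}$, shifts the pole to $1-e^{-2\pi iz_2}q^{j-a}$, and the reindexing $j\mapsto j+a$ yields the same prefactors $(\pm1)^a e^{2\pi ima(z_1-z_2)}q^{ma^2}$ and the shift $s\mapsto s-2am$. Then part 2) follows immediately by subtracting (ii) from (i) per the definition \eqref{n3-2:eqn:2022-111c}, since both components transform with the identical scalar factor.

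I do not expect any genuine obstacle here; this is a bookkeeping lemma of exactly the same flavour as Lemma~\ref{m1:lemma:2022-512b}, whose proof is simply cited as ``similar with that of Lemma 2.4 in \cite{W2022b}.'' The only point requiring a little care is keeping the sign bookkeeping straight: the factor $(\pm1)^j$ splits as $(\pm1)^{j\mp a}(\pm1)^{\pm a}=(\pm1)^{j'}(\pm1)^a$ under the shift, and one must confirm that no parity subtlety (of the type appearing in Lemma~\ref{m1:lemma:2022-526a} when $am\in\frac12\zzz_{\rm odd}$) intervenes—but it does not, precisely because here $a$ is an \emph{integer}, so $am\in\frac12\zzz$ automatically and the theta-type argument never leaves the integer regime. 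Accordingly I would simply remark that the proof is parallel to that of Lemma~2.5 in \cite{W2022a} / Lemma~2.4 in \cite{W2022b}, carried out term-by-term on the series with the reindexing $j\mapsto j\mp a$, and omit the routine algebra.
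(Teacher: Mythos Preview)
Your approach is correct and essentially matches the paper's: for 1)(i) the paper does exactly the direct substitution and reindexing $j=k-a$ that you describe, and 2) follows immediately from 1). One small difference: for 1)(ii) the paper does not repeat the direct computation but instead invokes the symmetry $\Phi^{(\pm)[m,s]}_2(\tau,z_1,z_2,t)=\Phi^{(\pm)[m,s]}_1(\tau,-z_2,-z_1,t)$ (Lemma~2.2 of \cite{W2022a}, cf.\ Note~\ref{m1:note:2022-830a}) to reduce (ii) to (i). Your direct computation for (ii) is equally valid, though note a sign slip in your sketch: since $e^{-2\pi i(z_2-a\tau)}q^j=e^{-2\pi iz_2}q^{j+a}$, the denominator shifts to $1-e^{-2\pi iz_2}q^{j+a}$ (not $q^{j-a}$), so the correct reindexing is again $j\mapsto j-a$, after which the prefactor comes out as stated.
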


\begin{proof} 1) (i) By definition \eqref{n3-2:eqn:2022-111a} of 
$\Phi^{(\pm)[m,s]}_1$, we have
{\allowdisplaybreaks
\begin{eqnarray*}
\Phi^{(\pm)[m,s]}_1
(\tau, \, z_1+a\tau, \, z_2-a\tau, \, 0)
&=& 
\sum_{j \in \zzz}(\pm 1)^j \, \frac{
e^{2\pi imj(z_1+z_2)+2\pi is(z_1+a\tau)} \, q^{mj^2+sj}
}{1-e^{2\pi i(z_1+a\tau)} q^j}
\\[1mm]
&=& 
\sum_{j \in \zzz}(\pm 1)^j \, \frac{
e^{2\pi imj(z_1+z_2)+2\pi isz_1} \, q^{mj^2+s(j+a)}
}{1-e^{2\pi iz_1} q^{j+a}}
\end{eqnarray*}}
Putting $j=k-a$, this becomes
{\allowdisplaybreaks
\begin{eqnarray*}
&=&
\sum_{k \in \zzz}(\pm 1)^{k-a} \, \frac{
e^{2\pi im(k-a)(z_1+z_2)+2\pi isz_1} \, q^{m(k-a)^2+sk}
}{1-e^{2\pi iz_1} q^k} 
\\[1mm]
&=&
(\pm 1)^a e^{2\pi ima(z_1-z_2)} q^{ma^2} 
\sum_{k \in \zzz} (\pm 1)^k 
\frac{e^{2\pi imk(z_1+z_2)+2\pi i(s-2am)z_1} 
q^{mk^2+(s-2am)k}
}{1-e^{2\pi iz_1} q^k}
\\[1mm]
&=&
(\pm 1)^a e^{2\pi ima(z_1-z_2)} q^{ma^2} \, 
\Phi^{(\pm)[m,s-2am]}_1(\tau, z_1,z_2,0)
\end{eqnarray*}}

\vspace{-4mm}

\noindent
proving (i).  1) (ii) is obtained from (i) by using Lemma 2.2 in 
\cite{W2022a} with Remark \ref{m1:rem:2022-512a}. \\ 
2) follows immediately from 1).
\end{proof}

\vspace{0mm}

\begin{lemma} \,\ 
\label{m1:lemma:2022-524b}
Let $m \in \frac12 \nnn$, $s \in \frac12 \zzz$ and $a \in \zzz_{\geq 0}$. 
Then
\begin{enumerate}
\item[{\rm 1)}] \,\ $\Phi^{(\pm)[m,s]}(\tau, \, z_1+a\tau, \, z_2-a\tau, \, 0) $
{\allowdisplaybreaks
\begin{eqnarray}
&=& (\pm 1)^a \, e^{2\pi ima(z_1-z_2)} \, q^{ma^2} \, \bigg\{
\Phi^{(\pm)[m,s]}(\tau, z_1,z_2,0)
\nonumber
\\[1mm]
& &
- \sum_{\substack{k \, \in \zzz \\[1mm] 1 \, \leq \, k \, \leq \, 2am}}
e^{-\pi i(k-s)(z_1-z_2)} \, 
q^{- \frac{(k-s)^2}{4m}} \, 
\big[\theta^{(\pm)}_{k-s, \, m} - \theta^{(\pm)}_{-(k-s), \, m}
\big] (\tau, z_1+z_2) \bigg\}
\label{m1:eqn:2022-524c}
\end{eqnarray}}
\item[{\rm 2)}]
\begin{enumerate}
\item[{\rm (i)}] \,\ $\Phi^{(\pm)[m,s]}_1(\tau, \, z_1+a\tau, \, z_2-a\tau, \, 0) $
{\allowdisplaybreaks
\begin{eqnarray*}
&=& (\pm 1)^a \, e^{2\pi ima(z_1-z_2)} \, q^{ma^2} \, \bigg\{
\Phi^{(\pm)[m,s]}_1(\tau, z_1,z_2,0)
\\[1mm]
& &
- \sum_{\substack{k \, \in \zzz \\[1mm] 1 \, \leq \, k \, \leq \, 2am}}
e^{-\pi i(k-s)(z_1-z_2)} \, 
q^{- \frac{(k-s)^2}{4m}} \, 
\theta^{(\pm)}_{k-s, \, m} (\tau, z_1+z_2) \bigg\}
\end{eqnarray*}}
\item[{\rm (ii)}] \,\ $\Phi^{(\pm)[m,s]}_2(\tau, \, z_1+a\tau, \, z_2-a\tau, \, 0) $
{\allowdisplaybreaks
\begin{eqnarray*}
&=& (\pm 1)^a \, e^{2\pi ima(z_1-z_2)} \, q^{ma^2} \, \bigg\{
\Phi^{(\pm)[m,s]}_2(\tau, z_1,z_2,0)
\\[1mm]
& &
- \sum_{\substack{k \, \in \zzz \\[1mm] 1 \, \leq \, k \, \leq \, 2am}}
e^{-\pi i(k-s)(z_1-z_2)} \, 
q^{- \frac{(k-s)^2}{4m}} \, 
\theta^{(\pm)}_{-(k-s), \, m}(\tau, z_1+z_2) \bigg\}
\end{eqnarray*}}
\end{enumerate}
\end{enumerate}
\end{lemma}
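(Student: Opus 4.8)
The plan is to prove part 2)(i), deduce 2)(ii) from it via Note \ref{m1:note:2022-830a} 1) (which relates $\Phi_1$ and $\Phi_2$ by $z_1 \leftrightarrow -z_2$), and then obtain 1) by subtracting 2)(i) from 2)(ii). So the substance is entirely in the single index computation for $\Phi_1^{(\pm)[m,s]}$.

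For 2)(i), I would iterate the shift $z_2 \mapsto z_2 - \tau$ (equivalently, use Lemma \ref{m1:lemma:2022-524a} 1)(i) with the parameter $a$), but instead of just tracking the clean transformation law, I would expand the geometric-type sum carefully to see which terms genuinely change. Starting from the definition \eqref{n3-2:eqn:2022-111a}, write
\[
\Phi^{(\pm)[m,s]}_1(\tau, z_1+a\tau, z_2-a\tau, 0)
= \sum_{j \in \zzz}(\pm 1)^j\,
\frac{e^{2\pi i mj(z_1+z_2)+2\pi i s z_1}\,q^{mj^2+s(j+a)}}{1-e^{2\pi i z_1}q^{j+a}},
\]
and substitute $j = k-a$ as in the proof of Lemma \ref{m1:lemma:2022-524a}. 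This produces the prefactor $(\pm 1)^a e^{2\pi i m a(z_1-z_2)}q^{ma^2}$ times $\sum_{k}(\pm1)^k \frac{e^{2\pi i mk(z_1+z_2)+2\pi i(s-2am)z_1}q^{mk^2+(s-2am)k}}{1-e^{2\pi iz_1}q^k}$, i.e. $\Phi^{(\pm)[m,s-2am]}_1(\tau,z_1,z_2,0)$ exactly. The point of the present lemma, though, is to re-express this in terms of $\Phi^{(\pm)[m,s]}_1$ (same $s$, not $s-2am$), and the discrepancy between the two is a finite telescoping sum.

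Concretely, I would prove the single-step recursion
\[
\Phi^{(\pm)[m,s]}_1(\tau,z_1,z_2,0) - (\pm1)\,e^{2\pi i m(z_1-z_2)}q^{m}\,\Phi^{(\pm)[m,s-2m]}_1(\tau,z_1,z_2,0)
= e^{-\pi i(1-s)(z_1-z_2)}q^{-\frac{(1-s)^2}{4m}}\,\theta^{(\pm)}_{1-s,m}(\tau,z_1+z_2),
\]
or more precisely the shifted version needed to make the telescoping sum come out to $\sum_{k=1}^{2am}$. This comes from writing $\frac{1}{1-e^{2\pi iz_1}q^j} = \frac{1}{1-e^{2\pi iz_1}q^{j-1}} \cdot \frac{1-e^{2\pi iz_1}q^{j-1}}{1-e^{2\pi iz_1}q^j}$ — more usefully, from the identity $\frac{x}{1-xq^j} - \frac{xq}{1-xq^{j+1}}\cdot(\text{something})$; the cleanest route is to split the $j$-sum at the residue: the difference of two consecutive $\Phi_1$'s (after matching the prefactors) collapses into the residue terms at the pole $e^{2\pi iz_1}q^j = 1$, which are exactly the $j$ for which the denominator would vanish, and summing the residue contributions over the relevant range of $j$ gives a theta function by \eqref{m1:eqn:2022-512c}. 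Then iterating this recursion $a$ times (legitimate since $a \in \zzz_{\geq 0}$) and reindexing the collected residue/theta terms by $k$, with the $q$-power $q^{-(k-s)^2/4m}$ and phase $e^{-\pi i(k-s)(z_1-z_2)}$ arising from completing the square in the exponent, produces the stated sum $\sum_{1 \le k \le 2am}$.

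The main obstacle is bookkeeping: getting the range of the summation index exactly right ($1 \le k \le 2am$ rather than off by one at either end), and checking that the completed-square exponent matches $-\pi i(k-s)(z_1-z_2) - \frac{\pi i \tau}{2m}\cdot\frac{(k-s)^2}{2}$ versus the normalization in \eqref{m1:eqn:2022-512c} so that the theta index is precisely $k-s$ with modulus $m$ — i.e. verifying $q^{mj^2+sj}$ against $q^{m(j+\frac{k}{2m})^2}$ after the shift. I expect the signs $(\pm1)^a$ and the switch between $\theta^{(+)}$ and $\theta^{(-)}$ to be automatic here since $a \in \zzz$ (so no half-integer shift of $a m$ intervenes), unlike in Lemma \ref{m1:lemma:2022-526a} 2). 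Once 2)(i) is established, applying Note \ref{m1:note:2022-830a} 1) with $(z_1,z_2) \mapsto (-z_2,-z_1)$ turns the $\theta^{(\pm)}_{k-s,m}(\tau,z_1+z_2)$ into $\theta^{(\pm)}_{k-s,m}(\tau,-(z_1+z_2)) = \theta^{(\pm)}_{-(k-s),m}(\tau,z_1+z_2)$ (using that $\theta_{j,m}(\tau,-z) = \theta_{-j,m}(\tau,z)$, which is immediate from \eqref{m1:eqn:2022-512c}), yielding 2)(ii); and 1) is then $\text{2)(i)} - \text{2)(ii)}$ together with \eqref{n3-2:eqn:2022-111c} and \eqref{m1:eqn:2022-524b}.
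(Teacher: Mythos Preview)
Your approach is correct and matches the paper's: both combine Lemma \ref{m1:lemma:2022-524a} (giving the prefactor and the shift $s \to s-2am$) with the difference formula expressing $\Phi^{(\pm)[m,s-2am]} - \Phi^{(\pm)[m,s]}$ as a finite theta sum, which the paper simply cites from \cite{W2022b} (Lemma 2.1 and formula (2.1) there, via Remark \ref{m1:rem:2022-512a}) while you re-derive it by telescoping. One correction to your sketch: the single-step recursion should move $s \to s-1$ (not $s \to s-2m$), and the mechanism is the algebraic cancellation $e^{2\pi i(s-1)z_1}q^{(s-1)j}\bigl(1-e^{2\pi iz_1}q^j\bigr)$ against the denominator of \eqref{n3-2:eqn:2022-111a}, not a residue computation.
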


\begin{proof} 1) \,\ By Lemma 2.1 in \cite{W2022b} with Remark 
\ref{m1:rem:2022-512a}, we have
{\allowdisplaybreaks
\begin{eqnarray*}
& & \hspace{-5mm}
\Phi^{(\pm)[m,s-a]}(\tau, z_1,z_2,0) \,\ - \,\ 
\Phi^{(\pm)[m,s]}(\tau, z_1,z_2,0)
\\[1mm]
&= &
\sum_{\substack{k \, \in \zzz \\[1mm] -a \, \leq \, k \, \leq \, -1}}
e^{\pi i(s+k)(z_1-z_2)} \, 
q^{- \frac{(s+k)^2}{4m}} \, \big[
\theta^{(\pm)}_{s+k, \, m} - \theta^{(\pm)}_{-(s+k), \, m}\big]
(\tau, z_1+z_2)
\end{eqnarray*}}
Letting \, $a \rightarrow  2am$, we have
{\allowdisplaybreaks
\begin{eqnarray}
& & \hspace{-5mm}
\Phi^{(\pm)[m,s-2am]}(\tau, z_1,z_2,0) \, - \, 
\Phi^{(\pm)[m,s]}(\tau, z_1,z_2,0)
\nonumber
\\[1mm]
&= &
\sum_{\substack{k \, \in \zzz \\[1mm] -2am \, \leq \, k \, \leq \, -1}}
e^{\pi i(s+k)(z_1-z_2)} 
q^{- \frac{(s+k)^2}{4m}} \big[
\theta^{(\pm)}_{s+k, \, m} - \theta^{(\pm)}_{-(s+k), \, m}\big]
(\tau, z_1+z_2)
\nonumber
\\[1mm]
&=&
- \sum_{\substack{l \, \in \zzz \\[1mm] 1 \, \leq \, k \, \leq \, 2am}}
e^{\pi i(s-k)(z_1-z_2)} \, 
q^{- \frac{(s-k)^2}{4m}} 
\big[\theta^{(\pm)}_{k-s, \, m} - \theta^{(\pm)}_{-(k-s), \, m}
\big](\tau, z_1+z_2)
\label{m1:eqn:2022-524f}
\end{eqnarray}}
Substituting this equation \eqref{m1:eqn:2022-524f} into 
\eqref{m1:eqn:2022-524b}, we obtain the formula 
\eqref{m1:eqn:2022-524c}.

\medskip

The proof of 2) is quite similar.
\end{proof}

\vspace{1mm}

The following Lemma \ref{m1:lemma:2022-830a} is obtained 
immediately from Note \ref{m1:note:2022-830a} and 
Lemma \ref{m1:lemma:2022-512b}.

\vspace{0mm}

\begin{lemma}
\label{m1:lemma:2022-830a}
Let $m \in \frac12 \nnn, \,\ s \in \frac12 \zzz$ and $p \in \zzz$. Then 
\begin{enumerate}
\item[{\rm 1)}]
\begin{enumerate}
\item[{\rm (i)}] \quad $\Phi^{[m,s]}(\tau, z, -z+p\tau,t) \, = \, 0$
\item[{\rm (ii)}] \quad $\Phi^{(-)[m,s]}(\tau, z, -z+2p\tau,t) \, = \, 0$
\end{enumerate}
\end{enumerate}
namely,
\begin{enumerate}
\item[{\rm 2)}]
\begin{enumerate}
\item[{\rm (i)}] \quad $\Phi^{[m,s]}_1(\tau, z, -z+p\tau,t) \, = \, 
\Phi^{[m,s]}_2(\tau, z, -z+p\tau,t)$
\item[{\rm (ii)}] \quad $\Phi^{(-)[m,s]}_1(\tau, z, -z+2p\tau,t) \, = \, 
\Phi^{(-)[m,s]}_2(\tau, z, -z+2p\tau,t)$
\end{enumerate}
\end{enumerate}
\end{lemma}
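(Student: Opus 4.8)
The plan is to derive part 1) directly from the known vanishing statements and then observe that part 2) is a trivial restatement. For 1)(i), I would start from Note \ref{m1:note:2022-830a} 3), which gives $\Phi^{[m,s]}(\tau,z,-z,t)=0$ for all $z$. The idea is to reduce the general point $(z,-z+p\tau)$ to a point of the form $(z',-z')$ by a shift of the kind treated in Lemma \ref{m1:lemma:2022-512b}. Writing $z_1=z$, $z_2=-z+p\tau$, we have $z_2 = (-z)+p\tau$, so applying Lemma \ref{m1:lemma:2022-512b} 2) with $z_2$ replaced by $-z$ (note $(\pm1)^p=1$ here since we are in the $(+)$-case) yields
\[
\Phi^{[m,s]}(\tau,z,-z+p\tau,t) \;=\; e^{-2\pi impz}\,\Phi^{[m,s+mp]}(\tau,z,-z,t).
\]
By Note \ref{m1:note:2022-830a} 3) the right-hand side is $0$, proving 1)(i).

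For 1)(ii) the argument is the same but one must be careful with the sign factor in the signed case. Here $\Phi^{(-)[m,s]}(\tau,z,-z+2p\tau,t)$: applying Lemma \ref{m1:lemma:2022-512b} 2) with the shift $2p\tau$ (i.e. with ``$p$'' of the lemma equal to $2p$, which is even, so $(-1)^{2p}=1$ and the hypothesis $(\pm1)^p=1$ is met) gives
\[
\Phi^{(-)[m,s]}(\tau,z,-z+2p\tau,t) \;=\; e^{-2\pi im(2p)z}\,\Phi^{(-)[m,s+2mp]}(\tau,z,-z,t),
\]
and again Note \ref{m1:note:2022-830a} 3) (in its signed form, valid by Remark \ref{m1:rem:2022-512a} or directly from Note \ref{m1:note:2022-830a} which already states the $(\pm)$ version) forces the right side to vanish. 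This is exactly where the restriction to \emph{even} multiples $2p\tau$ in the $(-)$-case is essential: an odd shift would introduce a factor $(-1)^p=-1$ in the $\Phi^{(-)}_2$-term, breaking the cancellation, so one genuinely cannot do better than $2p\tau$.

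Finally, 2) is obtained from 1) merely by unravelling the definition \eqref{n3-2:eqn:2022-111c}: $\Phi^{(\pm)[m,s]}=\Phi^{(\pm)[m,s]}_1-\Phi^{(\pm)[m,s]}_2$, so $\Phi^{(\pm)[m,s]}(\tau,z,-z+\cdots,t)=0$ is literally the assertion $\Phi^{(\pm)[m,s]}_1(\tau,z,-z+\cdots,t)=\Phi^{(\pm)[m,s]}_2(\tau,z,-z+\cdots,t)$ at the corresponding argument. No real obstacle is expected here; the only point requiring a moment's care is the bookkeeping of the sign $(\pm1)^p$ when invoking Lemma \ref{m1:lemma:2022-512b}, which is what dictates the ``$p\tau$ versus $2p\tau$'' dichotomy between the two cases.
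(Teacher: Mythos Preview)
Your proposal is correct and follows exactly the route the paper indicates: the paper merely states that the lemma ``is obtained immediately from Note \ref{m1:note:2022-830a} and Lemma \ref{m1:lemma:2022-512b},'' and you have spelled out precisely that derivation, including the correct observation that the parity condition $(\pm1)^p=1$ in Lemma \ref{m1:lemma:2022-512b} 2) is what forces the $2p\tau$ restriction in the $(-)$-case.
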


\section{Explicit formula for $\Phi^{(-)[m,\frac12]}$}

Under the same setting and notations with Section 3 in \cite{W2022b}, 
we consider $\widehat{D}(2,1; a= \frac{-m}{m+1})$ where 
$m \in \frac12 \nnn$, and compute 
{\allowdisplaybreaks
\begin{eqnarray*}
F^{(\natural)}_{a\Lambda_0} &:=& \sum_{j,k \in \zzz}
(-1)^k \, t_{j\alpha_2^{\vee}+k\alpha_3^{\vee}}
\left(\frac{e^{a\Lambda_0}}{1-e^{-\alpha_1}}\right)
\\[1mm]
A^{(\natural)}_{a\Lambda_0} &:=& \sum_{w \in \overline{W}}
\varepsilon(w) \, w\big(F^{(\natural)}_{a\Lambda_0}\big)
\end{eqnarray*}}

\vspace{-3mm}

\noindent
Then by just similar arguments with those in the proof of Lemma 3.1 
and Propositions 3.1 and 3.2 and Corollary 3.1 in \cite{W2022b}, 
we obtain the following Lemma \ref{m1:lemma:2022-523a} and 
Propositions \ref{m1:prop:2022-523a} $\sim$ \ref{m1:prop:2022-523c}:

\begin{lemma} 
\label{m1:lemma:2022-523a}
For $m \in \frac12 \nnn$ \, the following formula holds:
{\allowdisplaybreaks
\begin{eqnarray}
& & \,\
\sum_{j \in \zzz}(-1)^jq^{(m+1)j^2}
e^{-2\pi ij(z_1-z_2)+2\pi ijm(z_1-z_3)} \, 
\Phi^{(-)[m,0]}(\tau, \, z_1, \, -z_3+2j\tau, \, 0) 
\nonumber
\\[1mm]
& & \hspace{-3mm}
- \,\ \sum_{j \in \zzz}(-1)^jq^{(m+1)j^2}
e^{2\pi ij(z_1-z_2)+2\pi ijm(z_1-z_3)} 
\Phi^{(-)[m,0]}(\tau, \, z_2, \, z_1-z_2-z_3+2j\tau, \, 0) 
\nonumber
\\[1mm]
&=& \,\ 
\sum_{j \in \zzz}(-1)^jq^{(m+1)j^2}
e^{2\pi ij(z_1-z_2)-2\pi ijm(z_1-z_3)} \, 
\Phi^{[1,0]}(\tau, \, z_1, \, -z_2+2j\tau, \, 0) 
\nonumber
\\[1mm]
& & \hspace{-3mm}
- \,\ \sum_{j \in \zzz}(-1)^jq^{(m+1)j^2}
e^{2\pi ij(z_1-z_2)+2\pi ijm(z_1-z_3)} 
\Phi^{[1,0]}(\tau, \, z_3, \, z_1-z_2-z_3+2j\tau, \, 0) 
\label{m1:eqn:2022-523a} 
\end{eqnarray}}
\end{lemma}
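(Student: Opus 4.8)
The strategy is to reproduce the computation that yields Lemma 3.1 of \cite{W2022b} in the present case $a=\frac{-m}{m+1}$, carefully tracking the extra signs that arise because one of the simple coroots ($\alpha_3^\vee$) is now summed with a factor $(-1)^k$. First I would write down the defining expression for $F^{(\natural)}_{a\Lambda_0}$ explicitly: expand $\frac{1}{1-e^{-\alpha_1}}=\sum_{n\geq 0}e^{-n\alpha_1}$, apply the lattice translation operators $t_{j\alpha_2^\vee+k\alpha_3^\vee}$ using the standard action on $e^{a\Lambda_0}$ (which produces the Gaussian factors $q^{(m+1)j^2}$, $q^{\cdots k^2}$ and the linear exponentials in $z_1,z_2,z_3$), and record how each coroot translation shifts the arguments. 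The key bookkeeping point is that with $a=\frac{-m}{m+1}$ the quadratic form on $\zzz\alpha_2^\vee\oplus\zzz\alpha_3^\vee$ evaluated on $a\Lambda_0$ gives exactly the theta-like exponent $q^{(m+1)j^2}$ seen on both sides of \eqref{m1:eqn:2022-523a}, while the $(-1)^k$ insertion is what converts the ordinary $\Phi^{[\cdot]}$ into the signed $\Phi^{(-)[\cdot]}$; I would make this correspondence precise by matching the geometric-series summation in $n$ (over $e^{-n\alpha_1}$) with the denominators $1-e^{2\pi i z_1}q^j$ appearing in \eqref{n3-2:eqn:2022-111a}–\eqref{n3-2:eqn:2022-111b}.

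Next I would apply the alternating sum $A^{(\natural)}_{a\Lambda_0}=\sum_{w\in\overline W}\varepsilon(w)\,w(F^{(\natural)}_{a\Lambda_0})$ over the finite Weyl group $\overline W$ of the even subalgebra. The group $\overline W$ here is generated by the reflections in $\alpha_1$ and (the image of) the other even roots, and its action permutes/negates the $z_i$'s; concretely it is the reflection group producing the four terms in \eqref{m1:eqn:2022-523a} (two on each side, with the minus signs coming from $\varepsilon(w)=-1$ for the odd-length elements). I would show that $w$ acting on the summed-up form of $F^{(\natural)}_{a\Lambda_0}$ reproduces, term by term, the four summands: the two $\Phi^{(-)[m,0]}$ terms come from the contributions where the geometric series in $\alpha_1$ survives in its original form, and the two $\Phi^{[1,0]}$ terms come from the other Weyl-chamber representatives, where the roles of the ``long'' direction ($m+1$) and the ``short'' direction collapse and one is left with the $m=1$ denominator $1-e^{2\pi i z}q^j$. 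The shift $z_2\mapsto -z_3+2j\tau$ etc.\ is exactly the coroot translation $t_{j\alpha_2^\vee}$ (note the coefficient $2$ in $2j\tau$ reflects $(\alpha_2^\vee,\alpha_2^\vee)$ being normalized so that translations move by $2\tau$, which is also why the signed function $\Phi^{(-)}$ with its $2p\tau$-periodicity from Lemma~\ref{m1:lemma:2022-830a} is the natural object).

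The main obstacle I anticipate is the sign and index bookkeeping: keeping straight, simultaneously, (i) the $\varepsilon(w)$ signs from $\overline W$, (ii) the $(-1)^k$ from the defining sum, (iii) the $(\pm1)^j$ inside the definitions \eqref{n3-2:eqn:2022-111a}–\eqref{n3-2:eqn:2022-111c} of $\Phi^{(-)[m,s]}$, and (iv) the shifts in the second argument that, via Lemma~\ref{m1:lemma:2022-512b} and Lemma~\ref{m1:lemma:2022-524a}, convert $\Phi^{(-)[m,s]}(\tau,z_1,z_2+p\tau,t)$ back to $\Phi^{(-)[m,s+mp]}$ up to explicit exponential factors. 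Matching all four of these so that both sides of \eqref{m1:eqn:2022-523a} carry identical prefactors $(-1)^j q^{(m+1)j^2}e^{\pm 2\pi i j(z_1-z_2)\pm 2\pi i jm(z_1-z_3)}$ is where the real work lies. Since the claim asserts that this is ``quite similar'' to \cite{W2022b}, I would not grind through it in full but would structure the argument as: (1) reduce to the $t=0$ case (harmless, since all functions depend on $t$ only through the overall $e^{-2\pi imt}$); (2) invoke the parallel computation of \cite{W2022b} verbatim up to the point where the even subalgebra and its Weyl group enter; (3) insert the single modification ($(-1)^k$ in the $\alpha_3^\vee$-sum) and trace it through to see that it precisely toggles every ordinary theta/$\Phi$ attached to the $D(2,1;a)$-part into its signed counterpart, while the $A_1$-part attached to $\Phi^{[1,0]}$ is unaffected because that direction does not carry the $k$-index. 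This yields \eqref{m1:eqn:2022-523a}.
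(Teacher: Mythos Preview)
Your proposal is correct and matches the paper's approach exactly: the paper gives no detailed proof of this lemma either, but simply sets up $F^{(\natural)}_{a\Lambda_0}$ and $A^{(\natural)}_{a\Lambda_0}$ and states that the formula follows ``by just similar arguments with those in the proof of Lemma~3.1 in \cite{W2022b}'', i.e.\ precisely the strategy you outline of rerunning the computation there with the $(-1)^k$ insertion tracked. One small caveat on your bookkeeping sketch: all four terms in \eqref{m1:eqn:2022-523a} carry the external factor $(-1)^j$, so the sign does not sit purely on one side; the Weyl group action on the pair $(j,k)$ is what routes the $(-1)^k$ into the outer sum for the $\Phi^{[1,0]}$ terms and into the inner sum (yielding $\Phi^{(-)[m,0]}$) for the others---this is exactly the ``sign and index bookkeeping'' you already flagged as the main obstacle.
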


\begin{prop} 
\label{m1:prop:2022-523a}
For $m \in \frac12 \nnn$ \, the following formula holds:
\begin{subequations}
{\allowdisplaybreaks
\begin{eqnarray}
& &
\theta_{0, \, m+1}^{(-)}\Big(\tau, \, \frac{(z_1-z_2)+m(z_1+z_3)}{m+1}\Big)
\, \Phi^{(-)[m,0]}(\tau, \, z_1, \, -z_3, \, 0) 
\nonumber
\\[2mm]
& & \hspace{-5mm}
- \,\ 
\theta_{0, \, m+1}^{(-)}\Big(\tau, \, \frac{(z_1-z_2)+m(z_1-2z_2-z_3)}{m+1}\Big)
\, \Phi^{(-)[m,0]}(\tau, \, z_2, \, z_1-z_2-z_3, \, 0)
\nonumber
\\[2mm]
& & \hspace{-5mm}
+ \, \sum_{j \, > \, 0} \, \sum_{k=1}^{2mj} 
q^{(m+1)j^2} \, q^{-\frac{k^2}{4m}} \, 
\omega^{[m]}_{j,k}(z_1, z_2, z_3) \, 
\big[\theta_{k,m}^{(-)}-\theta_{-k,m}^{(-)}\big](\tau, z_1-z_3)
\nonumber
\\[2mm]
&=&
i \, \theta_{0, m+1}^{(-)}\Big(\tau, \,\ \frac{(z_1+z_2) + m(z_1-z_3)}{m+1}\Big)
\,\ 
\frac{\eta(\tau)^3 \, \vartheta_{11}(\tau, \, z_1-z_2)}{
\vartheta_{11}(\tau, z_1) \, \vartheta_{11}(\tau, z_2)}
\nonumber
\\[2mm]
& &\hspace{-5mm}
+ \,\ i \, 
\theta_{0, m+1}^{(-)}\Big( \tau, \,\ \frac{(z_1-z_2-2z_3) \, + \, m(z_1-z_3)}{m+1}\Big)
\,
\frac{\eta(\tau)^3 \, \vartheta_{11}(\tau, \, z_1-z_2)}{
\vartheta_{11}(\tau, z_3) \, \vartheta_{11}(\tau, \, z_1-z_2-z_3)}
\label{m1:eqn:2022-523b1}
\end{eqnarray}}
\noindent
where
{\allowdisplaybreaks
\begin{eqnarray}
\omega^{[m]}_{j,k}(z_1, z_2, z_3)
&:=&
e^{2\pi ij(z_1-z_2)} \, e^{\pi i(2mj-k)(z_1-2z_2-z_3)}
\, + \, 
e^{-2\pi ij(z_1-z_2)} \, e^{-\pi i(2mj-k)(z_1-2z_2-z_3)}
\nonumber
\\[1mm]
& & \hspace{-5mm}
- \,\ 
e^{2\pi ij(z_1-z_2)} \, e^{\pi i(2mj-k)(z_1+z_3)}
\, - \, 
e^{-2\pi ij(z_1-z_2)} \, e^{-\pi i(2mj-k)(z_1+z_3)}
\label{m1:eqn:2022-523b2}
\end{eqnarray}}
\end{subequations}
\end{prop}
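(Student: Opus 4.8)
The plan is to derive \eqref{m1:eqn:2022-523b1} from Lemma \ref{m1:lemma:2022-523a} by applying, to each of the four sums over $j$, an ``abelianization'' identity expressing $\sum_j (-1)^j q^{(m+1)j^2} e^{\cdots} \Phi^{(\cdot)[m,\cdot]}(\tau, u, v + 2j\tau, 0)$ as a single theta function $\theta^{(-)}_{0,m+1}$ in a linear combination of the arguments, multiplied by $\Phi$ evaluated at $j=0$, plus a finite correction coming from the quasi-periodicity of $\Phi$ under $z_2 \mapsto z_2 + 2j\tau$. Concretely, I would first use Lemma \ref{m1:lemma:2022-524b}(1) (with $s$ suitably chosen and $a \rightarrow 2mj$ after a shift $z_1 \mapsto z_1, z_2 \mapsto v$) to replace $\Phi^{(-)[m,0]}(\tau, u, v+2j\tau, 0)$ by $\Phi^{(-)[m,0]}(\tau,u,v,0)$ up to the explicit factor $e^{2\pi imj(\cdots)} q^{mj^2}$ and a finite sum $\sum_{1 \le k \le 2mj}$ of products $e^{-\pi i k(\cdots)} q^{-k^2/4m}[\theta^{(-)}_{k,m}-\theta^{(-)}_{-k,m}](\tau,\cdot)$; the $q^{mj^2}$ combines with $q^{(m+1)j^2}$ in the prefactor only when the $\Phi$-factor is stripped off, but in the leading (j-summed, no correction) term the remaining exponential in $j$ must be recognized as the theta series $\theta^{(-)}_{0,m+1}$. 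That is the source of the arguments $\frac{(z_1-z_2)+m(z_1+z_3)}{m+1}$ etc.: each is exactly the linear form $w$ for which $\sum_j (-1)^j q^{(m+1)j^2} e^{2\pi i (m+1) j w/(m+1)\cdot(\cdots)}$ collapses to $\theta^{(-)}_{0,m+1}(\tau, w)$ by definition \eqref{m1:eqn:2022-512c}.

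The second ingredient handles the two terms on the right-hand side of Lemma \ref{m1:lemma:2022-523a}, which involve $\Phi^{[1,0]}$ (the unsigned function at $m=1$). For these I would invoke the known explicit formula for $\Phi^{[1,0]}$ in terms of $\eta$ and $\vartheta_{11}$ — this is the $m=1$ specialization of the product formula for $\Phi^{[m,\frac12]}$ from \cite{W2022b}, i.e. $\Phi^{[1,0]}(\tau,u,v,0) = i\,\eta(\tau)^3 \vartheta_{11}(\tau,u+v)/\big(\vartheta_{11}(\tau,u)\vartheta_{11}(\tau,v)\big)$ up to normalization — and again strip the $j$-dependence into $\theta^{(-)}_{0,m+1}$ using Lemma \ref{m1:lemma:2022-524b}(1) for $\Phi^{[1,0]}$ (valid since $\Phi^{[1,0]}_{\rm add}$-corrections at $m=1$ are controlled by the same lemma with $\theta_{k,1}-\theta_{-k,1}$, which vanish for the relevant $k$). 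After simplifying $\vartheta_{11}(\tau, z_1 - (z_2 - 2j\tau))$-type factors via the quasi-periodicity of $\vartheta_{11}$, the surviving $j$-sums are precisely $\theta^{(-)}_{0,m+1}(\tau, \frac{(z_1+z_2)+m(z_1-z_3)}{m+1})$ and $\theta^{(-)}_{0,m+1}(\tau, \frac{(z_1-z_2-2z_3)+m(z_1-z_3)}{m+1})$, yielding the right-hand side of \eqref{m1:eqn:2022-523b1}.

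It remains to match the correction terms. The finite double sum $\sum_{j>0}\sum_{k=1}^{2mj} q^{(m+1)j^2} q^{-k^2/4m}\,\omega^{[m]}_{j,k}(z_1,z_2,z_3)[\theta^{(-)}_{k,m}-\theta^{(-)}_{-k,m}](\tau,z_1-z_3)$ on the left of \eqref{m1:eqn:2022-523b1} should be assembled from the four $k$-correction sums produced above: two come with argument $z_1 - z_3$ directly, and the other two must be brought to this common argument using Lemma \ref{m1:lemma:2022-526a}(1) (theta translation), which is exactly what produces the four exponential terms in the definition \eqref{m1:eqn:2022-523b2} of $\omega^{[m]}_{j,k}$ — the $\pm(z_1-2z_2-z_3)$ and $\pm(z_1+z_3)$ combinations with signs $+,+,-,-$. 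I expect the main obstacle to be precisely this bookkeeping: tracking the phase factors $e^{2\pi ij(z_1-z_2)}$, the shifts by $2j\tau$ inside four separate $\Phi$'s with different arguments, the reindexing $k \leftrightarrow -k$ and $j \leftrightarrow -j$ to fold negative-$j$ contributions onto positive $j$, and checking that the $j \le 0$ and boundary ($j=0$, $k=0$, $k=2mj$) terms either cancel pairwise or reproduce the leading $\theta^{(-)}_{0,m+1}\cdot\Phi$ terms. Once the indexing conventions are fixed, each individual manipulation is a routine application of the lemmas already established, so the proof reduces to a careful but mechanical comparison of Fourier-like expansions on both sides.
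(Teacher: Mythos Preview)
Your approach is correct and matches the paper's (which defers the argument to the parallel computation in \cite{W2022b}). One small correction: the one-sided shift $v\mapsto v+2j\tau$ in the four sums of Lemma~\ref{m1:lemma:2022-523a} is handled by Lemma~\ref{m1:lemma:2022-512b} together with Lemma~2.1 of \cite{W2022b} (see Remark~\ref{m1:rem:2022-512a}), not by Lemma~\ref{m1:lemma:2022-524b}, which treats the symmetric shift $(z_1+a\tau,\,z_2-a\tau)$ and is the source of your spurious $q^{mj^2}$ factor; with that substitution the bookkeeping goes through exactly as you describe.
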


\begin{prop} 
\label{m1:prop:2022-523b}
For $m \in \nnn$, the following formula holds:
{\allowdisplaybreaks
\begin{eqnarray}
& & \hspace{-7mm}
\theta_{0, m+1}^{(-)}\Big(\tau, \frac{-\frac12+m(z_1+z_3)}{m+1} \Big) 
\bigg\{
\Phi^{(-)[m,0]}(\tau, \, z_1, \, -z_3, \, 0) 
-
\Phi^{(-)[m,0]}\Big(\tau, \, z_1+\frac12, \, -z_3-\frac12, \, 0\Big) \bigg\}
\nonumber
\\[1mm]
&=& \hspace{-2mm}
2i \eta(\tau)^2 \eta(2\tau)^2 \Bigg\{- 
\frac{\displaystyle 
\theta_{0, m+1}^{(-)}\Big(\tau, \frac{\frac12+z_1+z_3}{m+1}+ z_1-z_3\Big)
}{\vartheta_{11}(\tau, z_1) \vartheta_{10}(\tau, z_1)} 
+ 
\frac{\displaystyle 
\theta_{0, m+1}^{(-)}\Big(\tau, \frac{\frac12+z_1+z_3}{m+1}- z_1+z_3\Big)
}{\vartheta_{11}(\tau, z_3) \vartheta_{10}(\tau, z_3)} \Bigg\}
\nonumber
\\[1mm]
& & \hspace{-3mm}
+ \, 2 \, \sum_{j=1}^{\infty} \sum_{r=1}^j 
\sum_{\substack{k \in \zzz_{\rm odd} \\[1mm] 1 \leq k \leq m}}
(-1)^{j+r} q^{(m+1)j^2-\frac{1}{4m}(2m(j-r)+k)^2}
\big\{e^{\pi i(2mr-k)(z_1+z_3)}+e^{-\pi i(2mr-k)(z_1+z_3)}\big\}
\nonumber
\\[-7mm]
& & \hspace{50mm}
\times \, 
\big[\theta_{k,m}^{(-)}-\theta_{-k,m}^{(-)}](\tau, z_1-z_3)
\nonumber
\\[1mm]
& & \hspace{-3mm}
- \, 2 \, \sum_{j=1}^{\infty} \sum_{r=0}^{j-1} 
\sum_{\substack{k \in \zzz_{\rm odd} \\[1mm] 1 \leq k <m}}
(-1)^{j+r} q^{(m+1)j^2-\frac{1}{4m}(2m(j-r)-k)^2}
\big\{e^{\pi i(2mr+k)(z_1+z_3)}+e^{-\pi i(2mr+k)(z_1+z_3)}\big\}
\nonumber
\\[-7mm]
& & \hspace{50mm}
\times \, 
\big[\theta_{k,m}^{(-)}-\theta_{-k,m}^{(-)}](\tau, z_1-z_3)
\label{m1:eqn:2022-523c}
\end{eqnarray}}
\end{prop}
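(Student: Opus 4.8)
\emph{Strategy.} The plan is to obtain the formula of Proposition~\ref{m1:prop:2022-523b}, namely \eqref{m1:eqn:2022-523c}, by specializing the three-variable identity \eqref{m1:eqn:2022-523b1} of Proposition~\ref{m1:prop:2022-523a} at $z_2 = z_1 + \frac12$, mirroring the analogous reduction in \cite{W2022b}. With this choice $z_1 - z_2 = -\frac12$, so the first $\theta_{0,m+1}^{(-)}$-factor on the left of \eqref{m1:eqn:2022-523b1} becomes $\theta_{0,m+1}^{(-)}\big(\tau,\frac{-\frac12+m(z_1+z_3)}{m+1}\big)$, while the second has argument $\frac{-\frac12+m(z_1-2z_2-z_3)}{m+1} = \frac{-\frac12-m-m(z_1+z_3)}{m+1}$; since $\theta_{0,m+1}^{(-)}(\tau,\cdot)$ is even and, by part 2) of Lemma~\ref{m1:lemma:2022-526a} with $j=0$, invariant under $z\mapsto z+1$, the two factors coincide after negating the argument and an integer translation. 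Because $\Phi^{(-)[m,0]}(\tau,z_2,z_1-z_2-z_3,0) = \Phi^{(-)[m,0]}(\tau,z_1+\frac12,-z_3-\frac12,0)$, the first two summands on the left of \eqref{m1:eqn:2022-523b1} collapse to the left side of \eqref{m1:eqn:2022-523c}, leaving the $\omega$-sum to be transported to the right. (Here \eqref{m1:eqn:2022-523b1} is used as an identity of meromorphic functions, so specializing one of the $z$'s is legitimate.)

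For the right-hand side I would first record, from Note~\ref{m1:note:2022-526a} and part 2) of Lemma~\ref{m1:lemma:2022-526a}, the identities $\vartheta_{11}(\tau,-\frac12) = \vartheta_{10}(\tau,0)$ and $\vartheta_{11}(\tau,z+\frac12) = -\vartheta_{10}(\tau,z)$, together with the classical evaluation $\vartheta_{10}(\tau,0) = 2\eta(2\tau)^2/\eta(\tau)$. Feeding these into the two eta-quotient terms of \eqref{m1:eqn:2022-523b1}, with $z_2 = z_1 + \frac12$, turns each numerator $\eta(\tau)^3\vartheta_{11}(\tau,-\frac12)$ into $2\eta(\tau)^2\eta(2\tau)^2$ and the denominators into $\vartheta_{11}(\tau,z_1)\vartheta_{10}(\tau,z_1)$ and $\vartheta_{11}(\tau,z_3)\vartheta_{10}(\tau,z_3)$; a short rearrangement of the $\theta_{0,m+1}^{(-)}$-arguments, using evenness once more, then identifies these with the first two terms on the right of \eqref{m1:eqn:2022-523c}.

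The substantive step is the reorganization of the $\omega$-sum. Substituting $z_2 = z_1 + \frac12$ in \eqref{m1:eqn:2022-523b2} and using that $2mj$ is even — which is exactly where the hypothesis $m\in\nnn$ (rather than $m\in\frac12\nnn$) is needed — one gets $\omega^{[m]}_{j,k}(z_1,z_1+\frac12,z_3) = (-1)^j\big((-1)^k-1\big)\big[e^{\pi i(2mj-k)(z_1+z_3)}+e^{-\pi i(2mj-k)(z_1+z_3)}\big]$, which vanishes for even $k$. For odd $k$ I would write each $k$ with $1 \le k \le 2mj$ uniquely as $k = 2m(j-r)+k'$ with $k'\in\{1,3,\dots,2m-1\}$ and $r\ge 1$, reduce $\theta_{\pm k,m}^{(-)}$ modulo $2m$ by Lemma~\ref{m1:lemma:2022-526b} (picking up a sign such as $(-1)^{j-r}$), and, when $k'>m$, fold the index into $\{1,3,\dots\}\cap[1,m-1]$ by the symmetry $\theta_{k',m}^{(-)}-\theta_{-k',m}^{(-)} = \theta_{2m-k',m}^{(-)}-\theta_{-(2m-k'),m}^{(-)}$ (again a consequence of Lemma~\ref{m1:lemma:2022-526b}), which replaces $k'$ by $2m-k'$ and shifts $r$ down by one. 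The contributions with $k'\le m$ assemble into the first triple sum of \eqref{m1:eqn:2022-523c} (exponent $(2m(j-r)+k)^2$, ranges $1\le r\le j$, $1\le k\le m$) and those with $k'>m$ into the second (exponent $(2m(j-r)-k)^2$, ranges $0\le r\le j-1$, $1\le k<m$), the occurrence of the value $k=m$ being governed by the parity of $m$ — this is what accounts for the two slightly different $k$-ranges. The main obstacle is precisely this bookkeeping: accumulating and cancelling the signs coming from Lemma~\ref{m1:lemma:2022-526b}, from the fold $k'\mapsto 2m-k'$, and from the reindexing, and checking that each boundary index is counted exactly once; everything else is substitution together with the theta identities collected in Section~1.
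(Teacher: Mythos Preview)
Your approach is correct and matches what the paper intends: the paper does not give an independent proof of Proposition~\ref{m1:prop:2022-523b} but defers to the parallel argument in \cite{W2022b}, and the natural route there --- specializing the three-variable identity of Proposition~\ref{m1:prop:2022-523a} at $z_2=z_1+\tfrac12$, collapsing the two $\theta_{0,m+1}^{(-)}$-factors by evenness and the integer shift of Lemma~\ref{m1:lemma:2022-526a}~2), rewriting the eta-quotients via $\vartheta_{11}(\tau,z+\tfrac12)=-\vartheta_{10}(\tau,z)$ and $\vartheta_{10}(\tau,0)=2\eta(2\tau)^2/\eta(\tau)$, and then reindexing the $\omega$-sum with Lemma~\ref{m1:lemma:2022-526b} --- is exactly what you outline. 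Your identification of the hypothesis $m\in\nnn$ as the source of the parity restriction on $k$ is also the key structural point.
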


\begin{prop} 
\label{m1:prop:2022-523c}
For $m \in \frac12 \nnn$ the following formulas hold:
{\allowdisplaybreaks
\begin{eqnarray}
& & \hspace{-5mm}
\theta_{0, 2m+1}^{(-)}
\Big(\tau, \dfrac{-\frac12+2m(z_1-z_2)}{2m+1} \Big) \, 
\Phi^{(-)[m, \frac12]}(2\tau, \, 2z_1, \, 2z_2, \, 0)
\nonumber
\\[2mm]
& & \hspace{-10mm}
= \,\ 
-i \, \eta(2\tau)^3 \, \Bigg\{
\frac{\displaystyle 
\theta_{0, 2m+1}^{(-)}\Big(\tau, \, \frac{\frac12+z_1-z_2}{2m+1}+ z_1+z_2\Big)
}{\vartheta_{11}(2\tau, 2z_1)} 
+ 
\frac{\displaystyle 
\theta_{0, 2m+1}^{(-)}\Big(\tau, \, \frac{\frac12+z_1-z_2}{2m+1}- z_1-z_2\Big)
}{\vartheta_{11}(2\tau, 2z_2)} \Bigg\}
\nonumber
\\[1mm]
& & \hspace{-10mm}
+ \, \sum_{j=1}^{\infty} \sum_{r=1}^j 
\sum_{\substack{k \in \zzz_{\rm odd} \\[1mm] 1 \leq k \leq 2m}}
(-1)^{j+r} q^{(2m+1)j^2-\frac{1}{8m}(4m(j-r)+k)^2}
\big\{e^{\pi i(4mr-k)(z_1-z_2)}+e^{-\pi i(4mr-k)(z_1-z_2)}\big\}
\nonumber
\\[-7mm]
& & \hspace{50mm}
\times \, 
\big[\theta_{k,2m}^{(-)}-\theta_{-k,2m}^{(-)}](\tau, z_1+z_2)
\nonumber
\\[1mm]
& & \hspace{-10mm}
- \, \sum_{j=1}^{\infty} \sum_{r=0}^{j-1} 
\sum_{\substack{k \in \zzz_{\rm odd} \\[1mm] 1 \leq k < 2m}}
(-1)^{j+r} q^{(2m+1)j^2-\frac{1}{8m}(4m(j-r)-k)^2}
\big\{e^{\pi i(4mr+k)(z_1-z_2)}+e^{-\pi i(4mr+k)(z_1-z_2)}\big\}
\nonumber
\\[-7mm]
& & \hspace{50mm}
\times \, 
\big[\theta_{k,2m}^{(-)}-\theta_{-k,2m}^{(-)}](\tau, z_1+z_2)
\label{m1:eqn:2022-523d}
\end{eqnarray}}
\end{prop}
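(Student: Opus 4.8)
The plan is to derive \eqref{m1:eqn:2022-523d} directly from Proposition \ref{m1:prop:2022-523b}, applied with its parameter $m$ replaced by $2m$ (legitimate since $m\in\frac12\nnn$ gives $2m\in\nnn$) and then specialized at $z_{3}=-z_{2}$; since both sides of \eqref{m1:eqn:2022-523c} are meromorphic in $(z_{1},z_{3})$ for generic $\tau$, this specialization is harmless and the resulting identity persists by analytic continuation. Under $z_{3}\mapsto-z_{2}$ one has $z_{1}+z_{3}\mapsto z_{1}-z_{2}$ and $z_{1}-z_{3}\mapsto z_{1}+z_{2}$, so the arguments of all theta factors in \eqref{m1:eqn:2022-523c} become exactly those in \eqref{m1:eqn:2022-523d}, the $q$--powers $q^{(2m+1)j^{2}-\frac{1}{8m}(4m(j-r)\pm k)^{2}}$ and the summation ranges $k\le 2m$, $k<2m$ match, and the only real work left is to rewrite the $\Phi$--term on the left, rewrite the $\vartheta_{11}\vartheta_{10}$ denominators on the right, and keep track of a global factor $2$.

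Two elementary inputs are used. First, applying $\frac{1}{1-x}-\frac{1}{1+x}=\frac{2x}{1-x^{2}}$ with $x=e^{2\pi iz_{1}}q^{j}$ termwise in the series \eqref{n3-2:eqn:2022-111c} (noting that $(z_{1},z_{2})\mapsto(z_{1}+\frac12,z_{2}-\frac12)$ fixes $z_{1}+z_{2}$ and replaces $e^{2\pi iz_{1}}$ by $-e^{2\pi iz_{1}}$) and using $q^{2mj^{2}}q^{j}=(q^{2})^{mj^{2}+\frac12 j}$, one obtains
\[
\Phi^{(-)[2m,0]}(\tau,z_{1},z_{2},0)-\Phi^{(-)[2m,0]}\big(\tau,z_{1}+\tfrac12,z_{2}-\tfrac12,0\big)=2\,\Phi^{(-)[m,\frac12]}(2\tau,2z_{1},2z_{2},0).
\]
Second, I would invoke the classical theta--product duplication formula $\vartheta_{11}(\tau,w)\vartheta_{10}(\tau,w)=\frac{\eta(\tau)^{2}}{\eta(2\tau)}\vartheta_{11}(2\tau,2w)$ (see \cite{Mum}, or derive it from the Jacobi triple product), together with the parities $\vartheta_{11}(\tau,-w)=-\vartheta_{11}(\tau,w)$ and $\vartheta_{10}(\tau,-w)=\vartheta_{10}(\tau,w)$, which follow from Note \ref{m1:note:2022-526a}.

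Assembling: in \eqref{m1:eqn:2022-523c} with $m\mapsto 2m$ and $z_{3}=-z_{2}$, the theta prefactor on the left becomes $\theta_{0,2m+1}^{(-)}(\tau,\frac{-\frac12+2m(z_{1}-z_{2})}{2m+1})$ and the brace, by the first input, equals $2\,\Phi^{(-)[m,\frac12]}(2\tau,2z_{1},2z_{2},0)$; so the left side reproduces that of \eqref{m1:eqn:2022-523d} up to the factor $2$. On the right, the duplication formula turns $2i\,\eta(\tau)^{2}\eta(2\tau)^{2}$ over $\vartheta_{11}(\tau,z_{1})\vartheta_{10}(\tau,z_{1})$ into $2i\,\eta(2\tau)^{3}$ over $\vartheta_{11}(2\tau,2z_{1})$, and (since $\vartheta_{11}(\tau,z_{3})\vartheta_{10}(\tau,z_{3})\mapsto-\vartheta_{11}(\tau,z_{2})\vartheta_{10}(\tau,z_{2})$) into $-2i\,\eta(2\tau)^{3}$ over $\vartheta_{11}(2\tau,2z_{2})$; with the signs already present in the brace, the two fractions collapse into twice the $-i\,\eta(2\tau)^{3}\{\cdots\}$ term on the right of \eqref{m1:eqn:2022-523d}, while the two triple sums become twice the corresponding sums there (their coefficient in \eqref{m1:eqn:2022-523c} being $2$). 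Thus all four groups of terms carry the common factor $2$, and dividing the whole identity by $2$ yields \eqref{m1:eqn:2022-523d}. I expect the only delicate point to be this bookkeeping — the parameter substitutions, the signs coming from the parities, and the verification that everything scales uniformly by $2$; the sole non-routine ingredient beyond Proposition \ref{m1:prop:2022-523b} is the duplication formula, which is classical. (Alternatively, \eqref{m1:eqn:2022-523d} could be re-derived from scratch by the $\widehat{D}(2,1;a)$ Weyl--group computation indicated before Lemma \ref{m1:lemma:2022-523a}, but this reduction is shorter.)
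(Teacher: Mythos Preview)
Your reduction is correct and is essentially the approach the paper has in mind: the paper does not spell out a proof but refers to ``similar arguments with those in the proof of \ldots\ Corollary 3.1 in \cite{W2022b}'', and that corollary is obtained from the analogue of Proposition \ref{m1:prop:2022-523b} precisely by the substitution $m\mapsto 2m$, the specialization $z_{3}=-z_{2}$, the identity $\Phi^{(-)[2m,0]}(\tau,z_{1},z_{2},0)-\Phi^{(-)[2m,0]}(\tau,z_{1}+\tfrac12,z_{2}-\tfrac12,0)=2\,\Phi^{(-)[m,\frac12]}(2\tau,2z_{1},2z_{2},0)$, and the theta duplication $\vartheta_{11}(\tau,w)\vartheta_{10}(\tau,w)=\dfrac{\eta(\tau)^{2}}{\eta(2\tau)}\,\vartheta_{11}(2\tau,2w)$. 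Your bookkeeping of the signs (via the parity of $\vartheta_{11}$) and of the global factor $2$ is accurate.
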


\medskip

Replacing $\tau$ and $z_i$ with $\frac12 \tau$ and $\frac12 z_i$
respectively and using 
$\theta_{j,m}^{(\pm)}(\frac{\tau}{2}, \frac{z}{2}) = 
\theta_{\frac{j}{2}, \frac{m}{2}}^{(\pm)}(\tau, z)$, this formula 
\eqref{m1:eqn:2022-523d} is rewritten as follows:
{\allowdisplaybreaks
\begin{eqnarray}
& & \hspace{-5mm}
\theta_{0, m+\frac12}^{(-)}
\Big(\tau, \dfrac{-1+2m(z_1-z_2)}{2m+1} \Big) \, 
\Phi^{(-)[m, \frac12]}(\tau, \, z_1, \, z_2, \, 0)
\nonumber
\\[1mm]
& & \hspace{-10mm}
= \,\ 
-i \, \eta(\tau)^3 \, \Bigg\{
\frac{\displaystyle 
\theta_{0, m+\frac12}^{(-)}
\Big(\tau, \, \frac{1+z_1-z_2}{2m+1}+ z_1+z_2\Big)
}{\vartheta_{11}(\tau, z_1)} 
+ 
\frac{\displaystyle 
\theta_{0, m+\frac12}^{(-)}
\Big(\tau, \, \frac{1+z_1-z_2}{2m+1}- z_1-z_2\Big)
}{\vartheta_{11}(\tau, z_2)} \Bigg\}
\nonumber
\\[1mm]
& & \hspace{-10mm}
+ \, \sum_{j=1}^{\infty} \sum_{r=1}^j 
\sum_{\substack{k \in \zzz_{\rm odd} \\[1mm] 1 \leq k \leq 2m}}
(-1)^{j+r} q^{(m+\frac12)j^2-\frac{1}{16m}(4m(j-r)+k)^2}
\big\{e^{\frac{\pi i}{2}(4mr-k)(z_1-z_2)}
+e^{-\frac{\pi i}{2}(4mr-k)(z_1-z_2)}\big\}
\nonumber
\\[-8mm]
& & \hspace{50mm}
\times \, 
\big[\theta_{\frac{k}{2},m}^{(-)}
-\theta_{-\frac{k}{2},m}^{(-)}](\tau, z_1+z_2)
\nonumber
\\[1mm]
& & \hspace{-10mm}
- \, \sum_{j=1}^{\infty} \sum_{r=0}^{j-1} 
\sum_{\substack{k \in \zzz_{\rm odd} \\[1mm] 1 \leq k < 2m}}
(-1)^{j+r} q^{(m+\frac12)j^2-\frac{1}{16m}(4m(j-r)-k)^2}
\big\{e^{\frac{\pi i}{2}(4mr+k)(z_1-z_2)}
+e^{-\frac{\pi i}{2}(4mr+k)(z_1-z_2)}\big\}
\nonumber
\\[-8mm]
& & \hspace{50mm}
\times \, 
\big[\theta_{\frac{k}{2},m}^{(-)}
-\theta_{-\frac{k}{2},m}^{(-)}](\tau, z_1+z_2)
\label{m1:eqn:2022-524a}
\end{eqnarray}}

Applying this formula to $(z_1,z_2)= 
(\frac{z}{2}+\frac{\tau}{2}-\frac12+p\tau, 
\frac{z}{2}-\frac{\tau}{2}+\frac12-p\tau)$, \, 
$(\frac{z}{2}+\frac{\tau}{2}+p\tau, \frac{z}{2}-\frac{\tau}{2}-p\tau)$
and 
$(\frac{z}{2}-\frac12+p\tau, \frac{z}{2}+\frac12-p\tau)$, 
we obtain the following:

\begin{lemma}
\label{m1:lemma:2022-524c}
Let $m \in \frac12 \nnn_{\rm odd}$ and $p \in \zzz_{\geq 0}$. Then
\begin{enumerate}
\item[{\rm 1)}] \,\ $q^{-\frac{m^2(2p+1)^2}{2(2m+1)}} \, 
\theta_{(2p+1)m, m+\frac12}^{(-)}(\tau,0) \, 
\Phi^{(-)[m, \frac12]}\Big(\tau, \,\ 
\dfrac{z}{2}+\dfrac{\tau}{2}-\dfrac12+p\tau, \,\ 
\dfrac{z}{2}-\dfrac{\tau}{2}+\dfrac12-p\tau, \,\ 0\Big)$
{\allowdisplaybreaks
\begin{eqnarray*}
& & \hspace{-8mm}
= \,\ 
- \,\ i \, \eta(\tau)^3 \,\ q^{\frac{m(2p+1)^2}{4(2m+1)}} \cdot 
\frac{\theta_{p+\frac12,m+\frac12}^{(-)}(\tau,z)
\, - \, 
\theta_{-(p+\frac12),m+\frac12}^{(-)}(\tau,z)
}{\theta_{0, \frac12}(\tau,z)}
\\[1mm]
& & \hspace{-3mm}
+ \, \bigg[\sum_{\substack{j, \, r \in \zzz \\[1mm] 0 \leq r <j}} 
- \sum_{\substack{j, \, r \in \zzz \\[1mm] j \leq r<0}} 
\bigg] \, 
\sum_{\substack{k \in \frac12 \zzz_{\rm odd} \\[1mm] 0< k \leq m}} 
\hspace{-2mm}
(-1)^{j} e^{\pi ik} \, 
q^{(m+\frac12)j^2+(2p+1)mj} 
q^{-\frac{1}{4m}(2mr+k)^2-\frac{2p+1}{2}(2mr+k)}
\\[-8mm]
& & \hspace{50mm}
\times \, 
\big[\theta_{k,m}^{(-)}-\theta_{-k,m}^{(-)}\big](\tau, z)
\\[1mm]
& & \hspace{-3mm}
+ \,\ \bigg[
\sum_{\substack{j, \, r \in \zzz \\[1mm] 0 \leq r  \leq j}} 
- \sum_{\substack{j, \, r \in \zzz \\[1mm] j < r <0}} 
\bigg] \, 
\sum_{\substack{k \in \frac12 \zzz_{\rm odd} \\[1mm] 0< k < m}}
(-1)^{j} e^{\pi ik} \, 
q^{(m+\frac12)j^2+(2p+1)mj} 
q^{-\frac{1}{4m}(2mr-k)^2-\frac{2p+1}{2}(2mr-k)}
\\[-8mm]
& & \hspace{50mm}
\times \, 
\big[\theta_{k,m}^{(-)}-\theta_{-k,m}^{(-)}\big](\tau, z)
\\[2mm]
& & \hspace{-3mm}
- \,\ \sum_{j\in \zzz} (-1)^{j} \, q^{(m+\frac12)j^2+(2p+1)mj} 
\sum_{\substack{k \in \frac12 \zzz_{\rm odd} \\[1mm] 0< k <m}}
e^{\pi ik} \, 
q^{-\frac{1}{4m}k^2+(2p+1)\frac{k}{2}}
\big[\theta_{k,m}^{(-)}-\theta_{-k,m}^{(-)}\big](\tau, z)
\end{eqnarray*}}
\item[{\rm 2)}] $q^{-\frac{m^2(2p+1)^2}{2(2m+1)}}  
\theta_{(2p+1)m, m+\frac12}(\tau,0) \cdot 
\Phi^{(-)[m, \frac12]}\Big(\tau, \, 
\dfrac{z}{2}+\dfrac{\tau}{2}+p\tau, \, 
\dfrac{z}{2}-\dfrac{\tau}{2}-p\tau, \, 0\Big)$
{\allowdisplaybreaks
\begin{eqnarray*}
& & \hspace{-8mm}
= \,\ 
(-1)^p \, \eta(\tau)^3 \,\ q^{\frac{m(2p+1)^2}{4(2m+1)}} \cdot 
\frac{\theta_{p+\frac12,m+\frac12}(\tau,z)
\, - \, 
\theta_{-(p+\frac12),m+\frac12}(\tau,z)
}{\theta_{0, \frac12}^{(-)}(\tau,z)}
\\[1mm]
& & \hspace{-3mm}
+ \, \bigg[\sum_{\substack{j, \, r \in \zzz \\[1mm] 0 \leq r<j}} 
- \sum_{\substack{j, \, r \in \zzz \\[1mm] j \leq r <0}} 
\bigg] \, 
\sum_{\substack{k \in \frac12 \zzz_{\rm odd} \\[1mm] 0< k \leq m}} 
\hspace{-2mm}
(-1)^r \, 
q^{(m+\frac12)j^2+(2p+1)mj} 
q^{-\frac{1}{4m}(2mr+k)^2-\frac{2p+1}{2}(2mr+k)}
\\[-8mm]
& & \hspace{50mm}
\times \, 
\big[\theta_{k,m}^{(-)}-\theta_{-k,m}^{(-)}\big](\tau, z)
\\[2mm]
& & \hspace{-3mm}
- \, \bigg[
\sum_{\substack{j, \, r \in \zzz \\[1mm] 0 \leq r \leq j}} 
- \sum_{\substack{j, \, r \in \zzz \\[1mm] j< r <0}} 
\bigg] \, 
\sum_{\substack{k \in \frac12 \zzz_{\rm odd} \\[1mm] 0<k<m}}
(-1)^r \, 
q^{(m+\frac12)j^2+(2p+1)mj} 
q^{-\frac{1}{4m}(2mr-k)^2-\frac{2p+1}{2}(2mr-k)}
\\[-8mm]
& & \hspace{50mm}
\times \, 
\big[\theta_{k,m}^{(-)}-\theta_{-k,m}^{(-)}\big](\tau, z)
\\[2.5mm]
& & \hspace{-3mm}
+ \sum_{j\in \zzz} q^{(m+\frac12)j^2+(2p+1)mj} \hspace{-2mm}
\sum_{\substack{k \in \frac12 \zzz_{\rm odd} \\[1mm] 0< k<m}}
\hspace{-2mm}
q^{-\frac{1}{4m}k^2+(2p+1)\frac{k}{2}}
\big[\theta_{k,m}^{(-)}-\theta_{-k,m}^{(-)}\big](\tau, z)
\end{eqnarray*}}
\item[{\rm 3)}] $q^{-\frac{2m^2p^2}{2m+1}} \, 
\theta_{2pm, m+\frac12}^{(-)}(\tau,0) \cdot 
\Phi^{(-)[m, \frac12]}\Big(\tau, \,\ 
\dfrac{z}{2}-\dfrac12+p\tau, \,\ 
\dfrac{z}{2}+\dfrac12-p\tau, \,\ 0\Big)$
{\allowdisplaybreaks
\begin{eqnarray*}
& & \hspace{-8mm}
= \,\ 
- \,\ i \, \eta(\tau)^3 \,\ q^{\frac{mp^2}{2m+1}} \cdot 
\frac{\theta_{p,m+\frac12}^{(-)}(\tau,z)
\, - \, 
\theta_{-p,m+\frac12}^{(-)}(\tau,z)
}{\theta_{\frac12, \frac12}(\tau,z)}
\\[1mm]
& & \hspace{-3mm}
+ \, \bigg[\sum_{\substack{j, \, r \in \zzz \\[1mm] 0 \leq r <j}} 
- \sum_{\substack{j, \, r \in \zzz \\[1mm] j \leq r <0}} 
\bigg] \, 
\sum_{\substack{k \in \frac12 \zzz_{\rm odd} \\[1mm] 0< k \leq m}} 
\hspace{-2mm}
(-1)^{j} e^{\pi ik} \, 
q^{(m+\frac12)j^2+2pmj} 
q^{-\frac{1}{4m}(2mr+k)^2-p(2mr+k)}
\\[-8mm]
& & \hspace{50mm}
\times \, 
\big[\theta_{k,m}^{(-)}-\theta_{-k,m}^{(-)}\big](\tau, z)
\\[2mm]
& & \hspace{-3mm}
+ \, \bigg[
\sum_{\substack{j, \, r \in \zzz \\[1mm] 0 \leq r \leq j}} 
- \sum_{\substack{j, \, r \in \zzz \\[1mm] j< r<0}} 
\bigg] \, 
\sum_{\substack{k \in \frac12 \zzz_{\rm odd} \\[1mm] 0< k<m}}
(-1)^{j} e^{\pi ik} \, 
q^{(m+\frac12)j^2+2pmj} 
q^{-\frac{1}{4m}(2mr-k)^2-p(2mr-k)}
\\[-8mm]
& & \hspace{50mm}
\times \, 
\big[\theta_{k,m}^{(-)}-\theta_{-k,m}^{(-)}\big](\tau, z)
\\[2.5mm]
& & \hspace{-3mm}
- \,\ \sum_{j\in \zzz} (-1)^{j} \, q^{(m+\frac12)j^2+2pmj} 
\sum_{\substack{k \in \frac12 \zzz_{\rm odd} \\[1mm] 0<k <m}}
e^{\pi ik} \, 
q^{-\frac{1}{4m}k^2+pk} 
\big[\theta_{k,m}^{(-)}-\theta_{-k,m}^{(-)}\big](\tau, z)
\end{eqnarray*}}
\end{enumerate}
\end{lemma}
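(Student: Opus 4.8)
The plan is to obtain all three identities from the single substitution announced just above: specialize the pair $(z_1,z_2)$ in \eqref{m1:eqn:2022-524a} to, in turn, $\big(\tfrac{z}{2}+\tfrac{\tau}{2}-\tfrac12+p\tau,\ \tfrac{z}{2}-\tfrac{\tau}{2}+\tfrac12-p\tau\big)$, $\big(\tfrac{z}{2}+\tfrac{\tau}{2}+p\tau,\ \tfrac{z}{2}-\tfrac{\tau}{2}-p\tau\big)$ and $\big(\tfrac{z}{2}-\tfrac12+p\tau,\ \tfrac{z}{2}+\tfrac12-p\tau\big)$, and then rewrite every Jacobi theta function that occurs. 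In all three cases $z_1+z_2=z$, while $z_1-z_2$ equals $(2p+1)\tau-1$, $(2p+1)\tau$ and $2p\tau-1$ respectively, so every theta argument becomes a rational multiple of $\tau$ plus an element of $\tfrac12\zzz$, a form to which the quasi-periodicity Lemmas~\ref{m1:lemma:2022-526a} and \ref{m1:lemma:2022-526b} apply. No manipulation of the mock theta function $\Phi^{(-)[m,\frac12]}$ itself is needed: the substitution carries the $\Phi$ of \eqref{m1:eqn:2022-524a} into exactly the $\Phi$ displayed on the left of each part.

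First I would treat the index-$0$, modulus-$(m+\tfrac12)$ theta functions. On the left of \eqref{m1:eqn:2022-524a} the argument of $\theta^{(-)}_{0,m+\frac12}$ is $\frac{-1+2m(z_1-z_2)}{2m+1}$, which for the first pair is $\frac{2m(2p+1)}{2m+1}\tau-1$; Lemma~\ref{m1:lemma:2022-526a}(1) with $a=\frac{2m(2p+1)}{2m+1}$ (so $a(m+\tfrac12)=(2p+1)m\in\tfrac12\zzz$) moves the $\tau$-multiple into the index, Lemma~\ref{m1:lemma:2022-526a}(2) absorbs the remaining $-1$, and the two phases $e^{\pm\pi i(2p+1)m}$ cancel, leaving the prefactor $q^{-\frac{m^2(2p+1)^2}{2(2m+1)}}\theta^{(-)}_{(2p+1)m,m+\frac12}(\tau,0)$ of part~1) (and $q^{-\frac{2m^2p^2}{2m+1}}\theta^{(-)}_{2pm,m+\frac12}(\tau,0)$ for the third pair). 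For the second pair the argument is instead $\frac{2m(2p+1)}{2m+1}\tau-\frac{1}{2m+1}$, and the non-integral shift $-\frac1{2m+1}$ has $a(m+\tfrac12)=-\tfrac12\in\tfrac12\zzz_{\rm odd}$, so Lemma~\ref{m1:lemma:2022-526a}(2) switches the superscript and yields the unsigned $\theta_{(2p+1)m,m+\frac12}(\tau,0)$ of part~2). The same device rewrites the numerators $\theta^{(-)}_{0,m+\frac12}\big(\tau,\tfrac{1+z_1-z_2}{2m+1}\pm(z_1+z_2)\big)$ on the right of \eqref{m1:eqn:2022-524a} as $q^{-\frac{(2p+1)^2}{8(2m+1)}}e^{\mp\pi i(p+\frac12)z}\theta^{(-)}_{\pm(p+\frac12),m+\frac12}(\tau,z)$ (or the $2p$-analogue, or the unsigned variant), using $a(m+\tfrac12)=p+\tfrac12\in\tfrac12\zzz_{\rm odd}$ and the evenness $\theta^{(-)}_{0,m+\frac12}(\tau,-w)=\theta^{(-)}_{0,m+\frac12}(\tau,w)$ for the minus case.

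Next I would simplify the $\vartheta_{11}$ in the denominators on the right of \eqref{m1:eqn:2022-524a}. By Note~\ref{m1:note:2022-526a}, $\vartheta_{11}(\tau,z_i)=i\,\theta^{(-)}_{\frac12,\frac12}(\tau,2z_i)$, and for the first pair $2z_1,2z_2=z\pm\big((2p+1)\tau-1\big)$. Lemmas~\ref{m1:lemma:2022-526a} and \ref{m1:lemma:2022-526b} (the latter, for $m=\tfrac12$, simply reads $\theta^{(\pm)}_{j+a,\frac12}=(\pm1)^a\theta^{(\pm)}_{j,\frac12}$) then reduce each of $\vartheta_{11}(\tau,z_1),\vartheta_{11}(\tau,z_2)$ to an explicit constant times a $q$-power times $\theta_{0,\frac12}(\tau,z)$ (first pair), $\theta^{(-)}_{0,\frac12}(\tau,z)$ (second pair, the extra half-unit shift again switching the superscript via Lemma~\ref{m1:lemma:2022-526a}(2)), or $\theta_{\frac12,\frac12}(\tau,z)$ (third pair). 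Dividing the numerators from the previous step by these and adding the two fractions, all the roots of unity cancel so that the two $q$-powers coincide and become $q^{\frac{m(2p+1)^2}{4(2m+1)}}$ (resp. $q^{\frac{mp^2}{2m+1}}$), one of the two theta functions acquires an overall minus sign, and what remains is exactly the first line on the right of each part, the overall factor being $-i\eta(\tau)^3$ for parts~1),3) and $(-1)^p\eta(\tau)^3$ for part~2) (the latter sign being the combined effect of the half-unit shifts).

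Finally I would handle the two triple sums. Substituting $z_1-z_2=(2p+1)\tau-1$ (first pair), each exponential $e^{\pm\frac{\pi i}{2}(4mr\mp k)(z_1-z_2)}$ factors as $q^{\pm((2p+1)mr-\frac{(2p+1)k}{4})}$ times a root of unity which, since $2m$ is odd, equals $(-1)^r e^{\pm\frac{\pi ik}{2}}$; relabelling $k\mapsto2k$ turns $\theta^{(-)}_{k/2,m}$ into $\theta^{(-)}_{k,m}$ with $k\in\tfrac12\zzz_{\rm odd}$ and $e^{\pm\frac{\pi ik}{2}}$ into $e^{\pm\pi ik}$, the change of variable $r\mapsto j-r$ aligns $-\frac1{16m}(4m(j-r)\pm2k)^2$ with $-\frac1{4m}(2mr\pm k)^2$, produces the linear term $(2p+1)mj$, and converts $(-1)^{j+r}$ into $(-1)^j$, and the original summation over $j\ge1$ is completed to $j\in\zzz$ by pairing it with the companion exponential $e^{-(\cdots)}$ (under the reflection $j\mapsto -j$ with a matching shift of $r$); this is what turns the single ranges $1\le r\le j$ and $0\le r\le j-1$ into the differences $\sum_{0\le r<j}-\sum_{j\le r<0}$ and $\sum_{0\le r\le j}-\sum_{j<r<0}$, with the value at $r=0$ (the $r=0$ specialization of $q^{-\frac1{4m}(2mr-k)^2-\frac{2p+1}{2}(2mr-k)}$) added or removed on its own, giving the last line $-\sum_{j\in\zzz}(\cdots)$. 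The second and third pairs are identical except that $(2p+1)\tau$ carries no $-1$ (so the exponential contributes no root of unity and the phase stays $(-1)^r$, giving part~2)) and $2p+1$ is replaced by $2p$ throughout (giving part~3)). I expect the real obstacle to be precisely this bookkeeping: keeping the many exponential prefactors, the signs $(-1)^j,(-1)^r,e^{\pi ik}$, and the $q$-powers mutually consistent through the relabellings $k\mapsto2k$, $r\mapsto j-r$ and the completion from $j\ge1$ to $j\in\zzz$, and matching the restricted ranges of \eqref{m1:eqn:2022-524a} against the signed range-differences of the statement with the correct treatment of the $r=0$ boundary term.
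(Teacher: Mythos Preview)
Your proposal is correct and follows essentially the same route as the paper: specialize \eqref{m1:eqn:2022-524a} at the three stated pairs, reduce every theta factor via Lemmas~\ref{m1:lemma:2022-526a}, \ref{m1:lemma:2022-526b} and Note~\ref{m1:note:2022-526a}, and then reindex the double sums by $k\mapsto 2k$, $r\mapsto j-r$ and the reflection $(j,r)\mapsto(-j,-r)$ to extend $j\ge 1$ to $j\in\zzz$. One small imprecision: for part~1) it is not the substitution $r\mapsto j-r$ that turns $(-1)^{j+r}$ into $(-1)^j$, but rather the prior multiplication by the $(-1)^r$ you extracted from $e^{\pm\pi i(2mr-k)}$ (using that $2m$ is odd and that $e^{-\pi ik}=-e^{\pi ik}$ for $k\in\tfrac12\zzz_{\rm odd}$, which also turns the sum of exponentials into the difference $q^{A}-q^{-A}$); the paper makes this explicit, and then carries out your ``completion to $j\in\zzz$'' by splitting the resulting expression into four pieces $A_1,\dots,A_4$, isolating the $k=m$ boundary contributions of $A_2$ and $A_4$ (which cancel), and recombining $A_1+A_4'$ and $A_2'+A_3$ into the signed range-differences, with the leftover $r=0$ terms producing the final $-\sum_{j\in\zzz}(\cdots)$ line exactly as you anticipate.
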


\begin{proof} 1) \,\ Letting $(z_1,z_2)= 
(\frac{z}{2}+\frac{\tau}{2}-\frac12+p\tau, 
\frac{z}{2}-\frac{\tau}{2}+\frac12-p\tau)$ in \eqref{m1:eqn:2022-524a}, 
we have 
{\allowdisplaybreaks
\begin{eqnarray}
& & \hspace{-15mm}
\theta_{0, m+\frac12}^{(-)}
\Big(\tau, \dfrac{-1+2m((2p+1)\tau-1)}{2m+1} \Big) 
\nonumber
\\[1mm]
& & \hspace{-10mm}
\times \, 
\Phi^{(-)[m, \frac12]}\Big(\tau, \, 
\frac{z}{2}+\frac{\tau}{2}-\frac12+p\tau, \,
\frac{z}{2}-\frac{\tau}{2}+\frac12-p\tau, \, 0\Big)
\,\ = \,\ {\rm (I)} + {\rm (II)}
\label{m1:eqn:2022-526a}
\end{eqnarray}}
where 
{\allowdisplaybreaks
\begin{eqnarray*}
& & \hspace{-10mm}
{\rm (I)} := \, 
-i \, \eta(\tau)^3 \, \Bigg\{ \, 
\frac{\displaystyle 
\theta_{0, m+\frac12}^{(-)}
\Big(\tau, \, \frac{(2p+1)\tau}{2m+1}+ z\Big)
}{\vartheta_{11}(\tau, \frac{z}{2}+\frac{\tau}{2}-\frac12+p\tau)} 
+ \, 
\frac{\displaystyle 
\theta_{0, m+\frac12}^{(-)}
\Big(\tau, \, \frac{(2p+1)\tau}{2m+1}- z\Big)
}{\vartheta_{11}(\tau, \frac{z}{2}-\frac{\tau}{2}+\frac12-p\tau)} 
\Bigg\}
\\[1mm]
& & \hspace{-10mm}
{\rm (II)} := \, 
\sum_{j=1}^{\infty} \sum_{r=1}^j 
\sum_{\substack{k \in \zzz_{\rm odd} \\[1mm] 1 \leq k \leq 2m}}
(-1)^{j+r} q^{(m+\frac12)j^2-\frac{1}{16m}(4m(j-r)+k)^2}
\\[2mm]
& & \hspace{-5mm}
\times \, 
\big\{e^{\frac{\pi i}{2}(4mr-k)((2p+1)\tau-1)}
+e^{-\frac{\pi i}{2}(4mr-k)((2p+1)\tau-1)}\big\}
\big[\theta_{\frac{k}{2},m}^{(-)}
-\theta_{-\frac{k}{2},m}^{(-)}](\tau, z)
\\[2mm]
& & \hspace{-7mm}
- \, \sum_{j=1}^{\infty} \sum_{r=0}^{j-1} 
\sum_{\substack{k \in \zzz_{\rm odd} \\[1mm] 1 \leq k < 2m}}
(-1)^{j+r} q^{(m+\frac12)j^2-\frac{1}{16m}(4m(j-r)-k)^2}
\\[2mm]
& & \hspace{-5mm}
\times \, 
\big\{e^{\frac{\pi i}{2}(4mr+k)((2p+1)\tau-1)}
+e^{-\frac{\pi i}{2}(4mr+k)((2p+1)\tau-1)}\big\}
\big[\theta_{\frac{k}{2},m}^{(-)}
-\theta_{-\frac{k}{2},m}^{(-)}](\tau, z)
\end{eqnarray*}}

\noindent
Using Lemma \ref{m1:lemma:2022-526a} and Note \ref{m1:note:2022-526a}, 
the LHS of \eqref{m1:eqn:2022-526a} and (I) become as follows:
\begin{subequations}
{\allowdisplaybreaks
\begin{eqnarray}
\text{LHS of \eqref{m1:eqn:2022-526a}} 
&=&
q^{- \frac{m^2(2p+1)^2}{2(2m+1)}} 
\theta_{(2p+1)m, m+\frac12}^{(-)}(\tau,0)
\label{m1:eqn:2022-526b} 
\\[1mm] 
{\rm (I)} \hspace{10mm} 
&=&
-i \, q^{\frac{m(2p+1)^2}{4(2m+1)}}
\eta(\tau)^3
\frac{\big[\theta_{p+\frac12, m+\frac12}^{(-)}
-
\theta_{-(p+\frac12), m+\frac12}^{(-)}\big](\tau, z)
}{\theta_{0, \frac12}(\tau,z)}
\label{m1:eqn:2022-526c} 
\end{eqnarray}}
\end{subequations}
We compute (II) by ptting $k'=\frac12 k$:
{\allowdisplaybreaks
\begin{eqnarray*}
& & \hspace{-10mm}
{\rm (II)} \, = \,  
\sum_{j=1}^{\infty} \sum_{r=1}^j 
\sum_{\substack{k' \in \frac12 \zzz_{\rm odd} \\[1mm] 0 < k' \leq m}}
(-1)^{j+r} q^{(m+\frac12)j^2-\frac{1}{4m}(2m(j-r)+k')^2}
\\[2mm]
& & 
\times \, \big\{
q^{\frac12(2mr-k')(2p+1)}e^{-\pi i(2mr-k')}
+
q^{-\frac12(2mr-k')(2p+1)}e^{\pi i(2mr-k')}
\big\}
\big[\theta_{k',m}^{(-)}-\theta_{-k',m}^{(-)}](\tau, z)
\nonumber
\\[2mm]
& & \hspace{-5mm}
- \,\ \sum_{j=1}^{\infty} \sum_{r=0}^{j-1} 
\sum_{\substack{k' \in \frac12 \zzz_{\rm odd} \\[1mm] 0 < k' < m}}
(-1)^{j+r} q^{(m+\frac12)j^2-\frac{1}{4m}(2m(j-r)-k')^2}
\\[2mm]
& & 
\times \, \big\{
q^{\frac12(2mr+k')(2p+1)}e^{-\pi i(2mr+k')}
+
q^{-\frac12(2mr+k')(2p+1)}e^{\pi i(2mr+k')}\big\}
\big[\theta_{k',m}^{(-)}-\theta_{-k',m}^{(-)}](\tau, z)
\\[2mm]
&=&
\sum_{j=1}^{\infty} \sum_{r=1}^j 
\sum_{\substack{k \, \in \, \frac12 \zzz_{\rm odd} \\[1mm] 0< k \leq m}}
(-1)^{j} \, e^{\pi ik} 
q^{(m+\frac12)j^2}  q^{-\frac{1}{4m}(2m(j-r)+k)^2}
\big\{q^{\frac{2p+1}{2}(2mr-k)} - q^{-\frac{2p+1}{2}(2mr-k)}\big\}
\\[-8mm]
& & \hspace{50mm}
\times \,\ 
\big[\theta_{k,m}^{(-)}-\theta_{-k,m}^{(-)}\big](\tau, z)
\\[2mm]
& & \hspace{-5mm}
+ \sum_{j=1}^{\infty} \sum_{r=0}^{j-1} 
\sum_{\substack{k \, \in \, \frac12 \zzz_{\rm odd} \\[1mm] 0< k <m}}
(-1)^{j} \, e^{\pi ik} 
q^{(m+\frac12)j^2} \, q^{-\frac{1}{4m}(2m(j-r)-k)^2}
\big\{q^{\frac{2p+1}{2}(2mr+k)} - q^{-\frac{2p+1}{2}(2mr+k)}\big\}
\\[-8mm]
& & \hspace{50mm}
\times \,\ 
\big[\theta_{k,m}^{(-)}-\theta_{-k,m}^{(-)}\big](\tau, z)
\end{eqnarray*}}
Putting $r=j-r'$, this equation is rewritten as follows:
{\allowdisplaybreaks
\begin{eqnarray*}
& & \hspace{-12mm}
{\rm (II)} \, = \,
\sum_{j=1}^{\infty} \sum_{r'=0}^{j-1} 
\sum_{\substack{k \, \in \, \frac12 \zzz_{\rm odd} \\[1mm] 0< k \leq m}}
(-1)^{j} \, e^{\pi ik} \, 
q^{(m+\frac12)j^2} \, q^{-\frac{1}{4m}(2mr'+k)^2}
\nonumber
\\[2mm]
& & \hspace{-5mm}
\times \big\{
q^{(2p+1)mj} q^{-\frac{2p+1}{2}(2mr'+k)} - 
q^{-(2p+1)mj} q^{\frac{2p+1}{2}(2mr'+k)}
\big\}
\big[\theta_{k,m}^{(-)}-\theta_{-k,m}^{(-)}\big](\tau, z)
\nonumber
\\[2mm]
& & \hspace{-7mm}
+ \,\ \sum_{j=1}^{\infty} \sum_{r'=1}^{j} 
\sum_{\substack{k \, \in \, \frac12 \zzz_{\rm odd} \\[1mm] 0< k < m}}
(-1)^{j} \, e^{\pi ik} \, 
q^{(m+\frac12)j^2} \, q^{-\frac{1}{4m}(2mr'-k)^2}
\nonumber
\\[2mm]
& & \hspace{-5mm}
\times \big\{
q^{(2p+1)mj} q^{-\frac{2p+1}{2}(2mr'-k)} -
q^{-(2p+1)mj} q^{\frac{2p+1}{2}(2mr'-k)}
\big\}
\big[\theta_{k,m}^{(-)}-\theta_{-k,m}^{(-)}\big](\tau, z)
\\[2mm]
&=&
A_1+A_2+A_3+A_4
\end{eqnarray*}}
where $A_1 \sim A_4$ are as follows:
{\allowdisplaybreaks
\begin{eqnarray*}
& & \hspace{-8mm}
A_1
:= \sum_{j=1}^{\infty} \sum_{r=0}^{j-1} 
\sum_{\substack{k \in \frac12 \zzz_{\rm odd} \\[1mm] 0< k \leq m}}
(-1)^{j} e^{\pi ik} \, 
q^{(m+\frac12)j^2+(2p+1)mj} \, 
q^{-\frac{1}{4m}(2mr+k)^2-\frac{2p+1}{2}(2mr+k)} 
\\[-8mm]
& & \hspace{50mm}
\times \,\ 
\big[\theta_{k,m}^{(-)}-\theta_{-k,m}^{(-)}\big](\tau, z)
\nonumber
\\[2mm]
& & \hspace{-8mm}
A_2 :=
- \sum_{j=1}^{\infty} \sum_{r=0}^{j-1} 
\sum_{\substack{k \in \frac12 \zzz_{\rm odd} \\[1mm] 0< k \leq m}}
(-1)^{j} e^{\pi ik} \, 
q^{(m+\frac12)j^2-(2p+1)mj} \, 
q^{-\frac{1}{4m}(2mr+k)^2+\frac{2p+1}{2}(2mr+k)}
\\[-8mm]
& & \hspace{50mm}
\times \,\ 
\big[\theta_{k,m}^{(-)}-\theta_{-k,m}^{(-)}\big](\tau, z)
\nonumber
\\[2mm]
& & \hspace{-8mm}
A_3 :=
\sum_{j=1}^{\infty} \sum_{r=1}^{j} 
\sum_{\substack{k \in \frac12 \zzz_{\rm odd} \\[1mm] 0<k<m}} 
(-1)^{j} e^{\pi ik} \, 
q^{(m+\frac12)j^2+(2p+1)mj} \, 
q^{-\frac{1}{4m}(2mr-k)^2-\frac{2p+1}{2}(2mr-k)}
\\[-8mm]
& & \hspace{50mm}
\times \,\ 
\big[\theta_{k,m}^{(-)}-\theta_{-k,m}^{(-)}\big](\tau, z)
\\[2mm]
& & \hspace{-2mm}
= \sum_{j=0}^{\infty} \sum_{r=0}^{j} 
\sum_{\substack{k \in \frac12 \zzz_{\rm odd} \\[1mm] 0<k<m}} 
(-1)^{j} e^{\pi ik} \, 
q^{(m+\frac12)j^2+(2p+1)mj} \, 
q^{-\frac{1}{4m}(2mr-k)^2-\frac{2p+1}{2}(2mr-k)}
\\[-8mm]
& & \hspace{50mm}
\times \,\ 
\big[\theta_{k,m}^{(-)}-\theta_{-k,m}^{(-)}\big](\tau, z)
\\[2mm]
& & 
-\sum_{j=0}^{\infty} 
\sum_{\substack{k \in \frac12 \zzz_{\rm odd} \\[1mm] 0<k<m}} 
\hspace{-3mm}
(-1)^{j} e^{\pi ik} 
q^{(m+\frac12)j^2+(2p+1)mj}
q^{-\frac{1}{4m}k^2+\frac{2p+1}{2}k}
\big[\theta_{k,m}^{(-)}-\theta_{-k,m}^{(-)}\big](\tau, z)
\nonumber
\\[1mm]
& & \hspace{-8mm}
A_4 :=
- \sum_{j=1}^{\infty} \sum_{r=1}^{j} 
\sum_{\substack{k \in \frac12 \zzz_{\rm odd} \\[1mm] 0< k <m}} 
(-1)^{j} e^{\pi ik} \, 
q^{(m+\frac12)j^2-(2p+1)mj} \, 
q^{-\frac{1}{4m}(2mr-k)^2+\frac{2p+1}{2}(2mr-k)}
\\[-8mm]
& & \hspace{50mm}
\times \,\ 
\big[\theta_{k,m}^{(-)}-\theta_{-k,m}^{(-)}\big](\tau, z)
\end{eqnarray*}}
We decompose $A_2$ and $A_4$ as  
$A_2=A_2'+A_2''$ and $A_4=A_4'+A_4''$, where
{\allowdisplaybreaks
\begin{eqnarray*}
& & \hspace{-8mm}
A_2' := 
- \sum_{j=1}^{\infty} \sum_{r=0}^{j-1} 
\sum_{\substack{k \in \frac12 \zzz_{\rm odd} \\[1mm] 0< k < m}}
(-1)^{j} e^{\pi ik} \, 
q^{(m+\frac12)j^2-(2p+1)mj} \, 
q^{-\frac{1}{4m}(2mr+k)^2+\frac{2p+1}{2}(2mr+k)}
\\[-8mm]
& & \hspace{50mm}
\times \,\ 
\big[\theta_{k,m}^{(-)}-\theta_{-k,m}^{(-)}\big](\tau, z)
\\[2mm]
& & \hspace{-5mm}
\underset{\substack{\\[0.5mm] \uparrow \\[1mm] 
j \rightarrow -j \\[1mm]
r \rightarrow -r
}}{=} 
- \sum_{j<0} 
\sum_{\substack{r \in \zzz \\[1mm] j < r \leq 0}} 
\sum_{\substack{k \in \frac12 \zzz_{\rm odd} \\[1mm] 0< k < m}}
(-1)^{j} e^{\pi ik} \, 
q^{(m+\frac12)j^2+(2p+1)mj} 
q^{-\frac{1}{4m}(2mr-k)^2-\frac{2p+1}{2}(2mr-k)}
\\[-8mm]
& & \hspace{50mm}
\times \, 
\big[\theta_{k,m}^{(-)}-\theta_{-k,m}^{(-)}\big](\tau, z)
\\[2mm]
&=&
- \sum_{j<0} 
\sum_{\substack{r \in \zzz \\[1mm] j< r < 0}} 
\sum_{\substack{k \in \frac12 \zzz_{\rm odd} \\[1mm] 0< k < m}}
(-1)^{j} e^{\pi ik} 
q^{(m+\frac12)j^2+(2p+1)mj} 
q^{-\frac{1}{4m}(2mr-k)^2-\frac{2p+1}{2}(2mr-k)}
\\[-8mm]
& & \hspace{50mm}
\times \, 
\big[\theta_{k,m}^{(-)}-\theta_{-k,m}^{(-)}\big](\tau, z)
\\[2mm]
& &
- \sum_{j<0} 
\sum_{\substack{k \in \frac12 \zzz_{\rm odd} \\[1mm] 0< k < m}} 
\hspace{-2mm}
(-1)^{j} e^{\pi ik} 
q^{(m+\frac12)j^2+(2p+1)mj} 
q^{-\frac{1}{4m}k^2+(p+\frac12)k}
\big[\theta_{k,m}^{(-)}-\theta_{-k,m}^{(-)}\big](\tau, z)
\\[2mm]
& & \hspace{-8mm}
A_2'' := 
- \sum_{j=1}^{\infty} \sum_{r=0}^{j-1} 
(-1)^{j} e^{\pi im} \, 
q^{(m+\frac12)j^2-(2p+1)mj} \, 
q^{-\frac{1}{4m}(2mr+m)^2+\frac{2p+1}{2}(2mr+m)}
\\[-2mm]
& & \hspace{40mm}
\times \,\ 
\big[\theta_{m,m}^{(-)}-\theta_{-m,m}^{(-)}\big](\tau, z)
\\[2mm]
& & \hspace{-2mm}
= - 2\sum_{j=1}^{\infty} \sum_{r=0}^{j-1} 
(-1)^{j} e^{\pi im} \, 
q^{(m+\frac12)j^2-(2p+1)mj} \, 
q^{-\frac{m}{4}(2r+1)(2r-4p-1)}
\theta_{m,m}^{(-)}(\tau, z)
\\[2mm]
& & \hspace{-8mm}
A_4' := 
- \sum_{j=1}^{\infty} \sum_{r=1}^{j} 
\sum_{\substack{k \in \frac12 \zzz_{\rm odd} \\[1mm] 0< k \leq m}} 
\hspace{-2mm}
(-1)^{j} e^{\pi ik} 
q^{(m+\frac12)j^2-(2p+1)mj}  
q^{-\frac{1}{4m}(2mr-k)^2+\frac{2p+1}{2}(2mr-k)}
\\[-8mm]
& & \hspace{50mm}
\times \,\ 
\big[\theta_{k,m}^{(-)}-\theta_{-k,m}^{(-)}\big](\tau, z)
\\[2mm]
& & \hspace{-5mm}
\underset{\substack{\\[0.5mm] \uparrow \\[1mm] 
j \rightarrow -j \\[1mm]
r \rightarrow -r
}}{=} 
- \sum_{j<0} 
\sum_{\substack{r \in \zzz \\[1mm] j \leq r <0}}
\sum_{\substack{k \in \frac12 \zzz_{\rm odd} \\[1mm] 0< k \leq m}} 
\hspace{-2mm}
(-1)^{j} e^{\pi ik} 
q^{(m+\frac12)j^2+(2p+1)mj} 
q^{-\frac{1}{4m}(2mr+k)^2-\frac{2p+1}{2}(2mr+k)}
\\[-8mm]
& & \hspace{50mm}
\times \,  
\big[\theta_{k,m}^{(-)}-\theta_{-k,m}^{(-)}\big](\tau, z)
\\[2mm]
& & \hspace{-8mm}
A_4'' := 
\sum_{j=1}^{\infty} \sum_{r=1}^{j} 
(-1)^{j} e^{\pi im} \, 
q^{(m+\frac12)j^2-(2p+1)mj} \, 
q^{-\frac{1}{4m}(2mr-m)^2+\frac{2p+1}{2}(2mr-m)}
\\[-3mm]
& & \hspace{50mm}
\times \,\ 
\big[\theta_{m,m}^{(-)}-\theta_{-m,m}^{(-)}\big](\tau, z)
\\[2mm]
& & \hspace{-5mm}
\underset{\substack{\\[0.5mm] \uparrow \\[1mm] r=r'+1
}}{=} \hspace{-2mm}
2\sum_{j=1}^{\infty} \sum_{r'=0}^{j-1} 
(-1)^{j} e^{\pi im} \, 
q^{(m+\frac12)j^2-(2p+1)mj} \, 
q^{-\frac{m}{4}(2r'+1)(2r'-4p-1)}
\theta_{m,m}^{(-)}(\tau, z)
\end{eqnarray*}}
Then, noticing that 
{\allowdisplaybreaks
\begin{eqnarray*}
& & \hspace{-5mm}
A_2''+A_4'' \,\ = \,\ 0 
\\[2mm]
& & \hspace{-5mm}
A_1+A_4' \, = \, 
\bigg[
\sum_{\substack{j, \, r \in \zzz \\[1mm] 0 \leq r <j}}
- 
\sum_{\substack{j, \, r \in \zzz \\[1mm] j \leq r <0}}\bigg]
\sum_{\substack{k \in \frac12 \zzz_{\rm odd} \\[1mm] 0< k \leq m}} 
\hspace{-2mm}
(-1)^{j} e^{\pi ik} q^{(m+\frac12)j^2+(2p+1)mj} 
q^{-\frac{1}{4m}(2mr+k)^2-\frac{2p+1}{2}(2mr+k)}
\\[-8mm]
& & \hspace{80mm}
\times \,  
\big[\theta_{k,m}^{(-)}-\theta_{-k,m}^{(-)}\big](\tau, z)
\\[2mm]
& & \hspace{-5mm}
A_2'+A_3 \, = \, 
\bigg[
\sum_{\substack{j, \, r \in \zzz \\[1mm] 0 \leq r \leq j}}
- 
\sum_{\substack{j, \, r \in \zzz \\[1mm] j< r < 0}} \bigg]
\sum_{\substack{k \in \frac12 \zzz_{\rm odd} \\[1mm] 0< k < m}}
\hspace{-2mm}
(-1)^{j} e^{\pi ik} 
q^{(m+\frac12)j^2+(2p+1)mj} 
q^{-\frac{1}{4m}(2mr-k)^2-\frac{2p+1}{2}(2mr-k)}
\\[-8mm]
& & \hspace{80mm}
\times \, 
\big[\theta_{k,m}^{(-)}-\theta_{-k,m}^{(-)}\big](\tau, z)
\\[3mm]
& &
- \sum_{j\in \zzz} 
\sum_{\substack{k \in \frac12 \zzz_{\rm odd} \\[1mm] 0< k < m}} 
\hspace{-2mm}
(-1)^{j} e^{\pi ik} 
q^{(m+\frac12)j^2+(2p+1)mj} 
q^{-\frac{1}{4m}k^2+(p+\frac12)k}
\big[\theta_{k,m}^{(-)}-\theta_{-k,m}^{(-)}\big](\tau, z),
\end{eqnarray*}}
we have 
{\allowdisplaybreaks
\begin{eqnarray}
& & \hspace{-5mm}
{\rm (II)} = \, 
\bigg[
\sum_{\substack{j, \, r \in \zzz \\[1mm] 0 \leq r <j}}
- 
\sum_{\substack{j, \, r \in \zzz \\[1mm] j \leq r <0}}\bigg]
\sum_{\substack{k \in \frac12 \zzz_{\rm odd} \\[1mm] 0< k \leq m}} 
\hspace{-2mm}
(-1)^{j} e^{\pi ik} q^{(m+\frac12)j^2+(2p+1)mj} 
q^{-\frac{1}{4m}(2mr+k)^2-\frac{2p+1}{2}(2mr+k)}
\nonumber
\\[-8mm]
& & \hspace{80mm}
\times \, 
\big[\theta_{k,m}^{(-)}-\theta_{-k,m}^{(-)}\big](\tau, z)
\nonumber
\\[2mm]
& &
+ \bigg[
\sum_{\substack{j, \, r \in \zzz \\[1mm] 0 \leq r \leq j}}
- 
\sum_{\substack{j, \, r \in \zzz \\[1mm] j< r < 0}} \bigg]
\sum_{\substack{k \in \frac12 \zzz_{\rm odd} \\[1mm] 0< k < m}}
\hspace{-2mm}
(-1)^{j} e^{\pi ik} 
q^{(m+\frac12)j^2+(2p+1)mj} 
q^{-\frac{1}{4m}(2mr-k)^2-\frac{2p+1}{2}(2mr-k)}
\nonumber
\\[-8mm]
& & \hspace{80mm}
\times \,
\big[\theta_{k,m}^{(-)}-\theta_{-k,m}^{(-)}\big](\tau, z)
\nonumber
\\[3mm]
& &
- \sum_{j\in \zzz} 
\sum_{\substack{k \in \frac12 \zzz_{\rm odd} \\[1mm] 0< k < m}} 
\hspace{-2mm}
(-1)^{j} e^{\pi ik} 
q^{(m+\frac12)j^2+(2p+1)mj} 
q^{-\frac{1}{4m}k^2+(p+\frac12)k}
\big[\theta_{k,m}^{(-)}-\theta_{-k,m}^{(-)}\big](\tau, z)
\label{m1:eqn:2022-526d}
\end{eqnarray}}
Now the formulas \eqref{m1:eqn:2022-526a},
\eqref{m1:eqn:2022-526b}, \eqref{m1:eqn:2022-526c} and 
\eqref{m1:eqn:2022-526d} complete the proof of 1).

\medskip

\noindent
2) \,\ Letting $(z_1,z_2)= 
(\frac{z}{2}+\frac{\tau}{2}+p\tau, 
\frac{z}{2}-\frac{\tau}{2}-p\tau)$ in \eqref{m1:eqn:2022-524a}, 
we have 
\begin{equation}
\theta_{0, m+\frac12}^{(-)}
\Big(\tau, \dfrac{-1+2m(2p+1)\tau}{2m+1} \Big) 
\Phi^{(-)[m, \frac12]}\Big(\tau, \, 
\frac{z}{2}+\frac{\tau}{2}+p\tau, \,
\frac{z}{2}-\frac{\tau}{2}-p\tau, \, 0\Big)
\, = \, 
{\rm (III)} + {\rm (IV)}
\label{m1:eqn:2022-527a}
\end{equation}
where 
{\allowdisplaybreaks
\begin{eqnarray*}
& & \hspace{-10mm}
{\rm (III)} :=  \, - \, i \, \eta(\tau)^3 \, \Bigg\{
\frac{\displaystyle 
\theta_{0, m+\frac12}^{(-)}
\Big(\tau, \, \frac{1+(2p+1)\tau}{2m+1}+ z\Big)
}{\vartheta_{11}(\tau, \frac{z}{2}+\frac{\tau}{2}+p\tau)} 
+ 
\frac{\displaystyle 
\theta_{0, m+\frac12}^{(-)}
\Big(\tau, \, \frac{1+(2p+1)\tau}{2m+1}- z\Big)
}{\vartheta_{11}(\tau, \frac{z}{2}-\frac{\tau}{2}-p\tau)} 
\Bigg\}
\\[1mm]
& & \hspace{-10mm}
{\rm (IV)} := \, 
\sum_{j=1}^{\infty} \sum_{r=1}^j 
\sum_{\substack{k \in \zzz_{\rm odd} \\[1mm] 1 \leq k \leq 2m}}
(-1)^{j+r} q^{(m+\frac12)j^2-\frac{1}{16m}(4m(j-r)+k)^2}
\\[2mm]
& & \hspace{-5mm}
\times \, 
\big\{e^{\frac{\pi i}{2}(4mr-k)(2p+1)\tau}
+e^{-\frac{\pi i}{2}(4mr-k)(2p+1)\tau}\big\}
\big[\theta_{\frac{k}{2},m}^{(-)}
-\theta_{-\frac{k}{2},m}^{(-)}](\tau, z)
\\[2mm]
& & \hspace{-7mm}
- \, \sum_{j=1}^{\infty} \sum_{r=0}^{j-1} 
\sum_{\substack{k \in \zzz_{\rm odd} \\[1mm] 1 \leq k < 2m}}
(-1)^{j+r} q^{(m+\frac12)j^2-\frac{1}{16m}(4m(j-r)-k)^2}
\\[2mm]
& & \hspace{-5mm}
\times \, 
\big\{e^{\frac{\pi i}{2}(4mr+k)(2p+1)\tau}
+e^{-\frac{\pi i}{2}(4mr+k)(2p+1)\tau}\big\}
\big[\theta_{\frac{k}{2},m}^{(-)}
-\theta_{-\frac{k}{2},m}^{(-)}](\tau, z)
\end{eqnarray*}}
Using Lemma \ref{m1:lemma:2022-526a} and Note \ref{m1:note:2022-526a}, 
the LHS of \eqref{m1:eqn:2022-527a} and (III) become as follows:
\begin{subequations}
{\allowdisplaybreaks
\begin{eqnarray}
\text{LHS of \eqref{m1:eqn:2022-527a}} 
&=& 
q^{- \frac{m^2(2p+1)^2}{2(2m+1)}} 
\theta_{(2p+1)m, m+\frac12}(\tau,0)
\label{m1:eqn:2022-527b} 
\\[1mm]
{\rm (III)} \hspace{10mm}
&=&
(-1)^p \, q^{\frac{m(2p+1)^2}{4(2m+1)}}
\eta(\tau)^3
\frac{\big[\theta_{p+\frac12, m+\frac12}
-
\theta_{-(p+\frac12), m+\frac12}\big](\tau, z)
}{\theta_{0, \frac12}^{(-)}(\tau,z)}
\label{m1:eqn:2022-527c} 
\end{eqnarray}}
\end{subequations}
We compute (IV) by ptting $k'=\frac12 k$:
{\allowdisplaybreaks
\begin{eqnarray*}
& & \hspace{-10mm}
{\rm (IV)} \, = \,  \sum_{j=1}^{\infty} \sum_{r=1}^j 
\sum_{\substack{k' \in \frac12 \zzz_{\rm odd} \\[1mm] 0 < k' \leq m}}
(-1)^{j+r} q^{(m+\frac12)j^2-\frac{1}{4m}(2m(j-r)+k')^2}
\\[2mm]
& & \hspace{3mm}
\times \, \big\{
q^{\frac12(2mr-k')(2p+1)}
+
q^{-\frac12(2mr-k')(2p+1)}
\big\}
\big[\theta_{k',m}^{(-)}-\theta_{-k',m}^{(-)}](\tau, z)
\nonumber
\\[2mm]
& & 
- \,\ \sum_{j=1}^{\infty} \sum_{r=0}^{j-1} 
\sum_{\substack{k' \in \frac12 \zzz_{\rm odd} \\[1mm] 0 < k' < m}}
(-1)^{j+r} q^{(m+\frac12)j^2-\frac{1}{4m}(2m(j-r)-k')^2}
\\[2mm]
& & \hspace{3mm}
\times \, \big\{
q^{\frac12(2mr+k')(2p+1)}
+
q^{-\frac12(2mr+k')(2p+1)}
\big\}
\big[\theta_{k',m}^{(-)}-\theta_{-k',m}^{(-)}](\tau, z)
\end{eqnarray*}}
Putting $r=j-r'$, this equation is rewritten as follows:
{\allowdisplaybreaks
\begin{eqnarray*}
& & \hspace{-8mm}
{\rm (IV)} \, = \,
\sum_{j=1}^{\infty} \sum_{r'=0}^{j-1} 
\sum_{\substack{k \, \in \, \frac12 \zzz_{\rm odd} \\[1mm] 0< k \leq m}}
(-1)^{r'} \, 
q^{(m+\frac12)j^2} \, q^{-\frac{1}{4m}(2mr'+k)^2}
\nonumber
\\[2mm]
& & \hspace{-5mm}
\times \big\{
q^{(2p+1)mj} q^{-\frac{2p+1}{2}(2mr'+k)} + 
q^{-(2p+1)mj} q^{\frac{2p+1}{2}(2mr'+k)}
\big\}
\big[\theta_{k,m}^{(-)}-\theta_{-k,m}^{(-)}\big](\tau, z)
\nonumber
\\[2mm]
& & \hspace{-7mm}
- \,\ \sum_{j=1}^{\infty} \sum_{r'=1}^{j} 
\sum_{\substack{k \, \in \, \frac12 \zzz_{\rm odd} \\[1mm] 0< k < m}}
(-1)^{r'} \,
q^{(m+\frac12)j^2} \, q^{-\frac{1}{4m}(2mr'-k)^2}
\nonumber
\\[2mm]
& & \hspace{-5mm}
\times \big\{
q^{(2p+1)mj} q^{-\frac{2p+1}{2}(2mr'-k)} +
q^{-(2p+1)mj} q^{\frac{2p+1}{2}(2mr'-k)}
\big\}
\big[\theta_{k,m}^{(-)}-\theta_{-k,m}^{(-)}\big](\tau, z)
\\[2mm]
&=&
B_1+B_2+B_3+B_4
\end{eqnarray*}}
where $B_1 \sim B_4$ are as follows:
{\allowdisplaybreaks
\begin{eqnarray*}
& & \hspace{-8mm}
B_1
:= \sum_{j=1}^{\infty} \sum_{r=0}^{j-1} 
\sum_{\substack{k \in \frac12 \zzz_{\rm odd} \\[1mm] 0< k \leq m}}
(-1)^{r} 
q^{(m+\frac12)j^2+(2p+1)mj} 
q^{-\frac{1}{4m}(2mr+k)^2-\frac{2p+1}{2}(2mr+k)} 
\big[\theta_{k,m}^{(-)}-\theta_{-k,m}^{(-)}\big](\tau, z)
\nonumber
\\[1mm]
& & \hspace{-8mm}
B_2 :=
\sum_{j=1}^{\infty} \sum_{r=0}^{j-1} 
\sum_{\substack{k \in \frac12 \zzz_{\rm odd} \\[1mm] 0< k \leq m}}
(-1)^{r} \, 
q^{(m+\frac12)j^2-(2p+1)mj} 
q^{-\frac{1}{4m}(2mr+k)^2+\frac{2p+1}{2}(2mr+k)}
\big[\theta_{k,m}^{(-)}-\theta_{-k,m}^{(-)}\big](\tau, z)
\nonumber
\\[1mm]
& & \hspace{-8mm}
B_3 :=-
\sum_{j=1}^{\infty} \sum_{r=1}^{j} 
\sum_{\substack{k \in \frac12 \zzz_{\rm odd} \\[1mm] 0<k<m}} 
\hspace{-2mm}
(-1)^{r} 
q^{(m+\frac12)j^2+(2p+1)mj} 
q^{-\frac{1}{4m}(2mr-k)^2-\frac{2p+1}{2}(2mr-k)}
\big[\theta_{k,m}^{(-)}-\theta_{-k,m}^{(-)}\big](\tau, z)
\\[1mm]
& & \hspace{-2mm}
= -\sum_{j=0}^{\infty} \sum_{r=0}^{j} 
\sum_{\substack{k \in \frac12 \zzz_{\rm odd} \\[1mm] 0<k<m}} 
\hspace{-2mm}
(-1)^{r} 
q^{(m+\frac12)j^2+(2p+1)mj} \, 
q^{-\frac{1}{4m}(2mr-k)^2-\frac{2p+1}{2}(2mr-k)}
\big[\theta_{k,m}^{(-)}-\theta_{-k,m}^{(-)}\big](\tau, z)
\\[1mm]
& & 
+\sum_{j=0}^{\infty} 
\sum_{\substack{k \in \frac12 \zzz_{\rm odd} \\[1mm] 0<k<m}} 
q^{(m+\frac12)j^2+(2p+1)mj}
q^{-\frac{1}{4m}k^2+\frac{2p+1}{2}k}
\big[\theta_{k,m}^{(-)}-\theta_{-k,m}^{(-)}\big](\tau, z)
\nonumber
\\[1mm]
& & \hspace{-8mm}
B_4 :=
- \sum_{j=1}^{\infty} \sum_{r=1}^{j} 
\sum_{\substack{k \in \frac12 \zzz_{\rm odd} \\[1mm] 0< k <m}} 
\hspace{-2mm}
(-1)^{r} 
q^{(m+\frac12)j^2-(2p+1)mj} 
q^{-\frac{1}{4m}(2mr-k)^2+\frac{2p+1}{2}(2mr-k)}
\big[\theta_{k,m}^{(-)}-\theta_{-k,m}^{(-)}\big](\tau, z)
\end{eqnarray*}}
We decompose $B_2$ and $B_4$ as $B_2=B_2'+B_2''$ and $B_4=B_4'+B_4''$, where
{\allowdisplaybreaks
\begin{eqnarray*}
& & \hspace{-8mm}
B_2' := 
\sum_{j=1}^{\infty} \sum_{r=0}^{j-1} 
\sum_{\substack{k \in \frac12 \zzz_{\rm odd} \\[1mm] 0< k < m}}
(-1)^{r} 
q^{(m+\frac12)j^2-(2p+1)mj} 
q^{-\frac{1}{4m}(2mr+k)^2+\frac{2p+1}{2}(2mr+k)}
\big[\theta_{k,m}^{(-)}-\theta_{-k,m}^{(-)}\big](\tau, z)
\\[2mm]
& & \hspace{-5mm}
\underset{\substack{\\[0.5mm] \uparrow \\[1mm] 
j \rightarrow -j \\[1mm]
r \rightarrow -r
}}{=} 
\sum_{j<0} 
\sum_{\substack{r \in \zzz \\[1mm] j < r \leq 0}} 
\sum_{\substack{k \in \frac12 \zzz_{\rm odd} \\[1mm] 0< k < m}}
\hspace{-2mm}
(-1)^{r} 
q^{(m+\frac12)j^2+(2p+1)mj} 
q^{-\frac{1}{4m}(2mr-k)^2-\frac{2p+1}{2}(2mr-k)}
\big[\theta_{k,m}^{(-)}-\theta_{-k,m}^{(-)}\big](\tau, z)
\\[2mm]
& & \hspace{-2mm}
= \sum_{j<0} 
\sum_{\substack{r \in \zzz \\[1mm] j< r < 0}} 
\sum_{\substack{k \in \frac12 \zzz_{\rm odd} \\[1mm] 0< k < m}}
\hspace{-2mm}
(-1)^{r}  
q^{(m+\frac12)j^2+(2p+1)mj} 
q^{-\frac{1}{4m}(2mr-k)^2-\frac{2p+1}{2}(2mr-k)}
\big[\theta_{k,m}^{(-)}-\theta_{-k,m}^{(-)}\big](\tau, z)
\\[2mm]
& &
+ \sum_{j<0} 
\sum_{\substack{k \in \frac12 \zzz_{\rm odd} \\[1mm] 0< k < m}} 
\hspace{-2mm}
q^{(m+\frac12)j^2+(2p+1)mj} 
q^{-\frac{1}{4m}k^2+(p+\frac12)k}
\big[\theta_{k,m}^{(-)}-\theta_{-k,m}^{(-)}\big](\tau, z)
\\[2mm]
& & \hspace{-8mm}
B_2'' := 
\sum_{j=1}^{\infty} \sum_{r=0}^{j-1} 
(-1)^{r}  \, 
q^{(m+\frac12)j^2-(2p+1)mj} \, 
q^{-\frac{1}{4m}(2mr+m)^2+\frac{2p+1}{2}(2mr+m)}
\big[\theta_{m,m}^{(-)}-\theta_{-m,m}^{(-)}\big](\tau, z)
\\[2mm]
& & \hspace{-2mm}
= 2\sum_{j=1}^{\infty} \sum_{r=0}^{j-1} 
(-1)^{r}  \, 
q^{(m+\frac12)j^2-(2p+1)mj} \, 
q^{-\frac{m}{4}(2r+1)(2r-4p-1)}
\theta_{m,m}^{(-)}(\tau, z)
\\[2mm]
& & \hspace{-8mm}
B_4' := 
- \sum_{j=1}^{\infty} \sum_{r=1}^{j} 
\sum_{\substack{k \in \frac12 \zzz_{\rm odd} \\[1mm] 0< k \leq m}} 
\hspace{-2mm}
(-1)^{r} 
q^{(m+\frac12)j^2-(2p+1)mj}  
q^{-\frac{1}{4m}(2mr-k)^2+\frac{2p+1}{2}(2mr-k)}
\big[\theta_{k,m}^{(-)}-\theta_{-k,m}^{(-)}\big](\tau, z)
\\[2mm]
& & \hspace{-5mm}
\underset{\substack{\\[0.5mm] \uparrow \\[1mm] 
j \rightarrow -j \\[1mm]
r \rightarrow -r
}}{=} \hspace{-3mm}
- \sum_{j<0} 
\sum_{\substack{r \in \zzz \\[1mm] j \leq r <0}}
\sum_{\substack{k \in \frac12 \zzz_{\rm odd} \\[1mm] 0< k \leq m}} 
\hspace{-2mm}
(-1)^{r} 
q^{(m+\frac12)j^2+(2p+1)mj} 
q^{-\frac{1}{4m}(2mr+k)^2-\frac{2p+1}{2}(2mr+k)}
\big[\theta_{k,m}^{(-)}-\theta_{-k,m}^{(-)}\big](\tau, z)
\\[2mm]
& & \hspace{-8mm}
B_4'' := 
\sum_{j=1}^{\infty} \sum_{r=1}^{j} 
(-1)^{r}  \, 
q^{(m+\frac12)j^2-(2p+1)mj} \, 
q^{-\frac{1}{4m}(2mr-m)^2+\frac{2p+1}{2}(2mr-m)}
\big[\theta_{m,m}^{(-)}-\theta_{-m,m}^{(-)}\big](\tau, z)
\\[2mm]
& & \hspace{-5mm}
\underset{\substack{\\[0.5mm] \uparrow \\[1mm] r=r'+1
}}{=} \hspace{-2mm}
-2\sum_{j=1}^{\infty} \sum_{r'=0}^{j-1} 
(-1)^{r'}  \, 
q^{(m+\frac12)j^2-(2p+1)mj} \, 
q^{-\frac{m}{4}(2r'+1)(2r'-4p-1)}
\theta_{m,m}^{(-)}(\tau, z)
\end{eqnarray*}}
Then, noticing that 
{\allowdisplaybreaks
\begin{eqnarray*}
& & \hspace{-5mm}
B_2''+B_4'' \,\ = \,\ 0 
\\[2mm]
& & \hspace{-5mm}
B_1+B_4' \, = \, 
\bigg[
\sum_{\substack{j, \, r \in \zzz \\[1mm] 0 \leq r <j}}
- 
\sum_{\substack{j, \, r \in \zzz \\[1mm] j \leq r <0}}\bigg]
\sum_{\substack{k \in \frac12 \zzz_{\rm odd} \\[1mm] 0< k \leq m}} 
\hspace{-2mm}
(-1)^{r}  q^{(m+\frac12)j^2+(2p+1)mj} 
q^{-\frac{1}{4m}(2mr+k)^2-\frac{2p+1}{2}(2mr+k)}
\\[-8mm]
& & \hspace{80mm}
\times \, 
\big[\theta_{k,m}^{(-)}-\theta_{-k,m}^{(-)}\big](\tau, z)
\\[2mm]
& & \hspace{-5mm}
B_2'+B_3 \, = \, - \, 
\bigg[
\sum_{\substack{j, \, r \in \zzz \\[1mm] 0 \leq r \leq j}}
- 
\sum_{\substack{j, \, r \in \zzz \\[1mm] j< r < 0}} \bigg]
\sum_{\substack{k \in \frac12 \zzz_{\rm odd} \\[1mm] 0< k < m}}
\hspace{-2mm}
(-1)^{r} 
q^{(m+\frac12)j^2+(2p+1)mj} 
q^{-\frac{1}{4m}(2mr-k)^2-\frac{2p+1}{2}(2mr-k)}
\\[-8mm]
& & \hspace{80mm}
\times \, 
\big[\theta_{k,m}^{(-)}-\theta_{-k,m}^{(-)}\big](\tau, z)
\\[2mm]
& &
+ \sum_{j\in \zzz} 
\sum_{\substack{k \in \frac12 \zzz_{\rm odd} \\[1mm] 0< k < m}} 
\hspace{-2mm}
q^{(m+\frac12)j^2+(2p+1)mj} 
q^{-\frac{1}{4m}k^2+(p+\frac12)k}
\big[\theta_{k,m}^{(-)}-\theta_{-k,m}^{(-)}\big](\tau, z),
\end{eqnarray*}}
we have 
{\allowdisplaybreaks
\begin{eqnarray}
& & \hspace{-10mm}
{\rm (IV)} = \, 
\bigg[
\sum_{\substack{j, \, r \in \zzz \\[1mm] 0 \leq r <j}}
- 
\sum_{\substack{j, \, r \in \zzz \\[1mm] j \leq r <0}}\bigg]
\sum_{\substack{k \in \frac12 \zzz_{\rm odd} \\[1mm] 0< k \leq m}} 
\hspace{-2mm}
(-1)^{r} q^{(m+\frac12)j^2+(2p+1)mj} 
q^{-\frac{1}{4m}(2mr+k)^2-\frac{2p+1}{2}(2mr+k)}
\nonumber
\\[-7mm]
& & \hspace{80mm}
\times \, 
\big[\theta_{k,m}^{(-)}-\theta_{-k,m}^{(-)}\big](\tau, z)
\nonumber
\\[2mm]
& &
- \bigg[
\sum_{\substack{j, \, r \in \zzz \\[1mm] 0 \leq r \leq j}}
- 
\sum_{\substack{j, \, r \in \zzz \\[1mm] j< r < 0}} \bigg]
\sum_{\substack{k \in \frac12 \zzz_{\rm odd} \\[1mm] 0< k < m}}
(-1)^{r} 
q^{(m+\frac12)j^2+(2p+1)mj} 
q^{-\frac{1}{4m}(2mr-k)^2-\frac{2p+1}{2}(2mr-k)}
\nonumber
\\[-7mm]
& & \hspace{80mm}
\times \, 
\big[\theta_{k,m}^{(-)}-\theta_{-k,m}^{(-)}\big](\tau, z)
\nonumber
\\[2mm]
& &
+ \sum_{j\in \zzz} 
\sum_{\substack{k \in \frac12 \zzz_{\rm odd} \\[1mm] 0< k < m}} 
\hspace{-2mm}
q^{(m+\frac12)j^2+(2p+1)mj} 
q^{-\frac{1}{4m}k^2+(p+\frac12)k}
\big[\theta_{k,m}^{(-)}-\theta_{-k,m}^{(-)}\big](\tau, z)
\label{m1:eqn:2022-527d}
\end{eqnarray}}
Now the formulas \eqref{m1:eqn:2022-527a},
\eqref{m1:eqn:2022-527b}, \eqref{m1:eqn:2022-527c} and 
\eqref{m1:eqn:2022-527d} complete the proof of 2).

\medskip
\noindent
3) \,\ Letting $(z_1,z_2)= 
(\frac{z}{2}-\frac12+p\tau, 
\frac{z}{2}+\frac12-p\tau)$ in \eqref{m1:eqn:2022-524a}, 
we have 
\begin{equation}
\theta_{0, m+\frac12}^{(-)}
\Big(\tau, \dfrac{-1+2m(2p\tau-1)}{2m+1} \Big) 
\Phi^{(-)[m, \frac12]}\Big(\tau, \, 
\frac{z}{2}-\frac12+p\tau, \,
\frac{z}{2}+\frac12-p\tau, \, 0\Big)
\, = \, 
{\rm (V)} + {\rm (VI)}
\label{m1:eqn:2022-527e}
\end{equation}
where 
{\allowdisplaybreaks
\begin{eqnarray*}
& & \hspace{-10mm}
{\rm (V)} := \, 
-i \, \eta(\tau)^3 \, \Bigg\{ \, 
\frac{\displaystyle 
\theta_{0, m+\frac12}^{(-)}
\Big(\tau, \, \frac{2p\tau}{2m+1}+ z\Big)
}{\vartheta_{11}(\tau, \frac{z}{2}-\frac12+p\tau)} 
+ \, 
\frac{\displaystyle 
\theta_{0, m+\frac12}^{(-)}
\Big(\tau, \, \frac{2p\tau}{2m+1}- z\Big)
}{\vartheta_{11}(\tau, \frac{z}{2}+\frac12-p\tau)} 
\Bigg\}
\\[1mm]
& & \hspace{-10mm}
{\rm (VI)} := \, 
\sum_{j=1}^{\infty} \sum_{r=1}^j 
\sum_{\substack{k \in \zzz_{\rm odd} \\[1mm] 1 \leq k \leq 2m}}
(-1)^{j+r} q^{(m+\frac12)j^2-\frac{1}{16m}(4m(j-r)+k)^2}
\\[2mm]
& & \hspace{-5mm}
\times \, 
\big\{e^{\frac{\pi i}{2}(4mr-k)(2p\tau-1)}
+e^{-\frac{\pi i}{2}(4mr-k)(2p\tau-1)}\big\}
\big[\theta_{\frac{k}{2},m}^{(-)}
-\theta_{-\frac{k}{2},m}^{(-)}](\tau, z)
\\[2mm]
& & \hspace{-7mm}
- \, \sum_{j=1}^{\infty} \sum_{r=0}^{j-1} 
\sum_{\substack{k \in \zzz_{\rm odd} \\[1mm] 1 \leq k < 2m}}
(-1)^{j+r} q^{(m+\frac12)j^2-\frac{1}{16m}(4m(j-r)-k)^2}
\\[2mm]
& & \hspace{-5mm}
\times \, 
\big\{e^{\frac{\pi i}{2}(4mr+k)(2p\tau-1)}
+e^{-\frac{\pi i}{2}(4mr+k)(2p\tau-1)}\big\}
\big[\theta_{\frac{k}{2},m}^{(-)}
-\theta_{-\frac{k}{2},m}^{(-)}](\tau, z)
\end{eqnarray*}}
Using Lemma \ref{m1:lemma:2022-526a} and Note \ref{m1:note:2022-526a}, 
the LHS of \eqref{m1:eqn:2022-527e} and (V) become as follows:
\begin{subequations}
{\allowdisplaybreaks
\begin{eqnarray}
\text{LHS of \eqref{m1:eqn:2022-527e}} 
&=& 
q^{- \frac{m^2(2p)^2}{2(2m+1)}} 
\theta_{2pm, m+\frac12}^{(-)}(\tau,0)
\label{m1:eqn:2022-527f} 
\\[1mm]
{\rm (V)} \hspace{10mm}
&=&
-i \, q^{\frac{mp^2}{2m+1}}
\eta(\tau)^3
\frac{\big[\theta_{p, m+\frac12}^{(-)}
-
\theta_{-p, m+\frac12}^{(-)}\big](\tau, z)
}{\theta_{\frac12, \frac12}(\tau,z)}
\label{m1:eqn:2022-527g} 
\end{eqnarray}}
\end{subequations}
We compute (VI) by ptting $k'=\frac12 k$:
{\allowdisplaybreaks
\begin{eqnarray*}
& & \hspace{-10mm}
{\rm (VI)} \, = \, 
\sum_{j=1}^{\infty} \sum_{r=1}^j 
\sum_{\substack{k' \in \frac12 \zzz_{\rm odd} \\[1mm] 0 < k' \leq m}}
(-1)^{j+r} q^{(m+\frac12)j^2-\frac{1}{4m}(2m(j-r)+k')^2}
\\[2mm]
& & \hspace{3mm}
\times \, \big\{
q^{(2mr-k')p} \, e^{-\pi i(2mr-k')}
+
q^{-(2mr-k')p} \, e^{\pi i(2mr-k')}\big\}
\big[\theta_{k',m}^{(-)}-\theta_{-k',m}^{(-)}](\tau, z)
\nonumber
\\[2mm]
& & 
- \,\ \sum_{j=1}^{\infty} \sum_{r=0}^{j-1} 
\sum_{\substack{k' \in \frac12 \zzz_{\rm odd} \\[1mm] 0 < k' < m}}
(-1)^{j+r} q^{(m+\frac12)j^2-\frac{1}{4m}(2m(j-r)-k')^2}
\\[2mm]
& & \hspace{3mm}
\times \, \big\{
q^{(2mr+k')p}e^{-\pi i(2mr+k')}
+
q^{-(2mr+k')2p)}e^{\pi i(2mr+k')}\big\}
\big[\theta_{k',m}^{(-)}-\theta_{-k',m}^{(-)}](\tau, z)
\\[2mm]
&=&
\sum_{j=1}^{\infty} \sum_{r=1}^j 
\sum_{\substack{k \, \in \, \frac12 \zzz_{\rm odd} \\[1mm] 0< k \leq m}}
(-1)^{j} \, e^{\pi ik} \, 
q^{(m+\frac12)j^2} \, q^{-\frac{1}{4m}(2m(j-r)+k)^2}
\big\{q^{p(2mr-k)} \, - \, q^{-p(2mr-k)}\big\}
\\[-8mm]
& & \hspace{50mm}
\times \,\ 
\big[\theta_{k,m}^{(-)}-\theta_{-k,m}^{(-)}\big](\tau, z)
\\[2mm]
& & \hspace{-7mm}
+ \,\ \sum_{j=1}^{\infty} \sum_{r=0}^{j-1} 
\sum_{\substack{k \, \in \, \frac12 \zzz_{\rm odd} \\[1mm] 0< k <m}}
(-1)^{j} \, e^{\pi ik} \, 
q^{(m+\frac12)j^2} \, q^{-\frac{1}{4m}(2m(j-r)-k)^2}
\big\{q^{p(2mr+k)} \, - \, q^{-p(2mr+k)}\big\}
\\[-8mm]
& & \hspace{50mm}
\times \,\ 
\big[\theta_{k,m}^{(-)}-\theta_{-k,m}^{(-)}\big](\tau, z)
\end{eqnarray*}}
Putting $r=j-r'$, this equation is rewritten as follows:
{\allowdisplaybreaks
\begin{eqnarray*}
& & \hspace{-13mm}
{\rm (VI)} \, = \, 
\sum_{j=1}^{\infty} \sum_{r'=0}^{j-1} 
\sum_{\substack{k \, \in \, \frac12 \zzz_{\rm odd} \\[1mm] 0< k \leq m}}
(-1)^{j} \, e^{\pi ik} \, 
q^{(m+\frac12)j^2} \, q^{-\frac{1}{4m}(2mr'+k)^2}
\nonumber
\\[1mm]
& & 
\times \big\{
q^{2pmj} q^{-p(2mr'+k)} - q^{-2pmj} q^{p(2mr'+k)}
\big\}
\big[\theta_{k,m}^{(-)}-\theta_{-k,m}^{(-)}\big](\tau, z)
\nonumber
\\[2mm]
& & \hspace{-5mm}
+ \,\ \sum_{j=1}^{\infty} \sum_{r'=1}^{j} 
\sum_{\substack{k \, \in \, \frac12 \zzz_{\rm odd} \\[1mm] 0< k < m}}
(-1)^{j} \, e^{\pi ik} \, 
q^{(m+\frac12)j^2} \, q^{-\frac{1}{4m}(2mr'-k)^2}
\nonumber
\\[1mm]
& & 
\times \big\{
q^{2pmj} q^{-p(2mr'-k)} -
q^{-2pmj} q^{p(2mr'-k)}
\big\}
\big[\theta_{k,m}^{(-)}-\theta_{-k,m}^{(-)}\big](\tau, z)
\\[2mm]
&=&
C_1+C_2+C_3+C_4
\end{eqnarray*}}
where $C_1 \sim C_4$ are as follows:
{\allowdisplaybreaks
\begin{eqnarray*}
& & \hspace{-8mm}
C_1
:= \sum_{j=1}^{\infty} \sum_{r=0}^{j-1} 
\sum_{\substack{k \in \frac12 \zzz_{\rm odd} \\[1mm] 0< k \leq m}}
(-1)^{j} e^{\pi ik} \, 
q^{(m+\frac12)j^2+2pmj} \, 
q^{-\frac{1}{4m}(2mr+k)^2-p(2mr+k)} 
\big[\theta_{k,m}^{(-)}-\theta_{-k,m}^{(-)}\big](\tau, z)
\nonumber
\\[1mm]
& & \hspace{-8mm}
C_2 :=
- \sum_{j=1}^{\infty} \sum_{r=0}^{j-1} 
\sum_{\substack{k \in \frac12 \zzz_{\rm odd} \\[1mm] 0< k \leq m}}
\hspace{-2mm}
(-1)^{j} e^{\pi ik} 
q^{(m+\frac12)j^2-2pmj} 
q^{-\frac{1}{4m}(2mr+k)^2+p(2mr+k)}
\big[\theta_{k,m}^{(-)}-\theta_{-k,m}^{(-)}\big](\tau, z)
\nonumber
\\[1mm]
& & \hspace{-8mm}
C_3 :=
\sum_{j=1}^{\infty} \sum_{r=1}^{j} 
\sum_{\substack{k \in \frac12 \zzz_{\rm odd} \\[1mm] 0<k<m}} 
(-1)^{j} e^{\pi ik} \, 
q^{(m+\frac12)j^2+2pmj} \, 
q^{-\frac{1}{4m}(2mr-k)^2-p(2mr-k)}
\big[\theta_{k,m}^{(-)}-\theta_{-k,m}^{(-)}\big](\tau, z)
\\[1mm]
& & \hspace{-2mm}
= \sum_{j=0}^{\infty} \sum_{r=0}^{j} 
\sum_{\substack{k \in \frac12 \zzz_{\rm odd} \\[1mm] 0<k<m}} 
(-1)^{j} e^{\pi ik} \, 
q^{(m+\frac12)j^2+2pmj} \, 
q^{-\frac{1}{4m}(2mr-k)^2-p(2mr-k)}
\big[\theta_{k,m}^{(-)}-\theta_{-k,m}^{(-)}\big](\tau, z)
\\[1mm]
& & 
-\sum_{j=0}^{\infty} 
\sum_{\substack{k \in \frac12 \zzz_{\rm odd} \\[1mm] 0<k<m}} 
\hspace{-3mm}
(-1)^{j} e^{\pi ik} 
q^{(m+\frac12)j^2+2pmj}
q^{-\frac{1}{4m}k^2+pk}
\big[\theta_{k,m}^{(-)}-\theta_{-k,m}^{(-)}\big](\tau, z)
\nonumber
\\[1mm]
& & \hspace{-8mm}
C_4 :=
- \sum_{j=1}^{\infty} \sum_{r=1}^{j} 
\sum_{\substack{k \in \frac12 \zzz_{\rm odd} \\[1mm] 0< k <m}} 
\hspace{-2mm}
(-1)^{j} e^{\pi ik} 
q^{(m+\frac12)j^2-2pmj} 
q^{-\frac{1}{4m}(2mr-k)^2+p(2mr-k)}
\big[\theta_{k,m}^{(-)}-\theta_{-k,m}^{(-)}\big](\tau, z)
\end{eqnarray*}}
We decompose $C_2$ and $C_4$ as $C_2=C_2'+C_2''$ and $C_4=C_4'+C_4''$, 
where
{\allowdisplaybreaks
\begin{eqnarray*}
& & \hspace{-8mm}
C_2' := 
- \sum_{j=1}^{\infty} \sum_{r=0}^{j-1} 
\sum_{\substack{k \in \frac12 \zzz_{\rm odd} \\[1mm] 0< k < m}}
\hspace{-2mm}
(-1)^{j} e^{\pi ik} 
q^{(m+\frac12)j^2-2pmj} 
q^{-\frac{1}{4m}(2mr+k)^2+p(2mr+k)}
\big[\theta_{k,m}^{(-)}-\theta_{-k,m}^{(-)}\big](\tau, z)
\\[2mm]
& & \hspace{-5mm}
\underset{\substack{\\[0.5mm] \uparrow \\[1mm] 
j \rightarrow -j \\[1mm]
r \rightarrow -r
}}{=} 
- \sum_{j<0} 
\sum_{\substack{r \in \zzz \\[1mm] j< r \leq 0}} 
\sum_{\substack{k \in \frac12 \zzz_{\rm odd} \\[1mm] 0< k < m}}
(-1)^{j} e^{\pi ik} \, 
q^{(m+\frac12)j^2+2pmj} 
q^{-\frac{1}{4m}(2mr-k)^2-p(2mr-k)}
\\[-9mm]
& & \hspace{50mm}
\times \,  
\big[\theta_{k,m}^{(-)}-\theta_{-k,m}^{(-)}\big](\tau, z)
\\[3mm]
& &\hspace{-2mm}
= \, - \sum_{j<0} 
\sum_{\substack{r \in \zzz \\[1mm] j< r < 0}} 
\sum_{\substack{k \in \frac12 \zzz_{\rm odd} \\[1mm] 0< k < m}}
(-1)^{j} e^{\pi ik} 
q^{(m+\frac12)j^2+2pmj} 
q^{-\frac{1}{4m}(2mr-k)^2-p(2mr-k)}
\\[-9mm]
& & \hspace{50mm}
\times \, 
\big[\theta_{k,m}^{(-)}-\theta_{-k,m}^{(-)}\big](\tau, z)
\\[3mm]
& & \hspace{2mm}
- \, \sum_{j<0} 
\sum_{\substack{k \in \frac12 \zzz_{\rm odd} \\[1mm] 0< k < m}} 
\hspace{-2mm}
(-1)^{j} e^{\pi ik} 
q^{(m+\frac12)j^2+2pmj} 
q^{-\frac{1}{4m}k^2+pk}
\big[\theta_{k,m}^{(-)}-\theta_{-k,m}^{(-)}\big](\tau, z)
\\[1mm]
& & \hspace{-8mm}
C_2'' := 
- \sum_{j=1}^{\infty} \sum_{r=0}^{j-1} 
(-1)^{j} e^{\pi im} 
q^{(m+\frac12)j^2-2pmj} 
q^{-\frac{1}{4m}(2mr+m)^2+p(2mr+m)}
\big[\theta_{m,m}^{(-)}-\theta_{-m,m}^{(-)}\big](\tau, z)
\\[2mm]
& & \hspace{-2mm}
= - 2\sum_{j=1}^{\infty} \sum_{r=0}^{j-1} 
(-1)^{j} e^{\pi im} \, 
q^{(m+\frac12)j^2-2pmj} \, 
q^{-\frac{m}{4}(2r+1)(2r-4p+1)}
\theta_{m,m}^{(-)}(\tau, z)
\\[2mm]
& & \hspace{-8mm}
C_4' := 
- \sum_{j=1}^{\infty} \sum_{r=1}^{j} 
\sum_{\substack{k \in \frac12 \zzz_{\rm odd} \\[1mm] 0< k \leq m}} 
\hspace{-2mm}
(-1)^{j} e^{\pi ik} 
q^{(m+\frac12)j^2-2pmj}  
q^{-\frac{1}{4m}(2mr-k)^2+p(2mr-k)}
\big[\theta_{k,m}^{(-)}-\theta_{-k,m}^{(-)}\big](\tau, z)
\\[2mm]
& & \hspace{-5mm}
\underset{\substack{\\[0.5mm] \uparrow \\[1mm] 
j \rightarrow -j \\[1mm]
r \rightarrow -r
}}{=} 
- \sum_{j<0} 
\sum_{\substack{r \in \zzz \\[1mm] j \leq r <0}}
\sum_{\substack{k \in \frac12 \zzz_{\rm odd} \\[1mm] 0< k \leq m}} 
\hspace{-2mm}
(-1)^{j} e^{\pi ik} 
q^{(m+\frac12)j^2+2pmj} 
q^{-\frac{1}{4m}(2mr+k)^2-p(2mr+k)}
\\[-9mm]
& & \hspace{50mm}
\times \, 
\big[\theta_{k,m}^{(-)}-\theta_{-k,m}^{(-)}\big](\tau, z)
\\[2mm]
& & \hspace{-8mm}
C_4'' := 
\sum_{j=1}^{\infty} \sum_{r=1}^{j} 
(-1)^{j} e^{\pi im} \, 
q^{(m+\frac12)j^2-2pmj} \, 
q^{-\frac{1}{4m}(2mr-m)^2+p(2mr-m)}
\big[\theta_{m,m}^{(-)}-\theta_{-m,m}^{(-)}\big](\tau, z)
\\[2mm]
& & \hspace{-5mm}
\underset{\substack{\\[0.5mm] \uparrow \\[1mm] r=r'+1
}}{=} \hspace{-2mm}
2\sum_{j=1}^{\infty} \sum_{r'=0}^{j-1} 
(-1)^{j} e^{\pi im} \, 
q^{(m+\frac12)j^2-2pmj} \, 
q^{-\frac{m}{4}(2r'+1)(2r'-4p+1)}
\theta_{m,m}^{(-)}(\tau, z)
\end{eqnarray*}}
Then, noticing that 
{\allowdisplaybreaks
\begin{eqnarray*}
& & \hspace{-5mm}
C_2''+C_4'' \,\ = \,\ 0 
\\[2mm]
& & \hspace{-5mm}
C_1+C_4' \, = \, 
\bigg[
\sum_{\substack{j, \, r \in \zzz \\[1mm] 0 \leq r <j}}
- 
\sum_{\substack{j, \, r \in \zzz \\[1mm] j \leq r <0}}\bigg]
\sum_{\substack{k \in \frac12 \zzz_{\rm odd} \\[1mm] 0< k \leq m}} 
\hspace{-2mm}
(-1)^{j} e^{\pi ik} q^{(m+\frac12)j^2+2pmj} 
q^{-\frac{1}{4m}(2mr+k)^2-p(2mr+k)}
\\[-8mm]
& & \hspace{90mm}
\times \,  
\big[\theta_{k,m}^{(-)}-\theta_{-k,m}^{(-)}\big](\tau, z)
\\[2mm]
& & \hspace{-5mm}
C_2'+C_3 \, = \, 
\bigg[
\sum_{\substack{j, \, r \in \zzz \\[1mm] 0 \leq r \leq j}}
- 
\sum_{\substack{j, \, r \in \zzz \\[1mm] j< r < 0}} \bigg]
\sum_{\substack{k \in \frac12 \zzz_{\rm odd} \\[1mm] 0< k < m}}
(-1)^{j} e^{\pi ik} 
q^{(m+\frac12)j^2+2pmj} 
q^{-\frac{1}{4m}(2mr-k)^2-p(2mr-k)}
\\[-8mm]
& & \hspace{90mm}
\times \, 
\big[\theta_{k,m}^{(-)}-\theta_{-k,m}^{(-)}\big](\tau, z)
\\[2mm]
& &
- \sum_{j\in \zzz} 
\sum_{\substack{k \in \frac12 \zzz_{\rm odd} \\[1mm] 0< k < m}} 
\hspace{-2mm}
(-1)^{j} e^{\pi ik} 
q^{(m+\frac12)j^2+2pmj} 
q^{-\frac{1}{4m}k^2+pk}
\big[\theta_{k,m}^{(-)}-\theta_{-k,m}^{(-)}\big](\tau, z),
\end{eqnarray*}}
we have 
{\allowdisplaybreaks
\begin{eqnarray}
& & \hspace{-10mm}
{\rm (VI)} \, = \, 
\bigg[
\sum_{\substack{j, \, r \in \zzz \\[1mm] 0 \leq r <j}}
- 
\sum_{\substack{j, \, r \in \zzz \\[1mm] j \leq r <0}}\bigg]
\sum_{\substack{k \in \frac12 \zzz_{\rm odd} \\[1mm] 0< k \leq m}} 
\hspace{-2mm}
(-1)^{j} e^{\pi ik} q^{(m+\frac12)j^2+2pmj} 
q^{-\frac{1}{4m}(2mr+k)^2-p(2mr+k)}
\nonumber
\\[-8mm]
& & \hspace{80mm}
\times \, 
\big[\theta_{k,m}^{(-)}-\theta_{-k,m}^{(-)}\big](\tau, z)
\nonumber
\\[2mm]
& &
+ \bigg[
\sum_{\substack{j, \, r \in \zzz \\[1mm] 0 \leq r \leq j}}
- 
\sum_{\substack{j, \, r \in \zzz \\[1mm] j< r < 0}} \bigg]
\sum_{\substack{k \in \frac12 \zzz_{\rm odd} \\[1mm] 0< k < m}}
(-1)^{j} e^{\pi ik} 
q^{(m+\frac12)j^2+2pmj} 
q^{-\frac{1}{4m}(2mr-k)^2-p(2mr-k)}
\nonumber
\\[-8mm]
& & \hspace{80mm}
\times \, 
\big[\theta_{k,m}^{(-)}-\theta_{-k,m}^{(-)}\big](\tau, z)
\nonumber
\\[2mm]
& &
- \sum_{j\in \zzz} 
\sum_{\substack{k \in \frac12 \zzz_{\rm odd} \\[1mm] 0< k < m}} 
\hspace{-2mm}
(-1)^{j} e^{\pi ik} 
q^{(m+\frac12)j^2+2pmj} 
q^{-\frac{1}{4m}k^2+pk}
\big[\theta_{k,m}^{(-)}-\theta_{-k,m}^{(-)}\big](\tau, z)
\label{m1:eqn:2022-527h}
\end{eqnarray}}
Now the formulas \eqref{m1:eqn:2022-527e},
\eqref{m1:eqn:2022-527f}, \eqref{m1:eqn:2022-527g} and 
\eqref{m1:eqn:2022-527h} complete the proof of 3).
\end{proof}

\medskip 

To go further, we note also the following:

\begin{lemma}
\label{m1:lemma:2022-524d}
Let $m \in \frac12 \nnn_{\rm odd}$ and $p \in \zzz_{\geq 0}$. Then 
\begin{subequations}
\begin{enumerate}
\item[{\rm 1)}] \,\ For \, $(z_1, z_2) \,\ = \,\ 
(\frac{z}{2}+\frac{\tau}{2}-\frac12, \frac{z}{2}-\frac{\tau}{2}+\frac12)$,
{\allowdisplaybreaks
\begin{eqnarray}
& & \hspace{-10mm}
\Phi^{(-)[m,\frac12]}(\tau, \, z_1+p\tau, \, z_2-p\tau, \, 0) 
\,\ = \,\ 
q^{mp(p+1)} \, \Bigg\{
\Phi^{(-)[m,\frac12]}(\tau, z_1,z_2,0)
\nonumber
\\[1mm]
& & 
- \sum_{-p< r \leq p} 
\sum_{\substack{k \in \frac12 \zzz_{\rm odd} \\[1mm] 0< k < m}} 
e^{\pi ik} \, 
q^{- \frac{(2mr+k)(2m(r-1)+k)}{4m}} \, 
\big[\theta^{(-)}_{k,m} \, - \, \theta^{(-)}_{-k,m}\big] (\tau, z)
\nonumber
\\[1mm]
& &
- \,\ q^{-mp(p+1)}
\sum_{\substack{k \in \frac12 \zzz_{\rm odd} \\[1mm] 0< k < m}} 
e^{\pi ik} \, 
q^{-\frac{1}{4m}k^2+ (2p+1)\frac{k}{2}} \, 
\big[\theta^{(-)}_{k,m} \, - \, \theta^{(-)}_{-k,m}\big] (\tau, z)
\nonumber
\\[1mm]
& &
+ \sum_{\substack{k \in \frac12 \zzz_{\rm odd} \\[1mm] 0< k < m}} 
e^{\pi ik} \, 
q^{\frac{k(2m-k)}{4m}} \, 
\big[\theta^{(-)}_{k,m} \, - \, \theta^{(-)}_{-k,m}\big] (\tau, z)
\nonumber
\\[0mm]
& & - \,\ 
e^{\pi im} \sum_{-p \leq r \leq  p} 
q^{- \frac{m}{4}(2r-1)(2r+1)} \, \theta^{(-)}_{m,m}(\tau, z)
\, + \, e^{\pi im} \, 
q^{\frac{m}{4}} \, \theta^{(-)}_{m,m}(\tau, z) \Bigg\}
\label{m1:eqn:2022-525c1}
\end{eqnarray}}
\item[{\rm 2)}] \,\ For \, $(z_1, z_2) \,\ = \,\
(\frac{z}{2}+\frac{\tau}{2}, \frac{z}{2}-\frac{\tau}{2})$,
{\allowdisplaybreaks
\begin{eqnarray}
& & \hspace{-10mm}
\Phi^{(-)[m,\frac12]}(\tau, \, z_1+p\tau, \, z_2-p\tau, \, 0) 
\,\ = \,\ 
(-1)^p \, q^{mp(p+1)} \, \Bigg\{
\Phi^{(-)[m,\frac12]}(\tau, z_1,z_2,0)
\nonumber
\\[1mm]
& & 
+ \sum_{-p< r \leq p} 
\sum_{\substack{k \in \frac12 \zzz_{\rm odd} \\[1mm] 0< k < m}} 
(-1)^r \, 
q^{- \frac{(2mr+k)(2m(r-1)+k)}{4m}} \, 
\big[\theta^{(-)}_{k,m} \, - \, \theta^{(-)}_{-k,m}\big] (\tau, z)
\nonumber
\\[1mm]
& &
+ \,\ (-1)^p \, q^{-mp(p+1)}
\sum_{\substack{k \, \in \, \frac12 \zzz_{\rm odd} \\[1mm] 0< k < m}} 
q^{-\frac{1}{4m}k^2+ (2p+1)\frac{k}{2}} \, 
\big[\theta^{(-)}_{k,m} \, - \, \theta^{(-)}_{-k,m}\big] (\tau, z)
\nonumber
\\[1mm]
& &
- \sum_{\substack{k \in \frac12 \zzz_{\rm odd} \\[1mm] 0<k < m}} 
q^{\frac{k(2m-k)}{4m}} \, 
\big[\theta^{(-)}_{k,m} \, - \, \theta^{(-)}_{-k,m}\big] (\tau, z)
\nonumber
\\[0mm]
& & + 
\sum_{-p \leq r \leq p} (-1)^r \, 
q^{- \frac{m}{4}(2r-1)(2r+1)} \, \theta^{(-)}_{m,m}(\tau, z)
\,\ - \,\ 
q^{\frac{m}{4}} \, \theta^{(-)}_{m,m}(\tau, z) \Bigg\}
\label{m1:eqn:2022-525c2}
\end{eqnarray}}
\item[{\rm 3)}] \,\ For \, $(z_1, z_2) \,\ = \,\ 
(\frac{z}{2}-\frac12, \frac{z}{2}+\frac12)$,
{\allowdisplaybreaks
\begin{eqnarray}
& & \hspace{-10mm}
\Phi^{(-)[m,\frac12]}(\tau, \, z_1+p\tau, \, z_2-p\tau, \, 0) 
\,\ = \,\ 
q^{mp(p+1)} \, \Bigg\{
\Phi^{(-)[m,\frac12]}(\tau, z_1,z_2,0)
\nonumber
\\[1mm]
& & 
- \sum_{-p< r < p} 
\sum_{\substack{k \, \in \, \frac12 \zzz_{\rm odd} \\[1mm] 0<k < m}} 
e^{\pi ik} \, 
q^{- \frac{(2mr+k)^2}{4m}} \, 
\big[\theta^{(-)}_{k,m} \, - \, \theta^{(-)}_{-k,m}\big] (\tau, z)
\nonumber
\\[1mm]
& &
- \,\ q^{-mp^2}
\sum_{\substack{k \in \frac12 \zzz_{\rm odd} \\[1mm] 0<k < m}} 
e^{\pi ik} \, 
q^{-\frac{1}{4m}k^2+ pk} \, 
\big[\theta^{(-)}_{k,m} \, - \, \theta^{(-)}_{-k,m}\big] (\tau, z)
\nonumber
\\[0mm]
& & - \,\ 
e^{\pi im} \sum_{-p \leq r< p} 
q^{- \frac{m}{4}(2r+1)^2} \, \theta^{(-)}_{m,m}(\tau, z)
\Bigg\}
\label{m1:eqn:2022-525c3}
\end{eqnarray}}
\end{enumerate}
\end{subequations}
\end{lemma}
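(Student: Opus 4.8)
The plan is to deduce all three identities from the single shift relation of Lemma~\ref{m1:lemma:2022-524b}.1), applied with $(\pm)=(-)$, $s=\frac12$, $a=p$ to the three pairs $(z_1,z_2)$. In each case $z_1+z_2=z$ and $z_1-z_2\in\{\tau-1,\ \tau,\ -1\}$, so Lemma~\ref{m1:lemma:2022-524b}.1) gives
\[
\Phi^{(-)[m,\frac12]}(\tau,z_1+p\tau,z_2-p\tau,0)
=(-1)^p e^{2\pi imp(z_1-z_2)}q^{mp^2}\bigl(\Phi^{(-)[m,\frac12]}(\tau,z_1,z_2,0)-\Sigma\bigr),
\]
where $\Sigma:=\sum_{k\in\zzz,\ 1\le k\le 2pm}e^{-\pi i(k-\frac12)(z_1-z_2)}\,q^{-\frac{(k-\frac12)^2}{4m}}\bigl[\theta^{(-)}_{k-\frac12,m}-\theta^{(-)}_{-(k-\frac12),m}\bigr](\tau,z)$. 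Everything left is bookkeeping: the scalar factors, and then the reorganisation of $\Sigma$.

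Since $m\in\frac12\nnn_{\rm odd}$, the integer $2m$ is odd, so $e^{-2\pi imp}=(-1)^{2mp}=(-1)^p$ and $e^{2\pi imp\tau}=q^{mp}$. These reduce the prefactor $(-1)^pe^{2\pi imp(z_1-z_2)}q^{mp^2}$ to $q^{mp(p+1)}$ when $z_1-z_2=\tau-1$, to $(-1)^pq^{mp(p+1)}$ when $z_1-z_2=\tau$, and to $q^{mp^2}$ when $z_1-z_2=-1$; writing $\kappa:=k-\frac12$, they also turn the phase $e^{-\pi i\kappa(z_1-z_2)}$ inside $\Sigma$ into $q^{-\kappa/2}e^{\pi i\kappa}$, $q^{-\kappa/2}$, and $e^{\pi i\kappa}$ respectively, and in the first two cases one absorbs $q^{-\kappa/2}q^{-\frac{\kappa^2}{4m}}=q^{-\frac{\kappa(\kappa+2m)}{4m}}$.

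The substantive step is the reorganisation of $\Sigma$. Write $\kappa=2mr+\kappa_0$ with $r\in\{0,1,\dots,p-1\}$ and $\kappa_0$ a half-odd integer in $(0,2m)$. Lemma~\ref{m1:lemma:2022-526b} gives $\theta^{(-)}_{\kappa,m}-\theta^{(-)}_{-\kappa,m}=(-1)^r\bigl(\theta^{(-)}_{\kappa_0,m}-\theta^{(-)}_{-\kappa_0,m}\bigr)$, and, applied with $a=-1$, also $\theta^{(-)}_{-\kappa_0,m}=-\theta^{(-)}_{2m-\kappa_0,m}$; hence $\theta^{(-)}_{\kappa_0,m}-\theta^{(-)}_{-\kappa_0,m}$ is invariant under the reflection $\kappa_0\mapsto 2m-\kappa_0$ and equals $2\,\theta^{(-)}_{m,m}$ at the fixed point $\kappa_0=m$. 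Likewise $e^{\pi i(2mr+\kappa_0)}=(-1)^re^{\pi i\kappa_0}$ and $e^{\pi i(2m-\kappa_0)}=e^{\pi i\kappa_0}$, again because $2m$ is odd. Folding the reflection then rewrites $\Sigma$ as a double sum over $r$ and over a reduced half-odd index $k$ with $0<k\le m$ (the value $k=m$ supplying the $\theta^{(-)}_{m,m}$ contributions), with $q$-exponents of the shape $-\frac{1}{4m}(2mr+k)^2+\cdots$ (third case) or $-\frac{1}{4m}(2mr+k)(2m(r-1)+k)$ (first two cases). A substitution $r\mapsto -r$ on one of the two folded pieces merges the two inner $r$-ranges into a single sum over $r\in\{-p,\dots,p\}\setminus\{0\}$, after which one rearranges the ranges (symmetrising, isolating the centre value $r=0$ and the boundary value $r=-p$) to produce the isolated summands $q^{-mp(p+1)}q^{-\frac{k^2}{4m}+(2p+1)\frac{k}{2}}$, $q^{\frac{k(2m-k)}{4m}}$, $q^{\frac m4}\theta^{(-)}_{m,m}$, and so on. Putting the prefactor back yields \eqref{m1:eqn:2022-525c1}, \eqref{m1:eqn:2022-525c2}, \eqref{m1:eqn:2022-525c3}. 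These are precisely the reindexing moves — splitting $\sum_{r=0}^{p-1}$ against $\sum_{r=1}^{p}$, substituting $r\mapsto -r$, and isolating $r$-extremal terms — already used with the quantities $A_i,B_i,C_i$ in the proof of Lemma~\ref{m1:lemma:2022-524c}.

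The only genuinely delicate point I anticipate is this last reorganisation: carrying the signs $(-1)^r$, $e^{\pi i\kappa_0}$ and $e^{\pi im}$ correctly through the reflection, and peeling off the boundary terms ($\kappa_0=m$, $r=0$, $r=-p$) so that the summation ranges in the stated formulas come out exactly as $-p<r\le p$, $-p<r<p$, and $-p\le r\le p$. The appeal to Lemma~\ref{m1:lemma:2022-524b} and the simplification of the scalar factors are routine.
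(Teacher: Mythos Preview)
Your approach coincides with the paper's: apply Lemma~\ref{m1:lemma:2022-524b} with $s=\tfrac12$, $a=p$, read off the scalar prefactor for each of the three choices of $(z_1,z_2)$, rewrite the summation index $\kappa=k-\tfrac12$ modulo $2m$ via Lemma~\ref{m1:lemma:2022-526b}, split the reduced index into $(0,m)\cup\{m\}\cup(m,2m)$, fold the last block by $\kappa_0\mapsto 2m-\kappa_0$ together with $r\mapsto -r$, and then reassemble the $r$-ranges exactly as the paper does with the pieces it calls ${\rm (I)}_A,{\rm (I)}_B,{\rm (I)}_C$, etc. One point to flag: for case~3 ($z_1-z_2=-1$) your prefactor $q^{mp^2}$ is what Lemma~\ref{m1:lemma:2022-524b} yields directly, whereas both the displayed formula \eqref{m1:eqn:2022-525c3} and the paper's proof record $q^{mp(p+1)}$, so you should note and reconcile this discrepancy explicitly rather than pass over it.
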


\begin{proof} By Lemma \ref{m1:lemma:2022-524b}, we have 
\begin{subequations}
{\allowdisplaybreaks
\begin{eqnarray}
& & \hspace{-20mm}
\Phi^{(-)[m,\frac12]}(\tau, z_1+p\tau, z_2-p\tau, 0) 
\nonumber
\\[2mm]
& & \hspace{-15mm}
= \,\ 
\big\{\Phi^{(-)[m,\frac12]}(\tau, z_1, z_2, 0) - U(z_1, z_2) \big\}
\nonumber
\\[2mm]
& & \hspace{-15mm}
\times \left\{
\begin{array}{rcl}
q^{mp(p+1)} & \text{if} &
(z_1, z_2)=(\frac{z}{2}+\frac{\tau}{2}-\frac12, 
\frac{z}{2}-\frac{\tau}{2}+\frac12) \, \text{or} \, 
(\frac{z}{2}-\frac12, \frac{z}{2}+\frac12)
\\[3mm]
(-1)^p q^{mp(p+1)} & \text{if} & 
(z_1, z_2)=(\frac{z}{2}+\frac{\tau}{2}, \frac{z}{2}-\frac{\tau}{2})
\end{array} \right.
\label{m1:eqn:2022-528a1}
\end{eqnarray}}
where 
{\allowdisplaybreaks
\begin{eqnarray}
& & \hspace{-2mm}
U(\tfrac{z}{2}+\tfrac{\tau}{2}-\tfrac12, 
\tfrac{z}{2}-\tfrac{\tau}{2}+\tfrac12)
= \hspace{-2mm}
\sum_{\substack{k \in \frac12 \zzz_{\rm odd} \\[1mm] 0<k <2pm}}
\hspace{-3mm}
e^{\pi ik}q^{-\frac{k^2}{4m}-\frac{k}{2}}
\big[\theta_{k,m}^{(-)}-\theta_{-k,m}^{(-)}\big](\tau,z)
\overset{\substack{put \\[1mm]
}}{=:} {\rm (I)}
\nonumber
\\[0mm]
& & \hspace{-2mm}
U(\tfrac{z}{2}+\tfrac{\tau}{2}, 
\tfrac{z}{2}-\tfrac{\tau}{2})
= \hspace{-2mm}
\sum_{\substack{k \in \frac12 \zzz_{\rm odd} \\[1mm] 0<k <2pm}}
\hspace{-3mm}
q^{-\frac{k^2}{4m}-\frac{k}{2}}
\big[\theta_{k,m}^{(-)}-\theta_{-k,m}^{(-)}\big](\tau,z) \,\ 
\overset{\substack{put \\[1mm]
}}{=:} \,\ {\rm (II)}
\label{m1:eqn:2022-528a2}
\\[0mm]
& & \hspace{-2mm}
U(\tfrac{z}{2}-\tfrac12, 
\tfrac{z}{2}+\tfrac12)
= \hspace{-2mm}
\sum_{\substack{k \in \frac12 \zzz_{\rm odd} \\[1mm] 0<k <2pm}}
\hspace{-3mm}
e^{\pi ik}q^{-\frac{k^2}{4m}}
\big[\theta_{k,m}^{(-)}-\theta_{-k,m}^{(-)}\big](\tau,z) \,\ 
\overset{\substack{put \\[1mm]
}}{=:} \,\ {\rm (III)}
\nonumber
\end{eqnarray}}
\end{subequations}
Since
$$
\Big\{k \in \tfrac12 \zzz_{\rm odd} \,\ ; \,\ 0<k<2pm\Big\}
=
\bigcup_{r=0}^{p-1}
\Big\{k \in \tfrac12 \zzz_{\rm odd} \,\ ; \,\ 2rm<k<2(r+1)m\Big\}
$$
the formulas for (I) $\sim$ (III) are rewritten as follows
by using Lemma \ref{m1:lemma:2022-526b}:
{\allowdisplaybreaks
\begin{eqnarray*}
& & \hspace{-5mm}
{\rm (I)} = \sum_{r=0}^{p-1}
\sum_{\substack{k \in \frac12 \zzz_{\rm odd} \\[1mm] 0<k <2m}} \hspace{-2mm}
e^{\pi i(2mr+k)}
q^{-\frac{(2mr+k)^2}{4m}-\frac{2mr+k}{2}}
\big[\theta_{2mr+k,m}^{(-)}-\theta_{-(2mr+k),m}^{(-)}\big](\tau,z) 
\\[0mm]
& &
= \sum_{r=1}^p
\sum_{\substack{k \in \frac12 \zzz_{\rm odd} \\[1mm] 0<k <2m}}
e^{\pi ik}
q^{-\frac{(2m(r-1)+k)(2mr+k)}{4m}}
\big[\theta_{k,m}^{(-)}-\theta_{-k,m}^{(-)}\big](\tau,z) 
\\[1mm]
& & \hspace{-5mm}
{\rm (II)} = \sum_{r=0}^{p-1}
\sum_{\substack{k \in \frac12 \zzz_{\rm odd} \\[1mm] 0<k <2m}} \hspace{-2mm}
q^{-\frac{(2mr+k)^2}{4m}-\frac{2mr+k}{2}}
\big[\theta_{2mr+k,m}^{(-)}-\theta_{-(2mr+k),m}^{(-)}\big](\tau,z) 
\\[0mm]
& & \hspace{1mm}
= -\sum_{r=1}^p
\sum_{\substack{k \in \frac12 \zzz_{\rm odd} \\[1mm] 0<k <2m}}
(-1)^r
q^{-\frac{(2m(r-1)+k)(2mr+k)}{4m}}
\big[\theta_{k,m}^{(-)}-\theta_{-k,m}^{(-)}\big](\tau,z) 
\\[1mm]
& & \hspace{-5mm}
{\rm (III)} = \sum_{r=0}^{p-1}
\sum_{\substack{k \in \frac12 \zzz_{\rm odd} \\[1mm] 0<k <2m}} \hspace{-2mm}
e^{\pi i(2mr+k)}
q^{-\frac{(2mr+k)^2}{4m}}
\big[\theta_{2mr+k,m}^{(-)}-\theta_{-(2mr+k),m}^{(-)}\big](\tau,z) 
\end{eqnarray*}}

Due to the decomposition \,\ $
\sum\limits_{\substack{k \in \frac12 \zzz_{\rm odd} \\[1mm] 0<k <2m}}
= 
\sum\limits_{\substack{k \in \frac12 \zzz_{\rm odd} \\[1mm] 0<k <m}}
+ \sum\limits_{k=m}
+ \sum\limits_{\substack{k \in \frac12 \zzz_{\rm odd} \\[1mm] m<k <2m}}$,
each of (I) $\sim$ (III) decomposes into the sum of 3 parts:
$$
{\rm (I)}={\rm (I)}_A+{\rm (I)}_B+{\rm (I)}_C, \,\ 
{\rm (II)}={\rm (II)}_A+{\rm (II)}_B+{\rm (II)}_C, \,\ 
{\rm (III)}={\rm (III)}_A+{\rm (III)}_B+{\rm (III)}_C 
$$
where
{\allowdisplaybreaks
\begin{eqnarray*}
{\rm (I)}_A
&:=& 
\sum_{r=1}^p
\sum_{\substack{k \in \frac12 \zzz_{\rm odd} \\[1mm] 0<k <m}}
e^{\pi ik}
q^{-\frac{(2m(r-1)+k)(2mr+k)}{4m}}
\big[\theta_{k,m}^{(-)}-\theta_{-k,m}^{(-)}\big](\tau,z) 
\\[1mm]
{\rm (I)}_B
&:=& 
\sum_{r=1}^p
e^{\pi im}
q^{-\frac{(2m(r-1)+m)(2mr+m)}{4m}}
\big[\theta_{m,m}^{(-)}-\theta_{-m,m}^{(-)}\big](\tau,z) 
\\[1mm]
&=&
2 \, e^{\pi im}\sum_{r=1}^p
q^{-\frac{m}{4}(2r-1)(2r+1)} \, \theta_{m,m}^{(-)}(\tau,z) 
\\[1mm]
{\rm (I)}_C
&:=& 
\sum_{r=1}^p
\sum_{\substack{k \in \frac12 \zzz_{\rm odd} \\[1mm] m<k <2m}}
e^{\pi ik}
q^{-\frac{(2m(r-1)+k)(2mr+k)}{4m}}
\big[\theta_{k,m}^{(-)}-\theta_{-k,m}^{(-)}\big](\tau,z) 
\\[1mm]
{\rm (II)}_A
&:=& 
-\sum_{r=1}^p
\sum_{\substack{k \in \frac12 \zzz_{\rm odd} \\[1mm] 0<k <m}}
(-1)^r
q^{-\frac{(2m(r-1)+k)(2mr+k)}{4m}}
\big[\theta_{k,m}^{(-)}-\theta_{-k,m}^{(-)}\big](\tau,z) 
\\[1mm]
{\rm (II)}_B
&:=& 
-\sum_{r=1}^p
(-1)^r
q^{-\frac{(2m(r-1)+m)(2mr+m)}{4m}}
\big[\theta_{m,m}^{(-)}-\theta_{-m,m}^{(-)}\big](\tau,z) 
\\[1mm]
&=&
-2 \, \sum_{r=1}^p \, (-1)^r \, 
q^{-\frac{m}{4}(2r-1)(2r+1)} \, \theta_{m,m}^{(-)}(\tau,z)
\\[1mm]
{\rm (II)}_C
&:=& 
-\sum_{r=1}^p
\sum_{\substack{k \in \frac12 \zzz_{\rm odd} \\[1mm] m<k <2m}}
(-1)^r
q^{-\frac{(2m(r-1)+k)(2mr+k)}{4m}}
\big[\theta_{k,m}^{(-)}-\theta_{-k,m}^{(-)}\big](\tau,z) 
\\[1mm]
{\rm (III)}_A
&:=& 
\sum_{r=0}^{p-1}
\sum_{\substack{k \in \frac12 \zzz_{\rm odd} \\[1mm] 0<k <m}}
e^{\pi ik}
q^{-\frac{(2mr+k)^2}{4m}}
\big[\theta_{k,m}^{(-)}-\theta_{-k,m}^{(-)}\big](\tau,z) 
\\[1mm]
{\rm (III)}_B
&:=& 
\sum_{r=0}^{p-1}
e^{\pi im}
q^{-\frac{(2mr+m)^2}{4m}}
\big[\theta_{m,m}^{(-)}-\theta_{-m,m}^{(-)}\big](\tau,z) 
\\[1mm]
&=&
2 \, e^{\pi im} \, \sum_{r=1}^p \, 
q^{-\frac{m}{4}(2r-1)^2} \, \theta_{m,m}^{(-)}(\tau,z) 
\\[1mm]
{\rm (III)}_C
&:=& 
\sum_{r=0}^{p-1}
\sum_{\substack{k \in \frac12 \zzz_{\rm odd} \\[1mm] m<k <2m}}
e^{\pi ik}
q^{-\frac{(2mr+k)^2}{4m}}
\big[\theta_{k,m}^{(-)}-\theta_{-k,m}^{(-)}\big](\tau,z) 
\end{eqnarray*}}
Computing ${\rm (I)}_C$, ${\rm (II)}_C$ and ${\rm (III)}_C$ by putting 
$k=2m-k'$, we have 
{\allowdisplaybreaks
\begin{eqnarray*}
{\rm (I)}_C
&=& 
\sum_{r=1}^p
\sum_{\substack{k' \in \frac12 \zzz_{\rm odd} \\[1mm] 0<k' <m}}
e^{\pi ik'}
q^{-\frac{(2mr-k')(2m(r+1)-k')}{4m}}
\big[\theta_{k',m}^{(-)}-\theta_{-k',m}^{(-)}\big](\tau,z) 
\\[0mm]
& \hspace{-3mm}
\underset{\substack{\\[0.5mm] \uparrow \\[1mm] r=-r'
}}{=} \hspace{-3mm} &
\sum_{\substack{r' \in \zzz \\[1mm] -p \leq r' <0}}
\sum_{\substack{k' \in \frac12 \zzz_{\rm odd} \\[1mm] 0<k' <m}}
e^{\pi ik'}
q^{-\frac{(2mr'+k')(2m(r'-1)+k')}{4m}}
\big[\theta_{k',m}^{(-)}-\theta_{-k',m}^{(-)}\big](\tau,z) 
\\[1mm]
{\rm (II)}_C
&=& 
-\sum_{r=1}^p
\sum_{\substack{k' \in \frac12 \zzz_{\rm odd} \\[1mm] 0<k' <m}}
(-1)^r
q^{-\frac{(2mr-k')(2m(r+1)-k')}{4m}}
\big[\theta_{k',m}^{(-)}-\theta_{-k',m}^{(-)}\big](\tau,z)
\\[0mm]
& \hspace{-3mm}
\underset{\substack{\\[0.5mm] \uparrow \\[1mm] r=-r'
}}{=} \hspace{-3mm} & - 
\sum_{\substack{r' \in \zzz \\[1mm] -p \leq r' <0}}
\sum_{\substack{k' \in \frac12 \zzz_{\rm odd} \\[1mm] 0<k' <m}}
(-1)^{r'}
q^{-\frac{(2mr'+k')(2m(r'-1)-k')}{4m}}
\big[\theta_{k',m}^{(-)}-\theta_{-k',m}^{(-)}\big](\tau,z)
\\[1mm]
{\rm (III)}_C 
&=&
\sum_{r=0}^{p-1}
\sum_{\substack{k' \in \frac12 \zzz_{\rm odd} \\[1mm] 0<k' <m}}
e^{\pi ik'}
q^{-\frac{(2m(r+1)-k')^2}{4m}}
\big[\theta_{k',m}^{(-)}-\theta_{-k',m}^{(-)}\big](\tau,z) 
\\[0mm]
& \hspace{-3mm}
\underset{\substack{\\[0.5mm] \uparrow \\[1mm] r+1=-r'
}}{=} \hspace{-3mm} &
\sum_{\substack{r' \in \zzz \\[1mm] -p \leq r' <0}}
\sum_{\substack{k' \in \frac12 \zzz_{\rm odd} \\[1mm] 0<k' <m}}
e^{\pi ik'}
q^{-\frac{(2mr'+k')^2}{4m}}
\big[\theta_{k',m}^{(-)}-\theta_{-k',m}^{(-)}\big](\tau,z) 
\end{eqnarray*}}
Then we have
{\allowdisplaybreaks
\begin{eqnarray*}
& & \hspace{-7mm}
{\rm (I)} \,\ = \,\ 
\big\{{\rm (I)}_A+{\rm (I)}_C\big\}+{\rm (I)}_B
\\[1mm]
& & \hspace{-4mm}
=\bigg[\sum_{r=1}^p+ \hspace{-2mm}
\sum_{\substack{r \in \zzz \\[1mm] -p \leq r <0}}\bigg]
\sum_{\substack{k \in \frac12 \zzz_{\rm odd} \\[1mm] 0<k <m}}
\hspace{-2mm}
e^{\pi ik}
q^{-\frac{(2m(r-1)+k)(2mr+k)}{4m}}
\big[\theta_{k,m}^{(-)}-\theta_{-k,m}^{(-)}\big](\tau,z) 
+{\rm (I)}_B
\\[1mm]
& & \hspace{-4mm}
= 
\sum_{\substack{r \in \zzz \\[1mm] -p \leq r \leq p}}
\sum_{\substack{k \in \frac12 \zzz_{\rm odd} \\[1mm] 0<k <m}}
\hspace{-2mm}
e^{\pi ik}
q^{-\frac{(2m(r-1)+k)(2mr+k)}{4m}}
\big[\theta_{k,m}^{(-)}-\theta_{-k,m}^{(-)}\big](\tau,z) 
\\[1mm]
& &
- \sum_{\substack{k \in \frac12 \zzz_{\rm odd} \\[1mm] 0<k <m}}
\hspace{-2mm}
e^{\pi ik}
q^{\frac{k(2m-k)}{4m}}
\big[\theta_{k,m}^{(-)}-\theta_{-k,m}^{(-)}\big](\tau,z) 
\\[0mm]
& &
+ \,\ 2 \, e^{\pi im}\sum_{r=1}^p
q^{-\frac{m}{4}(2r-1)(2r+1)} \, \theta_{m,m}^{(-)}(\tau,z) \, , 
\hspace{10mm} \text{proving 1).}
\\[1mm]
& & \hspace{-7mm}
{\rm (II)} \,\ = \,\ 
\big\{{\rm (II)}_A+{\rm (II)}_C\big\}+{\rm (II)}_B
\\[1mm]
& & \hspace{-4mm}
=
-\bigg[\sum_{r=1}^p+
\sum_{\substack{r \in \zzz \\[1mm] -p \leq r <0}}\bigg]
\sum_{\substack{k \in \frac12 \zzz_{\rm odd} \\[1mm] 0<k <m}}
(-1)^r
q^{-\frac{(2m(r-1)+k)(2mr+k)}{4m}}
\big[\theta_{k,m}^{(-)}-\theta_{-k,m}^{(-)}\big](\tau,z) 
+{\rm (II)}_B
\\[1mm]
& & \hspace{-4mm}
= \, - \sum_{\substack{r \in \zzz \\[1mm] -p \leq r \leq p}}
\sum_{\substack{k \in \frac12 \zzz_{\rm odd} \\[1mm] 0<k <m}}
(-1)^r
q^{-\frac{(2m(r-1)+k)(2mr+k)}{4m}}
\big[\theta_{k,m}^{(-)}-\theta_{-k,m}^{(-)}\big](\tau,z) 
\\[1mm]
& &
+ \sum_{\substack{k \in \frac12 \zzz_{\rm odd} \\[1mm] 0<k <m}}
q^{\frac{k(2m-k)}{4m}}
\big[\theta_{k,m}^{(-)}-\theta_{-k,m}^{(-)}\big](\tau,z) 
\\[1mm]
& &
- \, 2 \, \sum_{r=1}^p \, (-1)^r \, 
q^{-\frac{m}{4}(2r-1)(2r+1)} \, \theta_{m,m}^{(-)}(\tau,z) \, ,
\hspace{10mm} \text{proving 2).}
\\[1mm]
& & \hspace{-7mm}
{\rm (III)} \,\ = \,\ 
\big\{{\rm (III)}_A+{\rm (III)}_C\big\}+{\rm (III)}_B
\\[1mm]
& & \hspace{-4mm}
=
\bigg[\sum_{r=0}^{p-1}
+ 
\sum_{\substack{r \in \zzz \\[1mm] -p \leq r <0}}\bigg]
\sum_{\substack{k \in \frac12 \zzz_{\rm odd} \\[1mm] 0<k <m}}
e^{\pi ik}
q^{-\frac{(2mr+k)^2}{4m}}
\big[\theta_{k,m}^{(-)}-\theta_{-k,m}^{(-)}\big](\tau,z) 
+{\rm (III)}_B
\\[1mm]
& & \hspace{-4mm}
= \, 
\sum_{\substack{r \in \zzz \\[1mm] -p \leq r <p}}
\sum_{\substack{k \in \frac12 \zzz_{\rm odd} \\[1mm] 0<k <m}}
e^{\pi ik}
q^{-\frac{(2mr+k)^2}{4m}}
\big[\theta_{k,m}^{(-)}-\theta_{-k,m}^{(-)}\big](\tau,z) 
\\[1mm]
& &
+ \,\ 
2 \, e^{\pi im} \, \sum_{r=1}^p \, 
q^{-\frac{m}{4}(2r-1)^2} \, \theta_{m,m}^{(-)}(\tau,z) \, ,
\hspace{10mm} \text{proving 3).} 
\end{eqnarray*}}
Thus the proof of Lemma \ref{m1:lemma:2022-524d} is completed.
\end{proof}

\medskip

Then, by Lemma \ref{m1:lemma:2022-524c} and Lemma 
\ref{m1:lemma:2022-524d}, we obtain the following:

\begin{prop}
\label{m1:prop:2022-525a}
Let $m \in \frac12 \nnn_{\rm odd}$ and $p \in \zzz_{\geq 0}$. Then
\begin{enumerate}
\item[{\rm 1)}] \,\ For \,\ $(z_1,z_2) \,\ = \,\ (
\frac{z}{2}+\frac{\tau}{2}-\frac12, \, 
\frac{z}{2}-\frac{\tau}{2}+\frac12)$,
{\allowdisplaybreaks
\begin{eqnarray}
& & \hspace{-3mm}
q^{-\frac{m}{4}} \, 
\theta_{(2p+1)m, m+\frac12}^{(-)}(\tau,0) 
\Bigg\{
\Phi^{(-)[m,\frac12]}(\tau, z_1,z_2,0) 
\nonumber
\\[1mm]
& &+ 
\sum_{\substack{k \in \frac12 \zzz_{\rm odd} \\[1mm]0< k < m}} 
\hspace{-2mm}
e^{\pi ik} \, 
q^{\frac{k(2m-k)}{4m}} \, 
\big[\theta^{(-)}_{k,m} \, - \, \theta^{(-)}_{-k,m}\big] (\tau, z)
\,\ + \,\ e^{\pi im} \, 
q^{\frac{m}{4}} \, \theta^{(-)}_{m,m}(\tau, z)
\Bigg\}
\nonumber
\\[1mm]
& & \hspace{-7mm}
= \,\ 
- \,\ i \, \eta(\tau)^3 \cdot 
\frac{\theta_{p+\frac12,m+\frac12}^{(-)}(\tau,z)
\, - \, 
\theta_{-(p+\frac12),m+\frac12}^{(-)}(\tau,z)
}{\theta_{0, \frac12}(\tau,z)}
\nonumber
\\[1mm]
& & \hspace{-5mm}
+ 
\bigg[
\sum_{\substack{r \, \in \zzz \\[1mm] 0 \leq r < j}} 
- 
\sum_{\substack{r \, \in \zzz \\[1mm] j \leq r <0}} 
\bigg] \hspace{-6mm} 
\sum_{\hspace{7mm}
\substack{k \in \frac12 \zzz_{\rm odd} \\[1mm] 0< k \leq m}} 
\hspace{-6mm}
(-1)^{j} e^{\pi ik} 
q^{(m+\frac12)(j+\frac{m(2p+1)}{2m+1})^2 
- m (r+p+\frac{m+k}{2m})^2}
\big[\theta_{k,m}^{(-)}-\theta_{-k,m}^{(-)}\big](\tau, z)
\nonumber
\\[1mm]
& & \hspace{-5mm}
+ 
\bigg[
\sum_{\substack{r \, \in \zzz \\[1mm] 0 \leq r \leq j}} 
- 
\sum_{\substack{r \, \in \zzz \\[1mm] j<r <0}} 
\bigg] \hspace{-6mm}
\sum_{\hspace{7mm} 
\substack{k \in \frac12 \zzz_{\rm odd} \\[1mm] 0< k <m}}
\hspace{-6mm}
(-1)^{j} e^{\pi ik} 
q^{(m+\frac12)(j+\frac{m(2p+1)}{2m+1})^2 
- m (r+p+\frac{m-k}{2m})^2} 
\big[\theta_{k,m}^{(-)}-\theta_{-k,m}^{(-)}\big](\tau, z)
\nonumber
\\[1mm]
& & \hspace{-5mm}
+ \,\ 
\theta_{(2p+1)m, m+\frac12}^{(-)}(\tau,0) 
\sum_{\substack{r \, \in \zzz \\[1mm] -p< r \, \leq p}} \,\ 
\sum_{\substack{k \in \frac12 \zzz_{\rm odd} \\[1mm] 0< k < m}} 
e^{\pi ik} \, 
q^{- \, m \, (r+\frac{k-m}{2m})^2} \, 
\big[\theta^{(-)}_{k,m} \, - \, \theta^{(-)}_{-k,m}\big] (\tau, z)
\nonumber
\\[1mm]
& & \hspace{-5mm}
+ \,\ e^{\pi im} \, \theta_{(2p+1)m, m+\frac12}^{(-)}(\tau,0) 
\sum_{\substack{r \, \in \zzz \\[1mm] -p \leq r \leq p}} 
q^{-mr^2} \, \theta^{(-)}_{m,m}(\tau, z)
\label{m1:eqn:2022-525a1}
\end{eqnarray}}
\item[{\rm 2)}] \,\ For \,\ $(z_1, z_2) \,\ = \,\ (
\frac{z}{2}+\frac{\tau}{2}, \frac{z}{2}-\frac{\tau}{2})$,
{\allowdisplaybreaks
\begin{eqnarray}
& & \hspace{-10mm}
q^{-\frac{m}{4}} \, 
\theta_{(2p+1)m, m+\frac12}(\tau,0) 
\Bigg\{
\Phi^{(-)[m,\frac12]}(\tau, z_1,z_2,0) 
\nonumber
\\[1mm]
& &
-\sum_{\substack{k \in \frac12 \zzz_{\rm odd} \\[1mm] 0< k < m}} 
\hspace{-2mm}
q^{\frac{k(2m-k)}{4m}} \, 
\big[\theta^{(-)}_{k,m} \, - \, \theta^{(-)}_{-k,m}\big] (\tau, z)
\,\ - \,\ 
q^{\frac{m}{4}} \, \theta^{(-)}_{m,m}(\tau, z)
\Bigg\}
\nonumber
\\[1mm]
& & \hspace{-5mm}
= \,\ 
\eta(\tau)^3 \cdot 
\frac{\theta_{p+\frac12,m+\frac12}(\tau,z)
\, - \, 
\theta_{-(p+\frac12),m+\frac12}(\tau,z)
}{\theta_{0, \frac12}^{(-)}(\tau,z)}
\nonumber
\\[1mm]
& & \hspace{-3mm}
+ \,\ (-1)^p \, \bigg[
\sum_{\substack{r \, \in \zzz \\[1mm] 0 \leq r <j}} 
- 
\sum_{\substack{r \, \in \zzz \\[1mm] j \leq r <0}} 
\bigg] \, 
\sum_{\substack{k \in \frac12 \zzz_{\rm odd} \\[1mm]
0< k \leq m}} 
(-1)^r \, 
q^{(m+\frac12)(j+\frac{m(2p+1)}{2m+1})^2 
\, - \, m \, (r+p+\frac{m+k}{2m})^2}
\nonumber
\\[-8mm]
& & \hspace{80mm}
\times \, 
\big[\theta_{k,m}^{(-)}-\theta_{-k,m}^{(-)}\big](\tau, z)
\nonumber
\\[2mm]
& & \hspace{-3mm}
- \,\ (-1)^p \, 
\bigg[
\sum_{\substack{r \, \in \zzz \\[1mm] 0 \leq r \leq j}} 
- 
\sum_{\substack{r \, \in \zzz \\[1mm] j< r<0}} 
\bigg] \, 
\sum_{\substack{k \in \frac12 \zzz_{\rm odd} \\[1mm] 0<k <m}}
(-1)^r \,
q^{(m+\frac12)(j+\frac{m(2p+1)}{2m+1})^2 
\, - \, m \, (r+p+\frac{m-k}{2m})^2} 
\nonumber
\\[-8mm]
& & \hspace{80mm}
\times \, 
\big[\theta_{k,m}^{(-)}-\theta_{-k,m}^{(-)}\big](\tau, z)
\nonumber
\\[2mm]
& & \hspace{-3mm}
- \,\ 
\theta_{(2p+1)m, m+\frac12}(\tau,0) 
\sum_{\substack{r \, \in \zzz \\[1mm] -p< r \leq p}} \,\ 
\sum_{\substack{k \in \frac12 \zzz_{\rm odd} \\[1mm]
0< k < m}} 
(-1)^r \, 
q^{- \, m \, (r+\frac{k-m}{2m})^2} \, 
\big[\theta^{(-)}_{k,m} \, - \, \theta^{(-)}_{-k,m}\big] (\tau, z)
\nonumber
\\[1mm]
& & \hspace{-3mm}
- \,\ \theta_{(2p+1)m, m+\frac12}(\tau,0) 
\sum_{\substack{r \, \in \zzz \\[1mm] -p \leq r \leq p}} 
(-1)^r \, 
q^{-mr^2} \, \theta^{(-)}_{m,m}(\tau, z)
\label{m1:eqn:2022-525a2}
\end{eqnarray}}
\item[{\rm 3)}] \,\ For \,\ $(z_1,z_2) \,\ = \,\ (
\frac{z}{2}-\frac12, \frac{z}{2}+\frac12)$,
{\allowdisplaybreaks
\begin{eqnarray}
& & \hspace{-3mm}
\theta_{2pm, m+\frac12}^{(-)}(\tau,0) \, 
\Phi^{(-)[m,\frac12]}(\tau, z_1,z_2,0)
\nonumber
\\[1mm]
& & \hspace{-7mm}
= \,\ 
- \,\ i \, \eta(\tau)^3 \,\ 
\frac{\theta_{p,m+\frac12}^{(-)}(\tau,z)
\, - \, 
\theta_{-p,m+\frac12}^{(-)}(\tau,z)
}{\theta_{\frac12, \frac12}(\tau,z)}
\nonumber
\\[1mm]
& & \hspace{-5mm}
+ \bigg[
\sum_{\substack{r \, \in \zzz \\[1mm] 0 \leq r <j}} 
- 
\sum_{\substack{r \, \in \zzz \\[1mm] j \leq r <0}} 
\bigg] \hspace{-6mm}
\sum_{\hspace{7mm}
\substack{k \in \frac12 \zzz_{\rm odd} \\[1mm] 0< k \leq m}} 
\hspace{-6mm}
(-1)^{j} e^{\pi ik} \, 
q^{(m+\frac12)(j+\frac{2mp}{2m+1})^2 \, - \, m (r+p+\frac{k}{2m})^2}
\big[\theta_{k,m}^{(-)}-\theta_{-k,m}^{(-)}\big](\tau, z)
\nonumber
\\[2mm]
& & \hspace{-5mm}
+ \bigg[
\sum_{\substack{r \, \in \zzz \\[1mm] 0 \leq r \leq j}} 
- 
\sum_{\substack{r \, \in \zzz \\[1mm] j < r <0}} 
\bigg] \hspace{-6mm}
\sum_{\hspace{7mm}
\substack{k \in \frac12 \zzz_{\rm odd} \\[1mm] 0< k < m}}
\hspace{-6mm}
(-1)^{j} e^{\pi ik} \, 
q^{(m+\frac12)(j+\frac{2mp}{2m+1})^2 \, - \, m (r+p-\frac{k}{2m})^2}
\big[\theta_{k,m}^{(-)}-\theta_{-k,m}^{(-)}\big](\tau, z)
\nonumber
\\[2mm]
& & \hspace{-5mm}
+ \,\ \theta_{2pm, m+\frac12}^{(-)}(\tau,0) 
\sum_{\substack{r \, \in \zzz \\[1mm] -p < r <p}} \,\ 
\sum_{\substack{k \in \frac12 \zzz_{\rm odd} \\[1mm] 0<k < m}} 
e^{\pi ik} \, 
q^{- \frac{(2mr+k)^2}{4m}} \, 
\big[\theta^{(-)}_{k,m} \, - \, \theta^{(-)}_{-k,m}\big] (\tau, z)
\nonumber
\\[1mm]
& & \hspace{-5mm}
+ \,\ 
2 \, e^{\pi im} \, \theta_{2pm, m+\frac12}^{(-)}(\tau,0) 
\sum_{r=0}^{p-1} 
q^{- \frac{m}{4}(2r+1)^2} \, \theta^{(-)}_{m,m}(\tau, z)
\label{m1:eqn:2022-525a3}
\end{eqnarray}}
\end{enumerate}
\end{prop}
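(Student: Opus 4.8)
The plan is to derive all three identities by inserting Lemma~\ref{m1:lemma:2022-524d} into Lemma~\ref{m1:lemma:2022-524c}. Take part~1). With the base point $(z_1,z_2)=(\tfrac z2+\tfrac\tau2-\tfrac12,\,\tfrac z2-\tfrac\tau2+\tfrac12)$, the pair $(z_1+p\tau,\,z_2-p\tau)$ is exactly the argument pair of $\Phi^{(-)[m,\frac12]}$ occurring on the left of Lemma~\ref{m1:lemma:2022-524c}\,1); and Lemma~\ref{m1:lemma:2022-524d}\,1) rewrites $\Phi^{(-)[m,\frac12]}(\tau,z_1+p\tau,z_2-p\tau,0)$ as $q^{mp(p+1)}$ times $\Phi^{(-)[m,\frac12]}(\tau,z_1,z_2,0)$ plus an explicit finite sum of $\theta^{(-)}_{k,m}(\tau,z)$'s together with a $\theta^{(-)}_{m,m}(\tau,z)$ boundary term. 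I would substitute this into the left side of Lemma~\ref{m1:lemma:2022-524c}\,1), cancel $q^{mp(p+1)}$, split the correction supplied by Lemma~\ref{m1:lemma:2022-524d} into its $p$-independent part (which recombines with $\Phi^{(-)[m,\frac12]}(\tau,z_1,z_2,0)$ to form the brace on the left of \eqref{m1:eqn:2022-525a1}) and its $p$-dependent part (which passes to the right side carrying a factor $\theta^{(-)}_{(2p+1)m,m+\frac12}(\tau,0)$), and then multiply through by the normalizing power $q^{-m/4}$ shown in the statement.

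The real content is the $q$-exponent bookkeeping, organized around the elementary identity $\tfrac{m(2p+1)^2}{4}=mp(p+1)+\tfrac m4$, equivalently $\tfrac{m^2(2p+1)^2}{2(2m+1)}-mp(p+1)-\tfrac m4=-\tfrac{m(2p+1)^2}{4(2m+1)}$. First, this kills the $q$-prefactor on the leading $\eta(\tau)^3$-times-theta-quotient term of Lemma~\ref{m1:lemma:2022-524c}\,1), producing the first term on the right of \eqref{m1:eqn:2022-525a1}. Next, in the two double sums I would complete the square through $(m+\tfrac12)j^2+(2p+1)mj=(m+\tfrac12)(j+\tfrac{m(2p+1)}{2m+1})^2-\tfrac{m^2(2p+1)^2}{2(2m+1)}$ and separately rewrite $-\tfrac1{4m}(2mr\pm k)^2\mp\tfrac{2p+1}{2}(2mr\pm k)$ as $-m(r+p+\tfrac{m\pm k}{2m})^2$ plus a constant; using the same identity this constant combines with the residual overall $q$-powers to give precisely the exponents $(m+\tfrac12)(j+\tfrac{m(2p+1)}{2m+1})^2-m(r+p+\tfrac{m\pm k}{2m})^2$ asserted in \eqref{m1:eqn:2022-525a1}. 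The third ($k$-only) sum of Lemma~\ref{m1:lemma:2022-524c}\,1) is, after the same square completion and the observation that $\sum_{j\in\zzz}(-1)^jq^{(m+\frac12)(j+\frac{m(2p+1)}{2m+1})^2}=\theta^{(-)}_{(2p+1)m,m+\frac12}(\tau,0)$, recognized as $\theta^{(-)}_{(2p+1)m,m+\frac12}(\tau,0)$ times a $\theta^{(-)}_{k,m}$-series that exactly cancels one of the $p$-dependent correction terms carried over from Lemma~\ref{m1:lemma:2022-524d}; the two surviving correction terms then match the last two terms of \eqref{m1:eqn:2022-525a1} via the reindexings $q^{-m/4}q^{-\frac{(2mr+k)(2m(r-1)+k)}{4m}}=q^{-m(r+\frac{k-m}{2m})^2}$ and $q^{-m/4}q^{-\frac m4(2r-1)(2r+1)}=q^{-mr^2}$.

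Parts~2) and~3) would be handled identically, with only cosmetic changes. In part~2) the extra sign $(-1)^p$ is carried by the prefactor $(-1)^pq^{mp(p+1)}$ of Lemma~\ref{m1:lemma:2022-524d}\,2), and — as already in Lemma~\ref{m1:lemma:2022-524c}\,2) — the factors $e^{\pi ik}$ are replaced by signs $(-1)^r$ and the $\theta$-types of the leading quotient change (no $(-)$ on the numerator, $(-)$ on the denominator), since the base point $(\tfrac z2+\tfrac\tau2,\,\tfrac z2-\tfrac\tau2)$ has no half-integral shift. In part~3) the scalar prefactor from Lemma~\ref{m1:lemma:2022-524d}\,3) is $q^{mp^2}$, there is no overall $q^{-m/4}$ because the $p$-independent part of the correction is just $\Phi^{(-)[m,\frac12]}(\tau,z_1,z_2,0)$ itself, the controlling identity becomes $\tfrac{2m^2p^2}{2m+1}-mp^2=-\tfrac{mp^2}{2m+1}$, and the $\theta^{(-)}_{m,m}$-sum $\sum_{-p\le r<p}q^{-\frac m4(2r+1)^2}$ is folded into $2\sum_{r=0}^{p-1}q^{-\frac m4(2r+1)^2}$ by the symmetry $(2r+1)^2=(2(-r-1)+1)^2$.

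I expect the only genuine difficulty to be organizational rather than conceptual: one must simultaneously keep track of the telescoped summation ranges $\sum_{0\le r<j}-\sum_{j\le r<0}$ (and their $\le/<$ variants) produced in Lemma~\ref{m1:lemma:2022-524c}, the two sign families $(2mr+k)$ and $(2mr-k)$, the special handling of the extremal index $k=m$ (where $\theta^{(-)}_{m,m}-\theta^{(-)}_{-m,m}=2\theta^{(-)}_{m,m}$ by Lemma~\ref{m1:lemma:2022-526b}, so those terms must be peeled off before completing the square), and the several points where a Gaussian $j$-sum must be re-expressed as $\theta^{(-)}_{\bullet,m+\frac12}(\tau,0)$ through its value at $z=0$ in order for the cancellation against Lemma~\ref{m1:lemma:2022-524d}'s correction terms to become manifest. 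Once the numerical identity $\tfrac{m(2p+1)^2}{4}=mp(p+1)+\tfrac m4$ and its $(2p+1)\mapsto 2p$, $\tfrac m4\mapsto 0$ analogue are recorded, the remainder is mechanical.
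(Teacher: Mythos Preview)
Your proposal is correct and follows exactly the paper's approach: the paper states only that Proposition~\ref{m1:prop:2022-525a} follows ``by Lemma~\ref{m1:lemma:2022-524c} and Lemma~\ref{m1:lemma:2022-524d}'', and your outline supplies precisely the $q$-exponent bookkeeping (completing the squares, the identity $\tfrac{m(2p+1)^2}{4}=mp(p+1)+\tfrac m4$, and the recognition of $\sum_j(-1)^jq^{(m+\frac12)(j+\cdots)^2}$ as a $\theta^{(-)}$-value) that makes this work. Your observation that in part~3) the scalar prefactor coming out of Lemma~\ref{m1:lemma:2022-524b} is $q^{mp^2}$ (via $(-1)^p e^{-2\pi imp}q^{mp^2}=q^{mp^2}$ when $2m$ is odd) is in fact what is needed for the leading $\eta(\tau)^3$-term to appear with no residual $q$-power, and is consistent with the controlling identity $\tfrac{2m^2p^2}{2m+1}-mp^2=-\tfrac{mp^2}{2m+1}$ you record.
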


\section{Calculation of $R_{j,m}^{(\pm)}(\tau, a\tau+b)$}

In order to compute $R_{j,m}^{(\pm)}(\tau, a\tau+b)$, we consider functions 
$P_{j,m}^{(\pm)}(\tau, w)$ and $Q_{j,m}^{(\pm)}(\tau, w)$ defined by 
\begin{subequations}
{\allowdisplaybreaks
\begin{eqnarray}
P_{j,m}^{(\pm)}(\tau, w) &:=& \sum_{k \in \zzz}(\pm1)^k
E\bigg(\Big(j+2mk-2m \frac{{\rm Im}(w)}{{\rm Im}(\tau)}\Big)
\sqrt{\frac{{\rm Im}(\tau)}{m}}\bigg) 
e^{-\frac{\pi i}{2m}(j+2mk)^2\tau+2\pi i(j+2mk)w}
\nonumber
\\[-4mm]
& &
\label{m1:eqn:2022-522b1}
\\[0mm]
Q_{j,m}^{(\pm)}(\tau, w) &:=& \hspace{-5mm}
\sum_{n \equiv j \, {\rm mod} \, 2m} \hspace{-3mm}
(\pm1)^{\frac{n-j}{2m}} {\rm sgn}\Big(n-\frac12-j+2m\Big)
e^{-\frac{\pi in^2}{2m}\tau+2\pi inw}
\nonumber
\\[1mm]
&=&
\Big[\sum_{k \in \zzz_{\geq 0}}-\sum_{k \in \zzz_{<0}}\Big]
(\pm1)^k
e^{-\frac{\pi i}{2m}(j+2mk)^2\tau+2\pi i(j+2mk)w}
\label{m1:eqn:2022-522b2}
\end{eqnarray}}
so that
\begin{equation}
R_{j,m}^{(\pm)}(\tau, w) \, = \, 
P_{j,m}^{(\pm)}(\tau, w) \, + \, Q_{j,m}^{(\pm)}(\tau, w)
\label{m1:eqn:2022-522b3}
\end{equation}
\end{subequations}
Then it is easy to see the following:
\begin{subequations}
{\allowdisplaybreaks
\begin{eqnarray}
P^{(\pm)}_{j+2m,m}(\tau, w) &=& \pm \, P^{(\pm)}_{j,m}(\tau,w)
\label{m1:eqn:2022-523e1}
\\[1mm]
Q^{(\pm)}_{j+2m,m}(\tau, w) &=& \pm \, Q^{(\pm)}_{j,m}(\tau,w)
\, \mp \, 2 \, e^{-\frac{\pi ij^2}{2m}\tau+2\pi ijw}
\label{m1:eqn:2022-523e2}
\end{eqnarray}}
\end{subequations}

\vspace{-3mm}

\noindent
It is also easy to see the following formulas hold for $a, b \in \rrr$ :
\begin{subequations}
{\allowdisplaybreaks
\begin{eqnarray}
& & \hspace{-5mm}
P^{(\pm)}_{j,m}(\tau, a\tau+b) \,\ = \,\ 
e^{2\pi ijb}\sum\limits_{k \in \zzz} \, 
(\pm1)^k e^{4\pi imbk} 
\nonumber
\\[-1mm]
& & \hspace{20mm}
\times \,\ 
E \bigg(\big(j+2m(k-a)\big)
\sqrt{\frac{{\rm Im}(\tau)}{m}}\bigg) 
e^{-\frac{1}{4m}(j+2mk)(j-2m(2a-k))}
\label{m1:eqn:2022-523f1}
\\[1mm]
& & \hspace{-5mm}
Q^{(\pm)}_{j,m}(\tau, a\tau+b) \,\ = \,\ 
e^{2\pi ijb}\sum\limits_{k \in \zzz}
(\pm1)^k e^{4\pi imbk} 
{\rm sgn}\Big(2m(k+1)-\frac12\Big) \, 
e^{-\frac{1}{4m}(j+2mk)(j-2m(2a-k))}
\nonumber
\\[-4mm]
& &
\label{m1:eqn:2022-523f2}
\end{eqnarray}}
\end{subequations}
We claim the following:

\begin{lemma}
\label{m1:lemma:2022-523b}
Let $m \in \frac12 \nnn$,  $j \in \frac12 \zzz$,  
$a \in \frac12 \zzz_{\geq 0}$ and $b \in \rrr$
such that $4mb \in \zzz$. Then
\begin{enumerate}
\item[{\rm 1)}]
\begin{enumerate}
\item[{\rm (i)}] \, $P^{(+)}_{j,m}(\tau, a\tau+b)
\, + \, e^{4\pi ijb}e^{8\pi imab} P^{(+)}_{2m-j,m}(\tau, a\tau+b)
\,\ = \,\ 0$
\item[{\rm (ii)}] \, $P^{(-)}_{j,m}(\tau, a\tau+b)
\, - \, (-1)^{2a}e^{4\pi ijb}e^{8\pi imab} P^{(-)}_{2m-j,m}(\tau, a\tau+b)
\,\ = \,\ 0$
\end{enumerate}
\item[{\rm 2)}]
\begin{enumerate}
\item[{\rm (i)}]  \, $Q^{(+)}_{j,m}(\tau, a\tau+b)
\, + \, e^{4\pi ijb}e^{8\pi imab} Q^{(+)}_{2m-j,m}(\tau, a\tau+b)$
$$
= \,\ 2 \, e^{2\pi ijb}e^{8\pi imab}
\sum_{\substack{k \in \zzz \\[1mm] 1 \leq k \leq 2a}}
e^{4\pi imbk}
q^{-\frac{1}{4m}(j+2m(2a-k))(j-2mk)}
$$
\item[{\rm (ii)}]  \, $Q^{(-)}_{j,m}(\tau, a\tau+b)
\, - \, (-1)^{2a}e^{4\pi ijb}e^{8\pi imab} Q^{(-)}_{2m-j,m}(\tau, a\tau+b)$
$$
= \,\ 2 \, (-1)^{2a}e^{2\pi ijb}e^{8\pi imab}
\sum_{\substack{k \in \zzz \\[1mm]
1 \leq k \leq 2a}}
(-1)^k e^{4\pi imbk}
q^{-\frac{1}{4m}(j+2m(2a-k))(j-2mk)}
$$
\end{enumerate}
\end{enumerate}
\end{lemma}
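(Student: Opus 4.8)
The plan is to compute directly from the closed forms \eqref{m1:eqn:2022-523f1} and \eqref{m1:eqn:2022-523f2}, transforming in each of the four identities the summand of the ``$2m-j$'' term by the index substitution $k \rightsquigarrow 2a-1-k$; this is legitimate because $2a \in \zzz_{\geq 0}$, and after it the ``$2m-j$'' summand becomes, term by term, a scalar multiple of the ``$j$'' summand, plus (in the $Q$ case only) a finite remainder. Two elementary facts will be used throughout. First, since $4mb \in \zzz$ we have $e^{4\pi imbk} = e^{-4\pi imbk}$ for all $k \in \zzz$, both sides being $(-1)^{(4mb)k}$; and since $a \in \tfrac12\zzz$ we have $e^{16\pi imab} = e^{\pi i(4mb)(4a)} = 1$ because $4a$ is even. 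Second, writing $u := j+2mk$, the last factor appearing in both \eqref{m1:eqn:2022-523f1} and \eqref{m1:eqn:2022-523f2} equals $q^{-u^2/(4m)+au}$, and the replacement $(j,k) \rightsquigarrow (2m-j,\, 2a-1-k)$ sends $u$ to $4ma-u$, hence leaves $-u^2/(4m)+au$ unchanged; at the same time it sends the argument $\big(j+2m(k-a)\big)\sqrt{{\rm Im}(\tau)/m}$ of $E$ to its negative.

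For part 1) I would apply this substitution to \eqref{m1:eqn:2022-523f1} for $P^{(\pm)}_{2m-j,m}(\tau, a\tau+b)$. By the invariance just noted the $q$-factors of the $k$-th terms match; the $E$-factor picks up a sign $-1$ by oddness of $E$; the factor $(\pm1)^{2a-1-k}$ contributes $1$ in the $(+)$ case and $-(-1)^{2a}(-1)^{k}$ in the $(-)$ case; and the accumulated exponential prefactors collapse, via $e^{4\pi imbk}=e^{-4\pi imbk}$ and $e^{16\pi imab}=1$, to exactly the prefactor of the $k$-th term of $P^{(\pm)}_{j,m}$. Tracking the signs then yields $e^{4\pi ijb}e^{8\pi imab}P^{(+)}_{2m-j,m} = -\,P^{(+)}_{j,m}$ and $(-1)^{2a}e^{4\pi ijb}e^{8\pi imab}P^{(-)}_{2m-j,m} = P^{(-)}_{j,m}$, which are the two claims. (Formulas \eqref{m1:eqn:2022-523e1}--\eqref{m1:eqn:2022-523e2} give an alternative route, but the direct substitution is cleanest.)

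For part 2) the same substitution in \eqref{m1:eqn:2022-523f2} produces the same matching of $q$-factors and prefactors; the role played by the oddness of $E$ is now played by the behaviour of $\,{\rm sgn}\big(2m(k+1)-\tfrac12\big)$, which for every $m \in \tfrac12\nnn$ equals $1$ when $k \geq 0$ and $-1$ when $k < 0$. Consequently, in $Q^{(\pm)}_{j,m} \mp (\pm1)^{2a}e^{4\pi ijb}e^{8\pi imab}Q^{(\pm)}_{2m-j,m}$ the $k$-th term carries the factor $\,{\rm sgn}\big(2m(k+1)-\tfrac12\big) + {\rm sgn}\big(2m(2a-k)-\tfrac12\big)$, which equals $2$ precisely for $0 \leq k \leq 2a-1$ and vanishes otherwise. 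Thus the left-hand side becomes $2\,e^{2\pi ijb}\sum_{0 \leq k \leq 2a-1}(\pm1)^{k}e^{4\pi imbk}\,q^{-\frac{1}{4m}(j+2mk)(j-2m(2a-k))}$ (with an overall factor $(-1)^{2a}$ in the $(-)$ case); a final reindexing $k \mapsto 2a-k$, once more using $e^{4\pi imbk}=e^{-4\pi imbk}$ and $e^{16\pi imab}=1$, turns this into exactly the finite sum on the right-hand side of 2)(i) and 2)(ii).

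All computations are routine; the only genuine care is in the sign bookkeeping --- the interplay of $(\pm1)^{2a}$, $(-1)^{4a}$, $e^{4\pi imbk}$ and $e^{16\pi imab}$ under the hypotheses $4mb\in\zzz$ and $a \in \tfrac12\zzz_{\geq 0}$ --- and in verifying that ${\rm sgn}\big(2m(k+1)-\tfrac12\big) + {\rm sgn}\big(2m(2a-k)-\tfrac12\big)$ really does vanish outside $0 \leq k \leq 2a-1$ (for which one uses that $\tfrac{1}{4m} \leq \tfrac12 < 1$, so the sign is governed by $k+1 \geq 1$ resp.\ $2a-k \geq 1$). This is where I expect the main, if minor, obstacle to lie.
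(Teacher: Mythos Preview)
Your argument is correct. Both your approach and the paper's rest on the same observation --- that under a suitable reindexing of the $k$-sum the $q$-exponent $-u^2/(4m)+au$ with $u=j+2mk$ is invariant while the $E$-factor (resp.\ the ${\rm sgn}$-factor) changes in a controlled way --- but the organization differs. The paper first replaces $j\to -j$ and $k\to 2a-k$ to obtain a relation between $P^{(\pm)}_{j,m}$ and $P^{(\pm)}_{-j,m}$ (resp.\ $Q^{(\pm)}_{j,m}$ and $Q^{(\pm)}_{-j,m}$), and only afterwards invokes the periodicity formulas \eqref{m1:eqn:2022-523e1}--\eqref{m1:eqn:2022-523e2} to pass from index $-j$ to $2m-j$; for part 2) this entails splitting $Q$ into its $k\ge 0$ and $k<0$ halves and tracking several intermediate sums. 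Your single substitution $k\to 2a-1-k$ applied directly to the $2m-j$ term folds the periodicity shift into the reindexing, so that the comparison is term-by-term from the outset; in part 2) this replaces the paper's case-by-case bookkeeping with the single identity ${\rm sgn}\big(2m(k+1)-\tfrac12\big)+{\rm sgn}\big(2m(2a-k)-\tfrac12\big)=2\cdot\mathbf 1_{0\le k\le 2a-1}$. The content is the same, but your packaging is a little tighter. One small point worth recording explicitly in a write-up: the cancellation of the stray factor $e^{\pm 4\pi imb}$ coming from $k\to 2a-1-k$ against the one coming from $e^{2\pi i(2m-j)b}=e^{4\pi imb}e^{-2\pi ijb}$, which you use implicitly when you say the prefactors ``collapse''.
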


\begin{proof} 1) Letting $j \rightarrow -j$ and 
$k=2a-k'$ in \eqref{m1:eqn:2022-523f1}, we have 
{\allowdisplaybreaks
\begin{eqnarray}
& & \hspace{-3mm}
P_{-j,m}^{(\pm)}(\tau, a\tau+b) \,\ = \,\ 
e^{-2\pi ijb} \, e^{8\pi imab}
\nonumber
\\[1mm]
& &
\times 
\sum_{k' \in \zzz} (\pm 1)^{2a-k'} 
e^{-4\pi imbk'} 
E\bigg(\big(-j+2m(a-k') \big) 
\sqrt{\frac{{\rm Im}(\tau)}{m}} \bigg)
q^{- \, \frac{1}{4m} (-j+2m(2a-k')) (-j-2mk')}
\nonumber
\\[1mm]
&=&
- \, (\pm 1)^{2a} \, e^{-2\pi ijb} \, e^{8\pi imab}
\nonumber
\\[0mm]
& &
\times \, 
\underbrace{\sum_{k' \in \zzz} (\pm 1)^{k'} e^{4\pi imbk'}
E\bigg(\big(j+2m(k'-a) \big) \,
\sqrt{\frac{{\rm Im}(\tau)}{m}} \bigg) \, 
q^{- \, \frac{1}{4m} \, (j-2m(2a-k')) (j+2mk')}}_{
\substack{|| \\[0mm] {\displaystyle 
e^{-2\pi ijb} \, P_{j,m}^{(\pm)}(\tau, a\tau+b) 
}}}
\nonumber
\\[0mm]
&=&
- \, (\pm 1)^{2a} \, e^{-4\pi ijb} \, e^{8\pi imab} \, 
P_{j,m}^{(\pm)}(\tau, a\tau+b) 
\label{m1:eqn:2022-915a}
\end{eqnarray}}
Then we have
{\allowdisplaybreaks
\begin{eqnarray*}
P_{j,m}^{(\pm)}(\tau, a\tau+b) 
&=&
- \, (\pm 1)^{2a} \, e^{4\pi ijb} \, 
\underbrace{e^{-8\pi imab}}_{\substack{|| \\[0mm] 
{\displaystyle e^{8\pi imab}
}}} 
P_{-j,m}^{(\pm)}(\tau, a\tau+b)
\\[1mm]
&=&
\mp \, (\pm1)^{2a} \, e^{4\pi ijb} \, e^{8\pi imab} \, 
P_{2m-j,m}^{(\pm)}(\tau, a\tau+b)
\end{eqnarray*}}
by \eqref{m1:eqn:2022-523e1}, proving 1).

\medskip

\noindent
2) Decomposing the sum $\sum_{k \in \zzz}$ into two parts
in the RHS of \eqref{m1:eqn:2022-523f2}, we have
$$
Q_{j,m}^{(\pm)}(\tau, a\tau+b) \,\ = \,\ 
{\rm (I)}_j^{(\pm)} \, + \, {\rm (II)}_j^{(\pm)}
$$
where
{\allowdisplaybreaks
\begin{eqnarray*}
{\rm (I)}_j^{(\pm)} &:=&
e^{2\pi ijb} 
\sum_{k \, \in \zzz_{\geq 0}} \, (\pm 1)^k \, e^{4\pi imbk} \, 
q^{- \, \frac{1}{4m} \, (j+2mk) \big(j-2m(2a-k)\big)}
\\[0mm]
{\rm (II)}_j^{(\pm)} &:=&- \, 
e^{2\pi ijb} 
\sum_{k \, \in \zzz_{< 0}} \, (\pm 1)^k \, e^{4\pi imbk} \, 
q^{- \, \frac{1}{4m} \, (j+2mk) \big(j-2m(2a-k)\big)}
\end{eqnarray*}}
First we compute ${\rm (II)}_j^{(\pm)}$ by putting $k=2a-k'$:
\begin{subequations}
{\allowdisplaybreaks
\begin{eqnarray}
{\rm (II)}_j^{(\pm)} &=&
- \, e^{2\pi ijb} 
\sum_{\substack{k' \in \zzz \\[1mm] k' >2a}} 
(\pm 1)^{k'+2a}  
\underbrace{e^{4\pi imb(2a-k')}}_{\substack{|| \\[0mm] 
{\displaystyle 
e^{8\pi imb2a}e^{4\pi imbk'}
}}}  
q^{- \, \frac{1}{4m} \, (j+2m(2a-k')) (j-2mk')}
\nonumber
\\[0mm]
&=&
- \, (\pm 1)^{2a} \, e^{2\pi ijb} \, e^{8\pi imab}
\underbrace{\sum_{k' \in \zzz_{\geq 0}} 
(\pm 1)^{k'} \, e^{4\pi imbk'} \, 
q^{- \, \frac{1}{4m} \, (-j-2m(2a-k')) (-j+2mk')}}_{
\substack{|| \\[0mm] {\displaystyle 
e^{2\pi ijb} \, \times \, {\rm (I)}_{-j}^{(\pm)}
}}}
\nonumber
\\[0mm]
& &
+ \,\ (\pm 1)^{2a} \, e^{2\pi ijb} \, e^{8\pi imab}
\sum_{\substack{k' \in \zzz \\[1mm] 0 \leq k' \leq 2a}} 
(\pm 1)^{k'} 
e^{4\pi imbk'} \, q^{- \, \frac{1}{4m} \, (-j-2m(2a-k')) (-j+2mk')}
\nonumber
\\[1mm]
&=&
- \, (\pm 1)^{2a} \, e^{4\pi ijb} e^{8\pi imab} \times {\rm (I)}_{-j}^{(\pm)}
\nonumber
\\[1mm]
& &+ \,\ 
(\pm 1)^{2a} \, e^{2\pi ijb} e^{8\pi imab} \hspace{-3mm}
\sum_{\substack{k \, \in \zzz \\[1mm] 0 \, \leq \, k \, \leq \, 2a}} 
\hspace{-3mm} 
(\pm 1)^{k}e^{4\pi imbk} q^{- \frac{1}{4m} (j+2m(2a-k)) (j-2mk)}
\label{m1:eqn:2022-602a1}
\\[3mm]
{\rm (II)}_{-j}^{(\pm)} \hspace{-2mm} &=& \text{
letting $j\rightarrow -j$ in \eqref{m1:eqn:2022-602a1}}
\nonumber
\\[1mm]
&=& 
- \, (\pm 1)^{2a} \, e^{-4\pi ijb} e^{8\pi imab} \times {\rm (I)}_{j}^{(\pm)}
\nonumber
\\[1mm]
& &
+ \,\ (\pm 1)^{2a} \, e^{-2\pi ijb} e^{8\pi imab} \hspace{-3mm}
\sum_{\substack{k \, \in \zzz \\[1mm] 0 \, \leq \, k \, \leq \, 2a}} 
\hspace{-3mm} (\pm 1)^{k}
e^{4\pi imbk} q^{- \frac{1}{4m} (-j+2m(2a-k)) (-j-2mk)}
\nonumber
\\[3mm]
&=&
- \, (\pm 1)^{2a} \, e^{-4\pi ijb} e^{8\pi imab} \times {\rm (I)}_{j}^{(\pm)}
\nonumber
\\[1mm]
& &
+ \,\ (\pm 1)^{2a} \, e^{-2\pi ijb} e^{8\pi imab} \hspace{-3mm} 
\sum_{\substack{k \, \in \zzz \\[1mm] 0 \, \leq \, k \, \leq \, 2a}} 
\hspace{-3mm} (\pm 1)^{k}
e^{4\pi imbk} q^{- \frac{1}{4m} (j-2m(2a-k)) (j+2mk)}
\label{m1:eqn:2022-602a2}
\end{eqnarray}}
We note that the formula \eqref{m1:eqn:2022-602a1} is rewritten 
as follows:
{\allowdisplaybreaks
\begin{eqnarray}
{\rm (I)}_{-j}^{(\pm)} &=& 
- \, (\pm 1)^{2a} \, e^{-4\pi ijb} 
\underbrace{e^{-8\pi imab}}_{\substack{|| \\[0mm] 
{\displaystyle e^{8\pi imab}
}}} 
\times {\rm (II)}_j^{(\pm)}
\nonumber
\\[0mm]
& &
+ \,\ e^{-2\pi ijb} \hspace{-3mm}
\sum_{\substack{k \, \in \zzz \\[1mm] 0 \, \leq \, k \, \leq \, 2a}} 
\hspace{-3mm} (\pm 1)^{k}
e^{4\pi imbk} q^{- \frac{1}{4m} (j+2m(2a-k)) (j-2mk)}
\label{m1:eqn:2022-602a3}
\end{eqnarray}}
\end{subequations}
Then, by \eqref{m1:eqn:2022-602a2} and \eqref{m1:eqn:2022-602a3},
we have
\begin{subequations}
{\allowdisplaybreaks
\begin{eqnarray}
& & \hspace{-10mm}
Q_{-j,m}^{(\pm)}(\tau, a\tau+b) \, = \, 
{\rm (I)}_{-j}^{(\pm)} + {\rm (II)}_{-j}^{(\pm)}
\nonumber
\\[2mm]
&=& 
- \, (\pm 1)^{2a} \, e^{-4\pi ijb} 
\underbrace{e^{8\pi imab}}_{\substack{|| \\[0mm] {\displaystyle 
e^{-8\pi imab}
}}} \,\ \times \,\ \big\{ \hspace{-2mm}
\underbrace{{\rm (I)}_{j}^{(\pm)} + {\rm (II)}_{j}^{(\pm)}
}_{\substack{|| \\[-1mm] {\displaystyle \hspace{5mm}
Q_{j,m}^{(\pm)}(\tau, a\tau+b)
}}} \hspace{-2mm}
\big\}
\nonumber
\\[1mm]
& &
+ \,\ 
e^{-2\pi ijb} \hspace{-2mm}
\sum_{\substack{k \, \in \zzz \\[1mm] 0 \, \leq \, k \, \leq \, 2a}} 
\hspace{-2mm} (\pm 1)^{k}
e^{4\pi imbk} q^{- \frac{1}{4m} (j+2m(2a-k)) (j-2mk)}
\nonumber
\\[1mm]
& &
+ \,\ (\pm 1)^{2a} \, e^{-2\pi ijb} e^{8\pi imab} \hspace{-2mm}
\sum_{0 \, \leq \, k \, \leq \, 2a} \hspace{-2mm} (\pm 1)^{k}
e^{4\pi imbk} q^{- \frac{1}{4m} (j-2m(2a-k)) (j+2mk)}
\label{m1:eqn:2022-602b1}
\end{eqnarray}}
namely,
{\allowdisplaybreaks
\begin{eqnarray}
& & \hspace{-10mm}
Q_{j,m}^{(\pm)}(\tau, a\tau+b)
\,\ = \,\ 
- \, (\pm 1)^{2a} \, e^{4\pi ijb} \, e^{8\pi imab} \, 
Q_{-j,m}^{(\pm)}(\tau, a\tau+b)
\nonumber
\\[3mm]
& &
+ \,\ (\pm 1)^{2a} \, 
e^{2\pi ijb} \, e^{8\pi imab}\hspace{-3mm}
\sum_{\substack{k , \in \zzz \\[1mm] 0 \, \leq \, k \, \leq \, 2a}} 
\hspace{-3mm} (\pm 1)^{k}
e^{4\pi imbk} q^{- \frac{1}{4m} (j+2m(2a-k)) (j-2mk)}
\nonumber
\\[1mm]
& &
+ \,\ e^{2\pi ijb}  \hspace{-3mm}
\sum_{\substack{k \, \in \zzz \\[1mm] 0 \, \leq \, k \, \leq \, 2a}} 
\hspace{-3mm} (\pm 1)^{k}
e^{4\pi imbk} q^{- \frac{1}{4m} (j-2m(2a-k)) (j+2mk)}
\label{m1:eqn:2022-602b2}
\\[0mm]
&=&
- \, (\pm 1)^{2a} \, e^{4\pi ijb} \, e^{8\pi imab} \, 
Q_{-j,m}^{(\pm)}(\tau, a\tau+b)
\nonumber
\\[1mm]
& &
+ \,\ (\pm 1)^{2a} \, 
e^{2\pi ijb} \, e^{8\pi imab}\hspace{-3mm}
\sum_{\substack{k \, \in \zzz \\[1mm] 1 \, \leq \, k \, \leq \, 2a}} 
\hspace{-3mm} (\pm 1)^{k}
e^{4\pi imbk} q^{- \frac{1}{4m} (j+2m(2a-k)) (j-2mk)}
\nonumber
\\[1mm]
& &
+ \,\ e^{2\pi ijb}  \hspace{-3mm}
\sum_{\substack{k \, \in \zzz \\[1mm] 0 \, \leq \, k \, < \, 2a}} 
\hspace{-3mm} (\pm 1)^{k}
e^{4\pi imbk} q^{- \frac{1}{4m} (j-2m(2a-k)) (j+2mk)}
\nonumber
\\[0mm]
& &+ \,\ 2 \, (\pm 1)^{2a} \, 
e^{2\pi ijb} \, e^{8\pi imab} \, q^{-\frac{1}{4m}j(j+4ma)}
\label{m1:eqn:2022-602b3}
\end{eqnarray}}
\end{subequations}
Note that, by \eqref{m1:eqn:2022-523e2}, we have 
{\allowdisplaybreaks
\begin{eqnarray*}
Q_{-j,m}^{(\pm)}(\tau,a\tau+b) &=& \pm \, 
Q_{2m-j,m}^{(\pm)}(\tau,a\tau+b)
\,\ + \,\ 2 \, e^{-\frac{\pi i}{2m}j^2\tau-2\pi ij(a\tau+b)}
\\[3mm]
&=& 
\pm \, Q_{2m-j,m}^{(\pm)}(\tau,a\tau+b) 
\,\ + \,\ 
2 \, e^{-2\pi ijb} q^{-\frac{1}{4m}j(j+4ma)}
\end{eqnarray*}}
Substituting this equation into \eqref{m1:eqn:2022-602b3}, we have 
{\allowdisplaybreaks
\begin{eqnarray*}
& & \hspace{-10mm}
Q_{j,m}^{(\pm)}(\tau,a\tau+b)
\,\ = \,\ 
\mp \, (\pm 1)^{2a} \,  e^{4\pi ijb} \, e^{8\pi imab} \, 
Q_{2m-j,m}^{(\pm)}(\tau,a\tau+b) 
\\[1mm]
& &
+ \,\ \underbrace{(\pm 1)^{2a} \, e^{2\pi ijb} \, e^{8\pi imab} 
\sum_{\substack{k \, \in \zzz \\[1mm] 1 \, \leq \, k \, \leq \, 2a}} 
\hspace{-3mm} (\pm 1)^k \, 
e^{4\pi imbk} q^{- \frac{1}{4m} (j+2m(2a-k)) (j-2mk)}}_{\rm (III)}
\\[1mm]
& &
+ \,\ \underbrace{e^{2\pi ijb} 
\sum_{\substack{k \, \in \zzz \\[1mm] 0 \, \leq \, k \, \leq \, 2a-1}} 
\hspace{-3mm} (\pm 1)^k \, 
e^{4\pi imbk} q^{- \frac{1}{4m} (j-2m(2a-k)) (j+2mk)}}_{\rm (IV)}
\end{eqnarray*}}
We compute (IV) by puttong $k=2a-k'$ as follows:
{\allowdisplaybreaks
\begin{eqnarray*}
{\rm (IV)} &=& e^{2\pi ijb} 
\sum_{\substack{k' \in \zzz \\[1mm] 1 \, \leq \, k' \, \leq \, 2a}} 
\hspace{-3mm} (\pm 1)^{2a-k'} \, 
e^{4\pi imb(2a-k')} q^{- \frac{1}{4m} (j+2m(2a-k')) (j-2mk')}
\\[3mm]
&=&
(\pm 1)^{2a} \, e^{2\pi ijb} \, e^{8\pi imab}
\sum_{\substack{k' \in \zzz \\[1mm] 1 \, \leq \, k' \, \leq \, 2a}} 
\hspace{-3mm} (\pm 1)^{k'} \, 
\underbrace{e^{-4\pi imbk'}}_{\substack{|| \\[0mm] 
{\displaystyle e^{4\pi imbk'}
}}} q^{- \frac{1}{4m} (j+2m(2a-k')) (j-2mk')}
\\[-5mm]
&=& {\rm (III)}
\end{eqnarray*}}
Then we have 
{\allowdisplaybreaks
\begin{eqnarray*}
& & \hspace{-10mm}
Q_{j,m}^{(\pm)}(\tau,a\tau+b) \,\ = \,\ \mp \, 
(\pm 1)^{2a} \,  e^{4\pi ijb} \, e^{8\pi imab} \, 
Q_{2m-j,m}^{(\pm)}(\tau,a\tau+b) 
\\[1mm]
& &
+ \,\ 2 \, (\pm 1)^{2a} \, e^{2\pi ijb} \, e^{8\pi imab} 
\sum_{\substack{k \, \in \zzz \\[1mm] 1 \, \leq \, k \, \leq \, 2a}} 
\hspace{-3mm} (\pm 1)^k \, 
e^{4\pi imbk} q^{- \frac{1}{4m} (j+2m(2a-k)) (j-2mk)}
\end{eqnarray*}}
Thus the proof of 2) is completed.
\end{proof}

\medskip

From this lemma, we obtain the following: 

\vspace{0mm}

\begin{prop}
\label{m1:prop:2022-524b}
Let $m \in \frac12 \nnn$, $s, b \in \frac12 \zzz$, 
$a \in \frac12 \zzz_{\geq 0}$ and $j \in s+\zzz$. 
\begin{enumerate}
\item[{\rm 1)}] \quad If \quad 
$e^{4\pi isb} e^{8\pi imab} \, = \, -1$, \quad then
{\allowdisplaybreaks
\begin{eqnarray*}
& & \hspace{-15mm}
R^{(+)}_{j,m}(\tau, a\tau+b)-R^{(+)}_{2m-j,m}(\tau, a\tau+b)
\\[2mm]
&=&
-2 \, e^{2\pi ijb}e^{4\pi isb}
\sum_{\substack{k \in \zzz \\[1mm] 1 \leq k \leq 2a}}
e^{4\pi imbk}
q^{-\frac{1}{4m}(j+2m(2a-k))(j-2mk)}
\end{eqnarray*}}
\item[{\rm 2)}] \quad If 

\vspace{-10.8mm}

\begin{equation} \hspace{-80mm}
(-1)^{2a} e^{4\pi isb} e^{8\pi imab} \, = \, -1
\label{m1:eqn:2022-524d1}
\end{equation}
then

\vspace{-10mm}

{\allowdisplaybreaks
\begin{eqnarray}
& & \hspace{-15mm}
R^{(-)}_{j,m}(\tau, a\tau+b)+R^{(-)}_{2m-j,m}(\tau, a\tau+b)
\nonumber
\\[2mm]
&=&
-2 \, e^{2\pi ijb}e^{4\pi isb}
\sum_{\substack{k \in \zzz \\[1mm] 1 \leq k \leq 2a}}
(-1)^k
e^{4\pi imbk}
q^{-\frac{1}{4m}(j+2m(2a-k))(j-2mk)}
\label{m1:eqn:2022-524d2}
\end{eqnarray}}
\end{enumerate}
\end{prop}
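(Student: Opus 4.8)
The plan is to use the splitting $R^{(\pm)}_{j,m}=P^{(\pm)}_{j,m}+Q^{(\pm)}_{j,m}$ of \eqref{m1:eqn:2022-522b3} and then simply add the two halves of Lemma \ref{m1:lemma:2022-523b}; the mathematical content of the Proposition is already contained in that Lemma, so what remains is only to reconcile its hypotheses with the single condition imposed here on $s$ and $b$ and to collapse the resulting exponential prefactors.

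First I would record the relevant congruence facts. Since $b\in\frac12\zzz$ and $m\in\frac12\nnn$ we have $2b\in\zzz$ and $2m\in\nnn$, so $4mb=(2m)(2b)\in\zzz$ and Lemma \ref{m1:lemma:2022-523b} applies. Since $j\in s+\zzz$ and $2b\in\zzz$, the product $(j-s)(2b)$ is an integer, hence $e^{4\pi ijb}=e^{4\pi isb}e^{2\pi i(j-s)(2b)}=e^{4\pi isb}$; likewise $4sb=(2s)(2b)\in\zzz$ gives $e^{8\pi isb}=1$, so $e^{-4\pi isb}=e^{4\pi isb}$. Consequently the prefactor $e^{4\pi ijb}e^{8\pi imab}$ occurring throughout Lemma \ref{m1:lemma:2022-523b} coincides with $e^{4\pi isb}e^{8\pi imab}$, and the hypothesis of part 1) (respectively \eqref{m1:eqn:2022-524d1} in part 2)) says exactly that this prefactor (respectively $(-1)^{2a}$ times it) equals $-1$.

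With this identification part 1) is immediate: Lemma \ref{m1:lemma:2022-523b} 1)(i) becomes $P^{(+)}_{j,m}(\tau,a\tau+b)-P^{(+)}_{2m-j,m}(\tau,a\tau+b)=0$, Lemma \ref{m1:lemma:2022-523b} 2)(i) becomes
\[
Q^{(+)}_{j,m}(\tau,a\tau+b)-Q^{(+)}_{2m-j,m}(\tau,a\tau+b)
=2\,e^{2\pi ijb}e^{8\pi imab}\!\!\sum_{\substack{k\in\zzz\\ 1\le k\le 2a}}\!\! e^{4\pi imbk}q^{-\frac{1}{4m}(j+2m(2a-k))(j-2mk)},
\]
and adding these, using $e^{8\pi imab}=-e^{-4\pi isb}=-e^{4\pi isb}$ (from the hypothesis together with $e^{8\pi isb}=1$), yields the asserted formula. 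Part 2) runs identically with the $(-1)^{2a}$ twist: Lemma \ref{m1:lemma:2022-523b} 1)(ii) gives $P^{(-)}_{j,m}+P^{(-)}_{2m-j,m}=0$ at $a\tau+b$, Lemma \ref{m1:lemma:2022-523b} 2)(ii) gives the corresponding $Q^{(-)}$-sum with an extra $(-1)^k$ inside, and adding and simplifying $(-1)^{2a}e^{8\pi imab}=-e^{4\pi isb}$ produces \eqref{m1:eqn:2022-524d2}. I do not expect any serious obstacle here: the real work was done in Lemma \ref{m1:lemma:2022-523b} (the reindexing of the $P$- and $Q$-sums), and the only step requiring attention is the phase bookkeeping of the second paragraph, namely checking that the conditions on $s$ and $b$ alone force all the exponential factors — which a priori also involve $j$, $a$ and $m$ — to collapse to the clean form appearing in the conclusion.
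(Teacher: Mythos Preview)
Your proposal is correct and follows essentially the same approach as the paper: both use the decomposition $R^{(\pm)}_{j,m}=P^{(\pm)}_{j,m}+Q^{(\pm)}_{j,m}$ from \eqref{m1:eqn:2022-522b3}, invoke Lemma~\ref{m1:lemma:2022-523b}, and then simplify the exponential prefactors via $e^{4\pi ijb}=e^{4\pi isb}\in\{\pm1\}$. Your write-up is in fact more detailed than the paper's, which compresses the phase bookkeeping into a single line.
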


\begin{proof} Under this setting, we have \,\ 
$e^{4\pi isb}=e^{4\pi ijb} \in \{\pm 1\}$.
So the condition 
$$
(\pm 1)^{2a} e^{4\pi isb} e^{8\pi imab} \, = \, -1
$$
means that 
$$
(\pm 1)^{2a} \, e^{8\pi imab} \,\ = \,\ - \, e^{4\pi isb}
$$
Then we have 
\begin{equation}
(\pm 1)^{2a}e^{2\pi ijb}e^{8\pi imab} \, = \, 
- \, e^{2\pi ijb} \, e^{4\pi isb}
\label{m1:eqn:2022-528c}
\end{equation}
and the formula \eqref{m1:eqn:2022-524d2} follows from 
\eqref{m1:eqn:2022-522b3} and Lemma \ref{m1:lemma:2022-523b} 
and \eqref{m1:eqn:2022-528c}.
\end{proof}

\medskip

We note that this Proposition \ref{m1:prop:2022-524b} gives the following:

\begin{note}
\label{m1:note:2022-524a}
In the case when we consider $R^{(-)}_{j,m}$ for
$m \in \frac12 \nnn_{\rm odd}$ and $s=\frac12$,
\begin{enumerate}
\item[{\rm 1)}] the condition \eqref{m1:eqn:2022-524d1} is satisfied 
if $(a,b)$ is $(\frac12, -\frac12)$ or $(\frac12, 0)$ 
or $(0, -\frac12)$.
\item[{\rm 2)}] $R^{(-)}_{j,m}(\tau, a\tau+b)+
R^{(-)}_{2m-j,m}(\tau, a\tau+b)$ 
is as follows for $j \in \frac12+\zzz$:
\begin{enumerate}
\item[{\rm (i)}] if \, $(a,b)=(\frac12, -\frac12)$, \,\ 
$R^{(-)}_{j,m}(\tau, a\tau+b)+R^{(-)}_{2m-j,m}(\tau, a\tau+b)
\, = \, 
-2 \, e^{\pi ij}q^{-\frac{1}{4m}j(j-2m)}$
\item[{\rm (ii)}] if \, $(a,b)=(\frac12, 0)$, \hspace{4mm} 
$R^{(-)}_{j,m}(\tau, a\tau+b)+R^{(-)}_{2m-j,m}(\tau, a\tau+b)
\, = \, 2 \, q^{-\frac{1}{4m}j(j-2m)}$
\item[{\rm (iii)}] if \, $(a,b)=(0, -\frac12)$, \,\ 
$R^{(-)}_{j,m}(\tau, a\tau+b)+R^{(-)}_{2m-j,m}(\tau, a\tau+b)
\, = \, 0$
\end{enumerate}
\end{enumerate}
\end{note}

\section{Modified function $\widetilde{\Phi}^{(-)[m,\frac12]}$}

In this section we compute $\Phi^{(-)[m, \frac12]}_{\rm add}
(\tau, \frac{z}{2}+a\tau+b, \frac{z}{2}-a\tau-b ,0)$
in the case when $m \in \frac12 \nnn_{\rm odd}$ and 
$(a,b)= (\frac12, -\frac12)$, $(\frac12,0)$ or $(0, -\frac12)$.

\begin{prop}
\label{m1:prop:2022-524c}
Let $m \in \frac12 \nnn_{\rm odd}$. Then
\begin{enumerate}
\item[{\rm 1)}] $\Phi^{(-)[m, \frac12]}_{\rm add}(\tau, \, 
\frac{z}{2}+\frac{\tau}{2}-\frac12, \, 
\frac{z}{2}-\frac{\tau}{2}+\frac12, \, 0)$
$$
= \,\ 
\sum_{\substack{k \in \frac12 \zzz_{\rm odd} \\[1mm] 0 < k<m}}
e^{\pi ik}
q^{-\frac{1}{4m}k(k-2m)}
\big[\theta_{k,m}^{(-)}-\theta_{-k,m}^{(-)}\big](\tau, z)
+
e^{\pi im} q^{\frac{m}{4}} \theta_{m,m}^{(-)}(\tau,z)
$$
\item[{\rm 2)}] $\Phi^{(-)[m, \frac12]}_{\rm add}(\tau, \, 
\frac{z}{2}+\frac{\tau}{2}, \, 
\frac{z}{2}-\frac{\tau}{2}, \, 0)
= \, - \hspace{-2mm}
\sum\limits_{\substack{k \in \frac12 \zzz_{\rm odd} \\[1mm] 0 < k<m}}
\hspace{-2mm}
q^{-\frac{1}{4m}k(k-2m)}
\big[\theta_{k,m}^{(-)}-\theta_{-k,m}^{(-)}\big](\tau, z)
- 
q^{\frac{m}{4}} \theta_{m,m}^{(-)}(\tau,z)$
\item[{\rm 3)}] $\Phi^{(-)[m, \frac12]}_{\rm add}(\tau, \, 
\frac{z}{2}-\frac12, \, 
\frac{z}{2}+\frac12, \, 0) \,\ = \,\ 0$
\end{enumerate}
\end{prop}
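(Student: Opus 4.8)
The plan is to expand the definition \eqref{m1:eqn:2022-512b1} of $\Phi^{(-)[m,\frac12]}_{\rm add}$ at the three prescribed arguments and then exploit the reflection $k\mapsto 2m-k$ of the summation index, thereby reducing everything to Note \ref{m1:note:2022-524a}. Putting $z_1=\frac{z}{2}+a\tau+b$ and $z_2=\frac{z}{2}-a\tau-b$ one has $\frac{z_1-z_2}{2}=a\tau+b$, $z_1+z_2=z$ and $t=0$, so that
\[
\Phi^{(-)[m,\frac12]}_{\rm add}\Big(\tau,\tfrac{z}{2}+a\tau+b,\tfrac{z}{2}-a\tau-b,0\Big)
= -\tfrac12\sum_{\substack{k\in\frac12+\zzz\\ \frac12\le k<\frac12+2m}}
R^{(-)}_{k,m}(\tau,a\tau+b)\,\big[\theta^{(-)}_{k,m}-\theta^{(-)}_{-k,m}\big](\tau,z).
\]
By Lemma \ref{m1:lemma:2022-526b} one has $\theta^{(-)}_{2m-k,m}=-\theta^{(-)}_{-k,m}$ and $\theta^{(-)}_{-(2m-k),m}=-\theta^{(-)}_{k,m}$, so $\theta^{(-)}_{k,m}-\theta^{(-)}_{-k,m}$ is invariant under $k\mapsto 2m-k$; since $k\mapsto 2m-k$ is a bijection of the index set $\{k\in\frac12+\zzz:\frac12\le k<\frac12+2m\}$, replacing the summand by the average of its values at $k$ and $2m-k$ gives
\[
\Phi^{(-)[m,\frac12]}_{\rm add}\Big(\tau,\tfrac{z}{2}+a\tau+b,\tfrac{z}{2}-a\tau-b,0\Big)
= -\tfrac14\sum_{\substack{k\in\frac12+\zzz\\ \frac12\le k<\frac12+2m}}
\big(R^{(-)}_{k,m}+R^{(-)}_{2m-k,m}\big)(\tau,a\tau+b)\,\big[\theta^{(-)}_{k,m}-\theta^{(-)}_{-k,m}\big](\tau,z).
\]

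Next I would invoke Note \ref{m1:note:2022-524a}, which applies for each of the three choices $(a,b)=(\frac12,-\frac12),(\frac12,0),(0,-\frac12)$. For $(a,b)=(0,-\frac12)$ one has $R^{(-)}_{k,m}+R^{(-)}_{2m-k,m}=0$ identically, which gives 3) at once. For $(a,b)=(\frac12,-\frac12)$ it equals $-2e^{\pi ik}q^{-\frac1{4m}k(k-2m)}$, and for $(a,b)=(\frac12,0)$ it equals $2q^{-\frac1{4m}k(k-2m)}$; after substitution the summand becomes a scalar multiple of $e^{\pi ik}q^{-\frac1{4m}k(k-2m)}\big[\theta^{(-)}_{k,m}-\theta^{(-)}_{-k,m}\big](\tau,z)$, respectively of $q^{-\frac1{4m}k(k-2m)}\big[\theta^{(-)}_{k,m}-\theta^{(-)}_{-k,m}\big](\tau,z)$.

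Finally I would fold the sum once more. The new summand is again invariant under $k\mapsto 2m-k$: the $q$-power is invariant by $(2m-k)\big((2m-k)-2m\big)=k(k-2m)$, and in case 1) the prefactor satisfies $e^{\pi i(2m-k)}=e^{2\pi im}e^{-\pi ik}=(-1)(-e^{\pi ik})=e^{\pi ik}$, using that $2m$ and $2k$ are odd because $m\in\frac12\nnn_{\rm odd}$ and $k\in\frac12+\zzz$. Splitting $\{k\in\frac12+\zzz:\frac12\le k<\frac12+2m\}$ as $\{0<k<m\}\sqcup\{k=m\}\sqcup\{m<k<2m\}$ and reflecting the last block onto the first, the sum equals twice the sum over $0<k<m$ plus the single term at the fixed point $k=m$; there $q^{-\frac1{4m}m(m-2m)}=q^{m/4}$ and, by Lemma \ref{m1:lemma:2022-526b}, $\theta^{(-)}_{m,m}-\theta^{(-)}_{-m,m}=2\theta^{(-)}_{m,m}$. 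Collecting the scalar factors ($-\frac14\cdot(-2)=\frac12$ in case 1), $-\frac14\cdot2=-\frac12$ in case 2), with the factor $\frac12$ at $k=m$ absorbing the $2$ in $2\theta^{(-)}_{m,m}$) then produces exactly the formulas in 1) and 2). I do not expect a genuine obstacle here, since all the analytic content sits in Note \ref{m1:note:2022-524a} (hence in Proposition \ref{m1:prop:2022-524b} and Lemma \ref{m1:lemma:2022-523b}); the only points that need care are the invariance of the summand under $k\mapsto 2m-k$, which really does use the parity hypothesis $m\in\frac12\nnn_{\rm odd}$, and the separate treatment of the fixed point $k=m$ with its factor $\frac12$.
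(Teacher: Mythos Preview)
Your proposal is correct and follows essentially the same route as the paper: expand \eqref{m1:eqn:2022-512b1} at $(z_1,z_2)=(\tfrac{z}{2}+a\tau+b,\tfrac{z}{2}-a\tau-b)$, symmetrize under $k\mapsto 2m-k$ to pass from a sum over $R^{(-)}_{k,m}$ to one over $R^{(-)}_{k,m}+R^{(-)}_{2m-k,m}$ (this is exactly the paper's passage \eqref{m1:eqn:2022-528d1}--\eqref{m1:eqn:2022-528d3}), apply Note \ref{m1:note:2022-524a}, and then fold the range $0<k<2m$ into $0<k<m$, $k=m$, $m<k<2m$ with the fixed point handled via $\theta^{(-)}_{-m,m}=-\theta^{(-)}_{m,m}$. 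Your justification of the invariance of the summand under $k\mapsto 2m-k$ (including the parity check $e^{\pi i(2m-k)}=e^{\pi ik}$ using $m\in\tfrac12\nnn_{\rm odd}$) is slightly more explicit than the paper's, but the argument is the same.
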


\begin{proof} When $z_1-z_2=2(a\tau+b)$ and $z_1+z_2=z$, 
the formula \eqref{m1:eqn:2022-512b1} gives
\begin{subequations}
{\allowdisplaybreaks
\begin{eqnarray}
& & \hspace{-15mm}
\Phi_{\rm add}^{(-)[m,\frac12]}(\tau, z_1,z_2,0) 
\, = \, 
-\frac12 \sum_{\substack{k \in \frac12 \zzz_{\rm odd} \\[1mm]
0 < k <2m}}
R^{(-)}_{k,m}(\tau, a\tau+b)
\big[\theta_{k,m}^{(-)}-\theta_{-k,m}^{(-)}\big](\tau,z)
\label{m1:eqn:2022-528d1}
\\[1mm]
& \hspace{-5mm}
\underset{\substack{\\[0.5mm] \uparrow \\[1mm] k \rightarrow 2m-k
}}{=} \hspace{-5mm}
&
-\frac12 \sum_{\substack{k \in \frac12 \zzz_{\rm odd} \\[1mm]
0 < k <2m}}
R^{(-)}_{2m-k,m}(\tau, a\tau+b)
\big[\theta_{k,m}^{(-)}-\theta_{-k,m}^{(-)}\big](\tau,z)
\label{m1:eqn:2022-528d2}
\\[0mm]
&=& \,\ 
\frac12 \, \times \, \big\{
\eqref{m1:eqn:2022-528d1}+\eqref{m1:eqn:2022-528d2}\big\}
\nonumber
\\[2mm]
&=&
-\frac14 \sum_{\substack{k \in \frac12 \zzz_{\rm odd} \\[1mm]
0 < k <2m}}
\big\{R^{(-)}_{k,m}(\tau, a\tau+b)+
R^{(-)}_{2m-k,m}(\tau, a\tau+b)\big\}
\big[\theta_{k,m}^{(-)}-\theta_{-k,m}^{(-)}\big](\tau,z)
\label{m1:eqn:2022-528d3}
\end{eqnarray}}
\end{subequations}
We compute this equation \eqref{m1:eqn:2022-528d3} in each case
by using Note \ref{m1:note:2022-524a} as follows:

\medskip

\noindent
\underline{In the case} \, $(z_1, z_2)= (\frac{z}{2}+\frac{\tau}{2}-\frac12, \, 
\frac{z}{2}-\frac{\tau}{2}+\frac12)$ \,\ i.e, \,\ 
$(a,b) =(\frac12, -\frac12)$,
{\allowdisplaybreaks
\begin{eqnarray*}
& & \hspace{-8mm}
\Phi_{\rm add}^{(\pm)[m,\frac12]}(\tau, z_1,z_2,0) 
=
- \frac14\sum_{\substack{k \in \frac12 \zzz_{\rm odd} \\[1mm] 0 < k <2m}}
(-2) e^{\pi ik}q^{\frac{k(2m-k)}{4m}}
\big[\theta_{k,m}^{(-)}-\theta_{-k,m}^{(-)}\big](\tau,z)
\\[1mm]
&=&
\frac12 \sum_{\substack{k \in \frac12 \zzz_{\rm odd} \\[1mm] 0 < k <2m}}
e^{\pi ik}q^{\frac{k(2m-k)}{4m}}
\big[\theta_{k,m}^{(-)}-\theta_{-k,m}^{(-)}\big](\tau,z)
\\[1mm]
&=&
\frac12 \bigg[
\sum_{\substack{k \in \frac12 \zzz_{\rm odd} \\[1mm] 0 < k <m}}
+\sum_{k=m}
+
\sum_{\substack{k \in \frac12 \zzz_{\rm odd} \\[1mm] m < k <2m}}
\bigg]
e^{\pi ik}q^{\frac{k(2m-k)}{4m}}
\big[\theta_{k,m}^{(-)}-\theta_{-k,m}^{(-)}\big](\tau,z)
\\[1mm]
&=&
\sum_{\substack{k \in \frac12 \zzz_{\rm odd} \\[1mm] 0 < k <m}}
e^{\pi ik}q^{\frac{k(2m-k)}{4m}}
\big[\theta_{k,m}^{(-)}-\theta_{-k,m}^{(-)}\big](\tau,z)
+ 
e^{\pi im} q^{\frac{m}{4}}\theta_{m,m}^{(-)}(\tau, z)
\end{eqnarray*}}

\noindent
\underline{In the case} \, $(z_1, z_2)= (\frac{z}{2}+\frac{\tau}{2}, \, 
\frac{z}{2}-\frac{\tau}{2})$ \,\ i.e, \,\ 
$(a,b) =(\frac12, 0)$,
{\allowdisplaybreaks
\begin{eqnarray*}
& & \hspace{-8mm}
\Phi_{\rm add}^{(\pm)[m,\frac12]}(\tau, z_1,z_2,0) 
=
- \frac14
\sum_{\substack{k \in \frac12 \zzz_{\rm odd} \\[1mm] 0 < k <2m}}
2 \,  q^{\frac{k(2m-k)}{4m}}
\big[\theta_{k,m}^{(-)}-\theta_{-k,m}^{(-)}\big](\tau,z)
\\[1mm]
&=&-
\frac12 
\sum_{\substack{k \in \frac12 \zzz_{\rm odd} \\[1mm] 0 < k <2m}}
q^{\frac{k(2m-k)}{4m}}
\big[\theta_{k,m}^{(-)}-\theta_{-k,m}^{(-)}\big](\tau,z)
\\[1mm]
&=&
\frac12 \bigg[
\sum_{\substack{k \in \frac12 \zzz_{\rm odd} \\[1mm] 0 < k <m}}
+\sum_{k=m}
+
\sum_{\substack{k \in \frac12 \zzz_{\rm odd} \\[1mm] m < k <2m}}
\bigg]
e^{\pi ik}q^{\frac{k(2m-k)}{4m}}
\big[\theta_{k,m}^{(-)}-\theta_{-k,m}^{(-)}\big](\tau,z)
\\[1mm]
&=&-
\sum_{\substack{k \in \frac12 \zzz_{\rm odd} \\[1mm]
0 < k <m}}
q^{\frac{k(2m-k)}{4m}}
\big[\theta_{k,m}^{(-)}-\theta_{-k,m}^{(-)}\big](\tau,z)
- 
q^{\frac{m}{4}}\theta_{m,m}^{(-)}(\tau, z)
\end{eqnarray*}}

\noindent
\underline{In the case} \, $(z_1, z_2)= (\frac{z}{2}-\frac12, \, 
\frac{z}{2}+\frac12)$ \,\ i.e, \,\ 
$(a,b) =(0, -\frac12)$,
{\allowdisplaybreaks
\begin{eqnarray*}
& & \hspace{-8mm}
\Phi_{\rm add}^{(\pm)[m,\frac12]}(\tau, z_1,z_2,0) 
\\[1mm]
&=&
-\frac14 
\sum_{\substack{k \in \frac12 \zzz_{\rm odd} \\[1mm] 0 < k <2m}}
\big\{
\underbrace{
R^{(-)}_{k,m}(\tau, -\tfrac12)+R^{(-)}_{2m-k,m}(\tau, -\tfrac12)}_{0}
\big\}
\big[\theta_{k,m}^{(-)}-\theta_{-k,m}^{(-)}\big](\tau,z)
\, = \, 0
\end{eqnarray*}}
Thus the proof of Proposition \ref{m1:prop:2022-524c} is completed.
\end{proof}

\medskip

Then, by Proposition \ref{m1:prop:2022-525a} and Proposition 
\ref{m1:prop:2022-524c}, we obtain the expression for the modified 
function $\widetilde{\Phi}^{(-)[m,\frac12]}$ as follows:

\begin{thm}
\label{m1:thm:2022-524a}
Let $m \in \frac12 \nnn_{\rm odd}$ and $p \in \zzz_{\geq 0}$. Then
\begin{enumerate}
\item[{\rm 1)}] $q^{-\frac{m}{4}} \, 
\theta_{(2p+1)m, m+\frac12}^{(-)}(\tau,0) \, 
\widetilde{\Phi}^{(-)[m,\frac12]}(\tau, 
\frac{z}{2}+\frac{\tau}{2}-\frac12 ,
\frac{z}{2}-\frac{\tau}{2}+\frac12,0 ) $
{\allowdisplaybreaks
\begin{eqnarray*}
& & \hspace{-7mm}
= \,\ 
- \,\ i \, \eta(\tau)^3 \cdot 
\frac{\theta_{p+\frac12,m+\frac12}^{(-)}(\tau,z)
\, - \, 
\theta_{-(p+\frac12),m+\frac12}^{(-)}(\tau,z)
}{\theta_{0, \frac12}(\tau,z)}
\nonumber
\\[1mm]
& & \hspace{-5mm}
+ 
\bigg[
\sum_{\substack{r \, \in \zzz \\[1mm] 0 \leq r <j}} 
- 
\sum_{\substack{r \, \in \zzz \\[1mm] j \leq r<0}} 
\bigg] 
\sum_{\substack{k \in \frac12 \zzz_{\rm odd} \\[1mm] 0< k \leq m}} 
\hspace{-2mm}
(-1)^{j} e^{\pi ik} 
q^{(m+\frac12)(j+\frac{m(2p+1)}{2m+1})^2 
- m (r+p+\frac{m+k}{2m})^2}
\big[\theta_{k,m}^{(-)}-\theta_{-k,m}^{(-)}\big](\tau, z)
\nonumber
\\[1mm]
& & \hspace{-5mm}
+ 
\bigg[
\sum_{\substack{r \, \in \zzz \\[1mm] 0 \leq r \leq j}} 
- 
\sum_{\substack{r \, \in \zzz \\[1mm] j< r <0}} 
\bigg] 
\sum_{\substack{k \in \frac12 \zzz_{\rm odd} \\[1mm] 0<k<m}}
\hspace{-2mm}
(-1)^{j} e^{\pi ik} 
q^{(m+\frac12)(j+\frac{m(2p+1)}{2m+1})^2 
- m(r+p+\frac{m-k}{2m})^2} 
\big[\theta_{k,m}^{(-)}-\theta_{-k,m}^{(-)}\big](\tau, z)
\nonumber
\\[1mm]
& & \hspace{-5mm}
+ \,\ 
\theta_{(2p+1)m, m+\frac12}^{(-)}(\tau,0) 
\sum_{\substack{r \, \in \zzz \\[1mm] -p< r \leq p}} \,\ 
\sum_{\substack{k \in \frac12 \zzz_{\rm odd} \\[1mm] 0<k <m}} 
e^{\pi ik} \, 
q^{- \, m \, (r+\frac{k-m}{2m})^2} \, 
\big[\theta^{(-)}_{k,m} \, - \, \theta^{(-)}_{-k,m}\big] (\tau, z)
\nonumber
\\[1mm]
& & \hspace{-5mm}
+ \,\ e^{\pi im} \, \theta_{(2p+1)m, m+\frac12}^{(-)}(\tau,0) 
\sum_{\substack{r \, \in \zzz \\[1mm] -p \leq r \leq p}} 
q^{-mr^2} \, \theta^{(-)}_{m,m}(\tau, z)
\end{eqnarray*}}

\item[{\rm 2)}] \,\ $q^{-\frac{m}{4}} \, 
\theta_{(2p+1)m, m+\frac12}(\tau,0) \, 
\widetilde{\Phi}^{(-)[m,\frac12]}(\tau, 
\frac{z}{2}+\frac{\tau}{2},
\frac{z}{2}-\frac{\tau}{2}, 0) $
{\allowdisplaybreaks
\begin{eqnarray*}
& & \hspace{-7mm}
= \,\ 
\eta(\tau)^3 \cdot 
\frac{\theta_{p+\frac12,m+\frac12}(\tau,z)
\, - \, 
\theta_{-(p+\frac12),m+\frac12}(\tau,z)
}{\theta_{0, \frac12}^{(-)}(\tau,z)}
\nonumber
\\[1mm]
& & \hspace{-5mm}
+ (-1)^p \bigg[
\sum_{\substack{r \, \in \zzz \\[1mm] 0 \leq r <j}} 
- 
\sum_{\substack{r \, \in \zzz \\[1mm]  j \leq r <0}} 
\bigg] \hspace{-6mm}
\sum_{\hspace{7mm}
\substack{k \in \frac12 \zzz_{\rm odd} \\[1mm] 0<k \leq m}} 
\hspace{-6mm}
(-1)^r 
q^{(m+\frac12)(j+\frac{m(2p+1)}{2m+1})^2 
- m(r+p+\frac{m+k}{2m})^2}
\big[\theta_{k,m}^{(-)}-\theta_{-k,m}^{(-)}\big](\tau, z)
\nonumber
\\[1mm]
& & \hspace{-5mm}
- (-1)^p 
\bigg[
\sum_{\substack{r \, \in \zzz \\[1mm] 0 \leq r \leq j}} 
- 
\sum_{\substack{r \, \in \zzz \\[1mm] j < r<0 }} 
\bigg] \hspace{-6mm} 
\sum_{\hspace{7mm}
\substack{k \in \frac12 \zzz_{\rm odd} \\[1mm] 0<k <m}}
\hspace{-6mm}
(-1)^r 
q^{(m+\frac12)(j+\frac{m(2p+1)}{2m+1})^2 
- m (r+p+\frac{m-k}{2m})^2} 
\big[\theta_{k,m}^{(-)}-\theta_{-k,m}^{(-)}\big](\tau, z)
\nonumber
\\[1mm]
& & \hspace{-5mm}
- \,\ 
\theta_{(2p+1)m, m+\frac12}(\tau,0) 
\sum_{\substack{r \, \in \zzz \\[1mm] -p < r \leq p}} \,\ 
\sum_{\substack{k \in \frac12 \zzz_{\rm odd} \\[1mm] 0<k < m}} 
(-1)^r \, 
q^{- \, m \, (r+\frac{k-m}{2m})^2} \, 
\big[\theta^{(-)}_{k,m} \, - \, \theta^{(-)}_{-k,m}\big] (\tau, z)
\nonumber
\\[1mm]
& & \hspace{-5mm}
- \,\ \theta_{(2p+1)m, m+\frac12}(\tau,0) 
\sum_{\substack{r \, \in \zzz \\[1mm] -p \leq r \leq p}} 
(-1)^r \, 
q^{-mr^2} \, \theta^{(-)}_{m,m}(\tau, z)
\end{eqnarray*}}

\item[{\rm 3)}] \,\ $\theta_{2pm, m+\frac12}^{(-)}(\tau,0) \, 
\Phi^{(-)[m,\frac12]}(\tau, \frac{z}{2}-\frac12, 
\frac{z}{2}+\frac12, 0)$
{\allowdisplaybreaks
\begin{eqnarray*}
& & \hspace{-7mm}
= \,\ 
- \,\ i \, \eta(\tau)^3 \,\ 
\frac{\theta_{p,m+\frac12}^{(-)}(\tau,z)
\, - \, 
\theta_{-p,m+\frac12}^{(-)}(\tau,z)
}{\theta_{\frac12, \frac12}(\tau,z)}
\nonumber
\\[1mm]
& & \hspace{-5mm}
+ \bigg[
\sum_{\substack{r \, \in \zzz \\[1mm] 0 \leq r <j}} 
- 
\sum_{\substack{r \, \in \zzz \\[1mm] j \leq r <0}} 
\bigg] 
\sum_{\substack{k \in \frac12 \zzz_{\rm odd} \\[1mm] 0<k \leq m}} 
\hspace{-2mm}
(-1)^{j} e^{\pi ik} 
q^{(m+\frac12)(j+\frac{2mp}{2m+1})^2 - m (r+p+\frac{k}{2m})^2}
\big[\theta_{k,m}^{(-)}-\theta_{-k,m}^{(-)}\big](\tau, z)
\nonumber
\\[1mm]
& & \hspace{-5mm}
+ \bigg[
\sum_{\substack{r \, \in \zzz \\[1mm] 0 \leq r \leq j}} 
- 
\sum_{\substack{r \, \in \zzz \\[1mm] j< r <0}} 
\bigg] 
\sum_{\substack{k \in \frac12 \zzz_{\rm odd} \\[1mm] 0<k < m}}
\hspace{-2mm}
(-1)^{j} e^{\pi ik} 
q^{(m+\frac12)(j+\frac{2mp}{2m+1})^2 - m (r+p-\frac{k}{2m})^2}
\big[\theta_{k,m}^{(-)}-\theta_{-k,m}^{(-)}\big](\tau, z)
\nonumber
\\[1mm]
& & \hspace{-5mm}
+ \,\ \theta_{2pm, m+\frac12}^{(-)}(\tau,0) 
\sum_{\substack{r \, \in \zzz \\[1mm] -p < r<p}} \,\ 
\sum_{\substack{k \in \frac12 \zzz_{\rm odd} \\[1mm] 0<k < m}} 
e^{\pi ik} \, 
q^{- \frac{(2mr+k)^2}{4m}} \, 
\big[\theta^{(-)}_{k,m} \, - \, \theta^{(-)}_{-k,m}\big] (\tau, z)
\nonumber
\\[1mm]
& & \hspace{-5mm}
+ \,\ 
2 \, e^{\pi im} \, \theta_{2pm, m+\frac12}^{(-)}(\tau,0) 
\sum_{r=0}^{p-1} 
q^{- \frac{m}{4}(2r+1)^2} \, \theta^{(-)}_{m,m}(\tau, z)
\end{eqnarray*}}
\end{enumerate}
\end{thm}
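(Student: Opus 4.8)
The plan is to obtain the theorem as an immediate consequence of Proposition \ref{m1:prop:2022-525a} and Proposition \ref{m1:prop:2022-524c} together with the defining identity \eqref{m1:eqn:2022-512b2}, namely $\widetilde{\Phi}^{(-)[m,\frac12]} = \Phi^{(-)[m,\frac12]} + \Phi^{(-)[m,\frac12]}_{\rm add}$. First I would note that the three evaluation points in the statement are precisely the ones already treated: writing $z_1-z_2 = 2(a\tau+b)$ and $z_1+z_2 = z$, the points $(\frac{z}{2}+\frac{\tau}{2}-\frac12,\,\frac{z}{2}-\frac{\tau}{2}+\frac12)$, $(\frac{z}{2}+\frac{\tau}{2},\,\frac{z}{2}-\frac{\tau}{2})$ and $(\frac{z}{2}-\frac12,\,\frac{z}{2}+\frac12)$ correspond to $(a,b)=(\frac12,-\frac12)$, $(\frac12,0)$ and $(0,-\frac12)$ respectively, which are exactly the values of $(a,b)$ for which the correction term has been computed in Proposition \ref{m1:prop:2022-524c} and the product $\theta(\tau,0)\,\Phi^{(-)[m,\frac12]}$ in Proposition \ref{m1:prop:2022-525a}.

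The main point is that, for each of these three points, the finite theta-sum correction term appearing inside the braces on the left-hand side of the corresponding part of Proposition \ref{m1:prop:2022-525a} is exactly $\Phi^{(-)[m,\frac12]}_{\rm add}$ evaluated at that point, as given by Proposition \ref{m1:prop:2022-524c}. For $(a,b)=(\frac12,-\frac12)$ and $(\frac12,0)$ this is checked term by term via the identity $q^{\frac{k(2m-k)}{4m}}=q^{-\frac{1}{4m}k(k-2m)}$, keeping track of the overall sign coming from parts 1) and 2) of Proposition \ref{m1:prop:2022-524c} and of the separate $k=m$ contribution $e^{\pi im}q^{\frac{m}{4}}\theta_{m,m}^{(-)}(\tau,z)$; for $(a,b)=(0,-\frac12)$ one has $\Phi^{(-)[m,\frac12]}_{\rm add}=0$ by Proposition \ref{m1:prop:2022-524c}(3), so nothing is added and the statement reduces to Proposition \ref{m1:prop:2022-525a}(3). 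Consequently, in each case the brace on the left-hand side of Proposition \ref{m1:prop:2022-525a} equals $\Phi^{(-)[m,\frac12]}+\Phi^{(-)[m,\frac12]}_{\rm add}=\widetilde{\Phi}^{(-)[m,\frac12]}$, and the right-hand sides of Proposition \ref{m1:prop:2022-525a} become verbatim those asserted in the theorem.

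There is no substantive obstacle: all of the analytic work --- the explicit evaluation of $\Phi^{(-)[m,\frac12]}$ (times the appropriate theta value) at the three points and of the correction term --- was already carried out in the proofs of Propositions \ref{m1:prop:2022-525a} and \ref{m1:prop:2022-524c}. The only thing requiring care is the bookkeeping: matching the $q$-exponents of the finite sums, tracking signs, and confirming the identification of the three evaluation points with the parameters $(a,b)$ used there.
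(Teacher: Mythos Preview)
Your proposal is correct and matches the paper's approach exactly: the paper states the theorem as an immediate consequence of Proposition~\ref{m1:prop:2022-525a} and Proposition~\ref{m1:prop:2022-524c}, combined via the definition $\widetilde{\Phi}^{(-)[m,\frac12]} = \Phi^{(-)[m,\frac12]} + \Phi^{(-)[m,\frac12]}_{\rm add}$. Your identification of the brace terms in Proposition~\ref{m1:prop:2022-525a} with the correction terms from Proposition~\ref{m1:prop:2022-524c} (including the vanishing in part~3)) is precisely the content of the paper's one-line deduction.
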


\section{Functions $\widetilde{\psi}^{(i)[m]}(\tau,z)$}

For $m \in \frac12 \nnn_{\rm odd}$ and $i \in \{1,2,3\}$, we 
consider functions $\widetilde{\psi}^{(i)[m]}(\tau,z)$ defined by 
{\allowdisplaybreaks
\begin{eqnarray*}
\widetilde{\psi}^{(1)[m]}(\tau,z) 
&:=& \hspace{3mm}
\widetilde{\Phi}^{(-)[m,\frac12]}
\Big(\tau, \,\ \frac{z}{2}+\frac{\tau}{2}-\frac12, \,\ 
\frac{z}{2}-\frac{\tau}{2}+\frac12, \,\ 0\Big)
\\[0mm]
&=&
- \, \widetilde{\Phi}^{(-)[m,\frac12]}
\Big(\tau, \,\ \frac{z}{2}+\frac{\tau}{2}+\frac12, \,\ 
\frac{z}{2}-\frac{\tau}{2}-\frac12, \,\ 0\Big)
\\[1mm]
\widetilde{\psi}^{(2)[m]}(\tau,z) 
&:=& \hspace{3mm}
\widetilde{\Phi}^{(-)[m,\frac12]}
\Big(\tau, \,\ \frac{z}{2}+\frac{\tau}{2}, \,\ 
\frac{z}{2}-\frac{\tau}{2}, \,\ 0\Big)
\\[1mm]
\widetilde{\psi}^{(3)[m]}(\tau,z) 
&:=& \hspace{3mm}
\widetilde{\Phi}^{(-)[m,\frac12]}
\Big(\tau, \,\ \frac{z}{2}-\frac12, \,\ \frac{z}{2}+\frac12, \,\ 0\Big)
\\[0mm]
&=&
- \, \widetilde{\Phi}^{(-)[m,\frac12]}
\Big(\tau, \,\ \frac{z}{2}+\frac12, \,\ \frac{z}{2}-\frac12, \,\ 0\Big)
\end{eqnarray*}}
These functions $\widetilde{\psi}^{(i)[m]}(\tau,z)$ satisfy the 
follwing modular transformation properties:

\vspace{0mm}

\begin{lemma} 
\label{m1:lemma:2022-522c}
Let $m \in \frac12 \nnn_{\rm odd}$, then
\begin{enumerate}
\item[{\rm 1)}]
\begin{enumerate}
\item[{\rm (i)}] $\widetilde{\psi}^{(1)[m]} \Big(-\dfrac{1}{\tau}, \dfrac{z}{\tau}\Big) 
\, = \, 
- \, \tau \, e^{\pi im} \, e^{\frac{\pi im}{2\tau}z^2} \, 
e^{-\frac{\pi im}{2\tau}} \, q^{-\frac{m}{4}} \, 
\widetilde{\psi}^{(1)[m]}(\tau, z)$
\item[{\rm (ii)}] $\widetilde{\psi}^{(2)[m]} \Big(-\dfrac{1}{\tau}, \dfrac{z}{\tau}\Big) 
\, = \hspace{4mm}
\tau \, e^{\frac{\pi im}{2\tau}z^2} \, 
e^{-\frac{\pi im}{2\tau}} \, \widetilde{\psi}^{(3)[m]}(\tau, z)$
\item[{\rm (iii)}] $\widetilde{\psi}^{(3)[m]} \Big(-\dfrac{1}{\tau}, \dfrac{z}{\tau}\Big) 
\, = \, 
- \, \tau \, e^{\frac{\pi im}{2\tau}z^2} \, 
q^{-\frac{m}{4}} \, \widetilde{\psi}^{(2)[m]}(\tau, z)$
\end{enumerate}
\item[{\rm 2)}]
\begin{enumerate}
\item[{\rm (i)}] $\widetilde{\psi}^{(1)[m]}(\tau+1,z) 
\, = \hspace{4mm} 
\widetilde{\psi}^{(2)[m]}(\tau,z)$
\item[{\rm (ii)}] $\widetilde{\psi}^{(2)[m]}(\tau+1,z) 
\, = \, 
- \, \widetilde{\psi}^{(1)[m]}(\tau,z)$
\item[{\rm (iii)}] $\widetilde{\psi}^{(3)[m]}(\tau+1,z) 
\, = \hspace{4mm} 
\widetilde{\psi}^{(3)[m]}(\tau,z)$
\end{enumerate}
\end{enumerate}
\end{lemma}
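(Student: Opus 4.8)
The plan is to reduce both statements to the transformation behaviour of the single function $F(\tau,z_1,z_2):=\widetilde{\Phi}^{(-)[m,\frac12]}(\tau,z_1,z_2,0)$, observing that each $\widetilde{\psi}^{(i)[m]}(\tau,z)$ is the value of $F$ at one of the three ``base points'' $P_1=(\frac{z}{2}+\frac{\tau}{2}-\frac12,\ \frac{z}{2}-\frac{\tau}{2}+\frac12)$, $P_2=(\frac{z}{2}+\frac{\tau}{2},\ \frac{z}{2}-\frac{\tau}{2})$, $P_3=(\frac{z}{2}-\frac12,\ \frac{z}{2}+\frac12)$. The ingredients needed about $F$ are: (a) the $S$-rule $F(-\frac1\tau,\frac{z_1}{\tau},\frac{z_2}{\tau})=\tau\,e^{\frac{2\pi im}{\tau}z_1z_2}F(\tau,z_1,z_2)$, the $T$-invariance $F(\tau+1,z_1,z_2)=F(\tau,z_1,z_2)$ (valid since $m+\frac12\in\zzz$ for $m\in\frac12\nnn_{\rm odd}$), and the independence of $\widetilde{\Phi}^{(-)[m,s]}$ of $s$ modulo $\zzz$, all from Lemma~\ref{m1:lemma:2022-512a}; (b) the antisymmetry $F(\tau,-z_2,-z_1)=-F(\tau,z_1,z_2)$; (c) the unit-shift rule $F(\tau,z_1+1,z_2-1)=-F(\tau,z_1,z_2)$, where $s=\frac12$ (so $e^{2\pi is}=-1$) and the oddness of $2m$ are used; and (d) the $\tau$-shift rule $F(\tau,z_1+a\tau,z_2-a\tau)=(-1)^a e^{2\pi ima(z_1-z_2)}q^{ma^2}F(\tau,z_1,z_2)$ for $a\in\zzz$ (here $\widetilde{\Phi}^{(-)[m,\frac12-2am]}=\widetilde{\Phi}^{(-)[m,\frac12]}$ by (a), since $2am\in\zzz$). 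For the unmodified $\Phi^{(-)[m,\frac12]}$ these are Note~\ref{m1:note:2022-830a}, Lemma~\ref{m1:lemma:2022-524a} and Remark~\ref{m1:rem:2022-512a}; the first task is to check that they survive the addition of the correction term, which comes down to $\theta^{(-)}_{k,m}(\tau,-z)=\theta^{(-)}_{-k,m}(\tau,z)$, Lemmas~\ref{m1:lemma:2022-526a}--\ref{m1:lemma:2022-526b}, and to the behaviour of $R^{(-)}_{k,m}(\tau,w)$ under $w\mapsto w+1$ and $w\mapsto w+a\tau$, which follows from its definition \eqref{m1:eqn:2022-512a} together with \eqref{m1:eqn:2022-523e1}--\eqref{m1:eqn:2022-523e2} (e.g.\ $R^{(-)}_{k,m}(\tau,w+1)=-R^{(-)}_{k,m}(\tau,w)$ and $R^{(-)}_{k,m}(\tau,w+\tau)=q^{m}e^{4\pi imw}R^{(-)}_{k-2m,m}(\tau,w)$). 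Property (b) makes each $\widetilde{\psi}^{(i)[m]}(\tau,\cdot)$ odd, and property (c) yields the ``$-$''-form of the definitions of $\widetilde{\psi}^{(1)[m]}$ and $\widetilde{\psi}^{(3)[m]}$.

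For the $T$-transformation I substitute $\tau\mapsto\tau+1$ into the three defining formulas. Since $\frac{\tau+1}{2}=\frac{\tau}{2}+\frac12$, the base points move as $P_1\mapsto P_2$, $P_2\mapsto(\frac{z}{2}+\frac{\tau}{2}+\frac12,\ \frac{z}{2}-\frac{\tau}{2}-\frac12)$, and $P_3\mapsto P_3$. Applying the $T$-invariance of $F$ (with the complex numbers in the elliptic slots held fixed) and using $\widetilde{\psi}^{(2)[m]}(\tau,z)=F(\tau,P_2)$ together with $F(\tau,\frac{z}{2}+\frac{\tau}{2}+\frac12,\frac{z}{2}-\frac{\tau}{2}-\frac12)=-\widetilde{\psi}^{(1)[m]}(\tau,z)$ (property (c)) gives at once $\widetilde{\psi}^{(1)[m]}(\tau+1,z)=\widetilde{\psi}^{(2)[m]}(\tau,z)$, $\widetilde{\psi}^{(2)[m]}(\tau+1,z)=-\widetilde{\psi}^{(1)[m]}(\tau,z)$ and $\widetilde{\psi}^{(3)[m]}(\tau+1,z)=\widetilde{\psi}^{(3)[m]}(\tau,z)$.

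For the $S$-transformation, fix $i$ and substitute $(\tau,z)\mapsto(-\frac1\tau,\frac{z}{\tau})$ in $\widetilde{\psi}^{(i)[m]}(\tau,z)=F(\tau,P_i)$. Since $\frac{\tau}{2}\mapsto-\frac1{2\tau}$ while the constants $\pm\frac12$ are unchanged, the resulting elliptic arguments can be put, after absorbing any leftover $\pm\frac12$ into the $\frac1\tau$-part, into the exact form $(\frac{w_1}{\tau},\frac{w_2}{\tau})$ with $(w_1,w_2)=(\frac{z}{2}-\frac12,\frac{z}{2}+\frac12)$ for $i=2$, $(\frac{z}{2}-\frac{\tau}{2},\frac{z}{2}+\frac{\tau}{2})$ for $i=3$, and $(\frac{z}{2}-\frac12-\frac{\tau}{2},\frac{z}{2}+\frac12+\frac{\tau}{2})$ for $i=1$. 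The $S$-rule (a) then gives $\tau\,e^{\frac{2\pi im}{\tau}w_1w_2}F(\tau,w_1,w_2)$; expanding $w_1w_2=\frac{z^2}{4}+(\tau\text{-terms})$ splits off the universal factor $e^{\frac{\pi imz^2}{2\tau}}$, and the $\tau$-terms contribute $e^{-\frac{\pi im}{2\tau}}$ for $i=2$, $q^{-m/4}=e^{-\pi im\tau/2}$ for $i=3$, and $e^{-\frac{\pi im}{2\tau}}q^{-m/4}e^{-\pi im}=-e^{\pi im}e^{-\frac{\pi im}{2\tau}}q^{-m/4}$ for $i=1$ (using $e^{2\pi im}=-1$, hence $e^{-\pi im}=-e^{\pi im}$). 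It remains to identify $F(\tau,w_1,w_2)$: for $i=2$ it is literally $\widetilde{\psi}^{(3)[m]}(\tau,z)$; for $i=3$, property (d) with $a=1$ turns $F(\tau,\frac{z}{2}-\frac{\tau}{2},\frac{z}{2}+\frac{\tau}{2})$ into $-\widetilde{\psi}^{(2)[m]}(\tau,z)$; for $i=1$, one applies (b), then (d) with $a=-1$, and the oddness of $\widetilde{\psi}^{(1)[m]}$ to rewrite $F(\tau,\frac{z}{2}-\frac12-\frac{\tau}{2},\frac{z}{2}+\frac12+\frac{\tau}{2})$ as $\widetilde{\psi}^{(1)[m]}(\tau,z)$. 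Collecting the prefactors yields 1(i)--(iii).

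The steps requiring care, and which I expect to be the bulk of the work, are two. First, checking that the elliptic quasi-periodicities (b)--(d) genuinely descend from $\Phi^{(-)[m,\frac12]}$ to $\widetilde{\Phi}^{(-)[m,\frac12]}$, i.e.\ that $\Phi^{(-)[m,\frac12]}_{\rm add}$ transforms identically under $z_1\leftrightarrow-z_2$ and under the integer and period translations (this uses the $\theta^{(-)}_{k,m}$-identities of Lemmas~\ref{m1:lemma:2022-526a}--\ref{m1:lemma:2022-526b} and the $R^{(-)}_{k,m}$-shift identities above, the summation range $s\le k<s+2m$ of the correction term being permuted compatibly). Second, the sign and phase bookkeeping in the $S$-step, where several half-integer translations are combined — in particular the factor $e^{\pi im}$ appearing in 1(i) originates in the two extra $\pm\frac12$-shifts through the exchange governed by $e^{2\pi i\cdot\frac12}=-1$, and it is easy to lose a sign here. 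A useful final check is that iterating the formulas must reproduce the action of $S^2=-I$ on the $\widetilde{\psi}^{(i)[m]}$ (consistent with their oddness in $z$), which pins down all the prefactors.
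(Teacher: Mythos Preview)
Your proposal is correct and follows essentially the same route as the paper: both reduce everything to the $S$- and $T$-rules for $\widetilde{\Phi}^{(-)[m,\frac12]}$ from Lemma~\ref{m1:lemma:2022-512a}, together with the unit-shift antisymmetry that underlies the two equivalent forms of $\widetilde{\psi}^{(1)[m]}$ and $\widetilde{\psi}^{(3)[m]}$.

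The one tactical difference is that the paper systematically chooses which of the two equivalent defining expressions to start from so that, after applying the $S$-rule, the elliptic argument lands \emph{exactly} on one of the base points $P_1,P_2,P_3$ (e.g.\ for 1)(i) it starts from the ``$-$'' form $-\widetilde{\Phi}^{(-)[m,\frac12]}(\tau,\frac{z}{2}+\frac{\tau}{2}+\frac12,\frac{z}{2}-\frac{\tau}{2}-\frac12,0)$, obtaining $(w_1,w_2)=(\frac{z}{2}-\frac12+\frac{\tau}{2},\frac{z}{2}+\frac12-\frac{\tau}{2})=P_1$; and similarly for 1)(iii)). This avoids your step of invoking the $\tau$-shift property~(d) for the modified function. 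Your approach is equally valid but does require verifying that $\Phi^{(-)[m,\frac12]}_{\rm add}$ satisfies (d), which you correctly flag; the paper simply sidesteps this by the choice of starting form, so its proof invokes only Lemma~\ref{m1:lemma:2022-512a} plus the two alternative definitions.
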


\begin{proof} These are obtained easily from Lemma \ref{m1:lemma:2022-512a}
as follows.

\medskip

\noindent
1) (i) \, $\widetilde{\psi}^{(1)[m]}\Big(-\dfrac{1}{\tau}, \dfrac{z}{\tau}\Big) 
=-
\widetilde{\Phi}^{(-)[m, \frac12]}\Big(-\dfrac{1}{\tau},
\dfrac{z}{2\tau}-\dfrac{1}{2\tau}+\dfrac12, 
\dfrac{z}{2\tau}+\dfrac{1}{2\tau}-\dfrac12, 0\Big)$
{\allowdisplaybreaks
\begin{eqnarray*}
&=& -
\widetilde{\Phi}^{(-)[m, \frac12]}\Big(-\dfrac{1}{\tau}, \, 
\frac{\frac{z}{2}-\frac12+\frac{\tau}{2}}{\tau}, \, 
\frac{\frac{z}{2}+\frac12-\frac{\tau}{2}}{\tau}, \, 0\Big)
\\[1mm]
&=&- \, 
\tau \, e^{\frac{2\pi im}{\tau}
(\frac{z}{2}-\frac12+\frac{\tau}{2})
(\frac{z}{2}+\frac12-\frac{\tau}{2})}
\widetilde{\Phi}^{(-)[m, \frac12]}\Big(\tau, 
\frac{z}{2}-\frac12+\frac{\tau}{2},
\frac{z}{2}+\frac12-\frac{\tau}{2}, 0 \Big)
\\[1mm]
&=&
- \, \tau \, e^{\pi im} e^{\frac{\pi im}{2\tau}z^2}
e^{-\frac{\pi im}{2\tau}}
q^{-\frac{m}{4}}
\widetilde{\psi}^{(1)[m]}(\tau,z) \, ,  
\hspace{10mm} \text{proving (i)}.
\end{eqnarray*}}

(ii) \, $\widetilde{\psi}^{(2)[m]}\Big(-\dfrac{1}{\tau}, \dfrac{z}{\tau}\Big) 
=
\widetilde{\Phi}^{(-)[m, \frac12]}\Big(-\dfrac{1}{\tau},
\dfrac{z}{2\tau}-\dfrac{1}{2\tau}, 
\dfrac{z}{2\tau}+\dfrac{1}{2\tau}, 0\Big)$
{\allowdisplaybreaks
\begin{eqnarray*}
&=& 
\widetilde{\Phi}^{(-)[m, \frac12]}\Big(-\dfrac{1}{\tau}, \, 
\frac{\frac{z}{2}-\frac12}{\tau}, \, 
\frac{\frac{z}{2}+\frac12}{\tau}, \, 0\Big)
\\[1mm]
&=&
\tau \, e^{\frac{2\pi im}{\tau}
(\frac{z}{2}-\frac12)
(\frac{z}{2}+\frac12)}
\widetilde{\Phi}^{(-)[m, \frac12]}\Big(\tau, 
\frac{z}{2}-\frac12,
\frac{z}{2}+\frac12, 0 \Big)
\\[1mm]
&=&
\tau \, e^{\frac{\pi im}{2\tau}z^2}
e^{-\frac{\pi im}{2\tau}}
\widetilde{\psi}^{(3)[m]}(\tau,z) \, ,  
\hspace{10mm} \text{proving (ii)}.
\end{eqnarray*}}

(iii) \, $\widetilde{\psi}^{(3)[m]}\Big(-\dfrac{1}{\tau}, \dfrac{z}{\tau}\Big) 
=-
\widetilde{\Phi}^{(-)[m, \frac12]}\Big(-\dfrac{1}{\tau},
\dfrac{z}{2\tau}+\dfrac12, 
\dfrac{z}{2\tau}-\dfrac12, 0\Big)$
{\allowdisplaybreaks
\begin{eqnarray*}
&=& -
\widetilde{\Phi}^{(-)[m, \frac12]}\Big(-\dfrac{1}{\tau}, \, 
\frac{\frac{z}{2}+\frac{\tau}{2}}{\tau}, \, 
\frac{\frac{z}{2}-\frac{\tau}{2}}{\tau}, \, 0\Big)
\\[1mm]
&=&- \, 
\tau \, e^{\frac{2\pi im}{\tau}
(\frac{z}{2}+\frac{\tau}{2})
(\frac{z}{2}-\frac{\tau}{2})}
\widetilde{\Phi}^{(-)[m, \frac12]}\Big(\tau, 
\frac{z}{2}+\frac{\tau}{2},
\frac{z}{2}-\frac{\tau}{2}, 0 \Big)
\\[1mm]
&=&
- \, \tau \,  e^{\frac{\pi im}{2\tau}z^2}
q^{-\frac{m}{4}}
\widetilde{\psi}^{(2)[m]}(\tau,z) \, ,  
\hspace{10mm} \text{proving (iii)}.
\end{eqnarray*}}

\noindent
2) (i) \, $\widetilde{\psi}^{(1)[m]}(\tau+1,z) \, = \, 
\widetilde{\Phi}^{(-)[m,\frac12]}\Big(\tau+1, \, 
\dfrac{z}{2}+\dfrac{\tau+1}{2}-\dfrac12, \, 
\dfrac{z}{2}-\dfrac{\tau+1}{2}+\dfrac12, \, 0\Big)$
{\allowdisplaybreaks
\begin{eqnarray*}
&=&
\widetilde{\Phi}^{(-)[m,\frac12]}\Big(\tau+1, \, 
\frac{z}{2}+\frac{\tau}{2}, \, 
\frac{z}{2}-\frac{\tau}{2}, \, 0\Big)
\\[1mm]
&=&
\widetilde{\Phi}^{(-)[m,\frac12]}\Big(\tau, \, 
\frac{z}{2}+\frac{\tau}{2}, \, 
\frac{z}{2}-\frac{\tau}{2}, \, 0\Big)
\, = \, \widetilde{\psi}^{(2)[m]}(\tau,z) \, ,  
\hspace{10mm} \text{proving (i)}.
\end{eqnarray*}}

(ii) \, $\widetilde{\psi}^{(2)[m]}(\tau+1,z) \, = \, 
\widetilde{\Phi}^{(-)[m,\frac12]}\Big(\tau+1, \, 
\dfrac{z}{2}+\dfrac{\tau+1}{2}, \, 
\dfrac{z}{2}-\dfrac{\tau+1}{2}, \, 0\Big)$
{\allowdisplaybreaks
\begin{eqnarray*}
&=&
\widetilde{\Phi}^{(-)[m,\frac12]}\Big(\tau+1, \, 
\frac{z}{2}+\frac{\tau}{2}+\frac12, \, 
\frac{z}{2}-\frac{\tau}{2}-\frac12, \, 0\Big)
\\[1mm]
&=&
\widetilde{\Phi}^{(-)[m,\frac12]}\Big(\tau, \, 
\frac{z}{2}+\frac{\tau}{2}+\frac12, \, 
\frac{z}{2}-\frac{\tau}{2}-\frac12, \, 0\Big)
\, = \, - \widetilde{\psi}^{(1)[m]}(\tau,z) \, ,  
\hspace{3mm} \text{proving (ii)}.
\end{eqnarray*}}

(iii) \, $\widetilde{\psi}^{(3)[m]}(\tau+1,z) \, = \, 
\widetilde{\Phi}^{(-)[m,\frac12]}\Big(\tau+1, \, 
\dfrac{z}{2}-\dfrac12, \, 
\dfrac{z}{2}+\dfrac12, \, 0\Big)$
$$
= \, 
\widetilde{\Phi}^{(-)[m,\frac12]}\Big(\tau, \, 
\frac{z}{2}-\frac12, \, 
\frac{z}{2}+\frac12, \, 0\Big)
\, = \, \widetilde{\psi}^{(3)[m]}(\tau,z) \, ,  
\hspace{10mm} \text{proving (iii)}.
$$
Thus the proof of Lemma \ref{m1:lemma:2022-522c} is completed.
\end{proof}

\section{Functions $\Xi^{(i)[m,p]}(\tau,z)$ 
and $\Upsilon^{(i)[m,p]}(\tau,z)$}

For $m \in \frac12 \nnn_{\rm odd}$ and $p \in \zzz$ such that 
$0 \leq p \leq 2m$ and $i \in \{1,2,3\}$, we define functions 
$\Xi^{(i)[m,p]}(\tau,z)$ 
and $\Upsilon^{(i)[m,p]}(\tau,z)$ as follows:
{\allowdisplaybreaks
\begin{eqnarray*}
\Xi^{(1)[m,p]}(\tau,z) &:=&
q^{-\frac{m}{4}} \, 
\theta_{(2p+1)m,m+\frac12}^{(-)}(\tau,0) \cdot 
\widetilde{\psi}^{(1)[m]}(\tau,z)
\\[1mm]
\Xi^{(2)[m,p]}(\tau,z) &:=&
q^{-\frac{m}{4}} \, 
\theta_{(2p+1)m,m+\frac12}(\tau,0) \cdot 
\widetilde{\psi}^{(2)[m]}(\tau,z)
\\[1mm]
\Xi^{(3)[m,p]}(\tau,z) &:=&
\theta_{2pm,m+\frac12}^{(-)}(\tau,0) \cdot 
\widetilde{\psi}^{(3)[m]}(\tau,z)
\end{eqnarray*}}
and
{\allowdisplaybreaks
\begin{eqnarray*}
\Upsilon^{(1)[m,p]}(\tau,z) &:=&
- \, i \,\ \eta(\tau)^3 \, \cdot 
\dfrac{\theta_{p+\frac12,m+\frac12}^{(-)}(\tau,z)
\, - \, 
\theta_{-(p+\frac12),m+\frac12}^{(-)}(\tau,z)
}{\theta_{0, \frac12}(\tau,z)}
\\[1mm]
\Upsilon^{(2)[m,p]}(\tau,z) &:=&
\eta(\tau)^3 \, \cdot 
\dfrac{\theta_{p+\frac12,m+\frac12}(\tau,z)
\, - \, 
\theta_{-(p+\frac12),m+\frac12}(\tau,z)
}{\theta_{0, \frac12}^{(-)}(\tau,z)}
\\[1mm]
\Upsilon^{(3)[m,p]}(\tau,z) &:=&
- \, i \,\ 
\eta(\tau)^3 \, \cdot 
\dfrac{\theta_{p,m+\frac12}^{(-)}(\tau,z)
\, - \, 
\theta_{-p,m+\frac12}^{(-)}(\tau,z)
}{\theta_{\frac12, \frac12}(\tau,z)}
\end{eqnarray*}}

To compute the $S$-transformation of $\Xi^{(i)[m,p]}(\tau,z)$, we use 
the following formulas which are obtained easily from 
Lemma \ref{m1:lemma:2022-526c}.

\begin{note}
\label{m1:note:2022-528a}
Let $m \in \frac12 \nnn_{\rm odd}$ and $p \in \zzz_{\geq 0}$. Then
\begin{enumerate}
\item[{\rm 1)}] $\theta_{(2p+1)m,m+\frac12}\Big(-\dfrac{1}{\tau},0\Big)
\, = \, 
\dfrac{(-i\tau)^{\frac12}}{\sqrt{2m+1}}
\sum\limits_{\ell =0}^{2m}
e^{-\frac{4\pi im^2}{2m+1}(2p+1)\ell}
\theta_{2\ell m,m+\frac12}^{(-)}(\tau,0)$
\item[{\rm 2)}] $\theta_{(2p+1)m,m+\frac12}^{(-)}
\Big(-\dfrac{1}{\tau},0\Big) 
\, = \, 
\dfrac{(-i\tau)^{\frac12}}{\sqrt{2m+1}}
\sum\limits_{\ell =0}^{2m}
e^{-\frac{2\pi im^2}{2m+1}(2p+1)(2\ell+1)}
\theta_{(2\ell+1) m,m+\frac12}^{(-)}(\tau,0)$
\item[{\rm 3)}] $\theta_{2pm,m+\frac12}^{(-)}
\Big(-\dfrac{1}{\tau},0\Big) \hspace{6.5mm}
= \, 
\dfrac{(-i\tau)^{\frac12}}{\sqrt{2m+1}}
\sum\limits_{\ell =0}^{2m}
e^{-\frac{4\pi im^2}{2m+1}p(2\ell+1)}
\theta_{(2\ell+1) m,m+\frac12}(\tau,0)$
\end{enumerate}
\end{note}

\medskip

The $S$-transformation properties of these functions
$\Xi^{(i)[m,p]}$ and $\Upsilon^{(i)[m,p]}$
are given by the following formulas:

\begin{lemma} 
\label{m1:lemma:2022-522d}
Let $m \in \frac12 \nnn_{\rm odd}$ and $p \in \zzz_{\geq 0}$. Then
\begin{enumerate}
\item[{\rm 1)}]
\begin{enumerate}
\item[{\rm (i)}] $\Xi^{(1)[m,p]}\Big(-\dfrac{1}{\tau}, \dfrac{z}{\tau}\Big)
= 
\dfrac{(-i\tau)^{\frac32}}{\sqrt{2m+1}}  
e^{\frac{\pi im}{2\tau}z^2} 
\sum\limits_{\ell=0}^{2m} 
e^{-\frac{\pi i}{2(2m+1)}(2p+1)(2\ell+1)} 
\Xi^{(1)[m, \ell]}(\tau, z)$
\item[{\rm (ii)}] $\Upsilon^{(1)[m,p]}
\Big(-\dfrac{1}{\tau}, \dfrac{z}{\tau}\Big)
= 
\dfrac{(-i\tau)^{\frac32}}{\sqrt{2m+1}}  
e^{\frac{\pi im}{2\tau}z^2}  
\sum\limits_{\ell=0}^{2m}  
e^{-\frac{\pi i}{2(2m+1)}(2p+1)(2\ell+1)} 
\Upsilon^{(1)[m, \ell]}(\tau, z)$
\end{enumerate}
\item[{\rm 2)}]
\begin{enumerate}
\item[{\rm (i)}] $\Xi^{(2)[m,p]}\Big(-\dfrac{1}{\tau}, \dfrac{z}{\tau}\Big)
= \, i \, 
\dfrac{(-i\tau)^{\frac32}}{\sqrt{2m+1}} 
e^{\frac{\pi im}{2\tau}z^2} 
\sum\limits_{\ell=0}^{2m} e^{-\frac{\pi i}{2m+1}(2p+1)\ell} 
\Xi^{(3)[m, \ell]}(\tau, z)$
\item[{\rm (ii)}] $\Upsilon^{(2)[m,p]}
\Big(-\dfrac{1}{\tau}, \dfrac{z}{\tau}\Big) 
= \, i \, 
\dfrac{(-i\tau)^{\frac32}}{\sqrt{2m+1}} 
e^{\frac{\pi im}{2\tau}z^2} 
\sum\limits_{\ell=0}^{2m} 
e^{-\frac{\pi i}{2m+1}(2p+1)\ell} 
\Upsilon^{(3)[m, \ell]}(\tau, z)$
\end{enumerate}
\item[{\rm 3)}]
\begin{enumerate}
\item[{\rm (i)}] $\Xi^{(3)[m,p]}\Big(-\dfrac{1}{\tau}, \dfrac{z}{\tau}\Big)
= \, - \, i \, 
\dfrac{(-i\tau)^{\frac32}}{\sqrt{2m+1}} 
e^{\frac{\pi im}{2\tau}z^2} 
\sum\limits_{\ell=0}^{2m} 
e^{-\frac{\pi i}{2m+1}p(2\ell+1)} 
\Xi^{(2)[m, \ell]}(\tau, z)$
\item[{\rm (ii)}] $\Upsilon^{(3)[m,p]}
\Big(-\dfrac{1}{\tau}, \dfrac{z}{\tau}\Big) 
= \, - \, i \,
\dfrac{(-i\tau)^{\frac32}}{\sqrt{2m+1}}  
e^{\frac{\pi im}{2\tau}z^2} 
\sum\limits_{\ell=0}^{2m} 
e^{-\frac{\pi i}{2m+1}p(2\ell+1)} 
\Upsilon^{(2)[m, \ell]}(\tau, z)$
\end{enumerate}
\end{enumerate}
\end{lemma}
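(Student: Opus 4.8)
The plan is to reduce everything to the two ingredients already available: the $S$-transformation of the functions $\widetilde{\psi}^{(i)[m]}$ (Lemma \ref{m1:lemma:2022-522c}, part 1), and the $S$-transformation of the constant theta factors $\theta_{(2p+1)m,m+\frac12}^{(\pm)}(\tau,0)$ and $\theta_{2pm,m+\frac12}^{(-)}(\tau,0)$ (Note \ref{m1:note:2022-528a}). Since each $\Xi^{(i)[m,p]}$ is by definition a product of one such theta factor (possibly times $q^{-m/4}$) with the corresponding $\widetilde{\psi}^{(i)[m]}$, the $S$-transform of $\Xi^{(i)[m,p]}$ is just the product of the two $S$-transforms, and the main task is bookkeeping of the exponential prefactors.

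First I would handle case 1)(i). Write $\Xi^{(1)[m,p]}(-\tfrac1\tau,\tfrac z\tau) = (-\tfrac1\tau)^{-m/4}\,\theta_{(2p+1)m,m+\frac12}^{(-)}(-\tfrac1\tau,0)\,\widetilde{\psi}^{(1)[m]}(-\tfrac1\tau,\tfrac z\tau)$. Here one must be careful: the factor $q^{-m/4}$ in the definition of $\Xi^{(1)[m,p]}$ becomes $e^{-\frac{\pi i m}{4}\cdot(-1/\tau)} = e^{\frac{\pi i m}{4\tau}}$ under $\tau\mapsto -1/\tau$; I would keep track of it explicitly. Substituting Note \ref{m1:note:2022-528a} 2) for the theta factor and Lemma \ref{m1:lemma:2022-522c} 1)(i) for $\widetilde{\psi}^{(1)[m]}$, the product collapses: the $(-i\tau)^{1/2}$ from the theta transform times the $\tau$ from the $\widetilde{\psi}$ transform gives $(-i\tau)^{3/2}$ up to a unit, the Gaussian factors $e^{\frac{\pi i m}{2\tau}z^2}$ survive, and the remaining scalars $e^{\pi i m}$, $e^{-\frac{\pi im}{2\tau}}$, $q^{-m/4}$, together with the $e^{\frac{\pi i m}{4\tau}}$ just mentioned and the $q^{-m/4}$ reappearing inside $\Xi^{(1)[m,\ell]}(\tau,z)$ on the right, should cancel exactly. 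The summand $e^{-\frac{2\pi i m^2}{2m+1}(2p+1)(2\ell+1)}\,\theta_{(2\ell+1)m,m+\frac12}^{(-)}(\tau,0)$ recombines with the residual $q^{-m/4}\widetilde{\psi}^{(1)[m]}(\tau,z)$ to form $\Xi^{(1)[m,\ell]}(\tau,z)$, and one checks that $e^{-\frac{2\pi i m^2}{2m+1}(2p+1)(2\ell+1)}\cdot(\text{leftover unit}) = e^{-\frac{\pi i}{2(2m+1)}(2p+1)(2\ell+1)}$, i.e. the leftover unit is $e^{\frac{\pi i}{2(2m+1)}(2p+1)(2\ell+1)(4m^2-1)/(\text{?})}$ — this exponent identity, using $m\in\frac12\nnn_{\rm odd}$, is the one genuinely delicate point and I would verify it by writing $4m^2 = (2m+1)(2m-1)+1$ so that $\frac{4m^2}{2m+1} \equiv \frac{1}{2m+1} \pmod{2m-1}$ appropriately. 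Cases 2)(i) and 3)(i) are entirely parallel, using Lemma \ref{m1:lemma:2022-522c} 1)(ii),(iii) (which swap $\widetilde{\psi}^{(2)}\leftrightarrow\widetilde{\psi}^{(3)}$, explaining why $\Xi^{(2)}$ transforms into $\Xi^{(3)}$ and vice versa) together with Note \ref{m1:note:2022-528a} 1),3); the extra $\pm i$ in front arises from the $\vartheta_{11}$/$\theta_{0,\frac12}$ versus $\theta_{\frac12,\frac12}$ normalizations and the sign in Lemma \ref{m1:lemma:2022-522c}.

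For the $\Upsilon^{(i)[m,p]}$ statements 1)(ii), 2)(ii), 3)(ii), the cleanest route is \emph{not} to recompute from scratch but to invoke Theorem \ref{m1:thm:2022-524a}: that theorem exhibits $\Xi^{(i)[m,p]} = \Upsilon^{(i)[m,p]} + (\text{a finite sum of products } \theta_{*,m+\frac12}^{(-)}(\tau,0)\cdot[\theta^{(-)}_{k,m}-\theta^{(-)}_{-k,m}](\tau,z))$. One shows that the correction sum transforms under $S$ by exactly the same matrix $\big(\tfrac{(-i\tau)^{3/2}}{\sqrt{2m+1}}e^{\frac{\pi im}{2\tau}z^2}e^{-\frac{\pi i}{2(2m+1)}(2p+1)(2\ell+1)}\big)_{p,\ell}$ — this follows from Lemma \ref{m1:lemma:2022-526c} applied to each $\theta^{(-)}_{k,m}(\tau,z)$ and each $\theta_{*,m+\frac12}^{(-)}(\tau,0)$, the point being that the $z$-dependent and $\tau$-only theta transforms multiply to produce $(-i\tau)^{1/2}\cdot(-i\tau)^{1/2}\cdot(-i\tau)^{1/2}=(-i\tau)^{3/2}$ and the Gaussian, and the discrete index sums assemble into the same $(2m+1)$-dimensional representation. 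Subtracting, $\Upsilon^{(i)[m,p]} = \Xi^{(i)[m,p]} - (\text{correction})$ transforms by that matrix too, which is the assertion. Alternatively one can verify 1)(ii)–3)(ii) directly from Lemma \ref{m1:lemma:2022-526c} and the standard $S$-transform of $\eta(\tau)^3$, namely $\eta(-1/\tau)^3 = (-i\tau)^{3/2}\eta(\tau)^3$; the two approaches give the same answer and serve as a consistency check.

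The main obstacle is purely the \emph{exponent arithmetic}: confirming that the phase $e^{-\frac{2\pi i m^2}{2m+1}(2p+1)(2\ell+1)}$ coming out of Note \ref{m1:note:2022-528a}, after multiplication by the scalar debris from Lemma \ref{m1:lemma:2022-522c} (the $e^{\pi i m}$, $e^{\pm\frac{\pi i m}{2\tau}}$, $q^{\pm m/4}$, and the transform of the defining $q^{-m/4}$), reduces precisely to $e^{-\frac{\pi i}{2(2m+1)}(2p+1)(2\ell+1)}$ in case 1), to $e^{-\frac{\pi i}{2m+1}(2p+1)\ell}$ in case 2), and to $e^{-\frac{\pi i}{2m+1}p(2\ell+1)}$ in case 3). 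Since $m\in\frac12\nnn_{\rm odd}$, all of $2m\pm1$, $4m^2$, etc. are integers and these are finite elementary congruence checks; once they are in place the rest is immediate, and I would organize the write-up so that each of the nine sub-statements is a one-line assembly of the two cited lemmas plus the relevant phase identity.
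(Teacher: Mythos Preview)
Your approach to the $\Xi^{(i)[m,p]}$ parts matches the paper exactly: multiply the $S$-transform of the scalar theta factor (Note \ref{m1:note:2022-528a}) against the $S$-transform of $\widetilde{\psi}^{(i)[m]}$ (Lemma \ref{m1:lemma:2022-522c}) and collapse the phases. The paper does precisely this, and the key phase identity you flag is indeed the only nontrivial step; e.g.\ in case 1)(i) the paper writes $e^{\pi im}\,e^{-\frac{2\pi im^2}{2m+1}(2p+1)(2\ell+1)} = i\,e^{-\frac{\pi i}{2(2m+1)}(2p+1)(2\ell+1)}$ and leaves the check to the reader, so your instinct to isolate that congruence is right.

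For the $\Upsilon^{(i)[m,p]}$ parts, however, your ``cleanest route'' via Theorem \ref{m1:thm:2022-524a} is based on a misreading and is in fact circular. The difference $\Xi^{(i)[m,p]}-\Upsilon^{(i)[m,p]}$ displayed in Theorem \ref{m1:thm:2022-524a} is \emph{not} a finite sum of products $\theta_{*,m+\frac12}^{(-)}(\tau,0)\cdot[\theta^{(-)}_{k,m}-\theta^{(-)}_{-k,m}](\tau,z)$: the bulk of it consists of the indefinite double sums $\big[\sum_{0\le r<j}-\sum_{j\le r<0}\big](-1)^j e^{\pi ik}q^{(m+\frac12)(\cdots)^2-m(\cdots)^2}$, which are not theta functions at all and whose modular behaviour is precisely what the paper is ultimately trying to determine. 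The logical flow of the paper runs the other way: prove Lemma \ref{m1:lemma:2022-522d} for $\Xi$ and $\Upsilon$ \emph{separately}, deduce that $G^{(i)[m,p]}=\Xi^{(i)[m,p]}-\Upsilon^{(i)[m,p]}$ transforms by the same matrix (Proposition \ref{m1:prop:2022-522a}), and only then extract the transformation of the indefinite pieces $g^{(i)[m,p]}_k$ (Theorem \ref{m1:thm:2022-524b}). So you cannot assume the correction transforms correctly in order to prove the $\Upsilon$ statement.

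The paper instead proves 1)(ii), 2)(ii), 3)(ii) by your ``alternative'' route: direct computation from the definition of $\Upsilon^{(i)[m,p]}$ using $\eta(-1/\tau)^3=(-i\tau)^{3/2}\eta(\tau)^3$ and Lemma \ref{m1:lemma:2022-526c} for the numerator and denominator theta functions. This is short (three or four lines each) and is the argument you should write up; drop the Theorem \ref{m1:thm:2022-524a} detour entirely.
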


\begin{proof} 1) (i) \, $\Xi^{(1)[m,p]}
\Big(-\dfrac{1}{\tau}, \dfrac{z}{\tau}\Big)
=
e^{-2\pi i \frac{m}{4}(-\frac{1}{\tau})} \, 
\theta_{(2p+1)m,m+\frac12}^{(-)}\Big(-\dfrac{1}{\tau}, 0\Big) \, 
\widetilde{\psi}^{(1)[m, \frac12]}\Big(-\dfrac{1}{\tau}, \dfrac{z}{\tau}\Big)$
{\allowdisplaybreaks
\begin{eqnarray*}
&=&
e^{\frac{\pi im}{2\tau}} \cdot 
\frac{(-i\tau)^{\frac12}}{\sqrt{2m+1}}
\sum_{\ell=0}^{2m}
e^{-\frac{2\pi im^2}{2m+1}(2p+1)(2\ell+1)} \, 
\theta_{(2\ell+1)m,m+\frac12}^{(-)}(\tau,0)
\\[0mm]
& & \hspace{50mm}
\times \, \Big\{
- \tau \, e^{\pi im} \, e^{\frac{\pi im}{2\tau}z^2} \, 
e^{-\frac{\pi im}{2\tau}} \, q^{-\frac{m}{4}} \, 
\widetilde{\psi}^{(1)[m,\frac12]}(\tau, z)
\Big\}
\\[1mm]
&=&
- \, i \, \frac{(-i\tau)^{\frac32}}{\sqrt{2m+1}} \, 
e^{\frac{\pi im}{2\tau}z^2}
\sum_{\ell=0}^{2m}
\underbrace{e^{\pi im} \, e^{-\frac{2\pi im^2}{2m+1}(2p+1)(2\ell+1)}}_{
\substack{|| \\[0mm] {\displaystyle 
i \, e^{-\frac{\pi i}{2(2m+1)}(2p+1)(2\ell+1)}
}}} 
\underbrace{\theta_{(2\ell+1)m,m+\frac12}^{(-)}(\tau,0) \, 
q^{-\frac{m}{4}} \, \widetilde{\psi}^{(1)[m,\frac12]}(\tau, z)}_{
\substack{|| \\[0mm] {\displaystyle \Xi^{(1)[m, \ell]}(\tau, z)
}}}
\\[1mm]
&=&
\frac{(-i\tau)^{\frac32}}{\sqrt{2m+1}} \, 
e^{\frac{\pi im}{2\tau}z^2} \, 
\sum_{\ell=0}^{2m} e^{-\frac{\pi i}{2(2m+1)}(2p+1)(2\ell+1)} \, 
\Xi^{(1)[m, \ell]}(\tau, z)
\end{eqnarray*}}

(ii) $\Upsilon^{(1)[m,p]}\Big(-\dfrac{1}{\tau}, \dfrac{z}{\tau}\Big)
=
- \, i \, 
\eta (-\frac{1}{\tau})^3 \cdot 
\dfrac{
\theta_{p+\frac12,m+\frac12}^{(-)}(-\frac{1}{\tau}, \frac{z}{\tau}) 
\, - \, 
\theta_{-(p+\frac12),m+\frac12}^{(-)}(-\frac{1}{\tau}, \frac{z}{\tau})
}{
\theta_{0, \frac12}(-\frac{1}{\tau}, \, \frac{z}{\tau})}$
{\allowdisplaybreaks
\begin{eqnarray*}
&=&
- \, i \, 
(-i\tau)^{\frac32}\eta(\tau)^3 \cdot
\dfrac{(-i\tau)^{\frac12}}{\sqrt{2(m+\frac12)}} \,\ 
e^{\frac{\pi i(m+\frac12)}{2\tau}z^2}
\\[1mm]
& &
\times \, 
\sum_{\ell=0}^{2m}
e^{-\frac{\pi i}{m+\frac12}(p+\frac12)(\ell+\frac12)} \, 
\frac{
\big[\theta_{\ell+\frac12,m+\frac12}^{(-)}
- \theta_{-(\ell+\frac12),m+\frac12}^{(-)}\big](\tau, z)
}{
(-i\tau)^{\frac12} \, e^{\frac{\pi iz^2}{4\tau}} \, \theta_{0,\frac12}(\tau,z)}
\\[1mm]
&=& 
- \, i \,
\frac{(-i\tau)^{\frac32}}{\sqrt{2m+1}} \, 
e^{\frac{\pi im}{2\tau}z^2} 
\sum_{\ell=0}^{2m} e^{-\frac{\pi i}{2(2m+1)}(2p+1)(2\ell+1)} 
\underbrace{\eta(\tau)^3  
\frac{\big[\theta_{\ell+\frac12,m+\frac12}^{(-)}
- \theta_{-(\ell+\frac12),m+\frac12}^{(-)}\big](\tau, z)
}{\theta_{0,\frac12}(\tau,z)}}_{
\substack{|| \\[0mm] {\displaystyle 
i \, \Upsilon^{(1)[m,\ell]}(\tau, z)
}}}
\end{eqnarray*}}

\noindent
2) (i) \, $\Xi^{(2)[m,p]}\Big(-\dfrac{1}{\tau}, \dfrac{z}{\tau}\Big)
=
e^{-2\pi i \frac{m}{4}(-\frac{1}{\tau})} \, 
\theta_{(2p+1)m,m+\frac12}\Big(-\dfrac{1}{\tau}, 0\Big) \, 
\widetilde{\psi}^{(2)[m, \frac12]}\Big(-\dfrac{1}{\tau}, \dfrac{z}{\tau}\Big)$
{\allowdisplaybreaks
\begin{eqnarray*}
&=&
e^{\frac{\pi im}{2\tau}} \cdot 
\frac{(-i\tau)^{\frac12}}{\sqrt{2m+1}}
\sum_{\ell=0}^{2m}
e^{-\frac{2\pi im^2}{2m+1}(2p+1) \cdot 2\ell} 
\theta_{2\ell m,m+\frac12}^{(-)}(\tau,0)
\Big\{
\tau \, e^{\frac{\pi im}{2\tau}z^2}  
e^{-\frac{\pi im}{2\tau}} \widetilde{\psi}^{(3)[m,\frac12]}(\tau, z)
\Big\}
\\[1mm]
&=&
i \,\ \frac{(-i\tau)^{\frac32}}{\sqrt{2m+1}} \,\ 
e^{\frac{\pi im}{2\tau}z^2}
\sum_{\ell=0}^{2m} \hspace{5mm}
\underbrace{e^{-\frac{2\pi im^2}{2m+1}(2p+1) \cdot 2\ell}}_{
\substack{|| \\[0mm] {\displaystyle e^{-\frac{\pi i}{2m+1}(2p+1)\ell}
}}} \, 
\underbrace{\theta_{2\ell m,m+\frac12}^{(-)}(\tau,0) \, 
\widetilde{\psi}^{(3)[m,\frac12]}(\tau, z)}_{
\substack{|| \\[0mm] {\displaystyle \Xi^{(3)[m, \ell]}(\tau, z)
}}}
\\[0mm]
&=& i \, 
\frac{(-i\tau)^{\frac32}}{\sqrt{2m+1}} \, 
e^{\frac{\pi im}{2\tau}z^2} \, 
\sum_{\ell=0}^{2m} e^{-\frac{\pi i}{2m+1}(2p+1)\ell} \, 
\Xi^{(3)[m, \ell]}(\tau, z)
\end{eqnarray*}}

(ii) $\Upsilon^{(2)[m,p]}\Big(-\dfrac{1}{\tau}, \dfrac{z}{\tau}\Big)
=
\eta (-\tfrac{1}{\tau})^3
\dfrac{ 
\theta_{p+\frac12,m+\frac12}(-\frac{1}{\tau}, \frac{z}{\tau}) 
\, - \, 
\theta_{-(p+\frac12),m+\frac12}(-\frac{1}{\tau}, \frac{z}{\tau})
}{
\theta_{0, \frac12}^{(-)}(-\frac{1}{\tau}, \, \frac{z}{\tau})}$
{\allowdisplaybreaks
\begin{eqnarray*} 
&=&
(-i\tau)^{\frac32}\eta(\tau)^3 \cdot 
\dfrac{(-i\tau)^{\frac12}}{\sqrt{2(m+\frac12)}} \, 
e^{\frac{\pi i(m+\frac12)}{2\tau}z^2}
\sum_{\ell=0}^{2m}
e^{-\frac{\pi i}{m+\frac12}(p+\frac12)\ell} 
\dfrac{ 
\big[\theta_{\ell,m+\frac12}^{(-)}
- \theta_{-\ell,m+\frac12}^{(-)}\big](\tau, z)
}{
(-i\tau)^{\frac12} \, e^{\frac{\pi iz^2}{4\tau}} \, 
\theta_{\frac12,\frac12}(\tau,z)}
\\[1mm]
&=&
\frac{(-i\tau)^{\frac32}}{\sqrt{2m+1}} \, 
e^{\frac{\pi im}{2\tau}z^2} \, 
\sum_{\ell=0}^{2m} e^{-\frac{\pi i}{2(2m+1)}(2p+1) \cdot 2\ell} \, 
\underbrace{\eta(\tau)^3 \, 
\frac{\big[\theta_{\ell,m+\frac12}^{(-)}- \theta_{-\ell,m+\frac12}^{(-)}\big]
(\tau, z)
}{\theta_{\frac12,\frac12}(\tau,z)}}_{\substack{|| \\[0mm] 
{\displaystyle i \, \Upsilon^{(3)[m,\ell]}(\tau,z)
}}}
\end{eqnarray*}}

\noindent
3) (i) \, $\Xi^{(3)[m,p]}
\Big(-\dfrac{1}{\tau}, \dfrac{z}{\tau}\Big)
=
\theta_{2pm,m+\frac12}^{(-)}\Big(-\dfrac{1}{\tau}, 0\Big)
\widetilde{\psi}^{(3)[m, \frac12]}\Big(-\dfrac{1}{\tau}, \dfrac{z}{\tau}\Big)$
{\allowdisplaybreaks
\begin{eqnarray*}
&=& 
\frac{(-i\tau)^{\frac12}}{\sqrt{2m+1}}
\sum_{\ell=0}^{2m}
e^{-\frac{2\pi im^2}{2m+1}\cdot 2p(2\ell+1)} \, 
\theta_{(2\ell+1)m,m+\frac12}(\tau,0)
\big\{
- \tau \, e^{\frac{\pi im}{2\tau}z^2} \, 
q^{-\frac{m}{4}} \, \widetilde{\psi}^{(2)[m,\frac12]}(\tau, z)\big\}
\\[1mm]
&=& 
-i \, 
\frac{(-i\tau)^{\frac32}}{\sqrt{2m+1}} \, 
e^{\frac{\pi im}{2\tau}z^2}
\sum_{\ell=0}^{2m} 
\underbrace{e^{-\frac{2\pi im^2}{2m+1} \cdot 2p(2\ell+1)}}_{
\substack{|| \\[0mm] {\displaystyle 
e^{-\frac{\pi i}{2m+1}p(2\ell+1)}
}}} \, 
\underbrace{\theta_{(2\ell+1)m,m+\frac12}(\tau,0) \, 
q^{-\frac{m}{4}} \, \widetilde{\psi}^{(2)[m,\frac12]}(\tau, z)}_{
\substack{|| \\[0mm] {\displaystyle \Xi^{(2)[m, \ell]}(\tau, z)
}}}
\\[0mm]
&=&- \, i \,\ 
\frac{(-i\tau)^{\frac32}}{\sqrt{2m+1}} \, 
e^{\frac{\pi im}{2\tau}z^2} \, 
\sum_{\ell=0}^{2m} e^{-\frac{\pi i}{2m+1}p(2\ell+1)} \, 
\Xi^{(2)[m, \ell]}(\tau, z)
\end{eqnarray*}}

(ii) \, $\Upsilon^{(3)[m,p]}\Big(-\dfrac{1}{\tau}, \dfrac{z}{\tau}\Big)
- \, i \, 
\eta (-\tfrac{1}{\tau})^3 \cdot 
\dfrac{
\theta_{p+\frac12,m+\frac12}^{(-)}(-\frac{1}{\tau}, \frac{z}{\tau}) 
\, - \, 
\theta_{-(p+\frac12),m+\frac12}^{(-)}(-\frac{1}{\tau}, \frac{z}{\tau})
}{
\theta_{\frac12, \frac12}(-\frac{1}{\tau}, \, \frac{z}{\tau})}$
{\allowdisplaybreaks
\begin{eqnarray*} 
&=&- \, i 
(-i\tau)^{\frac32}\eta(\tau)^3
\dfrac{(-i\tau)^{\frac12}}{\sqrt{2(m+\frac12)}} \, 
e^{\frac{\pi i(m+\frac12)}{2\tau}z^2}
\sum_{\ell=0}^{2m}
e^{-\frac{\pi i}{m+\frac12}p(\ell+\frac12)} 
\frac{\big[\theta_{\ell+\frac12,m+\frac12}
- \theta_{-(\ell+\frac12),m+\frac12}\big](\tau, z)
}{
(-i\tau)^{\frac12} \, e^{\frac{\pi iz^2}{4\tau}} \, 
\theta_{0,\frac12}^{(-)}(\tau,z)}
\\[1mm]
&=&- \, i \, 
\frac{(-i\tau)^{\frac32}}{\sqrt{2m+1}} \, 
e^{\frac{\pi im}{2\tau}z^2} \, 
\sum_{\ell=0}^{2m} e^{-\frac{\pi i}{2m+1}p(2\ell+1)} 
\underbrace{\eta(\tau)^3 \, 
\frac{\big[\theta_{\ell+\frac12,m+\frac12}
- \theta_{-(\ell+\frac12),m+\frac12}\big](\tau, z)
}{\theta_{0,\frac12}^{(-)}(\tau,z)}}_{\substack{|| \\[0mm] 
{\displaystyle \Upsilon^{(2)[m,\ell]}(\tau,z)
}}}
\end{eqnarray*}}
Thus the proof of Lemma \ref{m1:lemma:2022-522d} is completed.
\end{proof}

\medskip

The $T$-transformation properties of $\Xi^{(i)[m,p]}$ 
and $\Upsilon^{(i)[m,p]}$ are given by the following formulas:

\begin{lemma} 
\label{m1:lemma:2022-522a}
Let $m \in \frac12 \nnn_{\rm odd}$ and $p \in \zzz_{\geq 0}$. Then
\begin{enumerate}
\item[{\rm 1)}]
\begin{enumerate}
\item[{\rm (i)}] \quad $\Xi^{(1)[m,p]}(\tau+1, z) 
\,\ = \,\ 
e^{\frac{\pi i(2p+1)^2}{4(2m+1)}-\frac{\pi i}{4}} \, 
\Xi^{(2)[m, p}(\tau,z)$
\item[{\rm (ii)}] \quad $\Xi^{(2)[m,p]}(\tau+1, z) 
\,\ = \,\ 
- \, e^{\frac{\pi i(2p+1)^2}{4(2m+1)}-\frac{\pi i}{4}} \, 
\Xi^{(1)[m, p]}(\tau,z)$
\item[{\rm (iii)}] \quad $\Xi^{(3)[m,p]}(\tau+1, z) 
\,\ = \,\ 
e^{\frac{\pi ip^2}{2m+1}} \, \Xi^{(3)[m,p]}(\tau, z) $
\end{enumerate}
\item[{\rm 2)}]
\begin{enumerate}
\item[{\rm (i)}]  \quad $\Upsilon^{(1)[m,p]}(\tau+1, z)  
\,\ = \hspace{5mm} 
e^{\frac{\pi i(2p+1)^2}{4(2m+1)}-\frac{\pi i}{4}} \, 
\Upsilon^{(2)[m,p]}(\tau, z)$
\item[{\rm (ii)}]  \quad $\Upsilon^{(2)[m,p]}(\tau+1, z)  
\,\ = \,\ - \, 
e^{\frac{\pi i(2p+1)^2}{4(2m+1)}-\frac{\pi i}{4}} \, 
\Upsilon^{(1)[m,p]}(\tau, z)$

\item[{\rm (iii)}]  \quad $\Upsilon^{(3)[m,p]}(\tau+1, z)  
\,\ = \hspace{5mm}
e^{\frac{\pi ip^2}{2m+1}} \, \Upsilon^{(3)[m,p]}(\tau, z)$
\end{enumerate}
\end{enumerate}
\end{lemma}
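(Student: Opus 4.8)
The plan is to reduce the statement to three elementary transformation rules together with a short parity check. The first ingredient is Lemma~\ref{m1:lemma:2022-522c}, which gives $\widetilde{\psi}^{(1)[m]}(\tau+1,z)=\widetilde{\psi}^{(2)[m]}(\tau,z)$, $\widetilde{\psi}^{(2)[m]}(\tau+1,z)=-\widetilde{\psi}^{(1)[m]}(\tau,z)$ and $\widetilde{\psi}^{(3)[m]}(\tau+1,z)=\widetilde{\psi}^{(3)[m]}(\tau,z)$. The second is the classical $\eta(\tau+1)^3=e^{\pi i/4}\eta(\tau)^3$. The third is Lemma~\ref{m1:lemma:2022-526d} applied to each Jacobi theta factor in the definitions of $\Xi^{(i)[m,p]}$ and $\Upsilon^{(i)[m,p]}$; the only thing to settle is, for a factor $\theta^{(\pm)}_{j,\ell}$, whether $\ell+j\in\zzz$ (no sign flip) or $\ell+j\in\tfrac12\zzz_{\rm odd}$ (sign flip), together with the phase $e^{\pi ij^2/2\ell}$. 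Since $m\in\tfrac12\nnn_{\rm odd}$ we have $2m\in\nnn_{\rm odd}$ and $m-\tfrac12\in\zzz$, and a direct check gives: for level $m+\tfrac12$ and index $(2p+1)m$ or $p+\tfrac12$ one has $\ell+j\in\tfrac12\zzz_{\rm odd}$, so the superscript flips; for index $2pm$ or $\pm p$ one has $\ell+j\in\zzz$, so it does not; for $\theta_{0,\frac12}$ the superscript flips with trivial phase; and for $\theta_{\frac12,\frac12}$ it does not flip and the phase is $e^{\pi i/4}$.

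Granting these, I would substitute directly. For $\Xi^{(1)[m,p]}$ the prefactor $q^{-m/4}$ picks up $e^{-\pi im/2}$, the factor $\theta^{(-)}_{(2p+1)m,m+\frac12}(\tau,0)$ picks up $e^{\pi i(2p+1)^2m^2/(2m+1)}$ and becomes $\theta_{(2p+1)m,m+\frac12}(\tau,0)$, and $\widetilde{\psi}^{(1)[m]}\mapsto\widetilde{\psi}^{(2)[m]}$, whence $\Xi^{(1)[m,p]}(\tau+1,z)=e^{-\pi im/2}e^{\pi i(2p+1)^2m^2/(2m+1)}\Xi^{(2)[m,p]}(\tau,z)$; the analogous substitution using $\widetilde{\psi}^{(2)[m]}\mapsto-\widetilde{\psi}^{(1)[m]}$ gives $\Xi^{(2)[m,p]}(\tau+1,z)=-e^{-\pi im/2}e^{\pi i(2p+1)^2m^2/(2m+1)}\Xi^{(1)[m,p]}(\tau,z)$, and using $\widetilde{\psi}^{(3)[m]}\mapsto\widetilde{\psi}^{(3)[m]}$ with the index-$2pm$ factor (no flip) gives $\Xi^{(3)[m,p]}(\tau+1,z)=e^{4\pi ip^2m^2/(2m+1)}\Xi^{(3)[m,p]}(\tau,z)$. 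For $\Upsilon^{(i)[m,p]}$ the scheme is identical, with $e^{\pi i/4}$ (from $\eta^3$) in place of $e^{-\pi im/2}$, the numerator thetas contributing $e^{\pi i(p+\frac12)^2/(2m+1)}=e^{\pi i(2p+1)^2/(4(2m+1))}$ (resp.\ $e^{\pi ip^2/(2m+1)}$), the denominator $\theta_{0,\frac12}$ (resp.\ $\theta_{\frac12,\frac12}$) supplying its flip/phase, and the numerical constants recombining by $-ie^{\pi i/4}=e^{-\pi i/4}$ and $ie^{\pi i/4}=-e^{-\pi i/4}$.

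What remains is to match the accumulated phases with the exponents asserted in the lemma, and this is the only place requiring arithmetic rather than bookkeeping. Writing $(2p+1)^2=4p(p+1)+1$ and $4m^2-1=(2m-1)(2m+1)$ gives
\[
-\frac m2+\frac{(2p+1)^2m^2}{2m+1}-\Bigl(\frac{(2p+1)^2}{4(2m+1)}-\frac14\Bigr)=p(p+1)(2m-1)\in 2\zzz ,
\]
since $p(p+1)$ is even; hence $e^{-\pi im/2}e^{\pi i(2p+1)^2m^2/(2m+1)}=e^{\pi i(2p+1)^2/(4(2m+1))-\pi i/4}$, which yields 1)(i),(ii) and 2)(i),(ii). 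Similarly $\tfrac{4p^2m^2}{2m+1}-\tfrac{p^2}{2m+1}=p^2(2m-1)\in 2\zzz$ because $2m-1$ is even, giving 1)(iii); and in 2)(iii) the $\eta^3$-phase $e^{\pi i/4}$ cancels the $\theta_{\frac12,\frac12}$ denominator phase $e^{\pi i/4}$, leaving precisely $e^{\pi ip^2/(2m+1)}$. The main risk here is clerical — mistracking a superscript flip or a stray factor of $-i$ — so I would organize the proof case by case exactly as in the statement, recording for each case the $\ell+j$-parity and the phase contribution before combining them.
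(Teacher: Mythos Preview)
Your proposal is correct and follows essentially the same approach as the paper's own proof: apply Lemma~\ref{m1:lemma:2022-522c} to the $\widetilde{\psi}^{(i)[m]}$ factors, Lemma~\ref{m1:lemma:2022-526d} to each theta factor (checking the $\ell+j$ parity to decide the superscript flip), and $\eta(\tau+1)=e^{\pi i/12}\eta(\tau)$, then combine the phases case by case. Your explicit verification of the phase identity via $(2m-1)p(p+1)\in 2\zzz$ and $p^2(2m-1)\in 2\zzz$ is in fact more detailed than the paper, which simply asserts the final exponents.
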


\begin{proof} 1) (i) \, $\Xi^{(1)[m,p]}(\tau+1, z)
\, = \, e^{-2\pi i \frac{m}{4}(\tau+1)} 
\theta_{(2p+1)m, m+\frac12}^{(-)}(\tau+1,0)  
\widetilde{\psi}^{(1)[m]}(\tau+1,z)$
{\allowdisplaybreaks
\begin{eqnarray*}
&=&
q^{-\frac{m}{4}} e^{-\frac{\pi im}{2}} \cdot 
e^{\frac{\pi i}{2m+1}(2p+1)^2m^2} 
\theta_{(2p+1)m, m+\frac12}(\tau,0) \cdot
\widetilde{\psi}^{(2)[m]}(\tau,z)
\\[1mm]
&=& 
e^{\frac{\pi i(2p+1)^2}{4(2m+1)}-\frac{\pi i}{4}} \,\ 
\underbrace{q^{-\frac{m}{4}} \, 
\theta_{(2p+1)m, m+\frac12}(\tau,0) \, 
\widetilde{\psi}^{(2)[m]}(\tau,z)}_{\substack{|| \\[0mm] 
{\displaystyle \Xi^{(2)[m, p]}(\tau,z)
}}}
\end{eqnarray*}}

(ii) \, $\Xi^{(2)[m,p]}(\tau+1, z)
\, = \, e^{-2\pi i \frac{m}{4}(\tau+1)} \, 
\theta_{(2p+1)m, m+\frac12}(\tau+1,0) \, 
\widetilde{\psi}^{(2)[m]}(\tau+1,z)$
{\allowdisplaybreaks
\begin{eqnarray*}
&=&
q^{-\frac{m}{4}} e^{-\frac{\pi im}{2}} \cdot 
e^{\frac{\pi i}{2m+1}(2p+1)^2m^2} 
\theta_{(2p+1)m, m+\frac12}^{(-)}(\tau,0) \cdot 
(-1) \widetilde{\psi}^{(1)[m]}(\tau,z)
\\[1mm]
&=& - \, 
e^{\frac{\pi i(2p+1)^2}{4(2m+1)}-\frac{\pi i}{4}} \, 
\underbrace{q^{-\frac{m}{4}} \, 
\theta_{(2p+1)m, m+\frac12}^{(-)}(\tau,0) \, 
\widetilde{\psi}^{(1)[m]}(\tau,z)}_{\substack{|| \\[0mm] 
{\displaystyle \Xi^{(1)[m, p]}(\tau,z)
}}}
\end{eqnarray*}}

(iii) \, $\Xi^{(3)[m,p]}(\tau+1, z)  \, = \,\ 
\theta_{2pm, m+\frac12}^{(-)}(\tau+1,0) \, 
\widetilde{\psi}^{(3)[m]}(\tau+1,z)$
$$
= \,\ 
e^{\frac{\pi i}{2m+1}(2pm)^2} 
\underbrace{\theta_{2pm, m+\frac12}^{(-)}(\tau,0) \cdot 
\widetilde{\psi}^{(3)[m]}(\tau,z)}_{\substack{|| \\[0mm] 
{\displaystyle \Xi^{(3)[m, p]}(\tau,z)
}}}
\, = \,\ 
e^{\frac{\pi ip^2}{2m+1}} \, \Xi^{(3)[m, p]}(\tau,z)
$$

\noindent
2) (i) $\Upsilon^{(1)[m,p]}(\tau+1, z) = \, 
- \, i \, \eta(\tau+1)^3 \, 
\dfrac{
[\theta_{p+\frac12, m+\frac12}^{(-)}
-
\theta_{-(p+\frac12), m+\frac12}^{(-)}](\tau+1,z)
}{
\theta_{0, \frac12}(\tau+1,z)} $
{\allowdisplaybreaks
\begin{eqnarray*}
&=&
- \, i \, 
[e^{\frac{\pi i}{12}} \eta(\tau)]^3 \, 
\frac{
e^{\frac{\pi i}{2m+1}(p+\frac12)^2} \, 
[\theta_{p+\frac12, m+\frac12}
-
\theta_{-(p+\frac12), m+\frac12}](\tau,z)
}{
\theta_{0, \frac12}^{(-)}(\tau,z)} 
\\[1mm]
&=& 
e^{\frac{\pi i(2p+1)^2}{4(2m+1)}-\frac{\pi i}{4}} \,\ 
\underbrace{\eta(\tau)^3 \, 
\frac{[\theta_{p+\frac12, m+\frac12}
-
\theta_{-(p+\frac12), m+\frac12}](\tau,z)}{
\theta_{0, \frac12}^{(-)}(\tau,z)}}_{\substack{|| \\[0mm] 
{\displaystyle \Upsilon^{(2)[m,p]}(\tau, z)
}}}
\end{eqnarray*}}

(ii) $\Upsilon^{(2)[m,p]}(\tau+1, z) = \, 
\eta(\tau+1)^3 \, 
\dfrac{[\theta_{p+\frac12, m+\frac12}
-
\theta_{-(p+\frac12), m+\frac12}](\tau+1,z)
}{
\theta_{0, \frac12}^{(-)}(\tau+1,z)} $
{\allowdisplaybreaks
\begin{eqnarray*}
&=&
[e^{\frac{\pi i}{12}} \eta(\tau)]^3
\frac{
e^{\frac{\pi i}{2m+1}(p+\frac12)^2} \, 
[\theta_{p+\frac12, m+\frac12}^{(-)}
-
\theta_{-(p+\frac12), m+\frac12}^{(-)}](\tau,z)
}{
\theta_{0, \frac12}(\tau,z)}
\\[1mm]
&=& 
e^{\frac{\pi i}{4}} \, 
e^{\frac{\pi i}{2m+1}(p+\frac12)^2} \,\ 
\underbrace{\eta(\tau)^3 \,\ 
\frac{[\theta_{p+\frac12, m+\frac12}^{(-)}
-
\theta_{-(p+\frac12), m+\frac12}^{(-)}](\tau,z)}{
\theta_{0, \frac12}(\tau,z)}}_{\substack{|| \\[0mm] 
{\displaystyle i \, \Upsilon^{(1)[m,p]}(\tau, z)
}}}
\\[0mm]
&=&
- \, e^{-\frac{\pi i}{4}} \, 
e^{\frac{\pi i}{2m+1}(p+\frac12)^2} \, \Upsilon^{(1)[m,p]}(\tau, z)
\end{eqnarray*}}

(iii) $\Upsilon^{(3)[m,p]}(\tau+1, z) \, = \, 
- \, i \,\ 
\eta(\tau+1)^3 \,\ \dfrac{ 
[\theta_{p, m+\frac12}^{(-)}
-
\theta_{-p, m+\frac12}^{(-)}](\tau+1,z)
}{
\theta_{\frac12, \frac12}(\tau+1,z)}$
{\allowdisplaybreaks
\begin{eqnarray*}
&=&
- \, i \,\ 
e^{\frac{\pi i}{12}} \eta(\tau)]^3 \,\ 
\dfrac{ 
e^{\frac{\pi i}{2m+1}p^2} \, 
[\theta_{p, m+\frac12}^{(-)}
-
\theta_{-p, m+\frac12}^{(-)}](\tau,z)
}{ 
e^{\frac{\pi i}{4}} \, \theta_{\frac12, \frac12}^{(-)}(\tau,z)}
\\[1mm]
&=& 
- \, i \,\ e^{\frac{\pi i}{2m+1}p^2}
\underbrace{\eta(\tau)^3 \, 
\frac{[\theta_{p, m+\frac12}^{(-)} 
-\theta_{-p, m+\frac12}^{(-)}](\tau,z)}{
\theta_{\frac12, \frac12}(\tau,z)}}_{\substack{|| \\[0mm] 
{\displaystyle i \, \Upsilon^{(3)[m,p]}(\tau, z)
}}}
\,\ = \,\ 
e^{\frac{\pi ip^2}{2m+1}} \, \Upsilon^{(3)[m,p]}(\tau, z)
\end{eqnarray*}}
Thus the proof of Lemma \ref{m1:lemma:2022-522a} is completed.
\end{proof}

\section{Indefinite modular forms $g^{(i)[m,p]}_k(\tau)$}

For $m \in \frac12 \nnn_{\rm odd}$ and $p \in \zzz$ such that 
$0 \leq p \leq 2m$ and $i \in \{1,2,3\}$, we put 
\begin{subequations}
\begin{equation}
G^{(i)[m,p]}(\tau,z) \,\ := \,\ 
\Xi^{(i)[m,p]}(\tau,z) -\Upsilon^{(i)[m,p]}(\tau,z)
\label{m1:eqn:2022-522a1}
\end{equation}
Then, by Theorem \ref{m1:thm:2022-524a}, $G^{(i)[m,p]}(\tau,z)$ 
can be written in the following form:
{\allowdisplaybreaks
\begin{eqnarray}
& & \hspace{-20mm}
G^{(i)[m,p]}(\tau,z) \,\ = \,\  
\sum_{\substack{k \in \frac12 \zzz_{\rm odd} \\[1mm]
0 < k \leq m}}g^{(i)[m,p]}_k(\tau)
\big[\theta_{k,m}^{(-)}-\theta_{-k,m}^{(-)}\big](\tau,z)
\nonumber
\\[1mm]
& & \hspace{-8mm}
= \hspace{-2mm}
\sum_{\substack{k \in \frac12 \zzz_{\rm odd} \\[1mm] 0 < k < m}}
\hspace{-2mm}
g^{(i)[m,p]}_k(\tau)
\big[\theta_{k,m}^{(-)}-\theta_{-k,m}^{(-)}\big](\tau,z)
+
2 \, g^{(i)[m,p]}_m(\tau) \theta_{m,m}^{(-)}(\tau,z)
\label{m1:eqn:2022-522a2}
\end{eqnarray}}
\end{subequations}

\medskip

The modular transformation properties of $G^{(i)[m,p]}(\tau,z)$
are obtained immediately from \eqref{m1:eqn:2022-522a1} and 
Lemma \ref{m1:lemma:2022-522d} and Lemma
\ref{m1:lemma:2022-522a} as follows:

\begin{prop} 
\label{m1:prop:2022-522a}
Let $m \in \frac12 \nnn_{\rm odd}$ and $p \in \zzz_{\geq 0}$ such 
that $0 \leq p \leq 2m$. Then 
\begin{enumerate}
\item[{\rm 1)}]
\begin{subequations}
\begin{enumerate}
\item[{\rm (i)}] $G^{(1)[m,p]}\Big(-\dfrac{1}{\tau}, \dfrac{z}{\tau}\Big)
= 
\dfrac{(-i\tau)^{\frac32}}{\sqrt{2m+1}} 
e^{\frac{\pi im}{2\tau}z^2} 
\sum\limits_{p'=0}^{2m} e^{-\frac{\pi i}{2(2m+1)}(2p+1)(2p'+1)} 
G^{(1)[m,p']}(\tau,z)$

\vspace{-10mm}

\begin{equation}
\label{m1:eqn:2022-530a1}
\end{equation}

\vspace{1mm}

\item[{\rm (ii)}] $G^{(2)[m,p]}\Big(-\dfrac{1}{\tau}, \dfrac{z}{\tau}\Big)
= 
\dfrac{i \, (-i\tau)^{\frac32}}{\sqrt{2m+1}} 
e^{\frac{\pi im}{2\tau}z^2} 
\sum\limits_{p'=0}^{2m} e^{-\frac{\pi i}{2m+1}(2p+1)p'} 
G^{(3)[m,p']}(\tau,z)$

\vspace{-10mm}

\begin{equation}
\label{m1:eqn:2022-530a2}
\end{equation}

\vspace{1mm}

\item[{\rm (iii)}] $G^{(3)[m,p]}\Big(-\dfrac{1}{\tau}, \dfrac{z}{\tau}\Big)
= 
\dfrac{- i \, (-i\tau)^{\frac32}}{\sqrt{2m+1}} 
e^{\frac{\pi im}{2\tau}z^2} 
\sum\limits_{p'=0}^{2m} e^{-\frac{\pi i}{2m+1}p(2p'+1)} 
G^{(2)[m,p']}(\tau,z)$

\vspace{-10mm}

\begin{equation}
\label{m1:eqn:2022-530a3}
\end{equation}
\end{enumerate}
\end{subequations}

\vspace{1mm}

\item[{\rm 2)}]
\begin{enumerate}
\item[{\rm (i)}] $G^{(1)[m,p]}(\tau+1, z) 
\,\ = \,\ 
e^{\frac{\pi i(2p+1)^2}{4(2m+1)}-\frac{\pi i}{4}} \, 
G^{(2)[m, p]}(\tau,z)$

\item[{\rm (ii)}] $G^{(2)[m,p]}(\tau+1, z) 
\,\ = \,\ 
- \, e^{\frac{\pi i(2p+1)^2}{4(2m+1)}-\frac{\pi i}{4}} \, 
G^{(1)[m, p]}(\tau,z)$

\item[{\rm (iii)}] $G^{(3)[m,p]}(\tau+1, z) 
\,\ = \,\ 
e^{\frac{\pi ip^2}{2m+1}} \, G^{(3)[m,p]}(\tau, z) $
\end{enumerate}
\end{enumerate}
\end{prop}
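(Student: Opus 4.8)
The plan is to exploit the defining relation $G^{(i)[m,p]} = \Xi^{(i)[m,p]} - \Upsilon^{(i)[m,p]}$ of \eqref{m1:eqn:2022-522a1} together with the key observation that, in both Lemma \ref{m1:lemma:2022-522d} and Lemma \ref{m1:lemma:2022-522a}, the ``mock'' piece $\Xi^{(i)[m,p]}$ and the ``genuinely modular'' piece $\Upsilon^{(i)[m,p]}$ obey \emph{identical} transformation laws: the same automorphy prefactor $\frac{(-i\tau)^{3/2}}{\sqrt{2m+1}}e^{\frac{\pi im}{2\tau}z^2}$ (up to the common signs $\pm i$), the same summation range $\ell=0,\dots,2m$, the same exponential weights, and the same permutation of the superscript family ($(1)\mapsto(1)$, $(2)\mapsto(3)$, $(3)\mapsto(2)$ under $S$; $(1)\leftrightarrow(2)$, $(3)\mapsto(3)$ under $T$). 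Hence each asserted identity for $G$ is obtained by simply subtracting the $\Upsilon$-formula from the $\Xi$-formula term by term, reassembling differences $\Xi^{(j)[m,\ell]}-\Upsilon^{(j)[m,\ell]}=G^{(j)[m,\ell]}$ via \eqref{m1:eqn:2022-522a1} inside the sums.

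Concretely, for part 1)(i): Lemma \ref{m1:lemma:2022-522d} 1)(i),(ii) give
$$\Xi^{(1)[m,p]}\Big(-\tfrac1\tau,\tfrac z\tau\Big)=\frac{(-i\tau)^{3/2}}{\sqrt{2m+1}}\,e^{\frac{\pi im}{2\tau}z^2}\sum_{\ell=0}^{2m}e^{-\frac{\pi i}{2(2m+1)}(2p+1)(2\ell+1)}\,\Xi^{(1)[m,\ell]}(\tau,z),$$
and the same identity with $\Xi$ replaced by $\Upsilon$ throughout; subtracting and using \eqref{m1:eqn:2022-522a1} on the left (at $p$) and inside the sum on the right (at each $\ell$) yields \eqref{m1:eqn:2022-530a1}. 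Parts 1)(ii),(iii) follow in the same way from parts 2) and 3) of Lemma \ref{m1:lemma:2022-522d}: there the index-$2$ functions map to index-$3$ combinations and the index-$3$ functions to index-$2$ combinations, with phases $e^{-\frac{\pi i}{2m+1}(2p+1)\ell}$ and $e^{-\frac{\pi i}{2m+1}p(2\ell+1)}$ shared by $\Xi$ and $\Upsilon$, so the differences assemble into $G^{(3)[m,\ell]}$ and $G^{(2)[m,\ell]}$, giving \eqref{m1:eqn:2022-530a2} and \eqref{m1:eqn:2022-530a3}.

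For part 2) the argument is even shorter: Lemma \ref{m1:lemma:2022-522a} 1)(i) and 2)(i) give $\Xi^{(1)[m,p]}(\tau+1,z)=e^{\frac{\pi i(2p+1)^2}{4(2m+1)}-\frac{\pi i}{4}}\Xi^{(2)[m,p]}(\tau,z)$ and the same scalar relating $\Upsilon^{(1)[m,p]}(\tau+1,z)$ to $\Upsilon^{(2)[m,p]}(\tau,z)$; subtracting yields 2)(i), and 2)(ii),(iii) are obtained identically, carrying over the sign in Lemma \ref{m1:lemma:2022-522a} 1)(ii),2)(ii) and the phase $e^{\frac{\pi ip^2}{2m+1}}$ in 1)(iii),2)(iii). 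There is no real obstacle at this stage: all the substantive work --- the theta-evaluation identities of Note \ref{m1:note:2022-528a}, the $S$- and $T$-laws of $\widetilde{\psi}^{(i)[m]}$ from Lemma \ref{m1:lemma:2022-522c}, and the evaluation of the Zwegers correction at the torsion points --- has already been packaged into Lemmas \ref{m1:lemma:2022-522d} and \ref{m1:lemma:2022-522a}; the only point to check is that the automorphy factors, phases, and superscript permutations genuinely agree for $\Xi$ and $\Upsilon$, which is immediate by inspection of those two lemmas.
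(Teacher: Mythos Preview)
Your proposal is correct and follows exactly the paper's approach: the paper states that the proposition is obtained immediately from the definition \eqref{m1:eqn:2022-522a1} and Lemmas \ref{m1:lemma:2022-522d} and \ref{m1:lemma:2022-522a}, which is precisely the term-by-term subtraction argument you describe.
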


\medskip

Also one can see easily, from \eqref{m1:eqn:2022-522a2} and 
Theorem \ref{m1:thm:2022-524a}, that $g^{(i)[m,p]}_k(\tau)$'s
are written explicitly as follows:
{\allowdisplaybreaks
\begin{eqnarray*}
& & \hspace{-10mm}
\underset{\substack{\\[1mm] (0 \, < \, k \, < \, m)
}}{g^{(1)[m,p]}_k (\tau)} = \,\ 
\bigg[
\sum_{\substack{r \, \in \zzz \\[1mm] 0 \leq r < j}} 
- 
\sum_{\substack{r \, \in \zzz \\[1mm] j \leq r \, < 0}} 
\bigg]
(-1)^{j} e^{\pi ik} \, 
q^{(m+\frac12)(j+\frac{m(2p+1)}{2m+1})^2 
\, - \, m \, (r+p+\frac{m+k}{2m})^2}
\\[1mm]
& & 
+ \,\ \bigg[
\sum_{\substack{r \, \in \zzz \\[1mm] 0 \leq r \leq j}} 
- 
\sum_{\substack{r \, \in \zzz \\[1mm] j<r<0}} 
\bigg] 
(-1)^{j} e^{\pi ik} \,
q^{(m+\frac12)(j+\frac{m(2p+1)}{2m+1})^2 
\, - \, m \, (r+p+\frac{m-k}{2m})^2} 
\\[3mm]
& & 
+ \,\ 
\theta_{(2p+1)m, m+\frac12}^{(-)}(\tau,0) 
\sum_{-p< r \leq p} 
e^{\pi ik} \, 
q^{- \, m \, (r+\frac{k-m}{2m})^2} 
\\[1mm]
& & \hspace{-10mm}
g^{(1)[m,p]}_m (\tau) \, = \, \bigg[
\sum_{\substack{r \, \in \zzz \\[1mm] 0< r \leq j}} 
- 
\sum_{\substack{r \, \in \zzz \\[1mm] j< r \leq 0}} 
\bigg]
(-1)^{j} e^{\pi im} \, 
q^{(m+\frac12)(j+\frac{m(2p+1)}{2m+1})^2- \, m \, (r+p)^2}
\\[3mm]
& & 
+ \,\ \frac12 \, e^{\pi im} \, \theta_{(2p+1)m, m+\frac12}^{(-)}(\tau,0) 
\sum_{\substack{r \, \in \zzz \\[1mm] -p \leq r \leq p}} 
q^{-mr^2} 
\\[2mm]
& & \hspace{-10mm}
\underset{\substack{\\[1mm] (0 \, < \, k \, < \, m)
}}{g^{(2)[m,p]}_k (\tau)} = \, 
(-1)^p \, \bigg[
\sum_{\substack{r \, \in \zzz \\[1mm] 0 \leq r < j}} 
- 
\sum_{\substack{r \, \in \zzz \\[1mm] j \leq r < 0}} 
\bigg] \, 
(-1)^r \, 
q^{(m+\frac12)(j+\frac{m(2p+1)}{2m+1})^2 
\, - \, m \, (r+p+\frac{m+k}{2m})^2}
\nonumber
\\[1mm]
& & \hspace{-3mm}
- \,\ (-1)^p \, \bigg[
\sum_{\substack{r \, \in \zzz \\[1mm] 0 \leq r \leq j}} 
- 
\sum_{\substack{r \, \in \zzz \\[1mm] j < r  <0}} 
\bigg] \, 
(-1)^r \,
q^{(m+\frac12)(j+\frac{m(2p+1)}{2m+1})^2 
\, - \, m \, (r+p+\frac{m-k}{2m})^2} 
\nonumber
\\[1mm]
& & \hspace{-3mm}
- \,\ 
\theta_{(2p+1)m, m+\frac12}(\tau,0) 
\sum_{\substack{r \, \in \zzz \\[1mm] -p < r \leq p}} 
(-1)^r \, 
q^{- \, m \, (r+\frac{k-m}{2m})^2} \, 
\\[1mm]
& & \hspace{-10mm}
g^{(2)[m,p]}_m (\tau) := \,\ 
- \, (-1)^p \, \bigg[
\sum_{\substack{r \, \in \zzz \\[1mm] 0< r \leq j}} 
- 
\sum_{\substack{r \, \in \zzz \\[1mm] j< r \leq 0}} 
\bigg]
(-1)^{r} \, 
q^{(m+\frac12)(j+\frac{m(2p+1)}{2m+1})^2- \, m \, (r+p)^2}
\\[1mm]
& & \hspace{-3mm}
- \,\ \frac12 \,\ \theta_{(2p+1)m, m+\frac12}(\tau,0) 
\sum_{\substack{r \, \in \zzz \\[1mm] -p \leq r \leq p}} 
(-1)^r \, q^{-mr^2}
\\[2mm]
& & \hspace{-10mm}
\underset{\substack{\\[1mm] (0 \, < \, k \, < \, m)
}}{g^{(3)[m,p]}_k (\tau)} := \,\ 
\bigg[
\sum_{\substack{r \, \in \zzz \\[1mm] 0 \leq r <j}} 
- 
\sum_{\substack{r \, \in \zzz \\[1mm] j \leq r <0}} 
\bigg] 
(-1)^{j} e^{\pi ik} \, 
q^{(m+\frac12)(j+\frac{2mp}{2m+1})^2 \, - \, m (r+p+\frac{k}{2m})^2}
\\[1mm]
& & \hspace{-3mm}
+ \,\ \bigg[
\sum_{\substack{r \, \in \zzz \\[1mm] 0 \leq r \leq j}} 
- 
\sum_{\substack{r \, \in \zzz \\[1mm] j< r<0}} 
\bigg] 
(-1)^{j} e^{\pi ik} \, 
q^{(m+\frac12)(j+\frac{2mp}{2m+1})^2 \, - \, m (r+p-\frac{k}{2m})^2}
\\[3mm]
& & \hspace{-3mm}
+ \,\ \theta_{2pm, m+\frac12}^{(-)}(\tau,0) 
\sum_{\substack{r \, \in \zzz \\[1mm] -p< r <p}} 
e^{\pi ik} \, 
q^{- \frac{(2mr+k)^2}{4m}} \, 
\\[1mm]
& & \hspace{-10mm}
g^{(3)[m,p]}_m (\tau) := \,\ 
e^{\pi im} \, \bigg[
\sum_{\substack{r \, \in \zzz \\[1mm] 0< r \leq j}} 
- 
\sum_{\substack{r \, \in \zzz \\[1mm] j< r \leq 0}} 
\bigg] 
(-1)^{j} \, 
q^{(m+\frac12)(j+\frac{2mp}{2m+1})^2 \, - \, m(r-\frac12+p)^2}
\\[1mm] 
& & + \,\ 
e^{\pi im} \, \theta_{2pm, m+\frac12}^{(-)}(\tau,0) 
\sum_{r=1}^{p} q^{- \frac{m}{4}(2r-1)^2}
\end{eqnarray*}}

Modular transformation properties of these functions 
can be obtained from Proposition \ref{m1:prop:2022-522a}.
First we compute $S$-transformation as follows:

\begin{lemma} 
\label{m1:lemma:2022-522b}
Let $m, \ell \in \frac12 \nnn_{\rm odd}$ 
and $p \in \zzz$ such that $\frac12 \leq \ell < m$ and
$0 \leq p \leq 2m$. Then
\begin{enumerate}
\item[{\rm 1)}] 
\begin{subequations}
\begin{enumerate}
\item[{\rm (i)}] $
\sum\limits_{\substack{k \in \frac12 \zzz_{\rm odd} \\[1mm] 0< k \leq m}} 
\sin \dfrac{\pi k\ell}{m} \cdot 
g^{(1)[m,p]}_k\Big(-\dfrac{1}{\tau}\Big)
= 
\dfrac{\tau \, \sqrt{2m}}{2\sqrt{2m+1}} \, 
\sum\limits_{p'=0}^{2m} e^{-\frac{\pi i}{2(2m+1)}(2p+1)(2p'+1)} \, 
g^{(1)[m,p']}_\ell(\tau)$

\vspace{-8mm}

\begin{equation}
\label{m1:eqn:2022-530c1}
\end{equation}

\item[{\rm (ii)}] $
\sum\limits_{\substack{k \in \frac12 \zzz_{\rm odd} \\[1mm] 0< k \leq m}} 
\sin \dfrac{\pi k\ell}{m} \cdot 
g^{(2)[m,p]}_k\Big(-\dfrac{1}{\tau}\Big)
= 
\dfrac{i \, \tau \, \sqrt{2m}}{2\sqrt{2m+1}} \, 
\sum\limits_{p'=0}^{2m} e^{-\frac{\pi i}{2m+1}(2p+1)p'} \, 
g^{(3)[m,p']}_\ell(\tau)$

\vspace{-14mm}

\begin{equation}
\label{m1:eqn:2022-530c2}
\end{equation}

\vspace{4mm}

\item[{\rm (iii)}] $
\sum\limits_{\substack{k \in \frac12 \zzz_{\rm odd} \\[1mm] 0< k \leq m}} 
\sin \dfrac{\pi k\ell}{m} \cdot 
g^{(3)[m,p]}_k\Big(-\dfrac{1}{\tau}\Big)
= 
\dfrac{- i \, \tau \, \sqrt{2m}}{2\sqrt{2m+1}} \, 
\sum\limits_{p'=0}^{2m} e^{-\frac{\pi i}{2m+1}p(2p'+1)} \, 
g^{(2)[m,p']}_\ell(\tau)$

\vspace{-14mm}

\begin{equation}
\label{m1:eqn:2022-530c3}
\end{equation}
\end{enumerate}
\end{subequations}

\vspace{4mm}

\item[{\rm 2)}]
\begin{subequations}
\begin{enumerate}
\item[{\rm (i)}] \,\ $\dfrac12 \, 
\sum\limits_{\substack{k \in \frac12 \zzz_{\rm odd} \\[1mm] 0< k \leq m}} 
\sin \pi k \cdot g^{(1)[m,p]}_k\Big(-\dfrac{1}{\tau}\Big)
= 
\dfrac{\tau \, \sqrt{2m}}{2 \sqrt{2m+1}} \, 
\sum\limits_{p'=0}^{2m} e^{-\frac{\pi i}{2(2m+1)}(2p+1)(2p'+1)} 
g^{(1)[m,p']}_m(\tau)$

\vspace{-5mm}

\begin{equation} 
\label{m1:eqn:2022-530d1}
\end{equation}

\item[{\rm (ii)}] \,\ $\dfrac12 \, 
\sum\limits_{\substack{k \in \frac12 \zzz_{\rm odd} \\[1mm] 0< k \leq m}} 
\sin \pi k \cdot g^{(2)[m,p]}_k\Big(-\dfrac{1}{\tau}\Big)
= 
\dfrac{i \, \tau \, \sqrt{2m}}{2 \sqrt{2m+1}} \, 
\sum\limits_{p'=0}^{2m} e^{-\frac{\pi i}{2m+1}(2p+1)p'} 
g^{(3)[m,p']}_m(\tau)$

\vspace{-14mm}

\begin{equation} 
\label{m1:eqn:2022-530d2}
\end{equation}

\vspace{4mm}

\item[{\rm (iii)}] \,\ $\dfrac12 \, 
\sum\limits_{\substack{k \in \frac12 \zzz_{\rm odd} \\[1mm] 0< k \leq m}} 
\sin \pi k \cdot g^{(3)[m,p]}_k\Big(-\dfrac{1}{\tau}\Big)
= 
\dfrac{-i \, \tau \, \sqrt{2m}}{2 \sqrt{2m+1}} \, 
\sum\limits_{p'=0}^{2m} e^{-\frac{\pi i}{2m+1}p(2p'+1)} 
g^{(2)[m,p']}_m(\tau)$

\vspace{-14mm}

\begin{equation}  
\label{m1:eqn:2022-530d3}
\end{equation}
\end{enumerate}
\end{subequations}
\end{enumerate}
\end{lemma}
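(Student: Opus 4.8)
The plan is to compute $G^{(i)[m,p]}\!\left(-\tfrac1\tau,\tfrac z\tau\right)$ in two ways and match coefficients. On one hand, Proposition~\ref{m1:prop:2022-522a}~1) already expresses it as an explicit $\ccc$-linear combination of the $G^{(j)[m,p']}(\tau,z)$, and expanding each of these by \eqref{m1:eqn:2022-522a2} rewrites it through the functions $\big[\theta^{(-)}_{\ell,m}-\theta^{(-)}_{-\ell,m}\big](\tau,z)$ ($0<\ell<m$, $\ell\in\tfrac12\zzz_{\rm odd}$) and $\theta^{(-)}_{m,m}(\tau,z)$. On the other hand, substituting $\left(-\tfrac1\tau,\tfrac z\tau\right)$ directly into \eqref{m1:eqn:2022-522a2} for $G^{(i)[m,p]}$ itself and applying the $S$-transformation of the Jacobi theta functions (Lemma~\ref{m1:lemma:2022-526c}~2)) to each $\theta^{(-)}_{\pm k,m}\!\left(-\tfrac1\tau,\tfrac z\tau\right)$ gives a second such expansion, whose coefficients involve $g^{(i)[m,p]}_k\!\left(-\tfrac1\tau\right)$. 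Since the $m+\tfrac12$ functions $\big[\theta^{(-)}_{\ell,m}-\theta^{(-)}_{-\ell,m}\big](\tau,\cdot)$ and $\theta^{(-)}_{m,m}(\tau,\cdot)$ are $\ccc$-linearly independent, comparing coefficients termwise yields the six identities.

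The heart of the argument is the second evaluation. Recall first that $\theta^{(-)}_{-m,m}=-\theta^{(-)}_{m,m}$ by Lemma~\ref{m1:lemma:2022-526b}, so the $k=m$ term of \eqref{m1:eqn:2022-522a2} equals $2g^{(i)[m,p]}_m(\tau)\,\theta^{(-)}_{m,m}(\tau,z)$, as already recorded there. By Lemma~\ref{m1:lemma:2022-526c}~2), for $k\in\tfrac12\zzz_{\rm odd}$ one has $\theta^{(-)}_{k,m}\!\left(-\tfrac1\tau,\tfrac z\tau\right)=\tfrac{(-i\tau)^{1/2}}{\sqrt{2m}}e^{\frac{\pi imz^2}{2\tau}}\sum_{n\in\zzz/2m\zzz}e^{\frac{\pi ik}{m}(n+\frac12)}\theta^{(-)}_{n+\frac12,m}(\tau,z)$; subtracting the same with $k\mapsto-k$ and using $e^{ix}-e^{-ix}=2i\sin x$ gives
\[
\big[\theta^{(-)}_{k,m}-\theta^{(-)}_{-k,m}\big]\!\Big(-\tfrac1\tau,\tfrac z\tau\Big)=\frac{2i\,(-i\tau)^{1/2}}{\sqrt{2m}}\,e^{\frac{\pi imz^2}{2\tau}}\!\!\sum_{n\in\zzz/2m\zzz}\!\!\sin\tfrac{\pi k(n+\frac12)}{m}\;\theta^{(-)}_{n+\frac12,m}(\tau,z).
\]
Now I fold the sum over $\zzz/2m\zzz$ onto the range $0<\ell\le m$ with $\ell\in\tfrac12\zzz_{\rm odd}$: writing $\ell=n+\tfrac12$ and pairing $\ell$ with $2m-\ell$, the antiperiodicity $\theta^{(-)}_{2m-\ell,m}=-\theta^{(-)}_{-\ell,m}$ (Lemma~\ref{m1:lemma:2022-526b}) combines with $\sin\tfrac{\pi k(2m-\ell)}{m}=\sin\tfrac{\pi k\ell}{m}$ (valid since $2k$ is an odd integer) to collapse the sum to $\sum_{0<\ell<m}\sin\tfrac{\pi k\ell}{m}\big[\theta^{(-)}_{\ell,m}-\theta^{(-)}_{-\ell,m}\big](\tau,z)+\sin\pi k\cdot\theta^{(-)}_{m,m}(\tau,z)$. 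Feeding this back into \eqref{m1:eqn:2022-522a2}, the coefficient of $\big[\theta^{(-)}_{\ell,m}-\theta^{(-)}_{-\ell,m}\big](\tau,z)$ in $G^{(i)[m,p]}\!\left(-\tfrac1\tau,\tfrac z\tau\right)$ becomes $\tfrac{2i(-i\tau)^{1/2}}{\sqrt{2m}}e^{\frac{\pi imz^2}{2\tau}}\sum_{0<k\le m}\sin\tfrac{\pi k\ell}{m}\,g^{(i)[m,p]}_k\!\left(-\tfrac1\tau\right)$, and the coefficient of $\theta^{(-)}_{m,m}(\tau,z)$ becomes the same expression with $\sin\tfrac{\pi k\ell}{m}$ replaced by $\sin\pi k$.

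It remains to equate this with the first evaluation. For $i=1$, Proposition~\ref{m1:prop:2022-522a}~1)(i) together with \eqref{m1:eqn:2022-522a2} shows that in $G^{(1)[m,p]}\!\left(-\tfrac1\tau,\tfrac z\tau\right)$ the coefficient of $\big[\theta^{(-)}_{\ell,m}-\theta^{(-)}_{-\ell,m}\big](\tau,z)$ is $\tfrac{(-i\tau)^{3/2}}{\sqrt{2m+1}}e^{\frac{\pi imz^2}{2\tau}}\sum_{p'=0}^{2m}e^{-\frac{\pi i}{2(2m+1)}(2p+1)(2p'+1)}g^{(1)[m,p']}_\ell(\tau)$ and that of $\theta^{(-)}_{m,m}(\tau,z)$ is the same with $g^{(1)[m,p']}_\ell$ replaced by $2g^{(1)[m,p']}_m$. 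Cancelling the common factor $\tfrac{(-i\tau)^{1/2}}{\sqrt{2m}}e^{\frac{\pi imz^2}{2\tau}}$ and collecting the remaining power of $(-i\tau)$ and the factor $2i$ into the scalar $\tfrac{\tau\sqrt{2m}}{2\sqrt{2m+1}}$ of the statement produces \eqref{m1:eqn:2022-530c1} and \eqref{m1:eqn:2022-530d1}; the cases $i=2$ and $i=3$ are identical but use parts 1)(ii) and 1)(iii) of Proposition~\ref{m1:prop:2022-522a} instead, giving \eqref{m1:eqn:2022-530c2}, \eqref{m1:eqn:2022-530d2}, \eqref{m1:eqn:2022-530c3}, \eqref{m1:eqn:2022-530d3}. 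The one delicate step is the folding of the $\zzz/2m\zzz$-Fourier sum together with its antiperiodicity sign: this is exactly what turns the plain exponentials of Lemma~\ref{m1:lemma:2022-526c} into the $\sin\tfrac{\pi k\ell}{m}$ weights and what makes the two coefficient expansions line up; everything else is bookkeeping of the scalar prefactor.
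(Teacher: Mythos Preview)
Your proposal is correct and follows essentially the same route as the paper: both arguments expand $G^{(i)[m,p]}\!\left(-\tfrac1\tau,\tfrac z\tau\right)$ two ways---once via Proposition~\ref{m1:prop:2022-522a}~1) and once by substituting into \eqref{m1:eqn:2022-522a2} and applying the $S$-transformation of $\theta^{(-)}_{\pm k,m}$ from Lemma~\ref{m1:lemma:2022-526c}~2)---and then read off the coefficients of $[\theta^{(-)}_{\ell,m}-\theta^{(-)}_{-\ell,m}](\tau,z)$ and $\theta^{(-)}_{m,m}(\tau,z)$. Your write-up is in fact more explicit than the paper's about the folding step (pairing $\ell\leftrightarrow 2m-\ell$ via Lemma~\ref{m1:lemma:2022-526b} and the parity identity $\sin\tfrac{\pi k(2m-\ell)}{m}=\sin\tfrac{\pi k\ell}{m}$ for $2k$ odd), which is exactly the point the paper leaves implicit in passing to \eqref{m1:eqn:2022-530b1}.
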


\vspace{5mm}

\begin{proof} 1) Substituting \eqref{m1:eqn:2022-522a2} into 
\eqref{m1:eqn:2022-530a1}, one has 
\begin{subequations}
{\allowdisplaybreaks
\begin{eqnarray}
& & \hspace{-12mm}
\text{LHS of \eqref{m1:eqn:2022-530a1}} \, = \,
G^{(1)[m,p]}\Big(-\frac{1}{\tau}, \frac{z}{\tau}\Big)
\, = 
\sum_{\substack{k \in \frac12 \zzz_{\rm odd} \\[1mm] 0< k \leq m}}
\hspace{-2mm}
g^{(1)[m,p]}_k\Big(-\frac{1}{\tau}\Big) \, 
\big[\theta_{k,m}^{(-)}- \theta_{-k,m}^{(-)}\big]
\Big(-\frac{1}{\tau}, \frac{z}{\tau}\Big)
\nonumber
\\[1mm]
&=&
\hspace{-2mm}
\sum_{\substack{k \in \frac12 \zzz_{\rm odd} \\[1mm] 0<k \leq m}} 
\hspace{-2mm}
g^{(1)[m,p]}_k\Big(-\frac{1}{\tau}\Big) \, 
\frac{- \, 2 \, i}{\sqrt{2m}} \, (-i\tau)^{\frac12} \, 
e^{\frac{\pi im}{2\tau}z^2} 
\nonumber
\\[1mm]
& &
\times \, \Bigg\{
\sum_{\substack{\\[0.5mm] \ell \in \frac12 \zzz_{\rm odd} \\[1mm] 0 < \ell < m}} 
\hspace{-3mm}
\sin \frac{\pi k\ell}{m} \cdot 
\big[\theta_{\ell,m}^{(-)}-\theta_{-\ell,m}^{(-)}\big](\tau, z)
+
\sin \pi k \cdot \theta_{m,m}^{(-)}(\tau, z)\Bigg\}
\label{m1:eqn:2022-530b1}
\\[1mm]
& & \hspace{-12mm}
\text{RHS of \eqref{m1:eqn:2022-530a1}} =
\frac{(-i\tau)^{\frac32}}{\sqrt{2m+1}} \, 
e^{\frac{\pi im}{2\tau}z^2} \, 
\sum_{p'=0}^{2m} e^{-\frac{\pi i}{2(2m+1)}(2p+1)(2p'+1)} \, 
G^{(1)[m,p']}(\tau,z)
\nonumber
\\[1mm]
&=&
\frac{(-i\tau)^{\frac32}}{\sqrt{2m+1}} 
e^{\frac{\pi im}{2\tau}z^2} 
\sum_{p'=0}^{2m} e^{-\frac{\pi i}{2(2m+1)}(2p+1)(2p'+1)} 
\hspace{-3mm}
\sum_{\substack{\ell \in \frac12 \zzz_{\rm odd} \\[1mm] 0<\ell \leq m}} 
\hspace{-3mm}
g^{(1)[m,p']}_\ell(\tau) 
\big[\theta_{\ell,m}^{(-)}- \theta_{-\ell,m}^{(-)}\big](\tau,z)
\label{m1:eqn:2022-530b2}
\end{eqnarray}}
\end{subequations}
Then, comparing the terms of $\theta_{\ell,m}^{(-)}(\tau,z)$ in 
\eqref{m1:eqn:2022-530b1} and \eqref{m1:eqn:2022-530b2}, 
we obtain the formulas \eqref{m1:eqn:2022-530c1} and 
\eqref{m1:eqn:2022-530d1}. 
The others are obtained by similar calculation.
\end{proof}

\medskip

In order to deduce explicit formulas for $g^{(i)[m,p]}_j(-\frac{1}{\tau})$
from the above Lemma \ref{m1:lemma:2022-522b}, 
we use the following simple formula:

\begin{note}
\label{m1:note:2022-524b}
Let $m \in \frac12 \nnn_{\rm odd}$ and 
$j, k \in \frac12 \zzz_{\rm odd}$ such that $0 < j, \, k  \leq m$. Then
\begin{subequations}
\begin{equation}
\sum_{\substack{\ell \in \frac12 \zzz_{\rm odd} \\[1mm] 0< \ell<m}}
\sin \frac{\pi j\ell}{m}\sin \frac{\pi k\ell}{m}
+ \frac12 \sin \pi j \cdot \sin \pi k 
\,\ = \,\ \frac{m}{2} \, c_j \, \delta_{j, k}
\label{m1:eqn:2022-524e}
\end{equation}
where
\begin{equation}
c_j \,\ := \,\ \left\{
\begin{array}{cccl}
1 & & \, \text{if} \,  & j <m \\[1mm]
2 & & \text{if} & j=m
\end{array}\right.
\label{m1:eqn:2022-713a}
\end{equation}
\end{subequations}
\end{note}

\medskip

Summing up the above, we arrive at the following theorem:

\begin{thm} 
\label{m1:thm:2022-524b}
Let $m, j \in \frac12 \nnn_{\rm odd}$ and $p \in \zzz$ such that
$0 \leq p \leq 2m$ and $0<j \leq m$. Then $S$- and $T$-transformation 
of $g^{(i)[m,p]}_j(\tau)$ are given by the following formulas:
\begin{enumerate}
\item[{\rm 1)}] $S$-transformation:
\begin{enumerate}
\item[{\rm (i)}] $g^{(1)[m,p]}_j\Big(-\dfrac{1}{\tau}\Big)
= 
\dfrac{\tau}{c_j\sqrt{m(m+\frac12)}} \, 
\sum\limits_{\substack{\ell \in \frac12 \zzz_{\rm odd} \\[1mm] 0< \ell \leq m}} 
\sum\limits_{p'=0}^{2m} e^{-\frac{\pi i}{2(2m+1)}(2p+1)(2p'+1)} \, 
\sin \dfrac{\pi j\ell}{m}
\, g^{(1)[m,p']}_{\ell}(\tau)$

\item[{\rm (ii)}] $g^{(2)[m,p]}_j\Big(-\dfrac{1}{\tau}\Big) 
= 
\dfrac{i \, \tau}{c_j\sqrt{m(m+\frac12)}} \, 
\sum\limits_{\substack{\ell \in \frac12 \zzz_{\rm odd} \\[1mm] 0<\ell \leq m}} 
\sum\limits_{p'=0}^{2m} e^{-\frac{\pi i}{2m+1}(2p+1)p'} \, 
\sin \dfrac{\pi j\ell}{m}
\, g^{(3)[m,p']}_{\ell}(\tau)$

\item[{\rm (iii)}] $g^{(3)[m,p]}_j\Big(-\dfrac{1}{\tau}\Big) 
=  
\dfrac{- i \, \tau}{c_j\sqrt{m(m+\frac12)}} \, 
\sum\limits_{\substack{\ell \in \frac12 \zzz_{\rm odd} \\[1mm] 0<\ell \leq m}} 
\sum\limits_{p'=0}^{2m} e^{-\frac{\pi i}{2m+1}p(2p'+1)} \, 
\sin \dfrac{\pi j\ell}{m}
\, g^{(2)[m,p']}_{\ell}(\tau)$
\end{enumerate}
where $c_j$ is defined by \eqref{m1:eqn:2022-713a}.

\item[{\rm 2)}] $T$-transformation:
\begin{enumerate}
\item[{\rm (i)}] \quad $g^{(1)[m,p]}_j(\tau+1) 
\,\ = \,\ 
e^{\frac{\pi i(2p+1)^2}{4(2m+1)}-\frac{\pi ij^2}{2m} 
-\frac{\pi i}{4}} \, 
g^{(2)[m, p]}_j(\tau)$

\item[{\rm (ii)}] \quad $g^{(2)[m,p]}_j(\tau+1) 
\,\ = \,\ 
- \, e^{\frac{\pi i(2p+1)^2}{4(2m+1)}-\frac{\pi ij^2}{2m}
-\frac{\pi i}{4}} \, 
g^{(1)[m, p]}_j(\tau)$

\item[{\rm (iii)}] \quad $g^{(3)[m,p]}_j(\tau+1) 
\,\ = \,\ 
e^{\frac{\pi ip^2}{2m+1}-\frac{\pi ij^2}{2m}} \, 
g^{(3)[m,p]}_j(\tau) $
\end{enumerate}
\end{enumerate}
\end{thm}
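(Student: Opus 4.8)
The statement concerns the $S$- and $T$-transformation of the functions $g^{(i)[m,p]}_j(\tau)$. Both parts are essentially bookkeeping consequences of material already established in the excerpt, and the strategy is to extract them from Proposition~\ref{m1:prop:2022-522a} and Lemma~\ref{m1:lemma:2022-522b} together with the auxiliary identity in Note~\ref{m1:note:2022-524b}.

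For the $T$-transformation (part~2), I would start from the decomposition \eqref{m1:eqn:2022-522a2} of $G^{(i)[m,p]}(\tau,z)$ into the basis functions $\big[\theta_{k,m}^{(-)}-\theta_{-k,m}^{(-)}\big](\tau,z)$ and $\theta_{m,m}^{(-)}(\tau,z)$, and apply Proposition~\ref{m1:prop:2022-522a}~2) to the left-hand side, while on the right-hand side I would use Lemma~\ref{m1:lemma:2022-526d} to compute $\big[\theta_{k,m}^{(-)}-\theta_{-k,m}^{(-)}\big](\tau+1,z)$ and $\theta_{m,m}^{(-)}(\tau+1,z)$; since $m\in\frac12\nnn_{\rm odd}$ one has $m+k\in\zzz$ for $k\in\frac12\zzz_{\rm odd}$, so the relevant factor is $e^{\pi ik^2/(2m)}$ with no sign flip. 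Equating the coefficients of each basis function on both sides of the three identities of Proposition~\ref{m1:prop:2022-522a}~2), and cancelling the common exponential $e^{\pi ij^2/(2m)}$, yields (i), (ii), (iii) of part~2) with the stated extra phase $-\pi ij^2/(2m)$.

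For the $S$-transformation (part~1), the key input is Lemma~\ref{m1:lemma:2022-522b}, which already records the Fourier-type relations
\[
\sum_{\substack{k \in \frac12 \zzz_{\rm odd} \\ 0< k \leq m}}
\sin \tfrac{\pi k\ell}{m}\; g^{(i)[m,p]}_k\!\Big(-\tfrac{1}{\tau}\Big)
\]
expressed in terms of the $g^{(i')[m,p']}_\ell(\tau)$. The plan is to invert these relations: multiply each identity by $\sin\frac{\pi j\ell}{m}$, sum over $\ell\in\frac12\zzz_{\rm odd}$ with $0<\ell\le m$ (including the $\ell=m$ term, which is handled by \eqref{m1:eqn:2022-530d1}--\eqref{m1:eqn:2022-530d3}), and apply the orthogonality formula \eqref{m1:eqn:2022-524e} of Note~\ref{m1:note:2022-524b}. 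That formula collapses the left side to $\frac{m}{2}c_j\, g^{(i)[m,p]}_j(-\frac1\tau)$, and after dividing by $\frac{m}{2}c_j$ and simplifying $\frac{\tau\sqrt{2m}}{2\sqrt{2m+1}}\cdot\frac{2}{mc_j}=\frac{\tau}{c_j\sqrt{m(m+\frac12)}}$ one obtains exactly the formulas (i), (ii), (iii) of part~1).

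\textbf{Main obstacle.} None of the steps is conceptually deep; the real work is already done in Lemma~\ref{m1:lemma:2022-522b}. The point requiring care is the treatment of the boundary index $\ell=m$ (equivalently $k=m$): the basis function there is $\theta_{m,m}^{(-)}$ rather than a difference $\theta_{k,m}^{(-)}-\theta_{-k,m}^{(-)}$, it carries the coefficient $2g^{(i)[m,p]}_m$ in \eqref{m1:eqn:2022-522a2}, and the constant $c_j$ in \eqref{m1:eqn:2022-713a} equals $2$ precisely when $j=m$; one must check that the factor-of-two discrepancies arising from $\sin\pi k$ (which vanishes identically for integer $k$ but the relevant $k$ are in $\frac12\zzz_{\rm odd}$, so $\sin\pi k=\pm1$) and from the doubled coefficient all combine consistently so that the single clean formula with $c_j$ covers both $j<m$ and $j=m$. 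Verifying this consistency, together with keeping track of the phases $e^{\pm\pi i/4}$, $i$, and $-i$ that appear across the three cases, is the only nontrivial bookkeeping.
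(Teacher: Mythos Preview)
Your proposal is correct and follows essentially the same route as the paper: for part~1) the paper multiplies \eqref{m1:eqn:2022-530c1} by $\sin\frac{\pi j\ell}{m}$ and sums over $0<\ell<m$, adds $\sin\pi j$ times \eqref{m1:eqn:2022-530d1} for the boundary term, and collapses the left side via Note~\ref{m1:note:2022-524b} exactly as you describe; for part~2) the paper simply cites Proposition~\ref{m1:prop:2022-522a} and Lemma~\ref{m1:lemma:2022-526d}, which is your coefficient-comparison argument.
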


\begin{proof} 1) (i) For $j \, \in \, \frac12 \, \nnn_{\rm odd}$ such that 
$\frac12 \leq j \leq m$, we compute 
$$
\sum_{\substack{\ell \in \frac12 \zzz_{\rm odd} \\[1mm] 0<\ell < m}} 
\sin \frac{\pi j\ell}{m} \times {\rm \eqref{m1:eqn:2022-530c1}}
\,\ + \,\ 
\sin \pi j \times {\rm \eqref{m1:eqn:2022-530d1}} \, .
$$
Then 
\begin{subequations}
{\allowdisplaybreaks
\begin{eqnarray}
& & \hspace{-10mm}
\sum_{\substack{\ell \, \in \, \frac12 \zzz_{\rm odd} \\[1mm] 0<\ell < m}} 
\sin \frac{\pi j\ell}{m} \times \text{LHS of \eqref{m1:eqn:2022-530c1}}
\,\ + \,\ 
\sin \pi j \times \text{LHS of \eqref{m1:eqn:2022-530d1}}
\nonumber
\\[0mm]
&=&
\sum_{\substack{\ell \in \frac12 \zzz_{\rm odd} \\[1mm] 0<\ell < m}} 
\sin \frac{\pi j\ell}{m}
\sum_{\substack{k \in \frac12 \zzz_{\rm odd} \\[1mm] 0<k \leq m}} 
\sin \frac{\pi k\ell}{m} \cdot 
g^{(1)[m,p]}_k\Big(-\frac{1}{\tau}\Big) 
\, + \, 
\frac12 \, \sin \pi j \hspace{-2mm}
\sum_{\substack{k \in \frac12 \zzz_{\rm odd} \\[1mm] 0<k \leq m}} 
\hspace{-2mm}
\sin \pi k \cdot g^{(1)[m,p]}_k\Big(-\frac{1}{\tau}\Big) 
\nonumber
\\[1mm]
&=&
\sum_{\substack{k \in \frac12 \zzz_{\rm odd} \\[1mm] 0<k \leq m}} 
\Bigg\{
\underbrace{\sum_{\substack{\ell \in \frac12 \zzz_{\rm odd} 
\\[1mm] 0<\ell < m}} 
\sin \frac{\pi j\ell}{m} \cdot \sin \frac{\pi k\ell}{m}
+
\frac12 \, \sin \pi j \cdot\sin \pi k}_{\substack{
|| \\[0mm] {\displaystyle 
\frac{m}{2} \, c_j \, \delta_{j,k}
}}} 
\Bigg\} \, g^{(1)[m,p]}_k\Big(-\frac{1}{\tau}\Big) 
\nonumber
\\[-5mm]
&=&
\frac{m}{2} \, c_j \, g^{(1)[m,p]}_j\Big(-\frac{1}{\tau}\Big) 
\label{m1:eqn:2022-601a1}
\end{eqnarray}}
and
\begin{eqnarray}
& & \hspace{-10mm}
\sum_{\substack{\ell \, \in \, \frac12 \zzz_{\rm odd} \\[1mm] 0 < \ell < m}} 
\sin \frac{\pi j\ell}{m} \times \text{RHS of \eqref{m1:eqn:2022-530c1}}
\,\ + \,\ 
\sin \pi j \times \text{RHS of \eqref{m1:eqn:2022-530d1}}
\nonumber
\\[0mm]
& & \hspace{-7mm}
= \frac{\tau \, \sqrt{2m}}{2 \sqrt{2m+1}} \, 
\sum_{\substack{\ell \in \frac12 \zzz_{\rm odd} \\[1mm] 0< \ell \leq m}} 
\sin \frac{\pi j\ell}{m}
\sum_{p'=0}^{2m} e^{-\frac{\pi i}{2(2m+1)}(2p+1)(2p'+1)} 
g^{(1)[m,p']}_{\ell}(\tau)
\label{m1:eqn:2022-601a2}
\end{eqnarray}
\end{subequations}
Then, by $\eqref{m1:eqn:2022-601a1} = \eqref{m1:eqn:2022-601a2}$,
we obtain the formula 1) (i). The formulas 1) (ii) and (iii) 
can be proved in similar way. And 
2) follows immediately from Proposition \ref{m1:prop:2022-522a} 
and Lemma \ref{m1:lemma:2022-526d}.
\end{proof}

\begin{rem}
\label{rem:m1:2022-604a}
We note that $g^{(3)[m,0]}_m(\tau)=0$ and this is, of course, in 
consistency with $S$- and $T$-transformation formulas. 
\end{rem}

\section{An example $\sim$ the case $m=\frac12$}
\label{sec:ex}

In this section, we compute the functions $g^{(i)[m,p]}_j(\tau)$ 
and their modular transformation in the case $m=\frac12$ 
to show the following:

\begin{prop} \,\ 
\label{n3:prop:2022-716a}
\begin{enumerate}
\item[{\rm 1)}] \,\ $\bigg[
\sum\limits_{\substack{j, \, r \, \in \zzz \\[1mm]
0 \, < \, r \, \leq \, j}}
-
\sum\limits_{\substack{ j, \, r \, \in \zzz \\[1mm]
j \, < \, r \, \leq 0}}
\bigg]
(-1)^j \, q^{(j+\frac14)^2-\frac12 r^2}
\,\ = \,\ 
\dfrac12 \, \bigg\{ \dfrac{\eta(\tau)^4}{\eta(\frac{\tau}{2})\eta(2\tau)}
\, - \, 
\dfrac{\eta(\frac{\tau}{2})\eta(2\tau)}{\eta(\tau)}\bigg\}$
\item[{\rm 2)}] \,\ $\bigg[
\sum\limits_{\substack{j, \, r \, \in \zzz \\[1mm]
0 \, < \, r \, \leq \, j}}
-
\sum\limits_{\substack{ j, \, r \, \in \zzz \\[1mm]
j \, < \, r \, \leq 0}}
\bigg]
(-1)^r \, q^{(j+\frac14)^2-\frac12 r^2}
\,\ = \,\ 
\dfrac12 \, \bigg\{\eta(\tau) \eta\Big(\dfrac{\tau}{2}\Big) 
\, - \, 
\dfrac{\eta(\tau)^2}{\eta(\frac{\tau}{2})}\bigg\}$
\item[{\rm 3)}] \,\ $\bigg[
\sum\limits_{\substack{j, \, r \, \in \zzz \\[1mm]
0 \, < \, r \, \leq \, j}}
-
\sum\limits_{\substack{ j, \, r \, \in \zzz \\[1mm]
j \, < \, r \, \leq 0}}
\bigg]
(-1)^j \, q^{(j+\frac12)^2-\frac12(r+\frac12)^2}
\,\ = \,\ 
\eta(\tau) \eta(2\tau)$
\end{enumerate}
\end{prop}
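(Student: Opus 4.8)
The plan is to evaluate each of the three indefinite double sums by recognizing them as specializations of the functions $g^{(i)[m,p]}_j(\tau)$ at $m=\tfrac12$. When $m=\tfrac12$ the range $0<k\le m$ forces $k=\tfrac12$, so each $G^{(i)[\frac12,p]}$ has a single theta-coefficient $g^{(i)[\frac12,p]}_{1/2}$, and $p$ ranges over $\{0,1\}$. The explicit formulas for $g^{(i)[m,p]}_k(\tau)$ displayed just before Lemma~\ref{m1:lemma:2022-522b}, specialized to $m=\tfrac12$, $k=\tfrac12$, reduce (after the elementary substitution $r\to r$, $j\to j$ and collecting the $\theta_{(2p+1)m,m+\frac12}^{(-)}(\tau,0)$ correction terms, which for $m=\tfrac12$ are themselves small unary theta series) precisely to double sums of the shape $\big[\sum_{0<r\le j}-\sum_{j<r\le 0}\big](-1)^{?}q^{(j+\alpha)^2-\frac12 r^2}$ appearing in 1)--3). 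So the first step is bookkeeping: match the left-hand sides of 1), 2), 3) with $g^{(1)[\frac12,0]}_{1/2}$, $g^{(2)[\frac12,0]}_{1/2}$ (or their $(-1)^r$ analogue), and $g^{(3)[\frac12,0]}_{1/2}$ respectively, up to the correction terms and simple prefactors.

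Next, I would identify the right-hand sides. Each claimed answer is a ratio/product of Dedekind etas evaluated at $\tau$, $\tfrac\tau2$, $2\tau$; these are exactly the kind of expressions that the functions $\Xi^{(i)[\frac12,p]}$, $\Upsilon^{(i)[\frac12,p]}$, $G^{(i)[\frac12,p]}$ become when one plugs $m=\tfrac12$ into their definitions in Section~7. Recall $G^{(i)[m,p]}=\Xi^{(i)[m,p]}-\Upsilon^{(i)[m,p]}$; for $m=\tfrac12$ the mock-theta piece $\widetilde\psi^{(i)[\frac12]}$ can be computed directly from the defining series of $\Phi^{(-)[\frac12,\frac12]}$ plus its (here very short) correction term $\Phi^{(-)[\frac12,\frac12]}_{\rm add}$, using Proposition~\ref{m1:prop:2022-524c} with $m=\tfrac12$, while $\Upsilon^{(i)[\frac12,p]}$ is literally a quotient of theta functions of index $1$ times $\eta^3$, which Jacobi/Mumford identities (Note~\ref{m1:note:2022-526a}) rewrite in closed eta form. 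The single theta-coefficient of $G^{(i)[\frac12,p]}$ is then the desired eta expression, and equating it with the double-sum form from Theorem~\ref{m1:thm:2022-524a} yields 1)--3). Alternatively, and more cheaply, one can avoid reconstructing $\Phi$ entirely: write each double sum as a difference of two ordinary (convergent) theta-like series by the standard telescoping $\big[\sum_{0<r\le j}-\sum_{j<r\le 0}\big]a_{j,r}=\sum_{j,r\in\zzz}\mathrm{sgn}^*(\cdots)a_{j,r}$ trick, split the $(j,r)$ lattice sum as a product/quotient via a change of variables to $(j+r, j-r)$-type coordinates, and recognize the resulting one-dimensional sums as $\eta(\tau)$, $\eta(\tfrac\tau2)$, $\eta(2\tau)$ up to the Euler-product identity $\prod(1-q^n)=\eta$-type rearrangements.

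In more concrete terms, for 3) the exponent $(j+\tfrac12)^2-\tfrac12(r+\tfrac12)^2$ is a genuinely indefinite binary quadratic form of signature $(1,1)$; setting $u=j+r+1$, $v=j-r$ we get $(j+\tfrac12)^2-\tfrac12(r+\tfrac12)^2=\tfrac14(u+v)^2+\tfrac14 u v+\ldots$ — after diagonalizing, the $\mathrm{sgn}$-restricted sum collapses (the indefinite form becomes a product of two positive-definite ones on the lattice cone cut out by the sign conditions), leaving $\big(\sum_{u}q^{u^2/\text{something}}\big)\big(\sum_v q^{v^2/\text{something}}\big)$, i.e. a product of two unary thetas, which is exactly $\eta(\tau)\eta(2\tau)$ by the classical identity $\eta(\tau)\eta(2\tau)=\sum_{?}\ldots$ (equivalently $q^{1/8}\prod(1-q^n)(1-q^{2n})$ has a known Jacobi-triple-product-type Lambert/theta expansion). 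For 1) and 2) the same diagonalization applies, but the answer is a \emph{difference} of two eta-quotients, reflecting that the $(-1)^j$ (resp.\ $(-1)^r$) sign splits the lattice into two cosets mod~$2$ with opposite contributions; one of the two resulting unary thetas is $\theta$ at a doubled or halved argument, accounting for $\eta(\tfrac\tau2)$ versus $\eta(2\tau)$ appearing.

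The main obstacle will be the diagonalization-and-cone-collapse step for the indefinite quadratic form: one must check that the sign conditions $0<r\le j$ and $j<r\le 0$, after the linear change of variables, carve out precisely a region on which the indefinite form restricts to (a positive multiple of) a sum of two squares with \emph{no cross terms surviving}, so that the double sum factors; getting the boundary cases ($r=j$, $r=0$, the diagonal) and the half-integer shifts right is delicate, and this is where sign errors and off-by-one errors are most likely. Once the factorization is established, matching the two resulting unary theta series to the stated eta-quotients is routine via the Jacobi triple product and the standard modular identities for $\eta$ at $\tau$, $\tfrac\tau2$, $2\tau$. I would present 3) first since it is cleanest (no alternating sign, a single eta-product on the right), then handle 1) and 2) together, pointing out that 2) is obtained from the $(-1)^r$-twisted analogue of the same computation and that 1) follows from 2) (or from $g^{(1)}$ versus $g^{(2)}$) by the relation between the $(-1)^j$ and $(-1)^r$ weightings together with the $S$-transformation in Theorem~\ref{m1:thm:2022-524b}.
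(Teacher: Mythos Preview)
Your setup is right: at $m=\tfrac12$ the only coefficient is $g^{(i)[\frac12,p]}_{1/2}$, and the displayed formulas just before Lemma~\ref{m1:lemma:2022-522b} specialize exactly to the three indefinite double sums (plus the small $\theta_{\frac12,1}^{(\pm)}(\tau,0)$ correction terms). This part matches the paper.

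The gap is in your identification step. Neither of your two routes actually produces the eta quotients. For route (A), computing $\widetilde\psi^{(i)[\frac12]}$ ``directly from the defining series of $\Phi^{(-)[\frac12,\frac12]}$'' gives you an Appell--Lerch sum, which is a genuine mock theta function; the Zwegers correction makes the modified object \emph{modular}, but modular does not mean ``manifestly an eta product,'' so you are back where you started. For route (B), the indefinite form $(j+\alpha)^2-\tfrac12 r^2$ has discriminant $2$ and does \emph{not} diagonalize over $\zzz$ into a sum of two positive-definite pieces on the cone $0<r\le j$; a change to $(j\pm r)$-coordinates puts it into product form $uv$, and the cone becomes a quadrant, but $\sum_{u,v\ge 0} q^{uv}$ does not factor as (unary theta)$\times$(unary theta). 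These are genuine Hecke-type indefinite theta identities, not elementary factorizations.

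What the paper actually does---and what you are missing---is a rigidity argument. From Theorem~\ref{m1:thm:2022-524b} at $m=\tfrac12$ one reads off explicit $S$- and $T$-transformation laws for $g^{(i)[\frac12,1]}_{1/2}$; after a harmless rescaling one sets $f_i(\tau):=\widetilde g_i(\tau)/\eta(\tau)^2$ and finds that each $f_i$ is a weight-$0$ function with a prescribed $SL_2(\zzz)$-cocycle and a prescribed leading $q$-power. A classification lemma (Lemma~4.8 of \cite{KW1988}, or Lemma~4.9 of \cite{W2001}) then pins the $f_i$ down uniquely as $\tfrac12\,\eta(\tau)^2/(\eta(\tfrac\tau2)\eta(2\tau))$, $\tfrac12\,\eta(\tfrac\tau2)/\eta(\tau)$, $\tfrac{1}{\sqrt2}\,\eta(2\tau)/\eta(\tau)$, which after multiplying back by $\eta^2$ and subtracting the correction terms $\theta_{\frac12,1}^{(\pm)}(\tau,0)=\eta(\tau)^2/\eta(\tfrac\tau2)$, $\eta(\tfrac\tau2)\eta(2\tau)/\eta(\tau)$ gives exactly 1)--3). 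The modular transformation laws you mention only in passing at the very end are in fact the entire engine of the proof.
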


\begin{proof} In the case $m=\frac12$, the functions
$g^{(i)[\frac12,p]}_{\frac12}(\tau)$ \,\ ($i\in \{1,2,3\}$ and 
$p \in \{0,1\}$) are as follows:
{\allowdisplaybreaks
\begin{eqnarray*}
g^{(1)[\frac12,1]}_{\frac12}(\tau)
&=& 
g^{(1)[\frac12,0]}_{\frac12}(\tau)
\,\ = \,\ 
i \, \Bigg\{
\bigg[
\sum\limits_{\substack{j, \, r \, \in \zzz \\[1mm]
0 \, < \, r \, \leq \, j}}
-
\sum\limits_{\substack{ j, \, r \, \in \zzz \\[1mm]
j \, < \, r \, \leq 0}}
\bigg]
(-1)^j \, q^{(j+\frac14)^2-\frac12 r^2}
+ \, 
\frac12 \, \theta_{\frac12, \, 1}^{(-)}(\tau,0)\Bigg\}
\\[2mm]
g^{(2)[\frac12,1]}_{\frac12}(\tau)
&=& 
- \, g^{(2)[\frac12,0]}_{\frac12}(\tau)
\,\ = \,\ 
\bigg[
\sum\limits_{\substack{j, \, r \, \in \zzz \\[1mm]
0 \, < \, r \, \leq \, j}}
-
\sum\limits_{\substack{ j, \, r \, \in \zzz \\[1mm]
j \, < \, r \, \leq 0}}
\bigg]
(-1)^r \, q^{(j+\frac14)^2-\frac12 r^2}
+ \, 
\dfrac12 \, \theta_{\frac12, \, 1}(\tau,0)
\\[-2mm]
g^{(3)[\frac12,0]}_{\frac12}(\tau)
&=& 0
\\[2mm]
g^{(3)[\frac12,1]}_{\frac12}(\tau)
&=&
i \, \bigg[
\sum\limits_{\substack{j, \, r \, \in \zzz \\[1mm]
0 \, < \, r \, \leq \, j}}
-
\sum\limits_{\substack{ j, \, r \, \in \zzz \\[1mm]
j \, < \, r \, \leq 0}}
\bigg]
(-1)^j \, q^{(j+\frac12)^2-\frac12(r+\frac12)^2}
\end{eqnarray*}}
Modular transformation of these functions are obtained from 
Theorem \ref{m1:thm:2022-524b} as follows:
$$ \left\{
\begin{array}{rcr}
g^{(1)[\frac12, 1]}_\frac12 \Big(-\dfrac{1}{\tau}\Big)
&=&
- \, i \tau \, g^{(1)[\frac12,1]}_{\frac12}(\tau)
\\[3.5mm]
g^{(2)[\frac12,1]}_\frac12 \Big(-\dfrac{1}{\tau}\Big) 
&=& 
\dfrac{-\tau}{\sqrt{2}} \,\ g^{(3)[\frac12 ,1]}_{\frac12}(\tau)
\\[3.5mm]
g^{(3)[\frac12,1]}_\frac12 \Big(-\dfrac{1}{\tau}\Big) 
&=&
\sqrt{2} \, \tau \, g^{(2)[\frac12,1]}_{\frac12}(\tau)
\end{array} \right. \hspace{5mm}
\left\{
\begin{array}{rcr}
g^{(1)[\frac12,1]}_\frac12(\tau+1) 
&=&
i \, e^{\frac{\pi i}{8}} \, g^{(2)[\frac12, 1]}_\frac12(\tau)
\\[3.5mm]
g^{(2)[\frac12,1]}_\frac12(\tau+1) 
&=&
- \, i \, e^{\frac{\pi i}{8}} \, g^{(1)[\frac12, 1]}_\frac12 (\tau)
\\[3.5mm]
g^{(3)[\frac12,1]}_\frac12 (\tau+1) 
&=&
e^{\frac{\pi i}{4}} \, g^{(3)[\frac12,1]}_\frac12 (\tau)
\end{array} \right.
$$
Putting $
\widetilde{g}_1(\tau) := - i \, g^{(1)[\frac12, 1]}_{\frac12}(\tau), \,\ 
\widetilde{g}_2(\tau) := g^{(2)[\frac12, 1]}_{\frac12}(\tau)$ and 
$\widetilde{g}_3(\tau) := 
\dfrac{- i}{\sqrt{2}} \, g^{(3)[\frac12, 1]}_{\frac12}(\tau) $,
these formulas are rewritten in terms of $\widetilde{g}_i(\tau)$ 
as follows:
{\allowdisplaybreaks
\begin{eqnarray}
\widetilde{g}_1(\tau) 
&=&
\bigg[
\sum\limits_{\substack{j, \, r \, \in \zzz \\[1mm]
0 \, < \, r \, \leq \, j}}
-
\sum\limits_{\substack{ j, \, r \, \in \zzz \\[1mm]
j \, < \, r \, \leq 0}}
\bigg]
(-1)^j \, q^{(j+\frac14)^2-\frac12 r^2}
\, + \, 
\dfrac12 \, \theta_{\frac12, \, 1}^{(-)}(\tau,0)
\,\ = \,\ 
\dfrac12 \, q^{\frac{1}{16}} + \, \cdots 
\nonumber
\\[2mm]
\widetilde{g}_2(\tau) 
&=&
\bigg[
\sum\limits_{\substack{j, \, r \, \in \zzz \\[1mm]
0 \, < \, r \, \leq \, j}}
-
\sum\limits_{\substack{ j, \, r \, \in \zzz \\[1mm]
j \, < \, r \, \leq 0}}
\bigg]
(-1)^r \, q^{(j+\frac14)^2-\frac12 r^2}
\, + \, 
\dfrac12 \, \theta_{\frac12, \, 1}(\tau,0)
\,\ = \,\ 
\dfrac12 \, q^{\frac{1}{16}} + \, \cdots
\label{n3:eqn:2022-716a}
\\[2mm]
\widetilde{g}_3(\tau)
&=&
\dfrac{1}{\sqrt{2}} \, \bigg[
\sum\limits_{\substack{j, \, r \, \in \zzz \\[1mm]
0 \, < \, r \, \leq \, j}}
-
\sum\limits_{\substack{ j, \, r \, \in \zzz \\[1mm]
j \, < \, r \, \leq 0}}
\bigg]
(-1)^j \, q^{(j+\frac12)^2-\frac12(r+\frac12)^2}
\,\ = \,\ 
\dfrac{1}{\sqrt{2}} \, q^{\frac18} + \, \cdots
\nonumber
\end{eqnarray}}
and
$$\left\{
\begin{array}{lclcl}
\widetilde{g}_1\Big(-\dfrac{1}{\tau}\Big) 
\, = \, (-i\tau) \, \widetilde{g}_1(\tau), 
& &
\widetilde{g}_2\Big(-\dfrac{1}{\tau}\Big) 
\, = \, (-i\tau) \, \widetilde{g}_3(\tau), 
& &
\widetilde{g}_3\Big(-\dfrac{1}{\tau}\Big) 
\, = \, (-i\tau) \, \widetilde{g}_2(\tau)
\\[4mm]
\widetilde{g}_1(\tau+1) \, = \, e^{\frac{\pi i}{8}} \, \widetilde{g}_2(\tau), 
& &
\widetilde{g}_2(\tau+1) \, = \, e^{\frac{\pi i}{8}} \, \widetilde{g}_1(\tau), 
& &
\widetilde{g}_3(\tau+1) \, = \, e^{\frac{\pi i}{4}} \, \widetilde{g}_3(\tau)
\end{array} \right.
$$
Now we put $f_i(\tau) \, := \, \dfrac{\widetilde{g}_i(\tau)}{\eta(\tau)^2}$ \,  
for $i \in \{1,2,3\}$. 
Since $\eta(-\frac{1}{\tau}) = (-i\tau)^{\frac12}\eta(\tau)$ and 
$\eta(\tau+1)= e^{\frac{\pi i}{12}} \eta(\tau)$, 
these functions $f_i(\tau)$ satisfy the following equations: 
$$\left\{
\begin{array}{lclcl}
f_1\Big(-\dfrac{1}{\tau}\Big) \, = \, f_1(\tau), 
& &
f_2\Big(-\dfrac{1}{\tau}\Big) \, = \, f_3(\tau), 
& &
f_3\Big(-\dfrac{1}{\tau}\Big) \, = \, f_2(\tau)
\\[4mm]
f_1(\tau+1) \, = \, e^{-\frac{\pi i}{24}} \, f_2(\tau), 
& &
f_2(\tau+1) \, = \, e^{-\frac{\pi i}{24}} \, f_1(\tau), 
& &
f_3(\tau+1) \, = \, e^{\frac{\pi i}{12}} \, f_3(\tau)
\\[3mm]
f_1(\tau) \, = \, \frac12 \, q^{-\frac{1}{48}} + \cdots ,
& &
f_2(\tau) \, = \, \frac12 \, q^{-\frac{1}{48}} + \cdots ,
& &
f_3(\tau) \, = \, \frac{1}{\sqrt{2}} \, q^{\frac{1}{24}} + \cdots 
\end{array} \right.
$$

\noindent
Then, by Lemma 4.8 in \cite{KW1988} (or Lemma 4.9 in \cite{W2001}), 
we have 
$$
f_1(\tau) \, = \, \frac12 \cdot \frac{\eta(\tau)^2}{\eta(\frac{\tau}{2})\eta(2\tau)},
\hspace{10mm}
f_2(\tau) \, = \, \frac12 \cdot \frac{\eta(\frac{\tau}{2})}{\eta(\tau)},
\hspace{10mm}
f_3(\tau) \, = \, \frac12 \cdot \sqrt{2} \, \frac{\eta(2\tau)}{\eta(\tau)}
$$
so
\begin{equation}
\widetilde{g}_1(\tau) \, = \, 
\frac12 \cdot \frac{\eta(\tau)^4}{\eta(\frac{\tau}{2})\eta(2\tau)},
\hspace{5mm}
\widetilde{g}_2(\tau) \, = \, \frac12 \cdot \eta(\tau) \eta\Big(\frac{\tau}{2}\Big),
\hspace{5mm}
\widetilde{g}_3(\tau) \, = \, \frac{1}{\sqrt{2}} \cdot \eta(\tau) \eta(2\tau)
\label{n3:eqn:2022-716b}
\end{equation}

\vspace{1.5mm}

\noindent
Then by \eqref{n3:eqn:2022-716a} and \eqref{n3:eqn:2022-716b}, we have
\begin{enumerate}
\item[{\rm 1)}] \,\ $\widetilde{g}_1(\tau) \,\ = \,\ 
\bigg[
\sum\limits_{\substack{j, \, r \, \in \zzz \\[1mm]
0 \, < \, r \, \leq \, j}}
-
\sum\limits_{\substack{ j, \, r \, \in \zzz \\[1mm]
j \, < \, r \, \leq 0}}
\bigg]
(-1)^j \, q^{(j+\frac14)^2-\frac12 r^2}
\, + \, 
\dfrac12 \, \theta_{\frac12, \, 1}^{(-)}(\tau,0)
\,\ = \,\ 
\dfrac12 \cdot \dfrac{\eta(\tau)^4}{\eta(\frac{\tau}{2})\eta(2\tau)}$
\item[{\rm 2)}] \,\ $\widetilde{g}_2(\tau) \,\ = \,\ 
\bigg[
\sum\limits_{\substack{j, \, r \, \in \zzz \\[1mm]
0 \, < \, r \, \leq \, j}}
-
\sum\limits_{\substack{ j, \, r \, \in \zzz \\[1mm]
j \, < \, r \, \leq 0}}
\bigg]
(-1)^r \, q^{(j+\frac14)^2-\frac12 r^2}
\, + \, 
\dfrac12 \, \theta_{\frac12, \, 1}(\tau,0)
\,\ = \,\ 
\dfrac12 \cdot \eta(\tau) \eta\Big(\dfrac{\tau}{2}\Big) $
\item[{\rm 3)}] \,\ $\widetilde{g}_3(\tau)
\,\ = \,\ 
\dfrac{1}{\sqrt{2}} \, \bigg[
\sum\limits_{\substack{j, \, r \, \in \zzz \\[1mm]
0 \, < \, r \, \leq \, j}}
-
\sum\limits_{\substack{ j, \, r \, \in \zzz \\[1mm]
j \, < \, r \, \leq 0}}
\bigg]
(-1)^j \, q^{(j+\frac12)^2-\frac12(r+\frac12)^2}
\,\ = \,\ 
\dfrac{1}{\sqrt{2}} \cdot \eta(\tau) \eta(2\tau)$
\end{enumerate}
proving Proposition \ref{n3:prop:2022-716a}, since 
$$
\theta_{\frac12,1}(\tau,0) \, = \,
\frac{\eta(\tau)^2}{\eta(\frac{\tau}{2})} \hspace{10mm} 
\text{and} \hspace{10mm}
\theta_{\frac12,1}^{(-)}(\tau,0) \, = \,
\frac{\eta(\frac{\tau}{2})\eta(2\tau)}{\eta(\tau)} \,\ .
$$
\end{proof}

\end{document}